\numberwithin{equation}{section}
\newcolumntype{C}{>{$}c<{$}} 
\newcolumntype{L}{{$}l{$}} 
\setlist[enumerate, 1]{
  leftmargin = \parindent, 
  align = left,
  labelwidth=\parindent,
  labelsep = 0pt
}
\author{Victoria Noquez, Lawrence S.~Moss}
\address{Department of Mathematics \& Computer Science, St. Mary's College of California\\
1928 St. Mary's Rd., Moraga, CA 94575\\
\\
Department of Mathematics, Indiana University\\
Rawles Hall, 831 East 3rd St., Bloomington, IN 47405-7106\\
}
\title{The Sierpinski carpet as a final coalgebra}
\keywords{Fractal, Initial Algebra, Final Coalgebra}
\newcommand{\norm}[1]{|#1|}
\newcommand{\onehalf}{\tfrac{1}{2}}
\newcommand{\onethird}{\tfrac{1}{3}}
\newcommand{\twothirds}{\tfrac{2}{3}}
\newcommand{\rthirds}{\tfrac{r}{3}}
\newcommand{\oneoverK}{\tfrac{1}{K}}
\renewcommand{\o}{\circ}
\newcommand{\reals}{\mathbbm{R}}
\newcommand{\mbar}{\overline{m}}
\newtheorem{theorem}{Theorem}
\newtheorem{proposition}[theorem]{Proposition}
\newtheorem{lemma}[theorem]{Lemma}
\newtheorem{corollary}[theorem]{Corollary}
\newtheorem{claim}{Claim}
\theoremstyle{definition}
\newcommand{\C}{\mathcal{C}}
\newcommand{\CC}{\mathrm{Com}}
\newcommand{\A}{\mathcal{A}}
\renewcommand{\AA}{\mathcal{A}}
\newcommand{\UU}{\mathcal{U}}
\newcommand{\shrink}{\mbox{\sf shrink}}
\newcommand{\SC}{\mathbbm{S}}
\newcommand{\set}[1]{\{ #1 \}}
\newcommand{\rem}[1]{\relax}
\newcommand{\Set}{\mbox{\sf Set}}
\newcommand{\Met}{\mbox{\sf Met}}
\newcommand{\CMS}{\mbox{\sf CMS}}
\newcommand{\SquaSet}{\mbox{\sf SquaSet}}
\newcommand{\SquaMS}{\mbox{\sf SquaMS}}
\newcommand{\Taxicab}{\mbox{\scriptsize\sf Taxi}}
\newcommand{\Euc}{\mbox{\scriptsize\sf Euc}}
\newcommand{\MS}{\mbox{\sf MS}}
\newcommand{\PS}{\mbox{\sf Pseu}}
\newcommand{\eps}{\varepsilon}
\newcommand{\AlgF}{\mbox{\sf Alg}\, F}
\newcommand{\id}{\mbox{\sf id}}
\newcommand{\sqone}{\mbox{\sc sq}_1}
\newcommand{\sqtwo}{\mbox{\sc sq}_2}
\newcommand{\sconeBprime}{
\begin{tikzpicture}[scale=1.333]
\foreach \x in {0,  1}
{
\draw (\x,0) -- (\x,1);
\draw (0,\x) --(1,\x);
};
\end{tikzpicture}
}
\newcommand{\sctwoBprime}[1]{
\begin{tikzpicture}[scale=4]
\draw ({1/6},{1/6}) node {#1};
\draw ({1/6},{1/2}) node  {#1};
\draw ({1/6},{5/6}) node {#1};
\draw ({1/2},{1/6}) node  {#1};
\draw ({1/2},{5/6}) node  {#1};
\draw ({5/6},{1/6}) node  {#1};
\draw ({5/6},{1/2}) node {#1};
\draw ({5/6},{5/6}) node  {#1};
\end{tikzpicture}
}
\newcommand{\secfoura}{
  \begin{tikzpicture}[>=stealth',shorten >=1pt,auto,node distance=2cm,semithick,scale=1.5]
   \draw [help lines] (0,0) grid (3,3);   
      \draw (1.9,1)  node (m4)   {$\bullet$}; 
      \draw (.2,0.4)  node  (m1)   {$\bullet$};         \draw (.2,.2) node{\protect{\scriptsize $x$}};
      \draw (.7,1.75) node{\protect{\scriptsize $A$}};    
\draw (.2,1) node (m2) {$\bullet$};
   \draw (1,1.7) node (m3) {$\bullet$}; 
      \draw (1,0.7) node (m5) {$\bullet$}; 
            \draw (.5,0.6) node (m6) {$\bullet$};    \draw (.5,.4) node{\protect{\scriptsize $y$}};
\draw[line width=2pt] (.2,0.4) -- (.2,1);
\draw[line width=2pt] (.2,1) -- (1,1.7);
\draw[line width=2pt] (1,1.7) -- (1.9,1);
\draw[line width=2pt] (1.9,1) -- (1,0.7);
\draw[line width=2pt] (1,0.7) -- (.5,0.6);
\end{tikzpicture}
}
\newcommand{\secfourb}{
  \begin{tikzpicture}[>=stealth',shorten >=1pt,auto,node distance=2cm,semithick,scale=1.5]
   \draw [help lines] (0,0) grid (3,3);   
      \draw (1.9,1)  node (m4)   {$\bullet$}; 
      \draw (.2,0.4)  node  (m1)   {$\bullet$};      
      \draw (.2,.2) node{\protect{\scriptsize $x$}};
      \draw (1,1.25) node{\protect{\scriptsize $A$}};\ 
\draw (.2,1) node (m2) {$\bullet$};
   \draw (1,1) node (m3) {$\bullet$}; 
      \draw (1,0.7) node (m5) {$\bullet$}; 
            \draw (.5,0.6) node (m6) {$\bullet$};    \draw (.5,.4) node{\protect{\scriptsize $y$}};
\draw[line width=2pt] (.2,0.4) -- (.2,1);
\draw[line width=2pt] (.2,1) -- (1,1);
\draw[line width=2pt] (1,1) -- (1.9,1);
\draw[line width=2pt] (1.9,1) -- (1,0.7);
\draw[line width=2pt] (1,0.7) -- (.5,0.6);
\end{tikzpicture}
}
\newcommand{\secfourc}{
  \begin{tikzpicture}[>=stealth',shorten >=1pt,auto,node distance=2cm,semithick,scale=1.5]
   \draw [help lines] (0,0) grid (3,3);   
      \draw (2.9,1)  node (m4)   {$\bullet$}; 
      \draw (1.2,0.4)  node  (m1)   {$\bullet$};         \draw (1.2,.2) node{\protect{\scriptsize $x$}};
      \draw (1.7,1.75) node{\protect{\scriptsize $A$}}; 
\draw (1.2,1) node (m2) {$\bullet$};
   \draw (2,1.7) node (m3) {$\bullet$}; 
      \draw (2,0.7) node (m5) {$\bullet$}; 
            \draw (1.5,0.6) node (m6) {$\bullet$};    \draw (1.5,.4) node{\protect{\scriptsize $y$}};
\draw[line width=2pt] (1.2,0.4) -- (1.2,1);
\draw[line width=2pt] (1.2,1) -- (2,1.7);
\draw[line width=2pt] (2,1.7) -- (2.9,1);
\draw[line width=2pt] (2.9,1) -- (2,0.7);
\draw[line width=2pt] (2,0.7) -- (1.5,0.6);
\end{tikzpicture}
}
\newcommand{\secfourd}{
  \begin{tikzpicture}[>=stealth',shorten >=1pt,auto,node distance=2cm,semithick,scale=1.5]
   \draw [help lines] (0,0) grid (3,3);   
      \draw (1.2,0.4)  node  (m1)   {$\bullet$};         \draw (1.2,.2) node{\protect{\scriptsize $x$}};
\draw (.4,1) node (m2) {$\bullet$};
   \draw (1,1.5) node (m3) {$\bullet$};       
      \draw (.8,1.75) node{\protect{\scriptsize $A$}}; 
   \draw (2,1.5) node (m4) {$\bullet$};      \draw (2.2,1.75) node{\protect{\scriptsize $B$}}; 
      \draw (2.6,1) node (m5) {$\bullet$}; 
      \draw (2, .3) node (m6) {$\bullet$};       
            \draw (1.5,0.8) node (m7) {$\bullet$};    \draw (1.5,.6) node{\protect{\scriptsize $y$}};
\draw[line width=2pt] (1.2,0.4) -- (.4,1);
\draw[line width=2pt] (.4,1) -- (1,1.5);
\draw[line width=2pt] (1,1.5) -- (2,1.5);
\draw[line width=2pt] (2,1.5) -- (2.6,1);
\draw[line width=2pt] (2.6,1) -- (2, .3);
\draw[line width=2pt] (2, .3) -- (1.5,0.8);
\end{tikzpicture}
}
\newcommand{\secfoure}{
  \begin{tikzpicture}[>=stealth',shorten >=1pt,auto,node distance=2cm,semithick,scale=1.5]
   \draw [help lines] (0,0) grid (3,3);   
      \draw (1.2,0.4)  node  (m1)   {$\bullet$};         \draw (1.2,.2) node{\protect{\scriptsize $x$}};
\draw (.4,1) node (m2) {$\bullet$};
   \draw (1,1) node (m3) {$\bullet$};       
      \draw (.8,1.25) node{\protect{\scriptsize $A$}};
   \draw (2,1.) node (m4) {$\bullet$};      \draw (2.2,1.25) node{\protect{\scriptsize $B$}}; 
      \draw (2.6,1) node (m5) {$\bullet$}; 
      \draw (2, .3) node (m6) {$\bullet$};       
            \draw (1.5,0.8) node (m7) {$\bullet$};    \draw (1.5,.6) node{\protect{\scriptsize $y$}};
\draw[line width=2pt] (1.2,0.4) -- (.4,1);
\draw[line width=2pt] (.4,1) -- (1,1);
\draw[line width=2pt] (1,1) -- (2,1.);
\draw[line width=2pt] (2,1.) -- (2.6,1);
\draw[line width=2pt] (2.6,1) -- (2, .3);
\draw[line width=2pt] (2, .3) -- (1.5,0.8);
\end{tikzpicture}
}
\newcommand{\secfourf}{
  \begin{tikzpicture}[>=stealth',shorten >=1pt,auto,node distance=2cm,semithick,scale=1.5]
   \draw [help lines] (0,0) grid (3,3);   
      \draw (1.3,1.5)  node  (m1)   {$\bullet$};         \draw (1.3,1.3) node{\protect{\scriptsize $x$}};
\draw (1.2,2) node (m2) {$\bullet$};
   \draw (1,2.7) node (m3) {$\bullet$}; 
         \draw (.5,2)  node (m4)   {$\bullet$}; 
      \draw (.5,1) node (m5) {$\bullet$}; 
            \draw (1,.3) node (m6) {$\bullet$};   
               \draw (2,.3) node (m7) {$\bullet$};            
         \draw (2.5,1) node (m8) {$\bullet$};          
                  \draw (2,1.4) node (m9) {$\bullet$};        
                    \draw (1.6,1.3) node (m11) {$\bullet$};    
             \draw (1.6,1.1) node{\protect{\scriptsize $y$}};
\draw[line width=2pt] (1.3,1.5)--  (1.2,2);
\draw[line width=2pt]  (1.2,2) -- (1,2.7);
\draw[line width=2pt] (1,2.7) -- (.5,2);
\draw[line width=2pt] (.5,2) -- (.5,1);
\draw[line width=2pt] (.5,1) -- (1,.3);
\draw[line width=2pt] (1,.3) -- (2,.3);
\draw[line width=2pt] (2,.3) -- (2.5,1);
\draw[line width=2pt] (2.5,1) -- (2,1.4);
\draw[line width=2pt] (2,1.4) -- (1.6,1.3);
\end{tikzpicture}
}
\newcommand{\secfourg}{
  \begin{tikzpicture}[>=stealth',shorten >=1pt,auto,node distance=2cm,semithick,scale=1.5]
   \draw [help lines] (0,0) grid (3,3);   
      \draw (2.9,2)  node (m4)   {$\bullet$}; 
      \draw (1.2,1.4)  node  (m1)   {$\bullet$};         \draw (1.2,1.2) node{\protect{\scriptsize $x$}};
      \draw (1.7,2.75) node{\protect{\scriptsize $A$}}; 
\draw (1.2,2) node (m2) {$\bullet$};
   \draw (2,2.7) node (m3) {$\bullet$}; 
      \draw (2,01.7) node (m5) {$\bullet$}; 
            \draw (1.5,1.6) node (m6) {$\bullet$};    
            \draw (1.5,1.4) node{\protect{\scriptsize $y$}};
\draw[line width=2pt] (1.2,1.4) -- (1.2,2);
\draw[line width=2pt] (1.2,2)  -- (2,2.7);
\draw[line width=2pt] (2,2.7) -- (2.9,2);
\draw[line width=2pt] (2.9,2)  -- (2,01.7);
\draw[line width=2pt] (2,01.7) -- (1.5,1.6);
\end{tikzpicture}
}
\newcommand{\secfourh}{
  \begin{tikzpicture}[>=stealth',shorten >=1pt,auto,node distance=2cm,semithick,scale=1.5]
   \draw [help lines] (0,0) grid (3,3);   
      \draw (1.2,1.4)  node  (m1)   {$\bullet$};         \draw (1.2,1.2) node{\protect{\scriptsize $x$}};
\draw (.4,2) node (m2) {$\bullet$};
   \draw (1,2.5) node (m3) {$\bullet$};       
      \draw (.8,2.75) node{\protect{\scriptsize $A$}}; 
   \draw (2,2.5) node (m4) {$\bullet$};      \draw (2.2,2.75) node{\protect{\scriptsize $B$}}; 
      \draw (2.6,2) node (m5) {$\bullet$}; 
      \draw (2, 1.3) node (m6) {$\bullet$};       
            \draw (1.5,1.8) node (m7) {$\bullet$};    \draw (1.5,1.6) node{\protect{\scriptsize $y$}};
\draw[line width=2pt] (1.2,1.4) -- (.4,2);
\draw[line width=2pt] (.4,2) --   (1,2.5);
\draw[line width=2pt]  (1,2.5) -- (2,2.5);
\draw[line width=2pt] (2,2.5) -- (2.6,2);
\draw[line width=2pt] (2.6,2) -- (2, 1.3);
\draw[line width=2pt] (2, 1.3) -- (1.5,1.8);
\end{tikzpicture}
}
\begin{document}

\maketitle
\begin{abstract}
We advance the program of connections between final coalgebras as sources of circularity in mathematics
and fractal sets of real numbers.  In particular, we are interested in the 
Sierpinski carpet, taking it as a fractal subset of the unit square.  We construct a 
category of \emph{square metric spaces} and an endofunctor on it which corresponds to the 
operation of gluing eight copies of a given square metric space along segments, as in the Sierpinski carpet.   
We show that the initial
algebra and final coalgebra exists for our functor, and that the final coalgebra is 
bilipschitz equivalent to the Sierpinski carpet.  Along the way, we make connections
to topics such as the iterative construction of initial algebras as $\omega$-colimits,
corecursive algebras, and the classic treatment of fractal sets
due to Hutchinson.
\end{abstract}

\section{Introduction}

This paper continues work on fractal sets modeled as final coalgebras.
It  builds on a line of work that
began with Freyd's result~\cite{freyd:real} that the unit interval $[0,1]$
is the final coalgebra of a certain endofunctor on the category of \emph{bi-pointed} sets.  
Leinster's paper~\cite{lein} is a far-reaching generalization of Freyd's result.
It 
 represents many of what would be intuitively 
called \emph{self-similar} spaces using (a) bimodules (also called profunctors or distributors);
(b) an examination of non-degeneracy conditions on functors of various sorts; (c) a construction of
final coalgebras for the types of functors of interest using a notion of resolution.    In addition to the 
characterization of fractal sets as sets, his seminal paper also characterizes them as topological spaces.

In a somewhat different direction, work related to Freyd's 
 Theorem continues with development of \emph{tri-pointed sets}~\cite{Bhat} and the proof  that the 
Sierpinski gasket $\mathbbm{SG}$ is related to the final coalgebra of a functor modeled on that of Freyd~\cite{freyd:real}.
(Please note that the \emph{gasket} is different from the \emph{carpet}.)
Although it might seem that this result is but a special case of the much better results in  Leinster~\cite{lein},
the work on tri-pointed sets was carried out in the setting of metric spaces rather than topological spaces
(and so it re-proved Freyd's result in that setting, too).   Work in the metric setting is unfortunately more
complicated.   It originates  in Hasuo, Jacobs, and Niqui~\cite{hjn}, a paper which
emphasized algebras in addition to coalgebras, and proposed
endofunctors defined using quotient metrics. 
Following this, Bhattacharya et al.~\cite{Bhat} show
 that for the unit interval,
 the initial algebra of Freyd's functor is also interesting, being the metric space of dyadic rationals,
and thus the unit interval itself is its Cauchy completion.  For the  Sierpinski gasket, the initial algebra
of the functor on tripointed sets is 
connected to the finite addresses used in building the gasket as a fractal; its completion again 
turns out to be the final coalgebra;
and while the gasket itself is \emph{not} the final coalgebra, the two metric spaces are bilipschitz equivalent.

In this paper, we take the next step in this area by considering the Sierpinski carpet $\SC$.
The difference between this and the gasket (or the unit interval) is that the gluing of spaces needed to define the functor
involves \emph{gluing along line segments}, not just at points.   
This turns out to complicate matters at every step.   The main results of the paper are analogs of what we saw for the 
gasket:  we have a category of metric spaces with additional structure
that we call \emph{square metric spaces}, an endofunctor $M\otimes -$ which 
takes a space to 8 scaled copies of itself glued along segments
(the notation recalls Leinster's paper, and again we are in the metric setting), a proof that the initial algebra and final coalgebra
exist, and that the latter is the completion of the former, and a verification that the actual Sierpinski carpet $\SC$
is bilipschitz equivalent to the final coalgebra.   Along the way, we need to consider a different functor
 $N\otimes -$ 
which is like $M\otimes-$ but involves 9 copies (no ``hole'').
The 
 final 
coalgebra of $N\otimes-$
turns out to be the unit square with the taxicab metric.   Moreover, in much of this work we have found it convenient to 
work with \emph{corecursive algebras} as a stepping stone to the final coalgebra; the unit square 
with the taxicab metric
turns out to be a corecursive
algebra for $N\otimes -$ on square metric spaces.
The Sierpinski carpet  $\SC$ turns out to be a 
 corecursive algebra
for the endofunctor $M\otimes -$, but it is not a final coalgebra for that endofunctor.

\subsection{Outline}

The paper begins with a discussion of the
Sierpinski carpet $\SC$ in classical terms, reviewing the
results from Hutchinson~\cite{Hut} that we need.
What we need most is that $\SC$ is the fixed point of certain
contractive map $\sigma$ on the space of non-empty compact subsets of the unit square.
The first leading idea in the paper is that the action of $\sigma$ can
be generalized to give an endofunctor
$F\colon\C\to\C$
on a category $\C$.  But it is not
immediate what that $\C$ and $F$ are.
 The category $\C$ is defined in
Section~\ref{section-SquaMS}; we call it the category
$\SquaMS$ of \emph{square metric spaces},
and the functor $F$ in
Section~\ref{section-functor-definition} is  written $X\mapsto M\otimes X$.
A square metric space is metric space $X$ together with a map
$S_X: M_0\to X$, where $M_0$ is the boundary
of the unit square.    In pictures, it would look like
the space on the left in Figure~\ref{figure-first-cloud}.
The mapping $S_X$ needs to be injective and satisfy some natural metric properties.

For technical reasons, $\Met$ in this paper is the category of metric spaces with distances
bounded by $2$ (not by $1$, since we need $M_0$ to be an object).
On the right in the figure, we indicate $M\otimes X$.  We go into detail on this functor 
$M\otimes -$ in Section~\ref{section-functor-definition}, and 
this will take a fair amount of preparation.

\begin{figure}[ht]  
\centering 
   \begin{subfigure}{0.4\linewidth}
    \centering
    \begin{tikzpicture}   
\node[cloud,
    fill = gray!10,
    minimum width = 2.22cm,
    minimum height = 1.68cm] (c) at (0,1.3) {};  
\node[rectangle,
    draw = lightgray,
    text = olive,
    minimum width = 1.3cm, 
    minimum height = 1.3cm] (r) at (0,1.3) {};   
    \end{tikzpicture}%
   \caption{$X$}
  \end{subfigure}
\begin{subfigure}{0.4\linewidth}
\centering
\begin{tikzpicture}
\node[cloud,
    fill = gray!10,
    minimum width = 2.2cm,
    minimum height = 1.68cm] (c) at (0,0) {};
\node[cloud,
    fill = gray!10,
    minimum width = 2.2cm,
    minimum height = 1.68cm] (c) at (1.3,0) {};
\node[cloud,
    fill = gray!10,
    minimum width = 2.2cm,
    minimum height = 1.68cm] (c) at (2.6,0) {};  
\node[cloud,
    fill = gray!10,
    minimum width = 2.2cm,
    minimum height = 1.68cm] (c) at (0,1.3) {};
\node[cloud,
    fill = gray!10,
    minimum width = 2.2cm,
    minimum height = 1.68cm] (c) at (2.6,1.3) {};  
\node[cloud,
    fill = gray!10,
    minimum width = 2.2cm,
    minimum height = 1.68cm] (c) at (0,2.6) {};
\node[cloud,
    fill = gray!10,
    minimum width = 2.2cm,
    minimum height = 1.68cm] (c) at (1.3,2.6) {};
\node[cloud,
    fill = gray!10,
    minimum width = 2.2cm,
    minimum height = 1.68cm] (c) at (2.6,2.6) {};      
 \node[rectangle,
    draw = lightgray,
    minimum width = 1.3cm, 
    minimum height = 1.3cm] (r) at (0,0) {};
\node[rectangle,
    draw = lightgray,
    minimum width = 1.3cm, 
    minimum height = 1.3cm] (r) at (1.3,0) {};  
 \node[rectangle,
    draw = lightgray,
    minimum width = 1.3cm, 
    minimum height = 1.3cm] (r) at (2.6,0) {}; 
 \node[rectangle,
    draw = lightgray,
    minimum width = 1.3cm, 
    minimum height = 1.3cm] (r) at (0,1.3) {};
 \node[rectangle,
    draw = lightgray,
    minimum width = 1.3cm, 
    minimum height = 1.3cm] (r) at (2.6,1.3) {};   
 \node[rectangle,
    draw = lightgray,
    minimum width = 1.3cm, 
    minimum height = 1.3cm] (r) at (0,2.6) {};
\node[rectangle,
    draw = lightgray,
    minimum width = 1.3cm, 
    minimum height = 1.3cm] (r) at (1.3,2.6) {};  
 \node[rectangle,
    draw = lightgray,
    minimum width = 1.3cm, 
    minimum height = 1.3cm] (r) at (2.6,2.6) {};     
\end{tikzpicture} 
\caption{$M\otimes X$}
\end{subfigure}
\caption{\label{figure-first-cloud}}
\end{figure}

The second leading idea is that $\SC$  should be related to the
\emph{final coalgebra} of $M\otimes-$.
Indeed, this explains the title of this paper.
We have the intuition that this should
be so from previous work on the unit interval~\cite{freyd:08}
and the Sierpinski gasket~\cite{Bhat}, and from the general
treatment of self-similar sets~\cite{lein}.  However, 
as we remarked above, this paper
involves a 
great deal more work than in those earlier works; we are not giving
a straightforward generalization of them.   
For example, Section~\ref{section-initial-algebra} constructs the initial
algebra of $M\otimes-$, and this already is more difficult than in 
previous work because the morphisms of the initial-algebra chain of $M\otimes-$ are not
isometric embeddings.  Still, $M\otimes-$ does have an initial algebra, and its
completion is the final coalgebra of this functor. 
This and other results are
proved in Section~\ref{section-final-coalgebras}.
We find it useful to bring in the concept of a \emph{corecursive algebra}, 
and so the results of that section should be of independent 
interest. 
The paper ends in Section~\ref{section-bilipschitz} with 
a proof that $\SC$ is bilipschitz equivalent to the final coalgebra of 
the functor $M\otimes -$.

The paper as a whole contains a mixture of geometric ideas that crop up 
in the study of square metric spaces and our functor $M\otimes-$,
and also very general facts about colimits of chains in various categories
and facts about corecursive algebras.  We hope that readers interested in
one or the other of these kinds of work will come away from our paper with
interest in the other kind, and that the mixture of ideas here will be useful
in the category-theoretic treatment of other fractal sets.

\paragraph{Acknowledgment} We would to thank to anonymous referee for their thorough reading of our paper and helpful comments.

\section{The Sierpinski carpet}

The main object of interest in this paper is the Sierpinski carpet.

\begin{figure}[h]
\[	\includegraphics[scale=0.12]{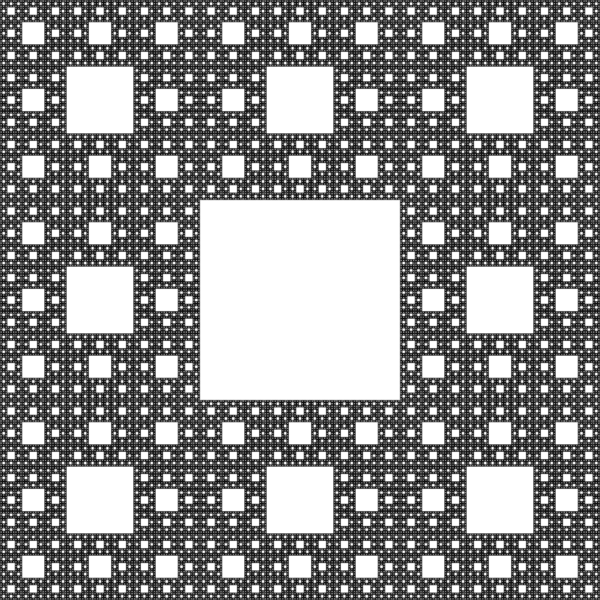}
\]
\end{figure}

We will begin by recalling the definition of the Sierpinski carpet $\SC$ (shown above) in terms of contractions of the unit square $U_0$, as in Hutchinson's work~\cite{Hut}.

\subsection{Review of  Hutchinson's theorem}

Let $(X,d)$ be a complete metric space, and let $\CC$
 be the set of non-empty compact subsets of $X$,
with the \emph{Hausdorff metric} $d_H$.   Here is how this is defined.
Given compact $A,B\subseteq X$, $d_H(A,B)$ 
    is the supremum of distances of points of one of the sets to the other
    one. This is defined by 
\begin{equation}
\label{eq:Hausdorff}
d_{H}(A,B) = \max(\displaystyle{\sup_{a\in A}}\  d(a,B),\displaystyle{\sup_{b\in B}}\  d(A,b)).
\end{equation}
In both cases, the distance from a point to a set is given by infima:
\[
d(a,B) = \inf_{b\in B} d(a,b).
\]
and similarly for $d(A,b)$.

 Let 
$M$
be a finite index set and suppose that for each $m\in M$, we have
a contracting map
$\sigma_m \colon X \to X$.  
We extend each $\sigma_m$ setwise to a function on (compact) sets 
 by taking images: for $A\subseteq X$, $\sigma_m(A) = \set{\sigma_m(x) : x\in A}$.  This
map  $\sigma_m$ is a contraction of $(\CC, d_{H})$.
Moreover, we define $\sigma \colon \CC\to\CC$ by 
\[ \sigma(A) = \displaystyle{\bigcup_{m\in M}} \sigma_m(A)\] 
Again $\sigma$ is a contracting map, and we let $K$ be its unique (non-empty) fixed point.
$K$ is called the \emph{invariant set} determined by the family $\set{\sigma_m: m\in M}$.

\begin{definition}
Fix $A\in\CC$ and contractions $\sigma_m$ for $m\in M$.
For each finite sequence $\vec{m} = m_1 m_2 \cdots m _k$ of elements of $M$, we define a set  $A_{\vec{m}}$ 
 by recursion on $k$, starting with $k = 0$ and the empty sequence $\eps$:

\[
\begin{array}{lcl}
A_{\eps} & = & A \\
A_{m_1 m_2 \cdots m _k m_{k+1}} & = &   \sigma_{m_{1}}( A_{m_2 m_3 \cdots m _{k+1} })\end{array}
\]

\end{definition}

\begin{proposition} [Hutchinson~\cite{Hut}] \label{prop-Hutchinson}
We have the following facts about the invariant set $K$:
\begin{enumerate}
\item If $A$ is a non-empty compact, then $diam(A_{m_1\ldots m_p})\rightarrow 0$ as $p\rightarrow\infty$, where 
 $diam(B) = 
\sup \set{d(x,y): x, y\in B}$.
\item For every infinite sequence $\overline{m} = m_1,m_2,\ldots, m_p, \ldots$ in $M$, 

\begin{equation}\label{decrease}
K_{\eps} \supseteq K_{m_1} \supseteq K_{m_1 m_2} \supseteq \cdots \supseteq K_{m_1 m_2\cdots m_p} \supseteq \cdots
\end{equation}

and $\displaystyle{\bigcap_{p=1}^\infty}K_{m_1\ldots m_p}$ is a singleton whose member is denoted $k_{\overline{m}}$.  $K$ is the union of these singletons.

\item \label{sequences} If $A$ is a non-empty compact set, then $d(A_{m_1\ldots m_p},k_{\overline{m}} )\rightarrow 0$ as $p\rightarrow\infty$.  In particular, $\sigma^p(A)\rightarrow K$ as $p\rightarrow\infty$ in the Hausdorff metric.

 \end{enumerate}
\end{proposition}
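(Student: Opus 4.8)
The plan is to reconstruct the standard argument from metric fixed-point theory. Write $c_m<1$ for a Lipschitz constant of $\sigma_m$ and set $c=\max_{m\in M}c_m<1$. Unwinding the recursion defining $A_{\vec m}$ gives $A_{m_1\cdots m_k}=\sigma_{m_1}\circ\sigma_{m_2}\circ\cdots\circ\sigma_{m_k}(A)$, so $\mathrm{diam}(A_{m_1\cdots m_k})\le c_{m_1}\cdots c_{m_k}\,\mathrm{diam}(A)\le c^{k}\,\mathrm{diam}(A)$, which is exactly part (2).

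For part (1), note that since $K=\sigma(K)=\bigcup_{m}\sigma_m(K)$ we have $K\supseteq\sigma_m(K)$ for each $m$; writing $K_{m_1\cdots m_p}=\sigma_{m_1}\circ\cdots\circ\sigma_{m_p}(K)$ and applying the monotone composite $\sigma_{m_1}\circ\cdots\circ\sigma_{m_p}$ to the inclusion $K\supseteq\sigma_{m_{p+1}}(K)$ yields $K_{m_1\cdots m_p}\supseteq K_{m_1\cdots m_{p+1}}$, which is (\ref{decrease}). Each $K_{m_1\cdots m_p}$ is non-empty and closed (indeed compact, as the attractor $K$ is compact by Hutchinson, and the $\sigma_m$ are continuous), and its diameter tends to $0$ by part (2); so by Cantor's intersection theorem $\bigcap_{p}K_{m_1\cdots m_p}$ is a single point $k_{\overline m}$. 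Each such $k_{\overline m}$ lies in $K_{m_1}\subseteq K$, and conversely, given $x\in K$ one constructs a branch $\overline m$ with $x\in K_{m_1\cdots m_p}$ for all $p$ by repeatedly applying $K=\bigcup_m\sigma_m(K)$ to peel off one index at a time: choose $m_1$ with $x\in\sigma_{m_1}(K)$, write $x=\sigma_{m_1}(y)$ with $y\in K$, choose $m_2$ with $y\in\sigma_{m_2}(K)$, and so on. Then $x=k_{\overline m}$, so $K=\bigcup_{\overline m}\set{k_{\overline m}}$.

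For part (3), the contraction bound applied to the iterated map gives $d_H(A_{m_1\cdots m_p},K_{m_1\cdots m_p})\le c^{p}\,d_H(A,K)\to 0$ (replacing $A$ by $\overline A$ if needed, since $d_H$ is insensitive to closure), while $d_H(K_{m_1\cdots m_p},\set{k_{\overline m}})\le\mathrm{diam}(K_{m_1\cdots m_p})\to 0$ because $k_{\overline m}\in K_{m_1\cdots m_p}$; the triangle inequality for $d_H$ then yields $d(A_{m_1\cdots m_p},k_{\overline m})\to 0$. The closing assertion follows either from $\sigma^{p}(A)=\bigcup\set{A_{\vec m}:\,\lvert\vec m\rvert=p}$ and $K=\bigcup\set{K_{\vec m}:\,\lvert\vec m\rvert=p}$ together with $d_H(\bigcup_i B_i,\bigcup_i C_i)\le\sup_i d_H(B_i,C_i)$, or most directly from the Banach fixed-point theorem applied to the contraction $\sigma$ on the complete space $(\CC,d_H)$, whose unique fixed point is $K$.

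The only genuine subtlety, and the step I would be most careful about, is the appeal to Cantor's intersection theorem: it requires each $K_{m_1\cdots m_p}$ to be non-empty and closed, which is where completeness of $X$ and compactness of the attractor $K$ (a result of Hutchinson) are used. Everything else is bookkeeping with the single constant $c$.
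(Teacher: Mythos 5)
Your proposal is correct and follows essentially the same route as the paper: invariance of $K$ gives the nested chain and the branch for each $x\in K$, completeness plus vanishing diameters gives the singleton intersection, and the uniform contraction ratio $c=\max_m c_m<1$ drives parts (2) and (3). The only cosmetic difference is in part (3), where you pass through $d_H(A_{m_1\cdots m_p},K_{m_1\cdots m_p})$ and the diameter of $K_{m_1\cdots m_p}$, while the paper compares $A_{m_1\cdots m_p}$ directly to $k_{\overline m}=\sigma_{m_1}\circ\cdots\circ\sigma_{m_p}(k_{m_{p+1}m_{p+2}\cdots})$ via the shift identity; both yield the same $C\,c^p$ bound.
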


\subsection{The Sierpinski carpet}

Now we apply the general results in the last section to define the Sierpinski carpet $\mathbbm{S}$ as a subset of $U_0 = [0,1]^2$.  Throughout this paper, we will be working with $(U_0,d_{\Taxicab})$, where 

\begin{equation}
d_{\Taxicab}((x,y),(x_1,y_1)) = |x-x_1|+|y-y_1|
    \label{eq:taxicab}
\end{equation} is the taxicab metric.  

Most typically, we would view $\SC$ as a subset of $U_0$ with the Euclidean metric, $d_{\Euc}$.
However, we will see that we can use the taxicab metric in our characterization of $\SC$.

\begin{definition}
Two metric spaces $A$ and $B$ are \emph{bilipschitz equivalent} if there
is a bijection $f: A\to B$ and a number $K\geq 1$ such that 
\[  \oneoverK d_A(x,y) \leq d_B(f(x),f(y)) \leq K d_A(x,y) \]
for all $x,y\in A$.  
\end{definition}

\begin{proposition}\label{taxi-euc-bilip} $(U_0,d_{\Taxicab})$ is bilipschitz equivalent to $(U_0, d_{\Euc})$.  
\end{proposition}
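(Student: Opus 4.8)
The plan is to take the bijection $f$ to be the identity map $\mathrm{id}\colon (U_0, d_{\Taxicab}) \to (U_0, d_{\Euc})$ and to use the constant $K = \sqrt{2}$; no cleverness about $U_0$ itself is needed, since the comparison holds on all of $\reals^2$. First I would fix two points $x = (x_1,y_1)$ and $x' = (x_1',y_1')$ of $U_0$ and abbreviate $a = |x_1 - x_1'|$ and $b = |y_1 - y_1'|$, so that $d_{\Taxicab}(x,x') = a+b$ while $d_{\Euc}(x,x') = \sqrt{a^2+b^2}$. The whole proposition then reduces to the two elementary inequalities $\sqrt{a^2+b^2} \le a+b$ and $a+b \le \sqrt{2}\,\sqrt{a^2+b^2}$, valid for all real $a,b \ge 0$.

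For the first inequality I would simply square both sides: $(a+b)^2 = a^2 + 2ab + b^2 \ge a^2 + b^2$ since $2ab \ge 0$. For the second, I would use $(a-b)^2 \ge 0$, i.e.\ $2ab \le a^2 + b^2$, whence $(a+b)^2 = a^2 + 2ab + b^2 \le 2(a^2+b^2)$, and take square roots. Putting these together gives $\tfrac{1}{\sqrt{2}}\, d_{\Taxicab}(x,x') \le d_{\Euc}(x,x') \le d_{\Taxicab}(x,x') \le \sqrt{2}\, d_{\Taxicab}(x,x')$, which is exactly the bilipschitz condition of the definition with $K = \sqrt{2}$.

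Honestly there is no real obstacle here: this is the standard comparison between the $\ell^1$ and $\ell^2$ norms on $\reals^2$, restricted to the unit square. The only point worth recording is that the constant $\sqrt 2$ cannot be improved — it is attained (in the limit) when $a = b$ — although optimality of $K$ plays no role in what follows, since all we need downstream is that \emph{some} bilipschitz equivalence exists, allowing us to work with $d_{\Taxicab}$ in place of $d_{\Euc}$ throughout the paper.
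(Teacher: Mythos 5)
Your proof is correct and takes essentially the same route as the paper: both use the identity map as the bijection and compare the two metrics pointwise by elementary inequalities. The only difference is that you obtain the sharper (optimal) constant $K=\sqrt{2}$ via the squaring trick, whereas the paper settles for $K=2$ via the triangle inequality; either constant suffices for everything downstream.
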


\proof
Our bijection will be the identity map.  Let $K = 2$ and let $(x,y),(x_1,y_1)\in U_0$.  Then
\def\arraystretch{1.5}
\[\begin{array}{rclc}
     \onehalf d_{\Euc}((x,y),(x_1,y_1))& \leq & 
     \onehalf (d_{\Euc}((x,y),(x_1,y)) + d_{\Euc}((x_1,y),(x_1,y_1))) \\
     & = & \frac{1}{2}(|x-x_1|+|y-y_1|)  \\
     & \leq & d_{\Taxicab}((x,y),(x_1,y_1))  \\
     & = & |x-x_1| + |y-y_1|  \\
     & = & \sqrt{(x-x_1)^2} + \sqrt{(y-y_1)^2}  \\
     &\leq & \sqrt{(x-x_1)^2+(y-y_1)^2} + \sqrt{(x-x_1)^2+(y-y_1)^2}  \\
     & = & 2d_{\Euc}((x,y),(x_1,y_1))  \\
\end{array}
\]
\def\arraystretch{1}
\endproof

\begin{corollary} $C\subset U_0$ is a closed set with respect to $d_{\Taxicab}$ if and only if it is a closed set with respect to $d_{\Euc}$.  \end{corollary}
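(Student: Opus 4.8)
The plan is to derive this immediately from Proposition~\ref{taxi-euc-bilip}. Two bilipschitz-equivalent metrics on one and the same underlying set induce the same topology, hence the same closed sets; and the bijection witnessing the equivalence in Proposition~\ref{taxi-euc-bilip} is the identity map on $U_0$, so ``closed with respect to $d_{\Taxicab}$'' and ``closed with respect to $d_{\Euc}$'' end up being literally the same condition once we know the two metrics generate the same open (equivalently closed) sets. The one thing to keep straight is exactly this point: because the equivalence is realized by the identity, no transport of subsets along a bijection is needed, and the two notions of closedness are being compared on the very same subsets of $U_0$.

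Concretely, I would argue sequentially, since $U_0$ with either metric is a metric space and closedness is detected by convergent sequences. From the chain of inequalities in the proof of Proposition~\ref{taxi-euc-bilip}, specialized to a point and a sequence, one has $\tfrac{1}{2} d_{\Euc}(z_n,z) \leq d_{\Taxicab}(z_n,z) \leq 2\, d_{\Euc}(z_n,z)$ for all $n$, so $d_{\Taxicab}(z_n,z) \to 0$ if and only if $d_{\Euc}(z_n,z)\to 0$. Thus a sequence in $U_0$ converges to a given point in $(U_0,d_{\Taxicab})$ exactly when it converges to that point in $(U_0,d_{\Euc})$.

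Given this, let $C\subseteq U_0$. If $C$ is $d_{\Taxicab}$-closed, take any sequence in $C$ that $d_{\Euc}$-converges to some $z\in U_0$; by the previous paragraph it also $d_{\Taxicab}$-converges to $z$, whence $z\in C$, so $C$ is $d_{\Euc}$-closed. The converse is identical with the two metrics interchanged. There is no genuine obstacle here; if one preferred a topological phrasing over a sequential one, the same inequalities show that every $d_{\Euc}$-ball contains a $d_{\Taxicab}$-ball about the same center and conversely, so the two metric topologies coincide and the corollary follows.
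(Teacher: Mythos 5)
Your proof is correct and takes exactly the route the paper intends: the corollary is stated without proof as an immediate consequence of Proposition~\ref{taxi-euc-bilip}, and your observation that the bilipschitz equivalence is witnessed by the identity map (so the two metrics induce the same topology on the same underlying set) is precisely the implicit justification. The sequential argument you supply is a fine way to make that explicit.
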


Let $\mathcal{C}$ denote the collection of non-empty closed subsets of $U_0$ (with respect to either metric). 
In order to apply Hutchinson's work to define $\SC$, we need to recall the general
definition of the  Hausdorff metric on compact sets from
(\ref{eq:Hausdorff}).
In our setting, let us introduce some notation:
\[d_{He}(A,B) = \max(\displaystyle{\sup_{a\in A}}\  d_{\Euc}(a,B),\displaystyle{\sup_{b\in B}}\  d_{\Euc}(A,b))\]
and 
\[d_{Ht}(A,B) = \max(\displaystyle{\sup_{a\in A}} \ d_{\Taxicab}(a,B),\displaystyle{\sup_{b\in B}} \ d_{\Taxicab}(A,b)).\]

\begin{proposition}\label{canconsidertaxi} 
$(\mathcal{C},d_{He})$ is bilipschitz equivalent to $(\mathcal{C}, d_{Ht})$.  
\end{proposition}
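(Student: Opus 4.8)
The plan is to transfer the bilipschitz equivalence of the two base metrics $d_{\Euc}$ and $d_{\Taxicab}$ on $U_0$ up to their Hausdorff metrics on $\mathcal{C}$. The key observation is the general principle that if two metrics $d$ and $d'$ on a set $X$ satisfy $\frac{1}{K}d(x,y)\le d'(x,y)\le K d(x,y)$ pointwise, then the induced Hausdorff metrics satisfy the \emph{same} inequality with the \emph{same} constant $K$ on the non-empty closed (equivalently, by the Corollary, closed in either metric) subsets. Since Proposition~\ref{taxi-euc-bilip} gives exactly this pointwise comparison with $K=2$, we will get $\frac{1}{2}d_{He}(A,B)\le d_{Ht}(A,B)\le 2\,d_{He}(A,B)$, and the bijection witnessing the equivalence is just the identity on $\mathcal{C}$.

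The steps, in order, would be: First, fix non-empty closed sets $A,B\in\mathcal{C}$ and unwind the definition of $d_{Ht}$ in terms of point-to-set distances $d_{\Taxicab}(a,B)=\inf_{b\in B}d_{\Taxicab}(a,b)$. Second, prove the one-sided comparison for point-to-set distances: for any $a$ and any non-empty $B$, $d_{\Taxicab}(a,B)\le 2\,d_{\Euc}(a,B)$ — this follows because for every $b\in B$, $d_{\Taxicab}(a,b)\le 2\,d_{\Euc}(a,b)$ by Proposition~\ref{taxi-euc-bilip}, so taking infima over $b$ preserves the inequality; symmetrically $d_{\Euc}(a,B)\le 2\,d_{\Taxicab}(a,B)$. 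Third, take suprema over $a\in A$ (and the symmetric suprema over $b\in B$), which again preserve the inequalities, and then take the max of the two directed terms, obtaining $d_{Ht}(A,B)\le 2\,d_{He}(A,B)$ and, by the symmetric argument, $d_{He}(A,B)\le 2\,d_{Ht}(A,B)$, i.e. $\frac{1}{2}d_{He}(A,B)\le d_{Ht}(A,B)$. Fourth, conclude: the identity map $\mathcal{C}\to\mathcal{C}$ is a bijection (well-defined on both sides by the Corollary above, since closedness does not depend on the choice of metric) and satisfies the bilipschitz condition with $K=2$.

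This argument is essentially routine — the only thing to be careful about is the monotonicity bookkeeping: inequalities of the form $f(b)\le g(b)$ for all $b$ are preserved by $\inf_b$, and $f(a)\le g(a)$ for all $a$ are preserved by $\sup_a$, and both are preserved by $\max$. I would state a short lemma capturing ``a pointwise bilipschitz comparison of metrics lifts to the same comparison of Hausdorff metrics'' if it is to be reused, but since the paper only needs the single instance, I would just inline the three-line computation. I do not anticipate a genuine obstacle; the main point worth a sentence of care is that $d_{He}$ and $d_{Ht}$ are a priori defined on possibly different collections of closed sets, which is why the preceding Corollary (closed in $d_{\Taxicab}$ iff closed in $d_{\Euc}$) is invoked to make sense of the identity bijection on the common domain $\mathcal{C}$.
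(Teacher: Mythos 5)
Your proposal is correct and follows essentially the same route as the paper: both lift the pointwise factor-of-$2$ comparison between $d_{\Euc}$ and $d_{\Taxicab}$ through the infima, suprema, and max defining the Hausdorff metrics, with the identity on $\mathcal{C}$ as the bijection. The paper's proof is just the inlined three-line computation you describe, relying on the preceding corollary for the common domain in the same way.
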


\proof
This follows from Proposition~\ref{taxi-euc-bilip}
\[
\begin{array}{rcl}
     \frac{1}{2} d_{He}(A,B) & = &   \onehalf \max(\displaystyle{\sup_{a\in A}}\  d_{\Euc}(a,B),\displaystyle{\sup_{b\in B}}\ d_{\Euc}(A,b)) \\
     & = & \max(\displaystyle{\sup_{a\in A}}\  \onehalf d_{\Euc}(a,B),\displaystyle{\sup_{b\in B}}\  \onehalf d_{\Euc}(A,b))\\
     & \leq & \max(\displaystyle{\sup_{a\in A}}\  d_{\Taxicab}(a,B),\displaystyle{\sup_{b\in B}}\  d_{\Taxicab}(A,b))\\
     & = & d_{Ht}(A,B)\\
\end{array}
\]
and similarly, $d_{Ht}(A,B) \leq 2d_{He}(A,B)$.  
\endproof

So from here on, we will consider $(U_0,d_{\Taxicab})$ and define $\mathbbm{S}$ as a subset of 
$U_0$ with respect to the taxicab metric.  

For the remainder of the section, we may write $d_{U_0}$ or simply $d$ to denote $d_{\Taxicab}$.

\begin{definition}
\label{def-sigmas}
\begin{enumerate}
\item $M$ is $\set{0,1,2}^2\setminus\set{(1,1)}$.

\item For each $m = (i,j)\in M$, let $\shrink(m)\in U_0$ be given by
 \[
\shrink(m) =( \onethird i, \onethird j).
\]
\item 
For a subset $A\subseteq U_0$, we define 
 $\sigma_m\colon \CC\to \CC$ by
\[
\sigma_m(A) =  \shrink(m) + \onethird(A) .
\]
Finally, 
let $\sigma\colon\CC \to \CC$ be $\sigma(A) = \displaystyle{\bigcup_{m\in M}} \sigma_m(A)$.
\end{enumerate}
\end{definition}

Since we are scaling by a factor of $\frac{1}{3}$, it is routine to verify that $\sigma$ is a contraction on $\CC$ with respect to $d_{Ht}$.  Indeed, it easy to verify that it is also a contracting map with respect to $d_{He}$.

\begin{definition}
The Sierpinski carpet $\SC$ is the unique fixed point of $\sigma\colon\CC \to \CC$.
That is, it is the unique non-empty compact (with respect to $d_{\Taxicab}$) subset of $U_0$  fixed by $\sigma$.

When we consider $\SC$ as a metric space, we primarily take the metric to be
the one inherited from $(U_0,d_{\Taxicab})$.  For example, the distance between
$(0,0)$ and $(1,1)$ is $2$.  
But because they are bilipschitz equivalent, if we had defined $\SC$ with respect to the Euclidean metric, we
would get the exact same fixed point.
\end{definition}

Indeed, $\SC$ is the unique non-empty compact (with respect to either metric) subset of $\mathbbm{R}^2$ fixed by $\sigma$.
But this is not relevant for us, and we prefer to work with subsets of 
the unit square $U_0$.

\section{The category of square metric spaces}
\label{section-SquaMS}

We start by defining $\SquaMS$, the category of 
\emph{square metric spaces}.  Though some of the arguments in the following sections will 
apply more generally, our work will primarily focus on this category.  
Our goal is to find an endofunctor $F$ on this category and an $F$-coalgebra which 
is bilipschitz equivalent to the Sierpinski carpet.  

\begin{definition}
Let \begin{equation}
    \label{eq-m0}
M_0=\{(r,s): r\in \{0,1\}, s\in [0,1]\}\cup\{(r,s):r\in [0,1], s\in \{0,1\}\}
\end{equation}
be the boundary of the unit square.  

A \emph{square set} is a set $X$ with with an injective map $S_X:M_0\rightarrow X$.  The idea is that $S_X$ designates the $4$ sides of the square.  Let $\SquaSet$ denote the category whose objects are square sets, and whose morphisms preserve $S_X$.  That is, for square sets $X$ and $Y$ and $f:X\rightarrow Y$, for $(r,s)\in M_0$, we must have $f(S_X((r,s))) = S_Y((r,s))$.
\end{definition}

\begin{example} Here are some
examples of square sets:  
\begin{itemize}
\item $M_0$ with $S_{M_0} = id$.
\item $X=[0,1]^2$, where $S_X$ is the inclusion map.
\item The Sierpinski carpet $\SC$, where $S_X$ is the inclusion map.
\end{itemize}
\end{example}

We are interested in square sets which are metric spaces.

\begin{definition}\label{definitionofsquams}
$(X,S_X)$ is a \emph{square metric space} if $X$ is a metric space bounded by $2$, and the boundary indicated by $S_X$ satisfies the following:
\begin{enumerate}
\item[($\sqone$)]\ For $i\in \{0,1\}$ and $r,s\in [0,1]$, $$d_X(S_X((i,r)),S_X((i,s))) = |s-r|$$ and $$d_X(S_X((r,i)),S_X((s,i))) = |s-r|.$$  That is, along each side of the square, distances coincide with distances on the unit interval.   
\item[($\sqtwo$)]\ For $(r,s),(t,u)\in M_0$, 
\[ d_X(S_X((r,s)),S_X((t,u)))\geq d_{\Taxicab}(S_{M_0}((r,s)),S_{M_0}((t,u)))=|r-t|+|s-u|.\]
This is a non-degeneracy requirement, which prevents our squares from ``collapsing''.  
For example, we want to avoid the case when opposite corners are less than distance $1$ from each other. 
 \end{enumerate}
\end{definition}

Note that we do not require the metric on the
boundary of the square to coincide with the Euclidean metric. 
Specifically, we are not requiring that opposite corners have distance $\sqrt{2}$.  In fact, we will be interested in a 
\emph{path metric} around the square. That is, we will determine the distance between points by the shortest path around the square (described in more detail below).

\begin{example}\label{pathmetric}
Here are examples of square metric spaces: 
\begin{itemize}
\item The unit square  $([0,1]^2, S)$ where $S$ is the inclusion map, with the taxicab metric.  

\item $(M_0, id)$ with the path metric: for $x,y\in M_0$, if they are on the same side, their distance coincides with the unit interval, if they are on adjacent sides which share a corner $C$, $d(x,y) = d(x,C)+d(C,y)$, and if they are on opposite sides, $d(x,y)$ is the minimum (between the two sides) of $d(x,C_1)+1+d(C_2,y)$ where $C_1,C_2$ are endpoints of a side not containing either $x$ or $y$, with $C_1$ on the side containing $x$ and $C_2$ on the side containing $y$.  Note that these distances are all bounded by $2$ (the distance between opposite corners is $2$).
Unless otherwise stated, when we use the notation $M_0$, it is for 
the boundary of the unit square with the path metric.

\item $(M_0, id)$ with the taxicab metric (the metric inherited from $([0,1]^2, S)$ above).
Note that the distance between points on opposite sides in this metric is almost always less than 
the distance in the path metric.
It will be important to distinguish the taxicab and path metrics on the set $M_0$.
\end{itemize}
\end{example}

\begin{definition}
 Let $X$ and $Y$ be metric spaces.
A map $f: X \to Y$ is \emph{short} if for all $x_1,x_2\in X$,
\[d_Y(f(x_1),f(x_2)) \leq d_X(x_1,x_2).\]
Other names for this notion are \emph{non-expanding} or \emph{non-distance-increasing} map.
When we consider metric spaces as a category $\MS$, we are using short maps as the morphisms.
\end{definition}

\begin{proposition}\label{Sxisashortmap} 
If $(X,S_X)$ is a square metric space, then $S_X:M_0\to X$ is a short map.
\end{proposition}

\begin{proof}
    Let $x,y\in M_0$.
  If $x$ and $y$ are on the same side of $M_0$, then $d_X(S_X(x),S_X(y)) = d_{M_0}(x,y)$,
  by ($\sqone$).
If $x$ and $y$ are on adjacent sides, let $C$ be the corner between them.  Then 
using  the triangle inequality in  $(X,S_X)$ and what we have just seen,
\[ \begin{array}{lcl} d_X(S_X(x),S_X(y))  
  & \leq & d_X(S_X(x),S_X(C))+d_X(S_X(C),S_X(y)) \\
 &  =  & d_{M_0}(x,C)+d_{M_0}(C,y) \\
 & =  & d_{M_0}(x,y).
 \end{array}\]
Finally, we have the case when $x$ and $y$ are on opposite sides
of the square.
Let $C_1,C_2$ be the endpoints of the side which provides the shortest path from $x$ to $y$ in $M_0$.  Then 
\[\begin{array}{cl}
&d_X(S_X(x),S_X(y))\\
  \leq & d_X(S_X(x),S_X(C_1))+ 
     d_X(S_X(C_1),S_X(C_2)) + d_X(S_X(C_2),S_X(y))\\
 = &  d_{M_0}(x,C_1) + 1 + d_{M_0}(C_2,y)  \\ 
= & d_{M_0}(x,y)
\end{array}
\]
\end{proof}

\begin{definition}
Let $\SquaMS$ be the category whose objects are square metric spaces (bounded by $2$) whose morphisms $f:(X,S_X)\to (Y,S_Y)$ are short maps which preserve $S$:
$S_Y = f\o S_X$.
\end{definition}

Proposition~\ref{sliceprop} provides a characterization of $\SquaMS$.\footnote{We
are grateful to an anonymous referee for this observation.}

\begin{proposition}\label{sliceprop}

$\SquaMS$ is the full subcategory of the slice category $M_0/\MS$ determined by the objects
$(X,S_X:M_0\to X)$ with the property that  $S_X$ is short and $(X,S_X)$ satisfies
($\sqone$) and ($\sqtwo$).  The initial object in $\SquaMS$  is  $(M_0,id)$
with the path metric.

\end{proposition}

\begin{proposition}\label{prop-mono}
Monomorphisms in $\SquaMS$ are
the morphisms which are one-to-one.
\end{proposition}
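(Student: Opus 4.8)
The claim is the standard "concrete = abstract" characterization of monos, and the plan is to prove both inclusions. One direction is free: any one-to-one morphism is a monomorphism in any concrete category, since if $f\circ g = f\circ h$ then applying the underlying-set functor and using injectivity of $f$ gives $g = h$. So the content is the converse: every monomorphism $f\colon (X,S_X)\to(Y,S_Y)$ in $\SquaMS$ is injective on underlying sets.

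For the converse I would argue by contraposition. Suppose $f$ is not one-to-one, so there are distinct points $a,b\in X$ with $f(a)=f(b)$. I want to produce a square metric space $(Z,S_Z)$ and two distinct morphisms $g,h\colon (Z,S_Z)\to(X,S_X)$ with $f\circ g = f\circ h$. The natural candidate is to take $(Z,S_Z)$ to be the initial object $(M_0,\mathrm{id})$ with the path metric, and to let $g,h$ be the (necessarily unique) morphism out of it — but that uniqueness is exactly the problem, since from the initial object there is only one morphism, so this cannot separate $g$ from $h$. So instead I would build $Z$ by "freely adjoining a point" that can be sent to either $a$ or $b$: take the underlying set of $Z$ to be $M_0 \sqcup \{*\}$, with $S_Z$ the inclusion of $M_0$, and choose a metric $d_Z$ on $Z$ that (i) restricts to the path metric on $M_0$, (ii) is bounded by $2$, (iii) satisfies ($\sqone$) and ($\sqtwo$) — which only constrain distances among boundary points, hence are automatic from (i) — and (iv) makes both of the maps "$M_0 \hookrightarrow X$ via $S_X$, and $* \mapsto a$" and "$M_0\hookrightarrow X$ via $S_X$, and $*\mapsto b$" short. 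Condition (iv) requires $d_Z(*, S_Z(p)) \ge d_X(a, S_X(p))$ and $d_Z(*,S_Z(p)) \ge d_X(b,S_X(p))$ for every $p\in M_0$; since the path metric dominates all boundary distances in $X$ (the Corollary above), the honest choice $d_Z(*,S_Z(p)) := \min\bigl(2,\ \max(d_X(a,S_X(p)),\,d_X(b,S_X(p)))\bigr)$ works for the short condition, and one checks the triangle inequality for $d_Z$. Then $g,h\colon Z\to X$ defined this way are morphisms, they are distinct (they disagree at $*$, and $a\ne b$), and $f\circ g = f\circ h$ since $f(a)=f(b)$ and both agree on $M_0$. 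Hence $f$ is not a monomorphism.

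The main obstacle is verifying that the proposed $d_Z$ is a genuine metric on $Z$, i.e. checking the triangle inequality for triples involving $*$: one needs $d_Z(*,S_Z(p)) \le d_Z(*,S_Z(q)) + d_{M_0}(p,q)$ and the reverse-type inequality $d_{M_0}(p,q)\le d_Z(*,S_Z(p)) + d_Z(*,S_Z(q))$ for $p,q\in M_0$. The first follows from the triangle inequality in $X$ applied to $a$ (or $b$) and the fact that $d_{M_0}$ dominates boundary distances in $X$; the $\min$ with $2$ needs a small separate check using boundedness of $X$ by $2$. The second — the "reverse triangle inequality" through the new point — is the delicate one and is where one genuinely uses that the chosen distances to $*$ are the $\max$ of two honest $X$-distances from points $a,b$ that both lie in the metric space $X$; I would handle it by the case analysis $d_{M_0}(p,q) = d_X(S_X(p),S_X(q)) \le d_X(S_X(p),a)+d_X(a,S_X(q)) \le d_Z(*,S_Z(p))+d_Z(*,S_Z(q))$ when neither distance-to-$*$ is truncated to $2$, and a direct estimate using $d_{M_0}\le 2$ when truncation occurs. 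Once $d_Z$ is confirmed to be a metric, everything else is bookkeeping.
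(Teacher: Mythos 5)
Your overall strategy is exactly the paper's: the easy direction (injective $\Rightarrow$ mono) is the same, and for the converse both you and the paper adjoin a single extra point to the initial object $M_0$ and map it to the two points that $f$ identifies. The difference is in the metric you put on $Z=M_0\sqcup\{*\}$, and that is where your argument breaks down. You set $d_Z(*,S_Z(p))=\min\bigl(2,\max(d_X(a,S_X(p)),d_X(b,S_X(p)))\bigr)$, and you justify the ``reverse'' triangle inequality $d_{M_0}(p,q)\le d_Z(*,S_Z(p))+d_Z(*,S_Z(q))$ via the chain beginning $d_{M_0}(p,q)=d_X(S_X(p),S_X(q))$. That equality is false in general: the Corollary following the initiality proof only gives $d_X(S_X(p),S_X(q))\le d_{M_0}(p,q)$, and the inequality can be strict. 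Concretely, take $X=U_0$ with the taxicab metric and $S_X$ the inclusion, $a=(\tfrac12,\tfrac25)$, $b=(\tfrac12,\tfrac35)$, $p=(0,\tfrac12)$, $q=(1,\tfrac12)$. Then $d_X(a,S_X(p))=d_X(b,S_X(p))=d_X(a,S_X(q))=d_X(b,S_X(q))=\tfrac35$, so your $d_Z(*,S_Z(p))=d_Z(*,S_Z(q))=\tfrac35$, while the path metric gives $d_{M_0}(p,q)=\tfrac12+1+\tfrac12=2>\tfrac65$. So your $d_Z$ is not a metric, and $(Z,S_Z)$ is not an object of $\SquaMS$.

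The repair is to make the distances to $*$ larger, and the paper does the simplest possible thing: $d_Z(*,S_Z(p))=2$ for \emph{every} $p\in M_0$. Then both triangle inequalities through $*$ are immediate ($d_{M_0}(p,q)\le 2\le 2+2$ and $2\le 2+d_{M_0}(q,p)$), positivity is trivial, and the two maps $g_0,g_1$ sending $*$ to $a$ and to $b$ are still short because every object of $\SquaMS$ is bounded by $2$. With that one change the rest of your bookkeeping goes through and coincides with the paper's proof. The lesson is that you do not need the ``honest'' (minimal) extension of the metric; you only need \emph{some} square metric space $Z$ admitting two distinct morphisms into $X$ that $f$ coequalizes, and the maximal-distance extension is both valid and verification-free.
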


\proof
Let $f:X\rightarrow Y$ be a monomorphism.  Let $Z = M_0\cup\{z\}$ be the boundary of the unit square with the path metric and one extra point $z$ such that $d_Z(z,(r,s)) = 2$ for all $(r,s)\in M_0$.  This is an object in $\SquaMS$.  Let $x_0,x_1\in X$ and suppose $f(x_0) = f(x_1)$.  Define $g_i:Z\rightarrow X$ by $(r,s)\mapsto S_X((r,s))$ for $(r,s)\in M_0$ and $z\mapsto x_i$ for $i=0,1$.  These clearly preserve $S_Z$, and are short maps since
\[ d_X(g_i((r,s)),g_i((t,u)))\leq d_Z(S_Z((r,s)),S_Z((t,u)))\] for $(r,s),(t,u)\in M_0$ by the same argument as the previous proposition, and 
\[ d_X(g_i(z),g_i((r,s))) \leq 2 = d_Z(z,(r,s))\] for $(r,s)\in M_0$. 
Now 
$f\circ g_0 = f\circ g_1$, since $f(g_i((r,s))) = S_Y((r,s))$,
and $f(g_0(z)) = f(x_0)=f(x_1) = f(g_1(z))$.  So since $f$ is a monomorphism, $g_0 = g_1$, which means that $x_0 = g_0(z) = g_1(z)= x_1$.  

For the other direction, suppose $f$ is an injective morphism and $g_0,g_1:Z\rightarrow X$ are morphisms from an arbitrary object $Z$  such that $f\circ g_0 = f\circ g_1$.  Then for $z\in Z$, $f(g_0(z)) = f(g_1(z))$. 
Since $f$ is injective, $g_0(z) =g_1(z)$.  Hence, $g_0=g_1$.    
\endproof

\begin{proposition}$\SquaMS$ has no final object.
\end{proposition}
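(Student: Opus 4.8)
The plan is to exhibit a family of square metric spaces whose existence is incompatible with a final object, using the non-degeneracy condition $(\sqtwo)$ together with the boundedness-by-$2$ constraint. The key observation is that $(\sqtwo)$ forces opposite corners to have distance \emph{at least} $2$, while the ambient bound forces it to be \emph{at most} $2$; so on the boundary itself the geometry is pinned down, but the \emph{interior} of a square metric space is not constrained in the same rigid way. I would look for a way to make interiors arbitrarily ``large'' relative to the boundary, or more simply, to produce a proper class (or at least an unbounded collection) of pairwise non-isomorphic spaces no one of which admits maps from all the others.

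Concretely, first I would fix a candidate ``witness'' object: for each set $I$, let $X_I = M_0 \sqcup \{p_i : i\in I\}$ be the path-metric boundary together with extra points $p_i$, each placed at distance $2$ from every point of $M_0$ and at distance $2$ from each other (one checks this satisfies $(\sqone)$, $(\sqtwo)$, and the bound by $2$ — essentially the same verification as for the space $Z$ used in the proof of Proposition~\ref{prop-mono}). Second, suppose toward a contradiction that $(T, S_T)$ is a final object. Then for each $I$ there is a (unique) morphism $X_I \to T$, which must send the $p_i$ somewhere in $T$; since morphisms are short maps preserving $S$, and since distinct $p_i, p_j$ are at distance $2$ in $X_I$, their images in $T$ are at distance $\le 2$ — no contradiction yet from a single $X_I$. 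The real leverage comes from combining finality with the existence of \emph{two different} morphisms into a suitable space: I would instead use that for finality, $T$ must receive a morphism from \emph{every} $X_I$ \emph{and} that $\mathrm{id}_T$ is the unique endomorphism. So take $I$ large (say $|I| > |T|$): the morphism $X_I \to T$ cannot be injective on $\{p_i\}$, which is harmless; but now build two distinct morphisms $X_I \to X_J$ for cleverly chosen $J$ and push them through $T$ to contradict uniqueness of $X_I\to T$, or alternatively show directly that no single $T$ can be ``universal'' for the cardinality-indexed family because $T$ would have to contain, via its unique maps, more structure than it has room for.

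The cleanest route, and the one I would actually write, is the cardinality argument in its simplest form: a final object $T$ would in particular be \emph{weakly final}, i.e.\ every object maps to it; but the collection of objects of $\SquaMS$ is a proper class (e.g.\ the $X_I$ above, one for each set $I$, are pairwise non-isomorphic since an isomorphism is in particular a bijection and $|X_I| = |M_0| + |I|$), whereas any fixed set $T$ has only a set's worth of elements, hence — because a morphism out of $X_I$ is determined by a function on the underlying set — only a set's worth of objects can admit a morphism \emph{to} a fixed $T$ whose image generates... Here I must be careful: many large objects \emph{can} map to a small $T$ (the maps need not be injective). So the honest obstacle is that weak finality alone is not obviously violated; one genuinely needs the \emph{uniqueness} half. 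Thus the final form of the argument: pick any object $X$ with at least two points $a \ne b$ outside $M_0$ with $d_X(a,b)$ small, e.g.\ $d_X(a,b)<2$ but also an object $X'$ (a quotient-like or rescaled variant) for which two genuinely distinct morphisms $X \to T$ would be forced — constructed by noting that a final $T$ must admit a morphism from $X$ \emph{and} from a ``doubled'' version of $X$, and the two composites $X \rightrightarrows (\text{double}) \to T$ differ.

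The step I expect to be the main obstacle is precisely pinning down the right witness diagram that forces \emph{two} distinct morphisms into the putative $T$ (weak finality is easy; genuine finality needs this extra input), and checking that the spaces involved really are objects of $\SquaMS$ — i.e.\ that one can freely adjoin interior points or ``double'' a space while preserving $(\sqone)$, $(\sqtwo)$ and the bound by $2$. Once the witnesses are in hand, deriving the contradiction from uniqueness of the map to $T$ is immediate. I would therefore spend the bulk of the write-up on the explicit construction of these auxiliary square metric spaces and the verification of their axioms, and keep the diagram-chase at the end short.
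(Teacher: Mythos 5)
There is a genuine gap. You construct essentially the right witness space (your $X_I$ with $I$ a singleton is exactly the space $Z = M_0 \cup \{z\}$, with $z$ at distance $2$ from everything, that the paper uses), but you never extract the contradiction from it. You note that the unique morphism $X_I \to T$ must send the extra points somewhere and conclude ``no contradiction yet,'' then drift into a cardinality argument that you yourself correctly identify as failing, and finally gesture at a ``doubled version'' of a space without constructing it. You even announce at the end that ``pinning down the right witness diagram that forces two distinct morphisms into the putative $T$'' is the main unresolved obstacle --- which is to say, the entire content of the proof is missing.

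The missing observation is this: because $z$ sits at distance exactly $2$ --- the global bound --- from every point of $M_0$, \emph{any} assignment $z \mapsto t$ with $t \in T$ extends $S_T$ to a short map $Z \to T$ preserving the square structure (the shortness check for pairs in $M_0$ is the initial-object computation, and the pair $(z, S_Z((r,s)))$ is automatic since all distances in $T$ are $\leq 2 = d_Z(z,(r,s))$). Hence there are at least as many morphisms $Z \to T$ as points of $T$. In particular $z \mapsto S_T((0,0))$ and $z \mapsto S_T((1,1))$ give two \emph{distinct} morphisms (distinct because $S_T$ is injective), so $T$ cannot be final. No large index sets, no doubling construction, and no appeal to the size of the category is needed; the freedom in where the single maximally-distant point lands already violates uniqueness for every candidate $T$.
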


\proof 
As in the previous proposition, let $Z$ be the boundary of the unit square, $M_0$, with the path metric and a single point $z$ defined to be distance $2$ from every point in $M_0$.  This is an object in $\SquaMS$, via the inclusion $M_0 \rightarrow Z$.

Let $Y$ be an object in the category.
Then consider $f_0: Z\rightarrow Y$ defined by $(r,s)\mapsto S_Y((r,s))$ for $(r,s)\in M_0$ and $z\mapsto S_Y((0,0))$, and $f_1:Z\rightarrow Y$ defined by $(r,s)\mapsto S_Y((r,s))$ for $(r,s)\in M_0$ and $z\mapsto S_Y((1,1))$. 
As in the previous proposition, these maps are both morphisms.  
So since there are two distinct morphisms from $Z$ to $Y$, $Y$ cannot be a final object in $\SquaMS$. 
\endproof

\section{The functors $M\otimes -$ and $N\otimes -$}
\label{section-functor-definition}

In this section we will define a functor 
\[ M\otimes- : \SquaMS\to \SquaMS
\] which, when applied to the initial square metric space $M_0$ (using the path metric), 
will give us objects which correspond to iterations of the Sierpinski carpet.  The idea is that $M$ will be a set of indices indicating positions to place scaled copies of $X$,
and $M\otimes X$ also indicates identifications
that turn the metric space $M\times X$ into a
square metric space.  In detail,
$M\otimes X$ will contain $8$ copies of $X$
arranged in a $3\times 3$ grid, but without the central copy.  
We have mentioned this functor in the Introduction.
In Figure~\ref{figure-first-cloud} we showed caricatures of square spaces and
the action of $M$.
For a square space $X$, 
we want $M\otimes X$ to look like
eight copies scaled by a factor of $\onethird$ with appropriate gluings on edges of the squares,
and with a ``hole'' in the middle.

Later in the paper, we will iterate this functor in order to form a chain, beginning with $M_0$, the boundary of the unit square.
Then we take the colimit of this chain, and finally take the completion of the colimit.
As we shall see, we obtain a space bilipschitz equivalent to the Sierpinski carpet;
this is the main result in the paper.
We will also define a different functor
$N\otimes-$.
The difference between $M\otimes X$ and $N\otimes X$
is that $N\otimes X$ uses $9$ copies instead of $8$;
it has no central ``hole.''
This functor shares properties with $M\otimes-$. 
To obtain the desired results about $M\otimes-$ it is useful to also use results on $N\otimes -$.  

\subsection{A general discussion of quotient metrics on sets}
\label{section-L}

In this section, we work at a high 
level of generality so that we can obtain results which we then
apply to the main functors on $\SquaMS$ of interest in this paper.\footnote{
We will do this work for the category $\SquaMS$, though it should be noted that the 
results of this section can be adapted to apply to a broad collection of categories, such as the  bipointed or tripointed metric spaces in \cite{Bhat,freyd:real}.}
As mentioned above, those functors are called $M\otimes-$ and $N\otimes -$,
but they are not defined until Sections \ref{section-M-otimes} and \ref{section-N-otimes} respectively.

Let $L$ be a finite set. 
We call the elements of $L$ \emph{indices}.  The idea is that these will indicate positions in which we will place scaled copies of a given square metric space $X$. 
We shall endow the product set $L\times X$
with a metric space structure in (\ref{nottensor}).
We subsequently define $L\otimes X$ using the quotient metric (Definition \ref{quotientmetricdef}, via a certain equivalence relation $E$). 
Our work is rather general.   
 We will give requirements on $L$ and $E$ which will guarantee that $L\otimes X$ is a metric space.

 We are not, however, going to show that $L\otimes -$ is a functor on $\SquaMS$.  Indeed, our requirements on $L$ and $\sim$ will not guarantee that $L\otimes X$ is in $\SquaMS$, and they are not enough allow us to define $L\otimes f$ for morphisms $f$ in $\SquaMS$.  
 The intention here is to work at a level of generality such that we can use the metric space result towards showing that $M\otimes-$ and $N\otimes-$ are functors.

Let $E$ be an equivalence relation on $L\times M_0$. Later in the paper, given an 
object $X$, the pairs in $E$ will identify places where we ``glue copies of $X$'' by a procedure which we will specify
shortly.    Of course, the set $E$ is defined independently of $X$; it is simply an equivalence relation on $L\times M_0$.

For a fixed object $X$ in $\SquaMS$, define a relation $\approx$ on $L\times X$ as follows: For $m,n\in L$ and $(r,s), (t,u)\in M_0$, 
\begin{equation}\label{extendE}
\mbox{$(m,S_X((r,s))) \approx (n,S_X((t,u)))$ if and only if $((m,(r,s)),(n,(t,u)))\in E$.} 
\end{equation}

Let $\sim$ be the symmetric, reflexive, and transitive closure of 
$\approx$ on $L\times X$.
In more detail, if $E$ is symmetric, then so is $\approx$.  If $E$ is transitive, then again so is $\approx$.
But even if $E$ is reflexive, $\approx$ need not be reflexive, since $S_X$ is almost never surjective.
So this is why we must in general extend $\approx$ to get the relation $\sim$.

In the following definition, we wish to characterize equivalence relations which suit our needs later on, but are sufficiently general to apply to a broader class of similar constructions.

As we said, the big idea is that we will ``glue copies of $X$'' together, specifically along sides of the image of $M_0$ under $S_X$.  We need to do this in such a way that we set ourselves up to view the resulting object as a metric space.

\begin{definition}\label{quotientsuitabledefinition} 

Let $D = \{B,\ell,R,T\}$  be a set denoting the bottom, left, right, and top sides of $M_0$.   That is, 
\[ 
\begin{array}{lcl}
B & = &  \{(r,0):r\in[0,1]\},
\\ \ell  & = & \{(0,s):s\in [0,1]\}, \\
\end{array}
\qquad
\begin{array}{lcl}
R & = &\{(1,s):s\in [0,1]\},
\\ T  & = & \{(r,1):r\in[0,1]\}.
\end{array}
\]

An equivalence relation $E$ on $L\times M_0$ is \emph{quotient suitable} if the following data exist, and if $E$ is 
characterized in terms of them as mentioned below: 

First, an injective partial function $\kappa:L\times D\rightarrow L\times D$ such that 
\begin{itemize}
    \item For all $m\in L$ and $Y\in D$, there is no $Z\in D$ such that $\kappa(m,Y) = (m,Z)$.
    \item The domain and image of $\kappa$ are disjoint. 
    \item If $(m,Y)$ is in the domain of $\kappa$ and $\kappa(m,Y) = (n,Z)$, then for all $(m,Y')$ in the domain of $\kappa$ with $Y'\neq Y$, $\kappa(m,Y')\neq (n,Z')$ for any $Z'\in D$.  
\end{itemize}

Second, for each $(m,Y)$ in the domain of $\kappa$, an isometry $f_{m,Y}:Y\rightarrow Z$ (where $(n,Z) =\kappa(m,Y)$).

Observe that we may view each side of $M_0$ as an isometric copy of $[0,1]$, so the only possible isometries $f_{m,Y}$ are either the identity or the map $r\mapsto 1-r$.  

So if $\kappa(m,Y)=(n,Z)$, we also have an isometry $f_{n,Z}:Z\rightarrow Y$ where $f_{n,Z} = f_{m,Y}^{-1}$.

And our requirement about all of this is that $E$ is the symmetric, transitive, and reflexive closure of 
$$\displaystyle{\bigcup_{(m,Y)\in dom(\kappa)}} \{((m,y),(n,f_{m,Y}(y))): y\in Y, \mbox{for some $Z$, $\kappa(m,Y) = (n,Z)$}\}.$$

\end{definition}

The big idea is that $E$ comes from matching sides of $M_0$ to sides in different copies of it.  The first requirement on $\kappa$ tells us that in a single copy of $M_0$, none of the sides are equivalent to each other.  The second requirement along with the fact that $\kappa$ is an injective function tells us that if we fix one copy and one side, it is matched with at most one other side in one other copy.  The third requirement tells us that between two copies, we cannot have multiple sides which are equivalent. Geometrically, we may view the maps $f_{m,Y}$ as preserving a side, or reflecting it.

When $E$ is a quotient suitable relation, we have a few nice properties 
of the induced equivalence relation on $L\times X$ for an arbitrary $X\in \SquaMS$.  When we refer to sides in $S_X[M_0]$, we mean the image of the corresponding sides in $M_0$ under $S_X$.  Since $S_X$ is injective, the sides are disjoint except at their shared corners.

\begin{lemma}\label{technicalquotientsuitablecorollary}
Let $E$ be a quotient suitable relation on $L\times M_0$ and let $X\in \SquaMS$.  Let $\sim$ be the equivalence relation on $L\times X$ described below (\ref{extendE}).

\begin{enumerate}

\item If $(m,x)\neq (n,y)$ in $L\times X$, then $(m,x)\sim (n,y)$ 
implies that $x,y\in S_X[M_0]$.  

\item $\sim$ relates corners to corners.  That is, if $(r,s)\in M_0$ is such that $r,s\in \{0,1\}$, and $m,n\in L$ and $(t,u)\in M_0$ are such that $(m,S_X((r,s)))\sim (n,S_X((t,u)))$, then $(t,u)$ is a corner (that is, $t,u\in \{0,1\}$). 

\item Suppose $x$ is in $S_X[M_0]$ but is not a corner and $y$ is on the same side of $S_X[M_0]$.  If there are  $m,n\in L$ and $x'\in X$ are such that $m\neq n$ and $(m,x)\sim (n,x')$, then there is some $y'\in M_0$ on the same side as $x'$ such that $(m,y)\sim (n,y')$.  

Furthermore, $d_{X}(x,y) = d_X(x',y')$.

\item Suppose $x$ is not a corner in $S_X[M_0]$ and 
that there are $m,n\in L$ and $x'\in X$ such that $m\neq n$ and $(m,x)\sim(n,x')$.  Suppose further that $y$ is on the same side as $x$ in $S_X[M_0]$ and is also not a corner, and that for some $l\in L$ with $l\neq m$ and $y'\in X$, $(m,y)\sim (l,y')$.  Then we must have $l=n$ and $y'$ is on the same side as $x'$ in $S_X[M_0]$.  
\end{enumerate}

3. and 4. can be thought of as existence and uniqueness in some sense.  The idea is that if we have one point on a side related to another side in another copy, its entire side is related to that other side in that other copy as well, and furthermore, we cannot relate any other sides between these two copies of $X$.  

\end{lemma}

\begin{proof}
\begin{enumerate}
 \item Immediate from the definition of $\sim$. 
 \item  Follows from the fact that the only isometries between sides will map corners to corners, and taking the symmetric, reflexive, and transitive closure will still only relate corners to corners. 
 \item Start with $x\in S_X[M_0]$ which is not a corner, and let $y\in S_X[M_0]$ be on the same side as $x$.  Let $(r,s),(t,u)\in M_0$ be such that $S_X((r,s)) = x$ and $S_X((t,u))=y$, and let $Y\in D$ be the side containing $(r,s)$ and $(t,u)$. 

 Suppose there are $m,n\in L$ and $x'\in X$ such that $m\neq n$ and $(m,x)\sim (n,x')$.  Then by part 1., there is $(r',s')\in M_0$ such that $x'=S_X((r',s'))$, and by part 2., $(r',s')$ is not a corner.  So there is a single side $Z\in D$ containing $(r',s')$.  Then from the definition of quotient suitable, we must have $(r',s') = f_{m,Y}((r,s))$, where $f_{m,Y}: Y\rightarrow Z$ is the appropriate isometry.  

 Then by our definition of $E$, we know that $((m,(t,u)),(n,f_{m,Y}((t,u)))) \in E$, so let $(t',u')=f_{m,Y}((t,u))$, which is on the same side as $(r',s')$.  Thus, $y'=S_X((t',u'))$ is such that $(m,y)\sim (n,y')$, and is on the same side as $x'$.  

 Furthermore, since $S_X$ is an isometry between points on the same side, 
\[\begin{array}{rcl}
d_X(x,y) &=& d_X(S_X((r,s)),S_X((t,u))\\
&=& d_{M_0}((r,s),(t,u))\\
&=& d_{M_0}(f_{m,Y}((r,s)),f_{m,Y}((t,u)))\\
&=& d_{M_0}((r',s'),(t',u'))\\
&=& d_X(S_X((r',s')),S_X((t',u')))\\
&=& d_X(x',y')\\

\end{array} \]
 \item Suppose $x$ is not a corner in $S_X[M_0]$ and there are $m,n\in L$ and $x'\in X$ such that $m\neq n$ and $(m,x)\sim (n,x')$.  Then $x = S_X((r,s))$ for some $(r,s)\in M_0$, and by part 1., $x' = S_X((r',s'))$ for some $(r',s')\in M_0$.  

 Suppose further that $y\in S_X[M_0]$ is also not a corner and is on the same side as $x$, so $y = S_X((t,u))$ for some $(t,u)\in M_0$ on the same side as $(r,s)$.  Assume for some $l\in L$ with $l\neq m$ and $y'\in X$, $(m,y)\sim (l,y')$.  Then by part 1., $y'=S_X((t',u'))$ for some $(t',u')\in M_0$. 

 Let $Y\in D$ be the unique side containing $(r,s)$ and $(t,u)$, and let $Z\in D$ be the unique side containing $(r',s')$. (Since neither $(r,s)$ nor $(t,u)$ is a corner,
each is only on one side).  
 Then since $((m,(r,s)),(n,(r',s')))\in E$, by the definition of quotient suitable,  $(r',s') = f_{m,Y}((r,s))$.

Even after taking the symmetric, reflexive, and transitive closures, the only elements of the equivalence class of $(m,(t,u))$ under $E$ are itself and $(n, f_{m,Y}((t,u)))$.  Thus, since $((m,(t,u)),(l,(t',u')))\in E$ and $l\neq m$, we must have $(n,f_{m,Y}((t,u))) = (l,(t',u'))$, so $l=n$ and $(t',u') = f_{m,Y}((t,u))$ which is on the side $Z$.
Thus, $y'=S_X((t',u'))$ is on the same side as $x'$.  
\end{enumerate}
 
\end{proof}

\paragraph{The Quotient Space and Quotient Metric}  Recall that every object in $\SquaMS$ has distances bounded by $2$.
Ultimately we will define $L\otimes X$, in which we will consider a quotient of $L\times X$, and show that this is a metric space.  
As a stepping stone, we consider a metric $d$ on $L\times X$ defined by 
\begin{equation}\label{nottensor} 
d_{L\times X}((m,x),(n,y)) = 
\left\{ \begin{array}{ll}
\frac{1}{3}d_X(x,y) & \mbox{if $m=n$} \\
      2 & \mbox{otherwise}\\
\end{array}
\right\}
\end{equation}

So the distance is scaled by $\frac{1}{3}$ inside of 
each copy of $X$, and otherwise, it is $2$ (the maximum distance).  
The constant $\frac{1}{3}$ comes from the 
particular sets $M$ and $N$ to which we apply the construction in the next sections.

We see right away that $d_{L\times X}$ is bounded by $2$, since points in the same copy of $X$ will be at most $\frac{2}{3} < 2$ from each other, and points in different copies will be $2$ away from each other. 

Fix a quotient suitable equivalence relation $E$ on $L\times M_0$. 

\begin{definition}
Let $X$ be a square metric space.
The space $L\otimes X$ is the quotient of $L\times X$ by the equivalence relation $\sim$ (described below (\ref{extendE})):
\[
\begin{array}{lcl}
L\otimes X &  =  & (L\times X)/\!\!\sim
\\
m\otimes x & 
 \mbox{denotes} &  \mbox{the equivalence class of $(m,x)$ in $L\otimes X$.}\\
 \end{array}
 \]

 \end{definition}
(Note that our notations $L\otimes X$ 
and $m\otimes x$
do not include $\sim$, but this is to unburden the notation.
All our work uses $\sim$.)

In order to define a metric on $L\otimes X$, we will need the following notions:

\begin{definition} 
\label{quotientmetricdef}
\begin{enumerate}
\item  For $(m,x),(n,y)\in L\times X$, a \emph{path} from $(m,x)$ to $(n,y)$ is a finite list of elements of $L\times X$, $(m_0,x_0),\ldots, (m_k,x_k)$, such that $(m_0,x_0) =(m,x)$ and $(m_k,x_k)=(n,y)$.
\item The \emph{score} of the path $(m_0,x_0),\ldots,(m_k,x_k)$ is $$\displaystyle{\sum_{i=0}^{k-1}} \widehat{d}((m_i,x_i),(m_{i+1},x_{i+1}))$$ where 

\[\widehat{d}((m_i,x_i),(m_{i+1},x_{i+1}))= 
\left\{ \begin{array}{ll}
0 & \mbox{if $(m_i,x_i)\sim(m_{i+1},x_{i+1})$} \\
      d_{L\times X}((m_i,x_i),(m_{i+1},x_{i+1})) & \mbox{otherwise}\\
\end{array}
\right\}\]
\item For $m\otimes x$ and $n\otimes y$ let $d_{L\otimes X}(m\otimes x,n\otimes y)$ denote the infimum over all paths from $(m,x)$ to $(n,y)$ of the score. We will refer to this as the \emph{quotient metric}. 
\end{enumerate}
\end{definition}

Note that, as it is defined, $d_{L\otimes X}$ is a pseudo-metric:  clearly
$d_{L\otimes X}$ is symmetric, the distance between any point and itself is $0$, 
and it will satisfy the triangle inequality since the concatenation of two paths is a path.  We will show that 
$d_{L\otimes X}$ is in fact a metric: distinct points will have positive distance. To achieve this, we will show that the distance is actually witnessed by the score of 
some particular finite path; it is not just an infimum of the scores of 
an infinite set of paths.

\begin{definition}\label{simAlternatingPath}
For $(m,x),(n,y)\in L\times X$, an \emph{alternating path in $L\times X$} from $(m,x)$ to $(n,y)$ is  a path from $(m,x)$ to $(n,y)$ such that every other element is related by $\sim$, and those elements not related by $\sim$ are distinct and share the same first entry $m_i$.  The relation by $\sim$ can start with the first or second entry, and either the last pair is related by $\sim$
or the pair just before the last is related by $\sim$.
In other words, it is a sequence
 of the form
\begin{equation}\label{L9}
 (m,x)=(m_0,x_0),(m_0,x_0')\sim (m_1,x_1),(m_1,x_1')\sim \ldots \sim (m_p,x_p), (m_p,x_p') = (n, y)
 \end{equation}
where $(m_0,x_0)$ or $(m_p,x_p')$ (or both) might be omitted.
(If the first is omitted, then $x$  belongs to  $S_X[M_0]$, and similarly with the last and $y$.) 
\end{definition}

\begin{remark}\label{length0path} Note, if $(m,x)$ and $(n,y)$ have an alternating path between them in which $p=0$, either $(m,x)=(n,y)$ or $m=n$.  When we say that there is an alternating path from $m\otimes x$ to $n\otimes y$, we mean there is an alternating path from $(m,x)$ to $(n,y)$.  Note that the choice of representatives of the equivalences classes of $m\otimes x$ and $n\otimes y$ are not important, since if we have an alternating path between two representatives, by adding one more entry on each end with $\sim$ or replacing the first or last entry as appropriate, we have an alternating path between any two representatives of $m\otimes x$ and $n\otimes y$ respectively. \end{remark}

\begin{lemma}\label{distancelemma}  
For $(m,x)$ and $(n,y)$ in $L\times X$,  either every path from $(m,x)$ to $(n,y)$ has score $2$, or for any path from 
$(m,x)$ to $(n,y)$ there is an alternating path 
from $(m,x)$ to $(n,y)$ 
with smaller or equal score
\end{lemma}

\proof
If it exists, take a path from $(m,x)$ to $(n,y)$, say
\begin{equation}\label{distancelemmapath}
(m,x) = (m_0,x_0), (m_1, x_1), (m_2, x_2), \ldots, (m_p, x_p) = (n,y)
\end{equation}
with score strictly less than $2$. 

If any adjacent pair on the path, say   $(m_k, x_k)$ and $(m_{k+1},x_{k+1})$ has 
$m_k \neq m_{k+1}$ and the pair is not related by $\sim$, then this pair
contributes $2$ to the score, which is impossible since we assumed the score is $<2$.  We thus assume that this case does not arise in what follows.

We may take our path (\ref{distancelemmapath}) and shorten any chain of $\sim$ relations.
This is because $\sim$ is transitive.  Thus, we can assume that no three adjacent pairs are related by $\sim$.
In other words, we never have  $(m_k, x_k) \sim (m_{k+1},x_{k+1}) \sim (m_{k+2},x_{k+2})$.

At this point, we argue by induction on $p$ that for  every path (\ref{distancelemmapath})
which meets all of the
assumptions so far, there is an alternating path with smaller or equal score.
If $p=0$, then $(m,x)=(n,y)$ so $(m,x)=(m_0,x_0),(m_0,x_0')=(n,y)$ is an alternating path with score $0$ (so, equal to the score of the original).

Assume our result for paths of length $<p$, and consider a path as in (\ref{distancelemmapath})
of length $p$.  If this path is not alternating, then there must be  
 $(m_k, x_k) \not\sim (m_{k+1},x_{k+1}) \not\sim (m_{k+2},x_{k+2})$, since we have assumed we cannot have $3$ or more entries in a row related by $\sim$.
 In this case, what we said at the end of the first paragraph implies that $m_k = m_{k+1} = m_{k+2}$.
 So we may shorten our path by deleting $(m_{k+1},x_{k+1})$.
 By the triangle inequality (in $X$) and the definition of the metric on $L\times X$
 in (\ref{nottensor}), the score does not increase with this deletion.
 And then applying our induction hypothesis
 to the shortened path proves our result.
\endproof

We need a few more technical lemmas about shortening alternating paths. 

\begin{lemma}\label{technicalpathlemma}
Consider an alternating path  
\begin{equation}\label{firstbad}
(m,x) =(m_0,x_0), (m_0,x_0')\sim(m_1,x_1),(m_1,x_1')\sim\ldots \sim (m_p,x_p),(m_p,x_p')=(n,y)
\end{equation}
with  $p\geq 0$.  

Just in the context of this lemma, say a \emph{bad configuration}
in an alternating path (\ref{firstbad})
is 
a number $k$  such that one of the following holds: 
\begin{itemize}
    \item $k=0$, $x_0$ and $x_0'$ are on the same side of $S_X[M_0]$, and $x_0'$ is not a corner, 
    \item $0< k< p$, $x_k$ and $x_k'$ are on the same side of $S_X[M_0]$, and at least one of $x_k$ or $x_k'$ is not a corner, 
    \item $k=p$, $x_p$ and $x_p'$ are on the same side of $S_X[M_0]$, and $x_p$ is not a corner. 
    
\end{itemize}

Note that if $(m_0,x_0)$ (or $(m_p,x_p)$) is omitted because $x$ (or respectively $y$) is in $S_X[M_0]$, then the only possible bad configuration is the second of the three cases
above.

Then: from (\ref{firstbad})
we can find an alternating path with strictly fewer entries,  a smaller or equal score than the original path, and with no bad configurations.
\end{lemma}

\begin{proof}
    First suppose our alternating path
    (\ref{firstbad})
    has exactly one bad configuration. 
    Without loss of generality, we will suppose the bad configuration is 
     $k$ such that $0<k<p$ and $x_k'$ is not a corner.  The cases when $k=0$ or $p$, as well as the case when $0<k<p$ and $x_k$ is not a corner (but $x_k'$ might be) are all similar.


This assumption tells us that
 $x_k$ and $x_k'$ are on the same side of $S_X[M_0]$.  
Our hypothesis in this lemma implies that $(m_k,x_k')\sim (m_{k+1},x_{k+1})$. 

By Lemma~\ref{technicalquotientsuitablecorollary}(3), there exists $\widehat{x}_k$ on the same side as $x_{k+1}$ such that $(m_k,x_k)\sim (m_{k+1},\widehat{x}_k)$. (Note: when we say ``on the same side'', there is no ambiguity.  Since $x_k'$ is not a corner, $x_{k+1}$ is also not a corner by Lemma~\ref{technicalquotientsuitablecorollary}(1), which means that there is in fact only one side of $S_X[M_0]$ containing it.) 
   By the definition of the metric on $L\times X$, the triangle inequality in $X$, and Lemma~\ref{technicalquotientsuitablecorollary}(3),

 \setstretch{1.2}  
 \[
 \begin{array}{cl}
& d_{L\times X}((m_{k+1},\widehat{x}_k),(m_{k+1},x_{k+1}')) \\
 =& \frac{1}{3} d_X(\widehat{x_k},x_{k+1}')\\
 \leq & \frac{1}{3} d_X(\widehat{x_k},x_{k+1}) + \frac{1}{3} d_X(x_{k+1},x_{k+1}')\\
=& \frac{1}{3}d_{X}(x_k,x_k') + \frac{1}{3}d_{X}(x_{k+1},x_{k+1}')\\
=& d_{L\times X}((m_k,x_k),(m_k,x_k')) +d_{L\times X}((m_{k+1},x_{k+1}),(m_{k+1},x_{k+1}')). \\
\end{array}\]
\setstretch{1}

\noindent
Thus, we can replace this section of the path: 
$$(m_{k-1},x_{k-1}), (m_{k-1},x_{k-1}')\sim(m_k,x_k),(m_k,x_k')\sim (m_{k+1},x_{k+1}),(m_{k+1},x_{k+1}')$$ with the 
path just below, which has strictly fewer entries:
$$(m_{k-1},x_{k-1}), (m_{k-1},x_{k-1}')\sim (m_{k+1},\widehat{x}_k), (m_{k+1},x_{k+1}').$$
We are using that $(m_{k-1},x_{k-1}') \sim (m_k,x_{k})\sim (m_{k+1},\widehat{x}_k)$, and 
that $\sim$ is transitive.  
Hence, we get a path with fewer entries and a smaller or equal score. Furthermore, since we assumed that there was only one bad configuration, we only need to make sure that $\widehat{x_k}$ and $x_{k+1}'$ are not on the same side, but we know this holds since $\widehat{x_k}$ is on the same side as $x_{k+1}$ and is not a corner, so it cannot be on the same side as $x_{k+1}'$.

Then we proceed by induction on the number of bad configurations in the alternating path.  Use the process described to ``remove'' the first (leftmost in the indexing) bad configuration, then apply the induction hypothesis.  
\end{proof}

On a related note, if we have two entries in our path which are 
\emph{strictly on the same side of $S_X[M_0]$}
(that is, neither are corners) in the same copy of $X$, then we can replace one of those entries with a corner such that we do not have two entries which are strictly on the same side of $M_0$.

\begin{lemma}\label{enteringonsamesidelemma} For $(m,x)$ and $(n,y)$ in $L\times X$,
either every path from $(m,x)$ to $(n,y)$ has score $2$, or for any path from $(m,x)$ to $(n,y)$ in $L\times X$, there exists a path
(\ref{newpath})
with shorter or equal score, and at most as many entries as the original, 
\begin{equation}\label{newpath}
(m,x)=(m_0,x_0),(m_0,x_0')\sim (m_1,x_1)\ldots (m_{p-1},x_{p-1}')\sim (m_p,x_p), (m_p,x_p') = (n,y)
\end{equation}
such that 
\begin{itemize}
    \item The new path (\ref{newpath}) is an alternating path,
    \item For $0\leq i\leq p$, if $x_i$ and $x_i'$ are on the same side of $S_X[M_0]$, then they are both corners, 
    \item For $j$ and $k$ with $0\leq j<k\leq p$, if $m_j=m_k$,  and
     $x_j'$ and $x_k$ are  on the same side of $S_X[M_0]$,
    then at least one of these points $x_j'$ or  $x_k$ is a corner.
\end{itemize}    
    \end{lemma}

\begin{proof}
    By Lemma~\ref{distancelemma} and Lemma~\ref{technicalpathlemma}
    we may start with an alternating path 
\begin{equation}\label{temppath}(m,x)=(m_0,x_0),(m_0,x_0')\sim (m_1,x_1)\ldots (m_{p-1},x_{p-1}')\sim (m_p,x_p), (m_p,x_p') = (n,y)\end{equation} 
    satisfying our requirements such that if  $x_i$ and $x_i'$ are on the same side of $S_X[M_0]$, then at least one of them is a corner for $0\leq i\leq p$. 

    From here on, we will assume we have such a path.

 Say a pair of indices $j$ and $k$ with $0\leq j<k\leq p$ is a \emph{bad configuration}  (in this proof) if 
 \begin{itemize}
     \item $m_j = m_k$,
     \item $x_j'$ and $x_k$ are on the same side of $S_X[M_0]$,
     \item Neither of $x_j'$ and $x_k$ are corners.
     
 \end{itemize}
Note that if our alternating path in (\ref{temppath}) has a bad configuration, it will fail to satisfy the third condition in our lemma.

    We will prove by induction on the number of bad configurations that we may adapt our path (\ref{temppath}) such that it will satisfy all three requirements, and so that it has at most as many entries as (\ref{temppath}) and a score at most that of the original. 


Suppose that we have exactly one bad configuration, so there are $j$ and $k$ with $0\leq j<k\leq p$ such that $m_j=m_k$, and $x_j'$ and $x_k$ are both not corners and are on the same side of $S_X[M_0]$.

Suppose that $j$ and $k$ are only one index apart, that is, $k = j+1$.  Then our path looks like $(m_j,x_j),(m_j,x_j')\sim (m_k,x_k),(m_k,x_k')$.  But then since $m_j=m_k$, we may eliminate $(m_j,x_j')\sim (m_k,x_k)$, and still have an alternating path with less or equal score, and which no longer has a bad configuration. 
   
Otherwise, by Lemma~\ref{technicalquotientsuitablecorollary}(4), we must have $m_{j+1} = m_{k-1}$, and since $x_j'$ and $x_k$ are not corners, $x_{j+1}$ and $x_{k-1}'$ are on the same side.  

By assumption, there are no other bad configurations, which means that if 
 $x_{j+1}'$ and $x_{k-1}$ are both not corners, then they cannot be on the same side of $S_X[M_0]$.   In other words, they cannot both be non-corners and on the opposite side of $x_{j+1}$ and $x_{k-1}'$.  So at least one of them must be on an adjacent side.  

To better understand the situation, assume without loss of generality that $x_{j+1}$ and $x_{k-1}'$ are on the top of $S_X[M_0]$, and that $x_{j+1}'$ is on the left side. The other cases are similar.  
We have the following picture.   
\[
   \begin{tikzpicture}[>=stealth',shorten >=1pt,auto,node distance=2cm,semithick,scale=2.7]   
\draw [help lines] (0,-1) grid (1,1);
    \draw (.7,0) node (pointr) {$\bullet$};
       \draw (.7,.6) node (pointrr) {$\bullet$}; 
    \draw (.3,0) node (pointb) {$\bullet$};  
     \draw (.3,.5) node (pointbb) {$\bullet$};    
         \draw (2,0) node {};
          \draw (0,0) node (pointp) {$\bullet$};       
      \draw (0,-.4) node (pointq) {$\bullet$};        
             \draw (1.75,-.15)  node {};
    \draw (1.75,.25)  node {};
\draw (2.3,.8) node (top) {};       
\draw (.7,0) -- (.7,.6);
\draw (.3,0) -- (.3,.5); 
\draw [dotted] (.7,0) .. controls (.7,-1) and (.4,-2.2) .. (0,-.4);
\draw (0,-.4) -- (.3,0);
\draw (0,0) -- (.3,.5);
\draw (0,-.4) -- (0,0);
\draw (-.2,.15) node (x)  {$y'_j$};
\draw (-.2,-.1)  node  (xx) {$y_{j+1}$};
\draw (.4,.15) node (w) {$x'_j$};
\draw (.4,-.15) node  (ww)  {$x_{j+1}$};
\draw (-.2,-.4) node  (ww)  {$x'_{j+1}$};
\draw (.3,.65) node  (ww)  {$x_j$};
\draw (.8,.15) node (w) {$x_{k}$};
\draw (.85,-.15) node  (ww)  {$x'_{k-1}$};
\draw (.7,.75) node  (ww)  {$x'_{k}$};
  \end{tikzpicture}
\] 

In this picture, $x_j$ and $x_k'$ are actually somewhere on $S_X[M_0]$, but it is not important where. 
We can replace $x_j'$ with $y_j'$ and $x_{j+1}$ with $y_{j+1}$, the corner between $x_{j+1}$ and $x_{j+1}'$, since we assumed they are on adjacent sides. 
Note that $(m_j,y_j')\sim (m_{j+1},y_{j+1})$.

 Currently, the portion of the path from $(m_j,x_j)$ to $(m_{j+1},x_{j+1}')$ contributes $$d_{L\times X}((m_j,x_j),(m_j,x_j')) + d_{L\times X}((m_{j+1},x_{j+1}),(m_{j+1},x_{j+1}'))$$ to the score. So using the definition of the metric on $L\times X$, we get 
\setstretch{1.5} 
\[\begin{array}{rcll}
& & d_{L\times X}((m_j,x_j),(m_j,x_j')) + d_{L\times X}((m_{j+1},x_{j+1}),(m_{j+1},x_{j+1}'))&\\
&= & \onethird d_X(x_j,x_j') + \frac{1}{3} d_X(x_{j+1},x_{j+1}')& \\ 
& \geq & \frac{1}{3}d_{X}(x_j,x_j') + \frac{1}{3}d_{X}(x_{j+1},y_{j+1}) + \frac{1}{3}d_{ X}(y_{j+1},x_{j+1}') & (1)\\
&=& \frac{1}{3}d_{ X}(x_j,x_j') +  \frac{1}{3}d_{ X}(x_j',y_j')+\frac{1}{3}d_{ X}(y_{j+1},x_{j+1}')&(2)\\
&\geq &  \frac{1}{3}d_{X}(x_j,y_j') + \frac{1}{3}d_{ X}(y_{j+1},x_{j+1}')& (3)\\
& = & d_{L\times X}((m_j,x_j),(m_j,y_j')) + d_{L\times X}((m_{j+1},y_{j+1}),(m_{j+1},x_{j+1}')) & \\ 
\end{array}\]

\setstretch{1}
\noindent
where (1) is by  ($\sqtwo$) and Corollary~\ref{Sxisashortmap}, (2) is by Lemma~\ref{technicalquotientsuitablecorollary}(3), and (3) is by the triangle inequality in $X$.  
 Thus, we may replace $(m_j,x_j')\sim (m_{j+1},x_{j+1})$ in our path with $(m_j,y_j')\sim (m_{j+1},y_{j+1})$ to obtain a path with the same number of entries, 
a score which is shorter or equal to that of the original,  and such that $j$ and $k$ are no longer a bad configuration.  Note that our new path is still an alternating path, and since the replacement entry is a corner, performing this process cannot create a path which violates the second or third points in the statement of the lemma. 

Thus, we may proceed by induction on the number of bad configurations.  Perform the process described above to remove one instance of a bad configuration, then apply the induction hypothesis. 
\end{proof}

We have one more technical lemma involving elements of an alternating path sharing sides of $S_X[M_0]$. 

\begin{lemma}\label{theverylasttechnicalpathlemma}
Consider an alternating path from $(m,x)$ to $(n,y)$ in $L\times X$, 
\begin{equation}\label{anotheraltpath} (m,x) = (m_0,x_0),(m_0,x_0')\sim (m_1,x_1),\ldots, (m_{p-1},x_{p-1})\sim (m_p,x_p),(m_p,x_p') = (n,y).\end{equation}

Suppose that there are $i$ and $j$ with $0\leq i<j\leq p$ such that $m_i = m_j$.  Further suppose that one of the following holds:
\begin{itemize}
    \item $x_i$ and $x_j$ are on the same side of $S_X[M_0]$ as each other, and $x_i'$ and $x_j'$ are on the same side of $S_X[M_0]$ as each other, 
    \item $x_i$ and $x_j'$ are on the same side of $S_X[M_0]$ as each other, and $x_i'$ and $x_j$ are on the same side of $S_X[M_0]$ as each other.
    
\end{itemize}
Then we can delete the entries strictly between $(m_i,x_i)$ and $(m_j,x_j')$ in our alternating path to obtain another alternating path with a smaller or equal score to that of the original. 
\end{lemma}

\begin{proof}
First, suppose $x_i$ and $x_j$ are on the same side of $S_X[M_0]$.  Without loss of generality, suppose they are on the bottom and $x_i$ is to the left of $x_j$ (in case it is the opposite, we may just reverse the order of the path).

We will consider the cases when $x_i'$ and $x_j'$ are on the left side of $M_0$ (the right side is similar) and when they are on the top of $M_0$.

In the first case, $x_i' =S_X((0,s))$ and $x_j' = S_X((0,s'))$ for some $s,s'\in [0,1]$.  Either $s<s'$ or $s'<s$. 
First, suppose that $s<s'$.   We consider the diagram on the left below.
\[
   \begin{tikzpicture}[>=stealth',shorten >=1pt,auto,node distance=2cm,semithick,scale=2.5]   
\draw [help lines] (0,0) grid (1,1);
    \draw (.8,0) node (pointr) {$\bullet$};
        \draw (.8,-.15) node {$x_j$};
    \draw (.5,0) node (pointb) {$\bullet$};  
         \draw (.5,-.15) node {$x_i$};
         \draw (2,0) node {};
          \draw (0,.8) node (pointp) {$\bullet$};
             \draw (-.15,.8) node (pointp) {$x'_j$};       
      \draw (0,.4) node (pointq) {$\bullet$};   
       \draw (-.15,.4) node (pointp) {$x'_i$};       
             \draw (1.75,-.15)  node {};
    \draw (1.75,.25)  node {};
\draw (2.3,.8) node (top) {};       
\draw (.35,.35)  node {$e$};
\draw (.2,.1)  node {$a$};
\draw (.7,.2)  node {$c$};
\draw (.5,0) -- (0,.4);
\draw (.5,0) -- (0,.8);
\draw (.8,0) -- (0,.8);
\draw [dotted] (.8,0) .. controls (0,-1) and (-.4,-.5) .. (0,.4);
\draw (-.25,-.3) node {$b$};
  \end{tikzpicture}
\begin{tikzpicture}[>=stealth',shorten >=1pt,auto,node distance=2cm,semithick,scale=2.5]   
\draw [help lines] (0,0) grid (1,1);
    \draw (.8,0) node (pointr) {$\bullet$};
        \draw (.8,-.15) node {$x_j$};
    \draw (.5,0) node (pointb) {$\bullet$};  
         \draw (.5,-.15) node {$x_i$};
         \draw (2,0) node {};
          \draw (0,.8) node (pointp) {$\bullet$};
             \draw (-.15,.8) node (pointp) {$x'_i$};       
      \draw (0,.4) node (pointq) {$\bullet$};   
       \draw (-.15,.4) node (pointp) {$x'_j$};       
             \draw (1.75,-.15)  node {};
    \draw (1.75,.25)  node {};
\draw (2.3,.8) node (top) {};       
\draw (.25,.65)  node {$a$};
\draw (.2,.1)  node {$e$};
\draw (.6,.2)  node {$c$};
\draw (.5,0) -- (0,.4);
\draw (.5,0) -- (0,.8);
\draw (.8,0) -- (0,.4);
\draw [dotted] (.8,0) .. controls (0,-.8) and (-1.5,-.4) .. (0,.8);
\draw (-.75,-.3) node {$b$};
  \end{tikzpicture}
\]  
The entries in the path from $(m_i,x_i)$ to $(m_j,x_j')$ contribute $a+b+c$ to the score. 
Since the metric on $M_0$ is the path metric, and $S_X$ acts isometrically on adjacent sides by ($\sqtwo$) and Corollary~\ref{Sxisashortmap},  $e<c$, so $e< a+b+c$,  meaning we can delete the entries between $(m_i,x_i),(m_j,x_j')$  to obtain an alternating path (since $m_i=m_j$) with a smaller or equal score and with strictly fewer entries. 

The case when $s'<s$ is similar.  The picture is on the right above.
The entries from $(m_i,x_i)$ to $(m_j,x_j')$ contribute $a+b+c$ to the score of the path. Again, since the metric on $M_0$ is the path metric and $S_X$ acts isometrically on points on adjacent sides of $M_0$, $e<c$.  So $e<a+b+c$.  Thus, we can delete the entries between $(m_i,x_i),(m_j,x_j')$ to get an alternating path with strictly fewer entries whose score is less than or equal to that of the original. 

Finally, we consider the case when $x_i'$ and $x_j'$ are on the top of $M_0$.  It does not matter which is leftmost. 
\[   
\begin{tikzpicture}[>=stealth',shorten >=1pt,auto,node distance=2cm,semithick,scale=2.5]   
\draw [help lines] (0,0) grid (1,1);
    \draw (.8,0) node (pointr) {$\bullet$};
        \draw (.8,-.15) node {$x_j$};
    \draw (.5,0) node (pointb) {$\bullet$};  
         \draw (.5,-.15) node {$x_i$};
         \draw (2,0) node {};
          \draw (.8,1) node (pointp) {$\bullet$};
             \draw (.5,1.15) node (pointp) {$x'_i$};       
      \draw (.5,1) node (pointq) {$\bullet$};   
       \draw (.8,1.25) node (pointp) {$x'_j$};       
             \draw (1.75,-.15)  node {};
    \draw (1.75,.25)  node {};
\draw (2.3,.8) node (top) {};       
\draw (.4,.5)  node {$a$};
\draw (.9,.5)  node {$c$};
\draw (.7,.5)  node {$e$};
\draw (.68,.14)  node {$f$};
\draw (.5,0) -- (.5,1);
\draw (.5,0) -- (.8,1);
\draw (.8,0) -- (.8,1);
\draw [dotted] (.8,0) .. controls (1.5,-.5) and (1.5,1.5) .. (.5,1);
\draw (1.4,1) node {$b$};
  \end{tikzpicture}
\]  
The entries from $(m_i,x_i)$ to $(m_j,x_j')$ contribute $a+b+c$ to the score, and note that $a\geq 1$ and $c\geq 1$ by ($\sqtwo$). 
By the triangle inequality, $e\leq f+c$.
Since $x_i$ and $x_j$ are strictly on the bottom of the image of $M_0$ under $S_X$, by $(\sqone)$, $f<1$.  So $e<f+c<1+c\leq a+c \leq a+b+c$.  
Thus, we may delete the entries between $(m_i,x_i)$ to $(m_j,x_j')$ to get a path with strictly fewer entries and a smaller or equal score than that of the original. 

So this completes the first case. 

Now suppose $x_i$ and $x_j'$ are on the same side.  Note that we cannot simply apply Lemma \ref{enteringonsamesidelemma} because $x_i$ and $x_j'$ could be corners.   

As before, we will assume without loss of generality that $x_i$ and $x_j'$ are on the bottom and consider the cases when $x_i'$ and $x_j$ are on the left (the right is similar) and when they are on the top.

Again, $x_i' =S_X((0,s))$ and $x_j = S_X((0,s'))$ for some $s,s'\in [0,1]$.  Either $s<s'$ or $s'<s$. 
First, suppose that $s<s'$.   We consider the diagram on the left below.
\[
   \begin{tikzpicture}[>=stealth',shorten >=1pt,auto,node distance=2cm,semithick,scale=2.5]   
\draw [help lines] (0,0) grid (1,1);
    \draw (.8,0) node (pointr) {$\bullet$};
        \draw (.8,-.15) node {$x_j'$};
    \draw (.5,0) node (pointb) {$\bullet$};  
         \draw (.5,-.15) node {$x_i$};
         \draw (2,0) node {};
          \draw (0,.8) node (pointp) {$\bullet$};
             \draw (-.15,.85) node (pointp) {$x_j$};       
      \draw (0,.4) node (pointq) {$\bullet$};   
       \draw (-.15,.3) node (pointp) {$x'_i$};       
             \draw (1.75,-.15)  node {};
    \draw (1.75,.25)  node {};
\draw (2.3,.8) node (top) {};       
\draw (.65,.05)  node {$e$};
\draw (.2,.1)  node {$a$};
\draw (.5,.4)  node {$c$};
\draw (.5,0) -- (0,.4);
\draw (.8,0) -- (0,.8);
\draw (.5,0) -- (.8,0);
\draw [dotted] (0,.4) .. controls (-.5,.6) and (-.5,.7) .. (0,.8);
\draw (-.5,.65) node {$b$};
  \end{tikzpicture}
\quad  
   \begin{tikzpicture}[>=stealth',shorten >=1pt,auto,node distance=2cm,semithick,scale=2.5]   
\draw [help lines] (0,0) grid (1,1);
    \draw (.8,0) node (pointr) {$\bullet$};
        \draw (.8,-.15) node {$x'_j$};
    \draw (.3,0) node (pointb) {$\bullet$};  
         \draw (.3,-.15) node {$x_i$};
         \draw (2,0) node {};
          \draw (0,.8) node (pointp) {$\bullet$};
             \draw (-.15,.9) node (pointp) {$x'_i$};       
      \draw (0,.4) node (pointq) {$\bullet$};   
       \draw (-.15,.3) node (pointp) {$x_j$};       
             \draw (1.75,-.15)  node {};
    \draw (1.75,.25)  node {};
\draw (2.3,.8) node (top) {};       
\draw (.15,.65)  node {$a$};
\draw (.5,.05)  node {$e$};
\draw (.4,.3)  node {$c$};
\draw (.3,0) -- (.8,0);
\draw (.3,0) -- (0,.8);
\draw (.8,0) -- (0,.4);
\draw (-.85,.6) node {$b$};
\draw [dotted] (0,.4) .. controls (-1,.5) and (-1,.7) .. (0,.8);
  \end{tikzpicture}
\]  
The entries in the path from $(m_i,x_i)$ to $(m_j,x_j')$ contribute $a+b+c$ to the score. By ($\sqtwo$), $e \leq c$, so $e\leq a+b+c$, meaning we can delete the entries between $(m_i,x_i),(m_j,x_j')$  to obtain an alternating path (since $m_i=m_j$) with a smaller or equal score and with strictly fewer entries. 

The case when $s'<s$ is similar.   The diagram is on the right above.
The entries from $(m_i,x_i)$ to $(m_j,x_j')$ contribute $a+b+c$ to the score of the path. Again, $e\leq c$ by ($\sqtwo$).  Thus, we can delete the entries between $(m_i,x_i),(m_j,x_j')$ to get an alternating path with strictly fewer entries whose score is less than or equal to that of the original. 

Finally, we consider the case when $x_i'$ and $x_j$ are on the top of $M_0$.  It does not matter which is leftmost. 
Here is a picture:
\[   
\begin{tikzpicture}[>=stealth',shorten >=1pt,auto,node distance=2cm,semithick,scale=2.5]   
\draw [help lines] (0,0) grid (1,1);
    \draw (.8,0) node (pointr) {$\bullet$};
        \draw (.8,-.15) node {$x'_j$};
    \draw (.5,0) node (pointb) {$\bullet$};  
         \draw (.5,-.15) node {$x_i$};
         \draw (2,0) node {};
          \draw (.8,1) node (pointp) {$\bullet$};
             \draw (.5,1.15) node (pointp) {$x'_i$};       
      \draw (.5,1) node (pointq) {$\bullet$};   
       \draw (.8,1.15) node (pointp) {$x_j$};       
             \draw (1.75,-.15)  node {};
    \draw (1.75,.25)  node {};
\draw (2.3,.8) node (top) {};       
\draw (.4,.5)  node {$a$};
\draw (.9,.5)  node {$c$};

\draw (.68,.14)  node {$e$};
\draw (.5,0) -- (.5,1);
\draw (.8,0) -- (.8,1);
\draw [dotted] (.8,1) .. controls (1.5,-.5) and (1.5,2.3) .. (.5,1);
\draw (1.4,1) node {$b$};
  \end{tikzpicture}
\]  
The entries from $(m_i,x_i)$ to $(m_j,x_j')$ contribute $a+b+c$ to the score.  Note that $a\geq 1$ and $c\geq 1$ by ($\sqtwo$). 
Since $x_i$ and $x_j'$ are on the same side, by ($\sqone$), $e\leq 1$.  Thus, $e<a+b+c$, so we may delete the entries between $(m_i,x_i)$ to $(m_j,x_j')$ to get a path with strictly fewer entries and a smaller or equal score than that of the original. 
\end{proof}

\begin{lemma}\label{finitelymanypaths}
There exists a positive integer $K$
such that for every alternating path $$(m,x) =(m_0,x_0),(m_0,x')\sim(m_1,x_1),(m_1,x_1')\sim\ldots (m_p,x_p),(m_p,x_p')=(n,y)$$
with $p> K$, there exists an alternating path from $(m,x)$ to $(n,y)$ with smaller or equal score with strictly fewer entries. 
\end{lemma}

\begin{proof}

Fix $K = 36|L|+1$, which is finite since $L$ is finite.  We will see the justification for this choice of $K$ in the proof.

Suppose that we have an alternating path of the form above with $p>K$. 

First, we may assume that it does not have any repetitions, since if there are two entries which are equal (not just equivalent, but actually equal), then we can delete every entry between those and one of the two repeated entries to obtain a path with strictly fewer entries and a score which is less than or equal to that of the original.

By Lemma \ref{technicalquotientsuitablecorollary}(1),  all of the entries in an alternating path except for the first and last must be of the form $(m,S_X((r,s)))$ for some $(r,s)\in M_0$. So by ignoring the first and last entries (if necessary), we have a path $$(m_0,x_0')\sim(m_1,x_1),(m_1,x_1')\sim\ldots (m_p,x_p),$$
where each $x_i$ and $x_i'$ are in the image of $S_X[M_0]$.

Furthermore, by Lemma \ref{technicalpathlemma}, we may assume that if $x_i$ and $x_i'$ are not both corners, then they are not on the same side of $S_X[M_0]$. By Lemma \ref{enteringonsamesidelemma}, we may further assume that if $0\leq i<j\leq p$ and $m_i = m_j$, then $x_i'$ is not on the same side as $x_j$.

Since $p>36|L|+1$, the list $m_1,\ldots, m_{p-1}$ has at least $36|L|+1$ entries.  So by the pigeonhole principle, there is $m\in L$ such that for at least $37$ many $k$ (with $1\leq k\leq p-1$), $m_k = m$.  

Then consider the set $\{x_k\mid m_k =m,1\leq k\leq p-1\}$.  Since we assumed that there are no identical entries in our path, every element of this set is distinct, so this set has at least $37$ elements. 
We use the pigeonhole principle again.  Since $S_X[M_0]$ has four sides, there is a side of $S_X[M_0]$ containing at least $10$ of the $x_k$'s from this set. 
Fix this side.
Again, since the $x_k$'s under discussion
are all distinct, at most two of these are corners, so this side contains at least $8$ $x_k$'s which are not corners. Let $x_{k_1},\ldots, x_{k_8}$ be 8 of them.  
Consider the corresponding points $x_{k_1}',\ldots, x_{k_8}'$.  Since they are all distinct, there
are at least four which are not corners.  By our assumptions about our path, they cannot be on the same side as $x_{k_1},\ldots, x_{k_8}$.  Thus, we have $3$ sides where these four entries can be, so there must be one of the four sides such that some $x_{k_i}'$ and $x_{k_j}'$ are on the same side (and are both not corners). 
For ease of notation, we will just refer to these indices as $i$ and $j$. 
From here on out, we will assume without loss of generality that $x_i$ and $x_j$
are on the bottom side of the image of $M_0$ under $S_X$, that is, $\{(r,0): 0<r<1\}$.



Arrange $x_i$ and $x_j$ so that $i<j$.  Then, by Lemma \ref{theverylasttechnicalpathlemma}, we may delete some portion of our path to obtain an alternating path with strictly fewer entries whose score is at most that of the original path.  
\end{proof}

We are ready to prove that the quotient metric can be calculated as the score of some
particular finite path, not just an infimum over a set of paths. 
The assumption in Theorem~\ref{quotientmetric}  below is very mild; the idea is that the distance between points in the same copy of $X$ cannot be made shorter by going outside of $X$ on some other path.    It plays a key role in our connection of the Sierpinski carpet with iterations of a functor on square sets.\footnote{To understand the assumption,
it might help to look 
ahead to Sections~\ref{section-M-otimes} and~\ref{section-N-otimes} for
 the definitions of $L\otimes X$ for the sets $M$ and $N$
that we want most to take for $L$  and for the set $E$ underlying an equivalence relation $\sim$.}
As an example of why we need to make this assumption (in addition to our 
the requirement that $E$ be 
quotient suitable), consider the following example:  Let $L = \{a,b,c\}$ and let $X = M_0$.  Then define $E$ on $L\times M_0$ by 

\[\begin{array}{rcl}
(a,(r,1)) & E & (b,(r,0))\\
(b,(0,r)) & E & (c, (1,r))\\
(c,(0,r)) & E & (a,(1-r,0))\\
\end{array}\]
for $r\in [0,1]$.  
We can visualize this as in Figure~\ref{qquestion}.

\begin{figure}[ht]
\[
  \begin{tikzpicture}[>=stealth',shorten >=1pt,auto,node distance=2cm,semithick,scale=1.7]
  \draw [help lines] (0,0) grid (2,1);  
    \draw [help lines] (1,-1) grid (2,0); 
  \draw (.5,.5)  node {$c$};   
  \draw (1.5,.5)  node {$b$}; 
    \draw (1.5,-.5)  node {$a$};   
 \draw[<-,decorate,decoration=zigzag] (0,0) -- (0,1);  
 \draw[decorate,decoration=zigzag,->] (1,-1) -- (2,-1);   
\end{tikzpicture}
\]
\caption{Justification for the assumption in Theorem~\ref{quotientmetric}\label{qquestion}}
\end{figure}

\noindent
Then $(a,(1,0)),(a,((0,1))$ would be an alternating path with score $2$.  In addition
$$(a,(1,0))\sim (c,(0,0)),(c,(1,0))\sim (a,(0,1))$$
is also an alternating path with score $1$.  
The big idea is that with squares we do not need to consider situations where we allow gluing which requires ``twisting'' copies of $M_0$. In fact, gluing with a twist would create a situation where we could find shorter distances by going through different copies of the square. We resolve this via the hypotheses
in the following theorem.
(Incidentally, we have not defined the sets $M$ and $N$ yet to which we shall apply all of this general theory, but when we do define them, we will see 
that the hypotheses of the theorem just below are indeed satisfied by both $M$ and $N$.)
The reason that we do not address this at the level of defining quotient suitability is that there may exist other examples (such as triangles in the Sierpinski Gasket (see, e.g.~\cite{Bhat}))
where we would need to allow for this.  

\begin{theorem}\label{quotientmetric} 
Suppose that $L$ is a finite index set and $E$ is a quotient suitable equivalence relation on $L\times M_0$.  Further, 
suppose that for any $x,y\in X$ and $m\in L$, for any path $(m,x)=(m_0,x_0), \ldots, (m_p,x_p) = (m,y)$, 
\[ \onethird d_X(x,y) \leq \displaystyle{\sum_{k=0}^{p-1}}d_{L\times X}((m_k,x_k),(m_{k+1},x_{k+1})).\]  
That is, given two points
$(m,x)$ and $(m,y)$
in the same scaled copy
$\set{m}\times X$
of $X$, their distance in $L\otimes X$  
is at least $\frac{1}{3}d_X(x,y)$.  
(Equivalently, our assumption is that there is no   path 
in $L\times X$ from $(m, x)$ to $(m,y)$
with a score smaller than $\frac{1}{3}d_X(x,y)$.) 
Then for all $m\otimes x$ and $n\otimes y$, either
\begin{enumerate}
    \item  $d_{L\otimes X}(m\otimes x,n\otimes y) = 2$, or
\item For some alternating path from $(m,x)$ to $(n,y)$, 
\[d_{L\otimes X}(m\otimes x,n\otimes y) = \displaystyle{\sum_{k=0}^p} d_{L\times X}((m_k,x_k),(m_k,x_k')),\]

\end{enumerate}
  \end{theorem}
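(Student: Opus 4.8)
The plan is to combine Lemma~\ref{distancelemma} with the second hypothesis of the theorem to reduce the infimum defining $d_{M\otimes X}$ to a minimum over a \emph{finite} set of candidate $\sim$-alternating paths. First I would observe that by Lemma~\ref{distancelemma}, the infimum defining $d_{M\otimes X}(m\otimes x, n\otimes y)$ is unchanged if we restrict to $\sim$-alternating paths: every finite path is dominated in score by some $\sim$-alternating path. So it suffices to show that the infimum over $\sim$-alternating paths is actually attained.

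Next I would analyze the structure of a $\sim$-alternating path as written in (\ref{L9}): it alternates between ``$\sim$-steps'' (which contribute $0$ to the score) and ``within-a-copy steps'' of the form $(m_i, x_i)$ to $(m_i, x_i')$, each contributing $\frac{1}{3}d_X(x_i, x_i')$. The key point is that the sequence of indices $m_0, m_1, \ldots, m_p$ visited is constrained: consecutive indices $m_i \neq m_{i+1}$ must be linked by a pair in $E$ (via the $\sim$-step between $(m_i, x_i')$ and $(m_{i+1}, x_{i+1})$, both of which lie in $S_X[M_0]$), and the gluing data $E$ is a fixed finite object independent of $X$. Moreover, by the second hypothesis, within each copy $\{m_i\}\times X$ the contribution to a minimal-score path between its entry and exit points is exactly $\frac{1}{3}$ times the $X$-distance between those two boundary points — there is no shortcut through other copies that does better. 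I would argue that we never need to revisit an index: if $m_i = m_j$ for $i < j$ in a minimal path, the subpath between them lives (up to the within-copy estimate) inside that one copy, and by the triangle inequality in $X$ we can splice it out without increasing the score. Hence we may restrict to $\sim$-alternating paths whose index sequence has no repeats, so there are only finitely many index sequences to consider (bounded by $|M|!$ or so).

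For each fixed index sequence, the score is $\frac{1}{3}\sum_i d_X(x_i, x_i')$ where consecutive boundary points are glued: $x_i' \in S_X[M_0]$, $x_{i+1} \in S_X[M_0]$, and $(m_i, x_i') \sim (m_{i+1}, x_{i+1})$ forces, via the relation $E$, that $x_{i+1} = S_X((t,u))$ whenever $x_i' = S_X((r,s))$ with $((m_i,(r,s)),(m_{i+1},(t,u)))\in E$. Because $E$ is finite, for each index transition there are only finitely many admissible pairs of boundary points. The only genuine freedom is in the \emph{first} segment (choice of $x_0'$ among finitely many boundary points $\sim$-linked out of $m_0$) and the \emph{last} segment (choice of $x_p$ similarly), plus the finitely many intermediate boundary-point choices. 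So for a fixed index sequence and fixed choices of all the intermediate and terminal boundary points, the score is a single determined number; minimizing over the finitely many such configurations yields a minimum, and minimizing over the finitely many index sequences yields the overall minimum. This attained minimum is the score of some $\sim$-alternating path, which is what the theorem asserts.

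The main obstacle I anticipate is the ``no revisited index'' / splicing-out argument, which is where the second hypothesis of the theorem does its real work: one must show carefully that any $\sim$-alternating path which leaves a copy $\{m\}\times X$ and later returns can be replaced by one that stays, without increasing the score. This is exactly the statement that there is no path from $m\otimes x$ to $m\otimes y$ with score below $\frac{1}{3}d_X(x,y)$ — i.e., no ``tunneling'' through neighboring copies shortcuts a within-copy distance. A secondary technical point is bookkeeping for the degenerate cases in Definition~\ref{simAlternatingPath} (first or last entry omitted when $x$ or $y$ already lies on the boundary, and the $p=1$ cases), but these are handled by the same finiteness argument. The closedness clause in ``quotient suitable'' is not strictly needed for attainment here since we have reduced to a finite minimization, though it guarantees that $0$-score paths genuinely correspond to $\sim$-related points.
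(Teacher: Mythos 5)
Your first two reductions match the paper's proof: Lemma~\ref{distancelemma} restricts attention to $\sim$-alternating paths, and the second hypothesis of the theorem lets you splice out a revisited index without increasing the score, so only finitely many index sequences $m_0,\dots,m_p$ need to be considered. (The paper is slightly more conservative here, splicing only when an index occurs three times and thus bounding the length by $2|M|$, but your stronger no-repeat version of the splicing argument is also valid and uses the hypothesis in the same way.)

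The gap is in the last step. You assert that ``for each index transition there are only finitely many admissible pairs of boundary points,'' and hence that the remaining minimization is over a finite set. This is false: $E$ is an equivalence relation on $M\times M_0$, and $M_0$ is the (uncountable) boundary of the unit square; in the intended instance (\ref{eq-E}), each generating clause of $E$ identifies an entire edge with an entire edge, parametrized by $r\in[0,1]$. So for a fixed index sequence the intermediate boundary points $x_i'$ and $x_{i+1}$ range over a continuum, and attainment of the infimum is not automatic. The paper closes this by a compactness argument: $S_X[M_0]$ is compact (it is the continuous image of the compact set $M_0$), the set of admissible configurations for a fixed index sequence is a closed --- here the closedness clause of ``quotient suitable,'' which you explicitly set aside as unnecessary, is exactly what is needed --- hence compact subset of a finite product of copies of $S_X[M_0]$, and the score is a continuous function on it, so its infimum is attained. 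Without some such topological input, your argument does not produce a path realizing the distance.
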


\proof
Let $m\otimes x$ and $n\otimes y$ in $L\otimes X$ be given.  If every path between them has score $2$, then $d_{L\otimes X}(m\otimes x,n\otimes y) = 2$.  Otherwise, consider an alternating path  from $(m,x)$ to $(n,y)$.
Lemma~\ref{distancelemma} shows that the distance from $m\otimes x$ to $n\otimes y$ is the 
infimum of the scores of alternating paths.
The point is that any path which is not alternating gives rise to a
alternating path with score that is at most the score of the original.

By Lemma~\ref{finitelymanypaths}, since $E$ is quotient suitable, there is a finite $K$ such that we only need to consider alternating paths $(m,x)=(m_0,x_0)\ldots (m_p,x_p')=(n,y)$ with $p\leq K$.  
Since there are only finitely many tuples from $L$ of length $\leq K+1$,
we need only show that for each $p\leq K$ and each fixed tuple
$m = m_0, m_1, \ldots, m_p = n$, the infimum of the scores of paths
involving this tuple (allowing the $x$'s to vary) is attained.

For $0\leq i \leq p-1$, let 
\[ C_i = (\set{m_i}\times S_{X}[M_0]) \times (\set{m_{i+1}} \times  S_{X}[M_0]).\]
Each $C_i$ is a compact set:
$M_0$ is compact, and $S_X$ is continuous (since it is a short map by Corollary \ref{Sxisashortmap}), so the image $S_{X}[M_0]$ is compact.
And thus, so is each set $\set{m_i}\times S_{X}[M_0]$.
So the following set $C^*$ is also compact:
\[
C^* =\{(m,x)\}\times C_0\times C_1\times\cdots\times C_{p-1}\times \{(n,y)\}
\]
Each element of $C^*$ is a tuple, and each gives us a path as in 
(\ref{L9}).  
In more detail, we can write an element of $C^*$ as
\begin{equation}\label{tuple}
((m_0,x_0),((m_0, x_0'),(m_1 , x_1)), \ldots, 
((m_{p-1} ,  x_{p-1}'),(m_p , x_p)),(m_p,x_p'))
\end{equation}
where $(m_0,x_0) = (m,x)$ and $(m_p,x_p') = (n,y)$
Again, the $m$'s are the ones
which we fixed above, and the $x$'s belong to $S_X[M_0]$.  

The path corresponding to this is the one with the same notation as in (\ref{L9}).
Moreover, every path as in (\ref{L9}) comes from 
an element of our set $C^*$.
Consider the function which takes an element of $C^*$ to the score of its corresponding path. 

This 
function is  continuous, so we have 
a continuous function $C^* \to \reals$.  Since $C^*$ is compact, this function
indeed attains its minimum value at some point, just as we want.
\endproof

\begin{definition}
Let $m\otimes x$ and $n\otimes y$ be points in $L\otimes X$.
A \emph{witness path} from  $m\otimes x$ to $n\otimes y$   is an alternating sequence of points
\[
 (m,x)=(m_0,x_0),(m_0,x_0')\sim (m_1,x_1),(m_1,x_1')\sim \ldots, (m_p,x_p), (m_p,x_p')
 \]
such that
\[d_{L\otimes X}(m\otimes x,n\otimes y) = \displaystyle{\sum_{k=0}^p} d_{M\times X}((m_k,x_k),(m_k,x_k')).\]
\label{definition-witness-path}
\end{definition}

 Our previous work shows us that the distances in $L\otimes X$  which are below the maximum distance $2$ are witnessed by a
single finite path, not just an infimum of an infinite set of paths.  This gives us the following: 

\begin{corollary}\label{quotientmetriccorollary}
Under the same assumption as in Theorem~\ref{quotientmetric},
\begin{enumerate}
    \item $L\otimes X$ is a metric space.
    \item For each $m\in L$, the function $x\mapsto m\otimes x:X\rightarrow L\otimes X$ is an injection.
    
   Moreover, for $x,y\in X$, $d_{L\otimes X}(m\otimes x,m\otimes y) = \frac{1}{3} d_X(x,y)$. 
\end{enumerate}
\end{corollary}

\proof
For the first assertion, assume that  $d_{L\otimes X}(m\otimes x,n\otimes y) =0$.  Then there exists a
     witness path  whose score is equal to $0$.
    The adjacent entries $(m_k,x_k), (m_{k+1}, x_{k+1})$  not related by 
    $\sim$ must then contribute $0$ to the score.  This only happens when $m_k = m_{k+1}$ and $x_k = x_{k+1}$.
    In this case, the entire path is a sequence completely related by $\sim$.
    So we have $(m,x)\sim(n,y)$.  Thus, 
 $m\otimes x=n\otimes y$. 
 
 The second point is immediate from the assumption in  
 Theorem~\ref{quotientmetric}.
\endproof

 As mentioned at the beginning of this section, we are not aiming to show that $L\otimes -$ is a functor (indeed, for $X\in \SquaMS$, $L\otimes X$ may not be a square set).  However, we will show here that for a morphism $f:X\rightarrow Y$ in $\SquaMS$, that we may define a function $L\otimes f:L\otimes X\rightarrow L\otimes Y$ by $f(m\otimes x) = m\otimes f(x)$ and that this is well-defined. We want to reiterate though that the function $L\otimes f$ generally will not have the properties required to be a morphism (e.g., it may not be a short map, and if there is a Square Set structure, might not preserve it).

\begin{lemma}\label{welldefinedmorphisms}  For $X,Y\in \SquaMS$ and $f:X\rightarrow Y$ a morphism in $\SquaMS$, the function $L\otimes f:L\otimes X\rightarrow L\otimes Y$ given by $L\otimes f(m\otimes x) = m\otimes f(x)$ is well defined. 
\end{lemma}

\begin{proof}
 Let $X,Y\in \SquaMS$ and $f:X\rightarrow Y$ be a morphism in $\SquaMS$.  Let $m\otimes x = n\otimes y$ in $L\otimes X$.  Since $m\otimes x=n\otimes y$, either $(m,x)=(n,y)$, in which case $f(m\otimes x) = f(n\otimes y)$, or $(m,x)\sim (n,y)$.  By Lemma~\ref{technicalquotientsuitablecorollary} (1), $x,y\in S_X[M_0]$, so $x = S_X((r,s))$ and $y= S_X((t,u))$ for some $(r,s),(t,u)\in M_0$.  Since $E$ does not depend on $X$, since $(m,S_X((r,s)))\sim (n,S_X((t,u)))$, we must have $(m,S_Y((r,s)))\sim (n,S_Y((t,u)))$ in $L\otimes Y$ as well. 

 Since $f$ is a morphism in $\SquaMS$, it preserves $S_X$, so $f(x) = S_Y((r,s))$ and $f(y) = S_Y((t,u))$.  Thus, $L\otimes f(m\otimes x) = m\otimes f(x) = m\otimes f(S_X((r,s))) = m\otimes S_Y((r,s)) \sim n\otimes S_Y((t,u)) =n\otimes f(S_X((t,u))) = n\otimes f(y)= L\otimes f(n\otimes y)$.  
    
\end{proof}

\subsection{Defining $M\otimes -$ for square metric spaces}
\label{section-M-otimes}

The last section dealt
with properties of the operation 
$X\mapsto L\otimes X$ which were presented in an abstract fashion.  Now it is time to be more concrete.   We 
take $L$ to be a particular set $M$
in this section, and also define a relation $E$
on $M\times M_0$ and show that it
is quotient suitable.  Then we will verify
the other hypotheses used in the results of the last section for this $M$ and $E$.
In a subsequent section, we do the same thing for a different set $N$ and a different relation $E$.

Let $M = \{0,1,2\}^2\setminus \{(1,1)\}$.  
Each $m=(i,j)\in M$ will indicate a (column, row) entry in the $3\times 3$ grid,
except that $(1,1)$ is missing.
 \[
   \begin{tikzpicture}[>=stealth',shorten >=1pt,auto,node distance=2cm,semithick,scale=1.2]
  \draw [help lines] (0,0) grid (3,3);
 \draw (.5,.5)  node {$(0,0)$};
  \draw (1.5,.5)  node {$(1,0)$};
   \draw (2.5,.5)  node {$(2,0)$};
    \draw (.5,1.5)  node {$(0,1)$};
   \draw (2.5,1.5)  node {$(2,1)$};
    \draw (.5,2.5)  node {$(0,2)$};
  \draw (1.5,2.5)  node {$(1,2)$};
   \draw (2.5,2.5)  node {$(2,2)$};
 \draw (1,1) -- (2,2);
 \draw (1,2) -- (2,1);
  \end{tikzpicture}
\]
The idea is that $m\in M$ will tell us where a scaled copy of an object $X$ in $\SquaMS$ will go.  
Our goal is to show that $X\mapsto M\otimes X$ is a functor on $\SquaMS$.  
We will use the results of the previous section to establish that $M\otimes X$ is a metric space. 

We will obtain $M\otimes X$ as a quotient space of $M\times X$.
Let $E$ be the equivalence relation generated by the following relation on $M\times M_0$ for  $r\in [0,1]$:
\begin{equation} \label{eq-E}
\begin{array}{lcl}
((0,0), (r,1))& E & ((0,1),(r,0))\\
((0,1), (r,1) )& E & ((0,2),(r,0))\\
((0,2),(1,r))& E & ( (1,2),(0,r))\\
((1,2),(1,r))& E &  ((2,2),(0,r))\\
\end{array}
\qquad
\begin{array}{lcl}
((2,2), (r,0))& E & ((2,1), (r,1))\\
((2,1), (r,0))& E & ( (2,0),(r,1))\\
((2,0), (0,r))& E & ((1,0), (1,r))\\
((1,0), (0,r))& E & ((0,0), (1,r))\\
\end{array}
\end{equation}
For any $X$ in $\SquaMS$ we 
 then define $\approx$ using (\ref{extendE}).  Finally, we take the 
equivalence relation generated by $\approx$
and call it $\sim$, just as in our more general work in the previous section.

\begin{lemma}\label{EisQuotientSuitable} $E$ on $M\times M_0$ is quotient suitable (Definition~\ref{quotientsuitabledefinition}).\end{lemma}

\begin{proof}
If we define $\kappa\subset (M\times D)^2$ by $\kappa((m,Y))=(n,Z)$ if and only if there are $y\in Y$ and $z\in Z$ such that $(m,y) E (n,z)$ and $m$ appears before $n$ in the lexicographic order on $M$, we see quickly that $\kappa$ satisfies the conditions in the definition of quotient suitable, and that the relation $E$ described in the definition coincides precisely with our relation $E$ on $M\times M_0$. 
\end{proof}
 
Next, we will see that $M\otimes X$ is a square set (and ultimately, a square metric space). 

Recall that square sets come with a function $S_X:M_0\to X$.  

\def\arraystretch{1.1}
Define $S_{M\otimes X}:M_0\to M\otimes X$ by
\begin{equation}\label{SMotimesXdef1} S_{M\otimes X}((0,r)) = 
\left\{ \begin{array}{lr}
      (0,0)\otimes S_X((0,3r)) & 0\leq r\leq \frac{1}{3} \\
      (0,1)\otimes S_X((0,3r-1)) & \frac{1}{3}\leq r\leq \frac{2}{3}\\
      (0,2)\otimes S_X((0,3r-2)) & \frac{2}{3}\leq r\leq 1\\
   \end{array}
   \right\}
\end{equation}

\begin{equation} 
\label{sisone}
S_{M\otimes X}((r,1)) = 
\left\{ \begin{array}{lr}
      (0,2)\otimes S_X((3r,1)) & 0\leq r\leq \frac{1}{3} \\
      (1,2)\otimes S_X((3r-1,1)) & \frac{1}{3}\leq r\leq \frac{2}{3}\\
      (2,2)\otimes S_X((3r-2,1)) & \frac{2}{3}\leq r\leq 1\\
   \end{array}
   \right\}
\end{equation}

\begin{equation} S_{M\otimes X}((1,r)) = 
\left\{ \begin{array}{lr}
      (2,0)\otimes S_X((1,3r)) & 0\leq r\leq \frac{1}{3} \\
      (2,1)\otimes S_X((1,3r-1)) & \frac{1}{3}\leq r\leq \frac{2}{3}\\
      (2,2)\otimes S_X((1,3r-2)) & \frac{2}{3}\leq r\leq 1\\
   \end{array}
   \right\}
\end{equation}

\begin{equation}\label{siszero} S_{M\otimes X}((r,0)) = 
\left\{ \begin{array}{lr}
      (0,0)\otimes S_X((3r,0)) & 0\leq r\leq \frac{1}{3} \\
      (1,0)\otimes S_X((3r-1,0)) & \frac{1}{3}\leq r\leq \frac{2}{3}\\
      (2,0)\otimes S_X((3r-2,0)) & \frac{2}{3}\leq r\leq 1\\
   \end{array}
   \right\}
\end{equation}
\def\arraystretch{1}

The idea is that each new side consists of $3$ copies of the corresponding side from $X$.  So far, $(M\otimes X, S_{M\otimes X})$ is a square \emph{set} (we know $S_{M\otimes X}$ is well-defined because of the identified segments in $\sim$ on $M\times X$).  
As before in (\ref{nottensor}), the metric $d_{M\times X}$ is
\begin{equation}\label{distancesInMotimesX}
     d_{M\times X}((m,x),(n,y)) = 
\left\{ \begin{array}{ll}
      \frac{1}{3}d_X(x,y) & \mbox{if $m = n$} \\
      2 & \mbox{otherwise}\\
   \end{array}
\right\}
\end{equation}

So the distance is scaled by $\frac{1}{3}$ inside of each copy of $X$, and otherwise, it is $2$ (the maximum distance).  Then we define the quotient metric on $M\otimes X$ as we did in the previous section, and so far, we know that it is a pseudo metric.   To apply Theorem~\ref{quotientmetric} and Corollary~\ref{quotientmetriccorollary} to show that $M\otimes X$ is a metric space, we need more details about the quotient metric.  In particular, we need to see that $d_{M\otimes X}(m\otimes x,m\otimes y) = \frac{1}{3} d_X(x,y)$ for $x,y\in X$ and $m\in M$. To achieve this, we will describe the paths in $M\otimes X$ in more detail.


\subsection{Classification of regular witness paths}

Let $X$ be a square metric space, and let $x, y\in M\otimes X$.
Recall the definition of a \emph{witness path}
in Definition~\ref{definition-witness-path}.
Such a path is 
an \emph{alternating path} from $x$ to $y$; it does not contain superfluous
visits to any entry, and its score is minimal over all paths from $x$ to $y$.

\begin{definition}\label{regularwitnesspath} We say that such a witness path is \emph{regular} if, in addition, its length (as a sequence of points)
is minimal over all witness paths from $x$ to $y$. \end{definition}  



It will be helpful to have a classification of regular witness paths.  But before that we will consider a few illustrative examples.

\begin{example}
\label{example-first-new}

Let $0\leq r\leq \frac{1}{3}$ and $\frac{1}{3}<s\leq \frac{2}{3}$,
and let $k\in[0,1]$.
Consider the following path in $M\times X$:
\begin{equation}\label{regularWP} 
\begin{array}{ll}
 & ((0,0),S_X((3r,0))), ((0,0),S_X((1,k))) \\
\sim & ((1,0),S_X((0,k))),((1,0),S_X((3s-1,0)))
\end{array}
\end{equation}

We check that as $k$ ranges over $[0,1]$,  
 the score of this path is minimized when $k = 0$, and the minimum score of such a path is $|r-s|$.

Here is the reasoning.
Let us draw a picture and introduce some notation.
In the figure below, $a$, $b$, and $k$ represent distances in $M\times X$ along
the evident line segments. 
\[
   \begin{tikzpicture}[>=stealth',shorten >=1pt,auto,node distance=2cm,semithick,scale=3]
  \node[cloud,
    fill = gray!10,
    minimum width = 5.2cm,
    minimum height = 3.92cm] (c) at (.5,.5) {};
   \node[cloud,
    fill = gray!10,
    minimum width = 5.2cm,
    minimum height = 3.92cm] (c) at (1.5,.5) {};   
\draw [help lines] (0,0) grid (2,1);
    \draw (.5,0) node (pointb) {$\bullet$};
            \draw (1.5,0) node (pointd) {$\bullet$};
         \draw (2,0) node {};
          \draw (1,.5) node (apex) {$\bullet$};
 \draw (.25,-.15)  node {$3 r$};
 \draw (.75,-.15)  node {$1-3r$};
  \draw (1.25,-.15)  node {$3s-1$};
 \draw (1.75,-.15)  node {};
  \draw (.7,.35)  node {$a$};
     \draw (1.05,.15)  node {$k$};
   \draw (1.3,.35)  node {$b$};
    \draw (1.75,.25)  node {};
   \draw[line width=2pt]   (.5,0) -- (1,.5);
    \draw [line width=2pt]  (1.5,0) -- (1,.5);
  \draw (0,-.5) node (pointa) {$((0,0), S_X((3r,0)))$};
  \path[->](pointa) [bend right] edge (pointb);
   \draw (2.5,-.5) node (pointc) {$((1,0), S_X((3s-1,0)))$};
     \path[->](pointc) edge [bend right] (pointd);
\draw (2.3,.8) node (top) {};     
 \draw (3.5,.8) node {$((0,0), S_X((1,k))) \sim ((1,0),S_X((0,k)))$};   
    \path[->](top) edge [bend right] (apex);
  \end{tikzpicture}
\]  
The path under discussion is shown.  It has score $a + b$.
Now the left endpoint of the path and the midpoint
have the same first component, $(0,0)$.
By (\ref{nottensor}), the distance between them is 
\[
\begin{array}{lcl}
a &  = &
d_{M\times X}(((0,0),S_X((3r,0))),((0,0),S_X((1,k)))) \\
& = & \onethird d_{X}(S_X((3r,0)),S_X((1,k)))\\
& \geq & \onethird - r + \onethird k.
\end{array}
\]
At the end we used the fact that
 $X$ is a square metric space: by ($\sqtwo$), 
 the distance above is at least the taxicab distance in the unit square between the 
 corresponding points,
 and this is $1-3r+k$.

Similar work shows us that $b \geq s-\onethird +\onethird k$.
Thus, the score of our path is $\geq s-r + \twothirds k$.   As a function of $k$, this is obviously minimized when $ k=0$.
When $k = 0$, the score is 
\[ \onethird |1-3r| + \onethird |3s-1| = \onethird ( |1-3r| + |3s-1|) = \onethird |1-3r + 3s -1|=|r-s|.\]
\end{example}

\begin{example}
    \label{example-second-new}

Let $0\leq r\leq \frac{1}{3}$ and $\frac{1}{3}<s\leq \frac{2}{3}$,
and note that
\[
\begin{array}{lcl}
S_{M\otimes X}((r,0))  & = &  (0,0)\otimes S_X((3r,0))\\ 
S_{M\otimes X}((s,0)) & = &  (1,0)\otimes S_X((3s-1,0)).
\end{array}
\]
Then we claim that  
\[d_{M\otimes X}(S_{M\otimes X}((r,0)), S_{M\otimes X}((s,0))) = |r - s|.\]
 \[
  \begin{tikzpicture}[>=stealth',shorten >=1pt,auto,node distance=2cm,semithick,scale=1.5]
  \draw [help lines]  grid (3,3);
    \draw (.3,0)  node (a) {$\bullet$};
        \draw (1.7,0)  node (b) {$\bullet$};
 \draw (1,1) -- (2,2);
\draw (1,2) -- (2,1);
\end{tikzpicture}
\qquad 
   \begin{tikzpicture}[>=stealth',shorten >=1pt,auto,node distance=2cm,semithick,scale=1.5]
  \draw [help lines]  grid (3,3);
    \draw (.3,0)  node (a) {$\bullet$};
        \draw (1.7,0)  node (b) {$\bullet$};
          \draw (1,.6)  node (c) {$\bullet$};   
\draw[line width=2pt] (.3,0) -- (1,.6);
\draw[line width=2pt] (1,.6) -- (1.7,0);
 \draw (1,1) -- (2,2);
\draw (1,2) -- (2,1);
\end{tikzpicture}
  \qquad  
  \begin{tikzpicture}[>=stealth',shorten >=1pt,auto,node distance=2cm,semithick,scale=1.5]
  \draw [help lines]  grid (3,3);
    \draw (.3,0)  node (a) {$\bullet$};
        \draw (1.7,0)  node (b) {$\bullet$};
\draw[line width=2pt] (.3,0) -- (.5,1);
\draw (.5,1) node {$\bullet$};
\draw[line width=2pt] (.5,1) -- (1,2);
\draw (1,2) node {$\bullet$};
\draw (2,2) node {$\bullet$};
\draw[line width=2pt] (1,2) -- (2,2);
\draw[line width=2pt] (2,2) -- (2,1);
\draw (2,1) node {$\bullet$};
\draw[line width=2pt] (2,1) -- (1.7,0);
\draw (1,1) -- (2,2);
\draw (1,2) -- (2,1);
\end{tikzpicture}
\]
Moreover, this same formula holds for points in the top (or bottom, or left, or right) edges of suitably neighboring squares
in $M\otimes X$.

Here is the reason.   The points involved are shown as on the left above.   To evaluate the distance in $M\otimes X$,
we return to $M\times X$ and consider paths between the points.  Since we want to minimize the score, by Lemma~\ref{distancelemma}, we can consider alternating paths.   
The most obvious such path would be as in the middle, where we add in a third point as shown.
Then the work we did in Example~\ref{example-first-new} shows 
that the score of such a path is at least $|r-s|$, and moreover that we can get a path with exactly this score
by taking the third point to be the corner.
But in this result, we need to consider other alternating paths besides this ``obvious one.'' 
To have a path of minimal score, we should not repeat points in the edges.  Indeed, we will see in our proof of Lemma \ref{basecaseforM} below, we cannot even repeat the elements of $M$.

One representative path would be the one shown on 
the right.  
In this path, the elements of $M$ (starting with the point in the bottom left) are
\[
(0,0), (0,1), (0,2), (1,2), (2,2), (2,1), (0,1)
\]
But the score of this path is  greater than the score of the path in the middle: each time one
crosses a square from side to side, the score adds $\onethird$ by $(\sqtwo)$.
So the score is at least $\frac{5}{3}$.
And $|r-s| \leq \twothirds$.

So in fact, the path (\ref{regularWP}) is a regular witness path, since it witnesses the distance, and we would not be able to obtain a shorter path since they are in different copies (so alternating path between them must have at least four entries). 
\end{example}

Now we will prove the conditions required to apply Theorem~\ref{quotientmetric}.
\begin{lemma}\label{basecaseforM} For any $x,y\in X$ and $m\in M$, for any path 
\begin{equation}
  \label{basecaseforMequation}  
(m,x)=(m_0,x_0), \ldots, (m_p,x_p) = (m,y),
\end{equation}
we have
\begin{equation}\label{basecaseforMequationtw}  
    \onethird d_X(x,y) \leq \displaystyle{\sum_{k=0}^{p-1}}d_{M\times X}((m_k,x_k),(m_{k+1},x_{k+1})).
\end{equation}   
\end{lemma}

\begin{proof}
 Fix $m$ throughout this proof.
By Lemma ~\ref{distancelemma},  since the path $(m,x),(m,y)$ has score $\leq \frac{2}{3} < 2$, we may assume that our path is an alternating path, 
and we write it as
$$(m,x)=(m_0,x_0),(m_0,x_0')\sim \ldots (m_{p-1},x_{p-1}')\sim (m_p,x_p),(m_p,x_p') = (m,y).$$
By Lemma~\ref{technicalpathlemma}, \ref{enteringonsamesidelemma}, and~\ref{theverylasttechnicalpathlemma}, we may also assume the following for $i, j$ such that $0\leq i<j\leq p$ :

\begin{itemize}
    \item If $x_i$ and $x_i'$ are on the same side of $S_X[M_0]$, then they are both corners,
    \item If $x_i'$ and $x_j$ are on the same side of $S_X[M_0]$, then at least one of them is a corner,
    
    \item 
    If $x_i$ and $x_j$ are on the same side of $S_X[M_0]$, then 
    $x_i'$ and $x_j'$ are \emph{not} on the same side of $S_X[M_0]$,
    \item 
    If  $x_i$ and $x_j'$ are on the same side of $S_X[M_0]$, then 
    $x_i'$ and $x_j$ are \emph{not} on the same side of $S_X[M_0]$.
\end{itemize}

We show by strong induction on the natural number $k\geq 1$ that for every 
path  as in (\ref{basecaseforMequation}) 
between points whose first coordinate is $m$, if 
$|\set{i : m_i = m}| = k$,  then the estimate in (\ref{basecaseforMequationtw}) holds.
So we fix $k\geq 1$, assume our result for numbers $<k$, and then show it for $k$.
We argue by cases on $k$.
If $k =1$, we have $p = 0$, and the path from $(m,x)$ to $(m,y)$ is just $(m,x),(m,y)$.  
Its length is $\frac{1}{3}d_X(x,y)$, by \eqref{distancesInMotimesX}.
When $k\geq 3$, the path in (\ref{basecaseforMequation}) has 
$m_{\ell} = m$ for some $0< \ell < p$.  
We cut this path into two subpaths, the part between $(m_0,x_0)$ and $(m_\ell, x_\ell)$,
 and the part from $(m_\ell, x_\ell)$ to $(m_p,x'_p)$. In both subpaths, the number of $j$ such that
 $m_j = m$ is $< k$.   
  So the 
induction hypothesis applies to the subpaths. 
By this and the triangle inequality, we show the desired inequality. 
\[
    \onethird d_X(x,y) \leq   \onethird d_X(x,x_\ell) +   \onethird d_X(x_\ell,y)
\leq    
    \displaystyle{\sum_{i=0}^{p-1}}d_{M\times X}((m_i,x_i),(m_{i+1},x_{i+1}))
\]

The remaining case is when $k = 2$.
Thus, we may assume that the only pairs $(m_i,x_i)$ in our path with $m_i = m$
are $m_0$ and $m_p$. 
By the conditions listed at the beginning of the proof, $x_0'$ and $x_p$ can only be on the same side of $S_X[M_0]$ if at least one of them is a corner.  However, by examining such a path, we can use a triangle inequality argument with ($\sqone$) to shorten the path.

As an example, we take $m = (1,0)$
and consider the following path (where $x,y$ are arbitrary elements of $X$, not necessarily in $S_X[M_0]$):

\[\begin{array}{rcl}
((1,0),x),((1,0),S_X((1,1)))&\sim& ((2,1),S_X((0,0))), \\
((2,1), S_X((\frac{1}{2},0)))&\sim &((2,0),S_X((\frac{1}{2},1))),\\ ((2,0),S_X((0,\frac{1}{2})))&\sim& ((1,0),S_X((1,\frac{1}{2}))), ((1,0),y)\\
\end{array}\]

\[\begin{tikzpicture}[>=stealth',shorten >=1pt,auto,node distance=2cm,semithick,scale=1.5]
  \draw [help lines]  grid (3,3);

    \draw (1.2, 0.2) node (f) {$y$};    
    \draw (1.5, 0.2) node (a) {$\bullet$};

    \draw[line width=2pt] (1.5,0.2) -- (2,0.5);
    \draw(2,0.5) node (b) {$\bullet$};
    \draw[line width =2pt] (2,0.5) -- (2.5, 1);
    \draw(2.5,1) node (c) {$\bullet$};
    \draw[line width=2pt] (2.5,1) -- (2,1);
    \draw(2,1) node (d) {$\bullet$};
    \draw[line width=2pt] (2,1)-- (1.5,0.8);
    \draw(1.5,0.8) node (e) {$\bullet$};
    \draw(1.2,0.8) node (g) {$x$};
\end{tikzpicture}
\qquad\begin{tikzpicture}[>=stealth',shorten >=1pt,auto,node distance=2cm,semithick,scale=1.5]
  \draw [help lines]  grid (3,3);

    \draw (1.2, 0.2) node (f) {$y$};    
    \draw (1.5, 0.2) node (a) {$\bullet$};

    \draw[line width=2pt] (1.5,0.2) -- (2,0.5);
    \draw(2,0.5) node (b) {$\bullet$};
    \draw[line width=2pt] (2,0.5) -- (2,1);
    \draw(2,1) node (d) {$\bullet$};
    \draw[line width=2pt] (2,1)-- (1.5,0.8);
    \draw(1.5,0.8) node (e) {$\bullet$};
    \draw(1.2,0.8) node (g) {$x$};
\end{tikzpicture}
\]
By ($\sqone$), 
\[\begin{array}{rl} &d_{M\times X}(((2,1),S_X((0,0))),((2,1),S_X((\frac{1}{2},0)))) \\
= &d_{M\times X}(((2,0),S_X((0,1))),((2,0),S_X((\frac{1}{2},1))))\\
\end{array}\]
Next, by the triangle inequality in $X$ and the definition  of $d_{M\times X}$, 
\[\begin{array}{rl}
& d_{M\times X}(((2,0),S_X((0,1))),((2,0),S_X((0,\frac{1}{2}))))\\
\leq & d_{M\times X}(((2,0),S_X((0,1))),((2,0),S_X((\frac{1}{2},1))))\\
+&  d_{M\times X}(((2,0), S_X((\frac{1}{2},1))),((2,0),S_X((0,\frac{1}{2}))))\\
\end{array}\]
So we have the path pictured on the right above and shown below:
\[\begin{array}{rcl}
((1,0),x),((1,0),S_X((1,1)))&\sim& ((2,0),S_X((0,1))), \\
((2,0),S_X((0,\frac{1}{2})))&\sim& ((1,0),S_X((1,\frac{1}{2}))), ((1,0),y)\\
\end{array}\]
It has fewer entries and score at most that of the original. 
And then we use the same type of argument again.  By ($\sqone$), 
\[\begin{array}{rl}
& d_{M\times X}(((2,0),S_X((0,1))),((2,0),S_X((0,\frac{1}{2}))))\\
= & d_{M\times X}(((1,0),S_X((1,1))),((1,0),S_X((1,\frac{1}{2}))))\\
\end{array}\]
So using the triangle inequality twice, we get that $d_{M\times X}(((1,0),x),((1,0),y))$ is less than or equal to the score of the path, as required.  

We also need to consider paths which enter and exit the $m$-copy of $X$ on (strictly) different sides.  Such a path will have at least as many entries as one of the two following possibilities (up to rotation): 
 \[
   \begin{tikzpicture}[>=stealth',shorten >=1pt,auto,node distance=2cm,semithick,scale=1.5]
  \draw [help lines]  grid (3,3);
    \draw (.3, .6)  node (a) {$\bullet$};
        \draw (.6,.3)  node (b) {$\bullet$};
          \draw (1,.5)  node (c) {$\bullet$};   
\draw[line width=2pt] (.3,.6) -- (.3,1);
\draw (.3,1) node (d) {$\bullet$};
\draw[line width=2pt] (.3, 1) -- (.6, 2);
\draw (.6, 2) node (d) {$\bullet$};
\draw (1,2.4) node (e) {$\bullet$};
\draw[line width=2pt] (.6,2) -- (1,2.4);
\draw (2,2.5) node (f) {$\bullet$};
\draw[line width=2pt] (1,2.4) -- (2,2.5);
\draw (2.4,2) node (g) {$\bullet$};
\draw[line width=2pt] (2,2.5) -- (2.4, 2);
\draw (2.2, 1) node (h) {$\bullet$};
\draw[line width=2pt] (2.4,2) -- (2.2,1);
\draw (2,.7) node (i) {$\bullet$};
\draw[line width=2pt] (2.2, 1) -- (2, .7);
\draw[line width=2pt] (2,.7) -- (1,.5);
\draw[line width=2pt] (1,.5) -- (.6, .3);
 \draw (1,1) -- (2,2);
\draw (1,2) -- (2,1);
\end{tikzpicture}
  \qquad  
     \begin{tikzpicture}[>=stealth',shorten >=1pt,auto,node distance=2cm,semithick,scale=1.5]
  \draw [help lines]  grid (3,3);
    \draw (1.3, .2)  node (a) {$\bullet$};
        \draw (1.6,.3)  node (b) {$\bullet$};
          \draw (1,.5)  node (c) {$\bullet$};   

\draw (.3,1) node (d) {$\bullet$};
\draw[line width=2pt] (.3, 1) -- (.6, 2);
\draw (.6, 2) node (d) {$\bullet$};
\draw (1,2.4) node (e) {$\bullet$};
\draw[line width=2pt] (.6,2) -- (1,2.4);
\draw (2,2.5) node (f) {$\bullet$};
\draw[line width=2pt] (1,2.4) -- (2,2.5);
\draw (2.4,2) node (g) {$\bullet$};
\draw[line width=2pt] (2,2.5) -- (2.4, 2);
\draw (2.2, 1) node (h) {$\bullet$};
\draw[line width=2pt] (2.4,2) -- (2.2,1);
\draw (2,.7) node (i) {$\bullet$};
\draw[line width=2pt] (2.2, 1) -- (2, .7);
\draw[line width=2pt] (.3,1) -- (1,.5);
\draw[line width=2pt] (1,.5) -- (1.3,.2);
\draw[line width=2pt] (1.6,.3) -- (2, .7);

 \draw (1,1) -- (2,2);
\draw (1,2) -- (2,1);
\end{tikzpicture}
\]

In both of these cases, due to $(\sqtwo)$, each of these paths will have score $\geq 1$, since each time we have a segment which goes between opposite sides of a copy of $X$, we contribute at least $\frac{1}{3}$ to the score.  So the score of a path of this form will be at least $1 \geq\frac{1}{3} d_X(x,y)$.  
\end{proof}

We rephrase the result as follows: 

\begin{corollary}\label{basecaseforregularwitnesspaths}
    For any $x,y\in X$ and $m\in M$, $d_{M\otimes X}(m\otimes x,m\otimes y) = \frac{1}{3}d_X(x,y)$.  
\end{corollary}

\begin{proof} By Lemma~\ref{basecaseforM} and the fact that $(m,x),(m,y)$ is a path.\end{proof}

\begin{corollary}\label{MotimesXisametricspace}

    $M\otimes X$ is a metric space.
\end{corollary}
\begin{proof}
   This follows from Corollary~\ref{quotientmetriccorollary}.
\end{proof}

Now we turn our attention to understanding the quotient metric in more detail. 

\begin{theorem}
\label{theorem-regular-witness-paths}
Let $X$ be a square metric space.  Let $x,y\in M\otimes X$.
Then there exists a regular witness path between $x$ and $y$, and every regular witness path between $x$ and $y$ looks like one of the paths
shown in Figure~\ref{figure-regular-witness-paths}.
\end{theorem}

\begin{figure}[t]
\[
  \begin{tikzpicture}[>=stealth',shorten >=1pt,auto,node distance=2cm,semithick,scale=1.5]
  \draw [help lines] (0,4) grid (1,5);
 \draw (.3,4.6)  node  (a1) {$\bullet$};
  \draw (.5,4.8)  node   {$x$};
  \draw (.7,4.2)  node  (a2) {$\bullet$};
  \draw (.5,4.1)  node   {$y$};
\draw[line width=2pt] (.3,4.6) -- (.7,4.2);
  \draw [help lines] (2,3) grid (3,5);
 \draw (2.5,4.5)  node (a3)  {$\bullet$};
  \draw (2.3, 4.5) node {$x$};
  \draw (2.1,3.1)  node  (a4) {$\bullet$};
  \draw (2.3, 3.1) node {$y$};  
  \draw (2.8,4) node (a5) {$\bullet$};
\draw[line width=2pt] (2.5,4.5) --(2.8,4);
\draw[line width=2pt] (2.1,3.1) -- (2.8,4);  
  \draw [help lines] (4,3) grid (5,5);  
    \draw [help lines] (5,4) grid (6,5);
   \draw (4.5,3.5)  node  (k1)  {$\bullet$}; 
   \draw (4.65,3.4) node {$y$};
      \draw (5.5,4.5)  node (k2)   {$\bullet$};  
      \draw (5.7, 4.5) node {$x$};
   \draw (5,4) node    (k3) {$\bullet$};
\draw[line width=2pt](4.5,3.5) -- (5,4);
\draw[line width=2pt] (5,4) -- (5.5,4.5);
\draw [help lines] (7,2) grid (10,5);  
   \draw (7.2,4.6)  node (h1)   {$\bullet$};
   \draw (7.2, 4.8) node {$x$};
  \draw (8.5,2.5)  node (h2)  {$\bullet$};
  \draw (8.5,2.3) node {$y$};  
    \draw (8,3)  node (h3)  {$\bullet$};  
     \draw (8,4)  node (h4)  {$\bullet$};  
\draw[line width=2pt] (8.5,2.5) -- (8,3); 
\draw[line width=2pt] (8,3) --(8,4); 
\draw[line width=2pt] (8,4) -- (7.2,4.6); 
\draw (8,3) -- (9,4);
\draw (9,3) -- (8,4);
  \draw (9,4)  node (h5)  {$\bullet$};
   \draw (9,3)  node (h6)  {$\bullet$};
 \draw[line width=2pt] (9,4) --(9,3);
 \draw[line width=2pt] (9,3) -- (8.5,2.5);
\draw[line width=2pt] (8,4) -- (9,4);
  \end{tikzpicture}
\]

\[
  \begin{tikzpicture}[>=stealth',shorten >=1pt,auto,node distance=2cm,semithick,scale=1.5]
  \draw [xshift=-0.5cm,help lines] (4,0) grid (7,3);    
    \draw (4.1,2.7)  node (n2)   {$\bullet$};
    \draw (4.3, 2.7) node {$x$};   
      \draw (4.2,0.4)  node  (n1)   {$\bullet$};
      \draw (4.2,0.2) node {$y$}; 
\draw (4.3,1) node (n3) {$\bullet$};
   \draw (4.3,2) node (n4) {$\bullet$}; 
\draw[line width=2pt](4.2,0.4)  --  (4.3,1);
\draw[line width=2pt]  (4.3,1) -- (4.3,2);
\draw[line width=2pt] (4.3,2) -- (4.1,2.7);
\draw (4.5,1) -- (5.5,2);
     \draw (5.5,1) -- (4.5,2);      
  \draw [help lines] (7,0) grid (10,3);    
    \draw (8.6,2.5)  node (m2)   {$\bullet$};
    \draw (8.6, 2.7) node {$x$};
      \draw (8.4,0.5)  node  (m1)   {$\bullet$};
      \draw (8.4,0.3) node {$y$}; 
  \draw (8,1) node (m3) {$\bullet$};
   \draw (8,2) node (m4) {$\bullet$}; 
\draw[line width=2pt] (8.4,0.5) -- (8,1);
\draw[line width=2pt] (8,1) -- (8,2);
\draw[line width=2pt] (8,2) -- (8.6,2.5);
\draw (8,1) -- (9,2);
      \draw (9,1) -- (8,2);
   \draw [xshift=0.5cm,help lines] (10,0) grid (13,3);
   \draw (11,2.5)  node  (j2)   {$\bullet$};
    \draw (10.8, 2.5) node {$x$};   
      \draw (13.1,0.6)  node  (j1)   {$\bullet$};
      \draw (13.3,0.6) node {$y$};  
           \draw (12.5,0.2)  node  (j3)   {$\bullet$};  
     \draw (11.5,2)  node  (j5)   {$\bullet$}; 
   \draw (11.5,1) node (j4) {$\bullet$};   
 \draw[line width=2pt] (13.1,0.6) -- (12.5,0.2);  
 \draw[line width=2pt] (12.5,0.2) -- (11.5,1); 
 \draw[line width=2pt] (11.5,1) -- (11.5,2);
  \draw[line width=2pt](11.5,2) -- (11,2.5);
  \draw (11.5,1) -- (12.5,2);
    \draw (12.5,1) -- (11.5,2);
 \end{tikzpicture}
\]
\caption{Regular witness paths in $M\otimes X$.
\label{figure-regular-witness-paths}}
\end{figure}

\begin{remark}\label{remark-tensoring}
Theorem~\ref{theorem-regular-witness-paths} is stated somewhat loosely,
   but we believe that a patient reader could make it completely precise, 
   and also that it is more comprehensible to state it the way we do.
  Here is a bit more about what we mean.   We are aiming at a classification
  of all of the regular witness paths between pairs of points in $M\otimes X$.
  The first case is where $x$ and $y$ are in the same copy of $X$;
  that is, there is some $m\in M$ such that $x,y\in m\otimes X :=\{m\otimes x:x\in X\}$, which is what we proved in Corollary~\ref{basecaseforregularwitnesspaths}.
  In this case, our result is that every regular witness path stays inside
  $m\otimes X$.  The second case is when $x$ and $y$ are in adjacent copies.
  In this case, our result is very similar to what was shown in Example~\ref{example-second-new}.
The next case is when $x$ and $y$ lie in diagonally connected squares, such as $(0,1)\otimes X$
and $(1,0)\otimes X$.  In this case, the result is that the only regular witness paths are 
the ones that go through the shared corner, as shown.

Continuing, we have pairs of points in squares related by ``a move of the chess knight''.
In this case, there are two possible ``shapes'' that a regular witness path could have,
indicated by the two paths from $x$ to $y$.  Both go through the upper-left 
corner of the ``hole'',
but they differ after that.  For different $X$, $x$, and $y$, a regular witness path might 
look like one or the other of these paths; in general, we do not have enough information to tell.
And in some sense, we do not need to tell.   We only need a classification of what the 
minimal witness paths look like, and this is the topic of our theorem.

Then we have the case of squares on opposite sides, such as $(0,0)\otimes X$ and $(2,0)\otimes X$,
or $(1,0)\otimes X$ and $(2,0)\otimes X$.  The interesting point here is that this case
splits into two subcases, depending on whether one must ``navigate around the central hole''
due to the fact that $(1,1)\notin X$.
 Finally, we have the case of squares on opposite corners: 
 $(0,0)\otimes X$ and $(2,2)\otimes X$, or  $(2,0)\otimes X$ and $(0,2)\otimes X$.
 In this case, there is no need to indicate another path around the hole, since we 
 are only working ``up to rotation/reflection'', and the other path is a rotation of the path shown.
 
\end{remark}

\begin{proof}
 First note that for any $m\otimes x$ and $n\otimes y$ in $M\otimes X$, there exists an alternating path between then by the way that $E$ is defined. Consider such a path, 

$$(m,x)=(m_0,x_0),(m_0,x_0')\sim\ldots \sim (m_p,x_p),(m_p,x_p')=(n,y).$$

We will show by induction on $p$ that there is a regular witness path of one of the forms indicated in Figure~\ref{figure-regular-witness-paths} whose score is less than or equal to that of the path.

For $p = 0$, by Remark \ref{length0path}, $(m,x)=(n,y)$, or $(m,x)=(m_0,x_0),(m_0,x_0')=(n,y)$, so by Corollary~\ref{basecaseforregularwitnesspaths}, the regular witness path is $(m,x),(m,y)$, which is the first entry in Figure~\ref{figure-regular-witness-paths}.

 Let us assume our result for $p$ and prove it for $p+1$.  

Before we do this, we will check by inspection that in each of the cases in the figure, if $x$ or $y$ is in $m\otimes S_X[M_0]$ (that is, is on the boundary of its copy of $X$), and we add one more point in an adjacent copy, then we obtain another case from Figure~\ref{figure-regular-witness-paths}, or we can obtain a path with a smaller score by replacing it with one of the other cases in Figure~\ref{figure-regular-witness-paths}. 

For example, in the first entry in the figure, if $y = m\otimes (r,0)$, that is, it is on the bottom boundary of its copy of $X$, and we add on $z$ in the copy below, then we get an instance of the second entry in Figure~\ref{figure-regular-witness-paths}.  

For a more involved example, in the bottom left entry, suppose $y=(0,0)\otimes (1,\frac{1}{2})$, so it is on the right boundary of its copy of $X$.  Then suppose that we add on another entry $z=(1,0)\otimes (\frac{1}{2},\frac{1}{2})$.

\[
  \begin{tikzpicture}[>=stealth',shorten >=1pt,auto,node distance=2cm,semithick,scale=1.5]
  \draw [xshift=-0.5cm,help lines] (4,0) grid (7,3);    
    \draw (4.1,2.7)  node (n2)   {$\bullet$};
    \draw (4.3, 2.7) node {$x$};   
      \draw (4.5,0.4)  node  (n1)   {$\bullet$};
      \draw (4.2,0.2) node {$y$}; 
      \draw (5,0.4) node (n5) {$\bullet$};
      \draw (5,0.2) node {$z$};
\draw (4.3,1) node (n3) {$\bullet$};
\draw[line width=2pt] (4.5,0.4)  --(5,0.4);
   \draw (4.3,2) node (n4) {$\bullet$}; 
\draw[line width=2pt] (4.5,0.4) -- (4.3,1);
\draw[line width=2pt] (4.3,1) -- (4.3,2);
\draw[line width=2pt] (4.3,2) -- (4.1,2.7);
\draw (4.5,1) -- (5.5,2);
     \draw (5.5,1) -- (4.5,2);      
  \draw [help lines] (7,0) grid (10,3);    
    \draw (7.6,2.7)  node (m2)   {$\bullet$};
    \draw (7.8, 2.7) node {$x$};
      \draw (8.4,0.5)  node  (m1)   {$\bullet$};
      \draw (8.4,0.3)node {$z$}; 
  \draw (8,1) node (m3) {$\bullet$};
   \draw (8,2) node (m4) {$\bullet$}; 
\draw[line width=2pt](8.4,0.5) -- (8,1);
\draw[line width=2pt] (8,1) -- (8,2);
\draw[line width=2pt] (8,2) -- (7.6,2.7);
\draw (8,1) -- (9,2);
      \draw (9,1) -- (8,2);
 \end{tikzpicture}
\]

We can replace the path on the left with the path on the right, and using an argument similar to that in Example~\ref{example-second-new}, we see that this has a smaller or equal score.  The path on the right is an instance of one of the paths in the top right entry of Figure~\ref{figure-regular-witness-paths}.  The rest of the cases are similar.

From here, if we consider a path with $p+1$ entries and remove one entry, by the induction hypothesis, we can replace it with one of the paths from Figure~\ref{figure-regular-witness-paths} without increasing the score.  Then when we add it back, either we obtain one of the paths from the figure, or, as we argued, we can find a path from Figure~\ref{figure-regular-witness-paths} with a shorter or equal score. 
\end{proof}

\subsection{$M\otimes X$ as a functor on square metric spaces}

We restate Corollary~\ref{basecaseforregularwitnesspaths} with a little more information which will be useful later on when we apply $M\otimes-$ $k$ many times. 
We will always denote this repeated application by $M^k\otimes-$, and similarly for $N$, defined later on in Section \ref{section-N-otimes}. 

\begin{corollary}\label{distanceinonecopyM}  For an object $X$ in $\SquaMS$, $m\in M$, and $x,y\in X$, 
\[ d_{M\otimes X}(m\otimes x,m\otimes y ) = \onethird d_X(x,y).\]
In particular, $d_{M\otimes X}(m\otimes x, m\otimes y) \leq \frac{2}{3}$, and
for all $k$,
\[ d_{M^k\otimes X}(m_1\otimes\ldots \otimes m_k\otimes x,m_1\otimes\ldots\otimes m_k\otimes y) \leq \tfrac{2}{3^k}.
\]
\end{corollary}

Corollary~\ref{MotimesXisametricspace} tells us that
for every object $X$ in $\SquaMS$, 
$M\otimes X$ is a metric space, and we have defined $S_{M\otimes X}:M_0\rightarrow M\otimes X$ ((\ref{SMotimesXdef1})-(\ref{siszero})).    
We next  check that $S_{M\otimes X}$ satisfies the non-degeneracy requirements.

\begin{lemma}\label{nondegen} $M\otimes X$ with $S_{M\otimes X}$ satisfies ($\sqone$) and ($\sqtwo$). 
\end{lemma}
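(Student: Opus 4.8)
The plan is to verify the two conditions $(\sqone)$ and $(\sqtwo)$ directly from the definition of $S_{M\otimes X}$ and the formula for the quotient metric established in Theorem~\ref{quotientmetric} and Corollary~\ref{quotientmetriccorollary}.

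\medskip

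For $(\sqone)$, I would fix a side of $M_0$, say the bottom side $\{(r,0):r\in[0,1]\}$, and two points $(r,0),(s,0)$ on it with $r\le s$. The definition of $S_{M\otimes X}$ breaks this side into three subintervals, each of which lands in a separate copy $(0,0),(1,0),(2,0)$ of $X$, scaled by $\frac13$, along the bottom edge of that copy. If $(r,0)$ and $(s,0)$ fall in the same subinterval, then by Corollary~\ref{quotientmetriccorollary}(2) the distance is $\frac13 d_X(S_X((3r-\text{shift},0)),S_X((3s-\text{shift},0))) = \frac13\cdot|3s-3r| = |s-r|$, using $(\sqone)$ for $X$ itself. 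If they fall in different subintervals, I would exhibit the $\sim$-alternating path that runs along the bottom edges of the intervening copies, passing through the glued corner points; its score is a sum of terms $\frac13\cdot(\text{length within each copy})$, which telescopes to $|s-r|$, and I would argue (again via Theorem~\ref{quotientmetric} and the region contact graph, as in Lemma~\ref{distanceinonecopyFirst}) that no shorter path exists since any detour leaves the bottom row and re-enters it only at a greater cost. The vertical sides are handled symmetrically.

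\medskip

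For $(\sqtwo)$, given $(r,s),(t,u)\in M_0$ I must show $d_{M\otimes X}(S_{M\otimes X}((r,s)),S_{M\otimes X}((t,u)))\ge |r-t|+|s-u|$. By Theorem~\ref{quotientmetric} the distance is the score of some $\sim$-alternating path; I would track the "coordinates" of the copies that the path visits in the $3\times 3$ grid together with the within-copy displacement, and show that the total score dominates the $\ell^1$-displacement of the images in $U_0$. Concretely, each segment of the path either stays in one copy $\{m\}\times X$ — contributing at least $\frac13$ times the $\ell^1$-displacement of its endpoints in $X$, by $(\sqtwo)$ for $X$ — or crosses a glue, contributing $0$ but not changing the position in $U_0$. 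Summing, the score is at least the $\ell^1$-distance between the two boundary points as points of $U_0$, which is exactly $|r-t|+|s-u|$. I would phrase this as: the natural "coordinate" map $M\times X\to U_0$ sending $(m,x)$ to $\shrink(m)+\frac13(\text{coordinates of }x)$ on boundary points is $\frac13$-Lipschitz-below with respect to $\ell^1$ and respects $\sim$, so it is $1$-Lipschitz-below from $(M\otimes X, d)$ to $(U_0,d_{\Taxicab})$ on the image of $S_{M\otimes X}$, and it sends $S_{M\otimes X}((r,s))$ to $(r,s)$.

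\medskip

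The main obstacle I anticipate is the "no shorter path" direction of $(\sqone)$: showing the along-the-edge path is genuinely optimal requires the same kind of case analysis on simple cycles in the region contact graph that appears in Lemma~\ref{distanceinonecopyFirst}, and one must be careful that a path leaving and re-entering the boundary row cannot gain. I expect this to reduce to the observation, already essentially available from Lemma~\ref{distanceinonecopyFirst} and the coordinate-projection argument of the previous paragraph, that the $\ell^1$-projection to $U_0$ gives the lower bound $|s-r|$ which matches the upper bound, so the two halves of the proof actually dovetail — $(\sqtwo)$'s projection argument supplies the lower bound needed to finish $(\sqone)$.
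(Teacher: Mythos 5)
Your proposal is correct, and for the lower bounds it takes a genuinely different route from the paper. The paper proves $(\sqone)$ by explicitly identifying the geodesic between two boundary points in adjacent (or non-adjacent) copies — arguing via a small triangle-inequality diagram that the optimal crossing point of the shared edge is the corner — and proves $(\sqtwo)$ by exhibiting a claimed-shortest path through at most two intermediate gluing points and computing its length. You instead obtain both lower bounds at once from a single projection argument: the coordinate map $(m,x)\mapsto \shrink(m)+\frac{1}{3}(\mbox{coordinates of }x)$ respects $\sim$ on boundary points, and by $(\sqtwo)$ in $X$ every within-copy segment of a $\sim$-alternating path (whose endpoints are necessarily in $S_X[M_0]$) contributes at least the $d_{\Taxicab}$-displacement of its image in $U_0$, so the score of any path dominates the taxicab distance between the images of its endpoints; this gives $(\sqtwo)$ directly and, since the along-the-edge path achieves $|s-r|$, it also closes $(\sqone)$ without any analysis of which path is actually shortest. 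Your argument is essentially Proposition~\ref{distanceonedgesforM} combined with the shortness computation inside Lemma~\ref{U0injective}, moved earlier; it buys a more uniform treatment that avoids the paper's case-by-case identification of geodesics (and the somewhat hand-waved ``by examining cases, a shortest path will be of the form pictured''), at the cost of having to verify up front that the eight relations in $E$ are compatible with the coordinate map — a routine check the paper defers to the well-definedness of $\alpha_N$. There is no circularity: everything you invoke (Lemma~\ref{distanceinonecopyFirst}, Theorem~\ref{quotientmetric}, Corollary~\ref{quotientmetriccorollary}) is established before this lemma and does not depend on it.
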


\proof
To show ($\sqone$), without loss of generality, we will examine $S_{M\otimes X}((r,0))$
and  $S_{M\otimes X}((s,0))$  for $r,s\in [0,1]$ (since all of the sides will behave the same way).  
We show \begin{equation}\label{rs}
d_{M\otimes X}(S_{M\otimes X}((r,0)),S_{M\otimes X}((s,0))) = |s-r|.\end{equation}

First, suppose that $S_{M\otimes X}((r,0))$ and $S_{M\otimes X}((s,0))$ are in the same copy of $X$.
(This means that there is at least one $m\in M$ 
such that these points belong to $m\otimes X$.
 Our result follows immediately from 
 Corollary~\ref{distanceinonecopyM} 
and (\ref{distancesInMotimesX}).  

Next, suppose that $S_{M\otimes X}((r,0))$ and $S_{M\otimes X}((s,0))$ are in adjacent copies of $X$.  
One way that this could happen would be when
$0\leq r\leq \frac{1}{3}$ and $\frac{1}{3}<s\leq \frac{2}{3}$.  
In this case,  Example~\ref{example-second-new}
shows that the distance is $|r-s|$. 
The details in all other cases are similar, and we omit them.

It remains to check that this holds for $S_{M\otimes X}((r,0))$ and $S_{M\otimes X}((s,0))$ in non-adjacent copies of $X$ (that is, $0\leq r\leq \frac{1}{3}$ and $\frac{2}{3}\leq s\leq 1$).
We note again that we have a path with length $|s-r|$ via the bottom corners of the $(1,0)$ copy of $X$. 
Another option that stays ``on the bottom'' is shown below:
\[
\begin{tikzpicture}[>=stealth',shorten >=1pt,auto,node distance=2cm,semithick,scale=1.5]
  \draw [help lines]  grid (3,3);
    \draw (.3,0)  node (a) {$\bullet$};
        \draw (2.7,0)  node (b) {$\bullet$};
\draw[line width=2pt] (.3,0) -- (1,1);
\draw (1,1) node {$\bullet$};
\draw[line width=2pt] (1,1) -- (2,1);
\draw (2,1) node {$\bullet$};
\draw[line width=2pt] (2,1) -- (2.7,0);
\draw (1,1) -- (2,2);
\draw (1,2) -- (2,1);
\end{tikzpicture}
\]
But then an argument using the fact that $X$ satisfies ($\sqtwo$)
shows that the score on the path shown above is at least as large as the score mentioned
above, $|r-s|$. 

If we go the other way around $M\otimes X$, our path will have score greater than $1$ because of the non-degeneracy requirement, and using a triangle inequality argument similar to the adjacent copies case, we see that this is minimized by going through the corners. 

Now to check ($\sqtwo$), let $(r,s)$ and $(t,u)$ in $M_0$ be given, and consider a 
witness path between $S_{M\otimes X}((r,s))$ and $S_{M\otimes X}((t,u))$.  First note that each pair of entries in the path contributing positively to the score will be on the sides of a copy of $X$, so we can take advantage of ($\sqtwo$) in $X$.  We will show that the sum of horizontal and vertical components of each entry of our path will be at least the sum of the horizontal and vertical components of the distance between $(r,s)$ and $(t,u)$, and thus, our distance will be bounded below by the taxicab metric. 

The different cases for relative placement of $(r,s)$ and $(t,u)$ are similar, so we will examine $S_{M\otimes X}((0,s))$ and $S_{M\otimes X}((r,0))$ with $\frac{1}{3}<r, s< \frac{2}{3}$ in detail.  

 \[
   \begin{tikzpicture}[>=stealth',shorten >=1pt,auto,node distance=2cm,semithick,scale=1.5]
  \draw [help lines]  grid (3,3);
 \draw (1.5,0)  node (a)  {$\bullet$};
  \draw (1,.5)  node {$\bullet$};
    \draw (.5,1)  node {$\bullet$};
   \draw (0,1.5)  node  (b) {$\bullet$};
     \draw (-1,1.5)  node {$S_{M\otimes X}((0,s))$};
  \draw (1.5,-.5)  node {$S_{M\otimes X}((r,0))$};
  \draw (0.75, 1.2) node {$A$};
   \draw (1.3, 0.5) node {$B$};
\draw[line width=2pt] (1.5,0) -- (0,1.5);
  \end{tikzpicture}
\]  

By examining cases, we can show that a shortest path will be of the form pictured (though we may have $A =B = (0,0)\otimes S_X((1,1))$).  Then by ($\sqtwo$) in $X$, we see that   
its length is
\[d_{M\otimes X}(S_{M\otimes X}((0,s)), A) + d_{M\otimes X}(A,B) + d_{M\otimes X}(B,S_{M\otimes X}((r,0))).\]
Note that $S_{M\otimes X}((0,s)) = (0,1)\otimes S_X((0,3s-1))$ and $S_{M\otimes X}((r,0)) = (1,0)\otimes S_X((3r-1,0))$.  Let 
\[
\begin{array}{lclcl}
A & =&  (0,1)\otimes S_X((t,0)) & = & (0,0)\otimes S_X((t,1))\\
B & =& (0,0)\otimes S_X((1,u)) & = & (1,0)\otimes S_X((0,u))
\end{array}
\]
Then the distance is 
\def\arraystretch{1.1}
\[
\begin{array}{cll}
& \frac{1}{3}d_X(S_X((0,3s-1)), S_X((t,0))) + \frac{1}{3}  d_X(S_X((t,1)),S_X((1,u)))\\
 & \qquad + \frac{1}{3}d_X(S_X((0,u)),S_X((3r-1,0)))\\

\geq&  \frac{1}{3}(|3s-1-0| + |t-0| + |1-t| + |1-u| + |u-0| + |3r-1-0|)
\\
   = & \frac{1}{3}(3|s-0| + 3|r-0|)\\
   = & |s-0| + |r-0|,
\end{array}
\]
as required.
In the first inequality, we used
the fact that $X$ is an object in $\SquaMS$. 
\endproof

\def\arraystretch{1}

At this point we know that $M\otimes X$ is an object in $\SquaMS$. 
That is, we know how the functor $M\otimes -$ works on objects
of $\SquaMS$.
Now let $f:X\rightarrow Y$ be a morphism  in 
$\SquaMS$, and define $M\otimes f:M\otimes X\rightarrow M\otimes Y$ by $M\otimes f(m\otimes x) = m\otimes f(x)$.  By Lemma \ref{welldefinedmorphisms}, we know that $M\otimes f$ is well defined.  
To check that $M\otimes f:M\otimes X\rightarrow M\otimes Y$ is a morphism, first note that for $(r,s)$ on the boundary of the unit square, 
\[ (M\otimes f)(S_{M\otimes X}((r,s))) = 
(M\otimes f)(m\otimes S_X((r',s')))\]
for some $m\in M$ and $(r',s')\in M_0$, by the definition of $S_{M\otimes X}$.
This equals
\[
\begin{array}{cll}
 &  m\otimes f(S_X((r',s')))\\
= & m\otimes S_Y((r',s'))&
\mbox{since $f$ preserves $S_X$}\\
= & S_{M\otimes Y}((r,s))\\
\end{array}
\]
The last equality holds since $S_{M\otimes Y}$ is defined using the same scheme as $S_{M\otimes X}$. 
So $M\otimes f$ preserves $S_{M\otimes X}$.  

To see that $M\otimes f$ is a short map, let 
$m\otimes x, n\otimes y\in M\otimes X$.  If $d_{M\otimes X}(m\otimes x,n\otimes y) =2$, then $d_{M\otimes X}(m\otimes x,n\otimes y)=2\geq d_{M\otimes Y}(M\otimes f(m\otimes x),M\otimes f(n\otimes y))$.  Otherwise, let 
\[m\otimes x = m_0\otimes x_0,\ldots,m_p\otimes x_p = n\otimes y\]
be a  witness path between them (this is shorthand, each entry is the equivalence class of adjacent entries which are related by $\sim$).

Then if $d_{M\otimes X}(m_k\otimes x_k,m_{k+1}\otimes x_{k+1}) \neq 0$, it is because $m_{k+1} \otimes x_{k+1} = m_k\otimes x_{k+1}' $ for some $x_{k+1}'\in X$.

So
\[
\begin{array}{cll}
& d_{M\otimes X}(m_k\otimes x_k,m_{k+1}\otimes x_{k+1}) \\
= & d_{M\times X}((m_k,x_k),(m_{k},x_{k+1}'))\\
= & \frac{1}{3}d_X(x_k,x_{k+1}')\\
\geq &  \frac{1}{3} d_Y(f(x_k),f(x_{k+1}')) &
\mbox{since $f$ is a short map}\\
= & d_{M\times Y}((m_k,f(x_k)),(m_{k},f(x_{k+1}')))\\
=  & d_{M\otimes Y}((M\otimes f)(m_k\otimes x_k),(M\otimes f)(m_{k+1}\otimes x_{k+1}))
\end{array}
\]

Thus, since there is a path in $M\otimes Y$ from $(M\otimes f)(m\otimes x)$ to $(M\otimes f)(m\otimes y)$ whose score is bounded above by the score of a shortest path in $M\otimes X$, $M\otimes f$ is a short map.

Finally, note that $M\otimes -$ preserves compositions and identity maps, as required. 

\begin{theorem} $M\otimes -$ is a functor on $\SquaMS$. \end{theorem}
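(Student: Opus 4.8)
The plan is to assemble the pieces already developed in this section. Three things must be checked: that $M\otimes-$ sends objects of $\SquaMS$ to objects of $\SquaMS$, that it sends morphisms to morphisms, and that it respects identities and composition.

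For the action on objects, fix $X$ in $\SquaMS$. We have already observed that $E$ is quotient suitable, and Lemma~\ref{distanceinonecopyFirst} supplies exactly the second hypothesis of Theorem~\ref{quotientmetric}: no path from $m\otimes x$ to $m\otimes y$ inside the copy $\set{m}\times X$ has score below $\frac13 d_X(x,y)$. Hence Corollary~\ref{quotientmetriccorollary} applies, giving that $M\otimes X$ is a metric space and that each $\phi_m$ is a $\frac13$-scaling isometric embedding. The bound by $2$ is built into $d_{M\times X}$ and survives passing to the infimum, so $M\otimes X$ is bounded by $2$. The map $S_{M\otimes X}$ is well defined and injective because the three sub-copies making up each side agree precisely where $\sim$ identifies them and distinct sides meet only at shared corners; Lemma~\ref{nondegen} then shows $(M\otimes X, S_{M\otimes X})$ satisfies ($\sqone$) and ($\sqtwo$). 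So $M\otimes X$ is an object of $\SquaMS$.

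For the action on morphisms, let $f\colon X\to Y$ be a morphism of $\SquaMS$ and set $(M\otimes f)(m\otimes x) = m\otimes f(x)$. This is well defined on $\sim$-classes: if $(m,S_X((r,s)))\approx(n,S_X((t,u)))$ via $((m,(r,s)),(n,(t,u)))\in E$, then since $f$ preserves $S_X$ we have $m\otimes f(S_X((r,s))) = m\otimes S_Y((r,s))$ and $n\otimes f(S_X((t,u))) = n\otimes S_Y((t,u))$, and these are $\sim$-related because $E$ does not depend on the object; transitivity and reflexivity extend this to all of $\sim$. That $M\otimes f$ preserves $S$ and is short is the computation displayed immediately before the theorem: preservation of $S$ is checked side by side using the common definition scheme of $S_{M\otimes X}$ and $S_{M\otimes Y}$, and shortness follows by taking a finite realizing $\sim$-alternating path for $d_{M\otimes X}(m\otimes x,n\otimes y)$ — which exists by Theorem~\ref{quotientmetric} — pushing it forward through $M\otimes f$, and bounding the resulting path's score term by term: each nonzero term equals $\frac13 d_X(x_k,x_{k+1}) \geq \frac13 d_Y(f(x_k),f(x_{k+1}))$ since $f$ is short, while terms identified by $\sim$ map to terms identified by $\sim$.

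Finally, $M\otimes \id_X = \id_{M\otimes X}$ and $M\otimes(g\circ f) = (M\otimes g)\circ(M\otimes f)$ are immediate from the pointwise formula $m\otimes x \mapsto m\otimes f(x)$, so $M\otimes-$ is a functor. The only substantive input is Theorem~\ref{quotientmetric} together with its geometric hypothesis Lemma~\ref{distanceinonecopyFirst}; everything else is bookkeeping. The main obstacle — already surmounted — is guaranteeing that distances in $M\otimes X$ are realized by genuine finite paths, so that the short-map argument for $M\otimes f$ can proceed path-by-path instead of through an uncontrolled infimum of scores.
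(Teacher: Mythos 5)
Your proposal is correct and follows the same route as the paper: the object part rests on the quotient suitability of $E$, Lemma~\ref{distanceinonecopyFirst} as the hypothesis of Theorem~\ref{quotientmetric}, Corollary~\ref{quotientmetriccorollary}, and Lemma~\ref{nondegen}, while the morphism part uses well-definedness on $\sim$-classes and the term-by-term shortness estimate along a realizing $\sim$-alternating path. The paper's proof is exactly this assembly of the preceding results, so nothing further is needed.
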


Finally, we want to take advantage of the following lower bound on paths.  Recall that
\begin{equation}
 \label{d-Uzero}   
d_{U_0}((x,y),(x_1,y_1)) = |x-x_1|+|y-y_1|
\end{equation}
is the taxicab metric on the unit square. 

\begin{proposition}\label{distanceonedgesforM}  Let $B$ be an object in $\SquaMS$ and consider $m\otimes S_B((r,s))$ and $n\otimes S_B((t,u))$ in $M\otimes B$.  Then 
\[ 
d_{M\otimes B}(m\otimes S_B((r,s)),n\otimes S_B((t,u)))\geq d_{M\otimes U_0}(m\otimes S_{U_0}((r,s)),n\otimes S_{U_0}((t,u))).\]
\end{proposition}

\proof
By Theorem~\ref{quotientmetric} there is a witness path in $M\otimes B$ of the form 
\[ 
\begin{array}{l}m\otimes S_B((r,s)) = m_0\otimes S_B((r_0,s_0)), m_0\otimes S_B((r_0',s_0'))\sim m_1\otimes S_B((r_1,s_1)),\\
\ldots, m_{p-1}\otimes S_B((r_{p-1},s_{p-1})),\\
m_{p-1}\otimes S_B((r_{p-1}',s_{p-1}'))\sim m_p\otimes S_B((r_p,s_p)) = n\otimes S_B((t,u))
\end{array}
\]
\def\arraystretch{1.1}
Then note that 
\[ 
\begin{array}{cl}
 & d_{M\otimes B}(m_k\otimes S_B((r_k,s_k)), m_k\otimes S_B((r_k',s_k')))
\\
= & \frac{1}{3} d_B(S_B((r_k,s_k)),S_B((r_k',s_k')))
\\
\geq & \frac{1}{3} d_{U_0}(S_{U_0}((r_k,s_k)),S_{U_0}((r_k',s_k')))
\\
= &  d_{M\otimes U_0}(m_k\otimes S_{U_0}((r_k,s_k)), m_k\otimes S_{U_0}((r_k',s_k')))
\end{array}
\]
\def\arraystretch{1}
where the inequality follows from $(\sqtwo)$.  So the score of the shortest path in $M\otimes B$ is bounded below by the score of the corresponding path in $M\otimes U_0$, which is an upper bound of the distance between the corresponding points in $M\otimes U_0$. (But there may be a shorter path
in $M\otimes U_0$, and this is why our result has an inequality.)
\endproof

\subsection{Defining $N\otimes-$ in $\SquaMS$}
\label{section-N-otimes}

It will be useful for us to augment $M$ in the following way.  Let $N = \{0,1,2\}^2 = M\cup \{(1,1)\}$, which will correspond to the full $3\times 3$ grid.  We aim to expand the work from the previous section to show that $N\otimes -$ is also a functor.  We will use this in later sections. 

 \[
   \begin{tikzpicture}[>=stealth',shorten >=1pt,auto,node distance=2cm,semithick,scale=1.2]
  \draw [help lines] (0,0) grid (3,3);
 \draw (.5,.5)  node {$(0,0)$};
  \draw (1.5,.5)  node {$(1,0)$};
   \draw (2.5,.5)  node {$(2,0)$};
    \draw (.5,1.5)  node {$(0,1)$};
  \draw (1.5,1.5)  node {$(1,1)$};
   \draw (2.5,1.5)  node {$(2,1)$};
    \draw (.5,2.5)  node {$(0,2)$};
  \draw (1.5,2.5)  node {$(1,2)$};
   \draw (2.5,2.5)  node {$(2,2)$};
  \end{tikzpicture}
\] 
In our pictures of $N\otimes X$, we do not show an {\sf X} over the 
square $(1,1)$ the way we did with $M\otimes X$.

First we want to apply Corollary~\ref{quotientmetriccorollary} to see that $N\otimes X$ is in fact a metric space with the quotient metric.  The majority of the work for us is done.  The definition of $S_{N\otimes X}$ will coincide with $S_{M\otimes X}$, and we will need to expand $E$ to include 
\[
\begin{array}{l}
((0,1),(1,r))\approx ((1,1),(0,r))\\
((1,2),(r,0))\approx ((1,1),(r,1))\\
((2,1),(0,r))\approx ((1,1),(1,r))\\
((1,0),(r,1))\approx ((1,1),(r,0))\\
\end{array}
\]
for $r\in [0,1]$.  Call this relation $\widehat{E}$.  

\begin{lemma} $\widehat{E}$ is quotient suitable on $N\times M_0$ (Definition~\ref{quotientsuitabledefinition}).\end{lemma}

The proof is the same as that of Lemma~\ref{EisQuotientSuitable}

\begin{lemma}\label{distanceinonecopy}
Let $X$ be any object in $\SquaMS$.
Let $n\in N$ and let $x,y\in X$.
Let $(n_0,x_0),\ldots,(n_p,x_p)$ be a path, where $(n_0,x_0) = (n,x)$ and $(n_p,x_p) = (n,y)$. (Notice that the same $n$ is used in both the start and end of the path.) 
Then
\[\onethird d_X(x,y) \leq \displaystyle{\sum_{k=0}^{p-1}} d_{N\times X}((n_k,x_k),(n_{k+1},x_{k+1})).\]
\end{lemma}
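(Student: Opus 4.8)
The plan is to mimic the proof of Lemma~\ref{distanceinonecopyFirst} almost verbatim, the only new feature being that the center copy $\{(1,1)\}\times X$ now participates in the region contact graph. First I would reduce, via Lemma~\ref{distancelemma}, to the case of a $\sim$-alternating path from $(n,x)$ to $(n,y)$, and argue by induction on the length of the path exactly as before: if two consecutive indices $n_k = n_{k+1}$ coincide, the triangle inequality in $\{n_k\}\times X$ lets us delete an entry without increasing the score, so we may assume the sequence $n_0,\ldots,n_p$ is a genuine walk in the region contact graph of $N$ that starts and ends at the same vertex $n$, hence decomposes into simple cycles. Since the score is bounded by $\tfrac23$ (the maximum value of $\tfrac13 d_X(x,y)$), every edge of this walk must be within one copy or identified by $\sim$, so it suffices to rule out each simple cycle of the contact graph of $N$ as a candidate for a path of score $<\tfrac13 d_X(x,y)$.

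The key step is therefore to enumerate the simple cycles of the contact graph of $N$ and bound the score of the corresponding $\sim$-alternating path from below. The new graph adds the center vertex $(1,1)$, adjacent to its four edge-neighbors $(0,1),(1,2),(2,1),(1,0)$ (and, through the $\widehat E$-identifications, also meeting the corner copies at single points, as in the ``diamond'' computation in Lemma~\ref{distanceinonecopyFirst}). The simple cycles fall into a few families: (i) the four small ``unit squares'' of the $3\times 3$ grid, e.g.\ $(0,0)$--$(1,0)$--$(1,1)$--$(0,1)$; (ii) triangles through the center, e.g.\ $(1,1)$--$(0,1)$--$(0,0)$ if those three pairwise touch, or $(1,1)$--$(1,0)$--$(0,0)$, etc.; (iii) the outer cycles already handled in Lemma~\ref{distanceinonecopyFirst}, together with their variants that now route through the center. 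For each, I compute the forced positions of the shared boundary points on $S_X[M_0]$ (each pair of adjacent copies shares exactly one point or one segment, as dictated by $\widehat E$), and use the non-degeneracy condition $(\sqtwo)$ in $X$ — distances on the boundary are bounded below by the taxicab metric — to show the score is at least $\tfrac23$, hence cannot be strictly less than $\tfrac13 d_X(x,y)$. Any cycle that goes ``out and back'' through the center rather than looping can be shortened using the same triangle-inequality / $\sim$-transitivity moves used in Lemma~\ref{distanceinonecopyFirst} to collapse a triangle to a two-point path; iterating collapses the whole path down to the direct path $(n,x),(n,y)$ of score exactly $\tfrac13 d_X(x,y)$.

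The main obstacle I expect is the bookkeeping in step (ii): when the center copy is involved, the points where $\{(1,1)\}\times X$ meets an adjacent corner copy are \emph{corner} points of $X$ (e.g.\ $(1,1)\otimes S_X((0,0))$ identified with a corner of $(0,0)\otimes X$ only through an intermediate edge copy), so I must be careful that a path touching the center and a corner diagonally actually pays for crossing a full side of some edge copy — this is precisely the ``diamond'' phenomenon of Lemma~\ref{distanceinonecopyFirst}, and the score $\tfrac43 > \tfrac23$ computed there is what kills those cycles. Once I confirm that every cycle through the center either has score $\geq \tfrac23$ or is collapsible, the induction closes exactly as in Lemma~\ref{distanceinonecopyFirst}, and the base cases ($p=2$, and paths lying in a single copy) are trivial. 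The remaining cases — $n\in M$ rather than $n=(1,1)$ — are already covered by Lemma~\ref{distanceinonecopyFirst}, since adding the vertex $(1,1)$ to the graph only adds new cycles and does not remove the constraint that the old analysis relied on; one checks that no new low-score cycle through $(1,1)$ connects two old vertices either.
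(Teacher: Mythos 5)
Your proposal is correct and follows essentially the same route as the paper: reduce to $\sim$-alternating paths, read the index sequence as a closed walk in the region contact graph of $N$ (where $(1,1)$ is adjacent to all eight other vertices, the corner adjacencies arising exactly as you note, via transitivity of $\sim$ through an edge copy), collapse triangles by the corner-point/triangle-inequality argument, and kill the longer cycles through the center by reducing them to previously handled cases or by the diamond-type lower bound. The paper merely organizes the new cycles by their first two vertices (those beginning $(1,1),(0,1)$ versus $(1,1),(0,2)$) instead of by an explicit list of simple-cycle families, but the substance is identical.
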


\begin{proof}
Fix $n$ throughout this proof.  By Lemma ~\ref{distancelemma}, since the path $(n,x),(n,y)$ has score $\leq \frac{2}{3} <2$, we may assume that our path is an alternating path, 

$$(n,x)=(n_0,x_0),(n_0,x_0')\sim \ldots (n_{p-1},x_{p-1}')\sim (n_p,x_p),(n_p,x_p') = (n,y).$$

By Lemma \ref{technicalpathlemma}, Lemma \ref{enteringonsamesidelemma}, and Lemma \ref{theverylasttechnicalpathlemma}, we may also assume the following for $i, j$ such that $0\leq i<j\leq p$

\begin{itemize}
    \item If $x_i$ and $x_i'$ are on the same side of $S_X[M_0]$, then they are both corners,
    \item If $x_i'$ and $x_j$ are on the same side of $S_X[M_0]$, then at least one of them is a corner,
     \item  If $x_i$ and $x_j$ are on the same side of $S_X[M_0]$, then 
    $x_i'$ and $x_j'$ are \emph{not} on the same side of $S_X[M_0]$,
    \item If  $x_i$ and $x_j'$ are on the same side of $S_X[M_0]$, then 
    $x_i'$ and $x_j$ are \emph{not} on the same side of $S_X[M_0]$.
\end{itemize}

As in Lemma \ref{basecaseforM}, we will proceed by induction on the natural number $k$ where $k=|\{i:m_i=m\}|$.

When $k=1$, $p=0$, and the path from $(n,x)$ to $(n,y)$ is just $(n,x),(n,y)$, whose length is $\frac{1}{3}d_X(x,y)$.  When $k \geq 3$, our path has $n_l=n$ for some $0<l<p$.  We cut this path into two subpaths, the part between $(n_0,x_0)$ and $(n_l,x_l)$, and the part from $(n_l,x_l')$ to $(n_p,x_p')$.  In both subpaths, the number $j$ such that $n_j = n$ is $<k$, so the induction hypothesis applies to the subpaths.  By this and the triangle inequality, we show the desired inequality.
\[
\frac{1}{3} d_X(x,y) \leq \frac{1}{3}d_X(x,x_{\ell}) + \frac{1}{3}d_X(x_{\ell},y) \leq \displaystyle{\sum_{i=0}^{p-1}} d_{N\times X}((n_i,x_i),(n_{i+1},x_{i+1})).
\]








The remaining case is when $k=2$.  Up until this point, the proof as been the same as Lemma~\ref{basecaseforM}, and the remaining part is similar, but we have a few more cases to consider since we include the center copy of $X$. 
There are three possible cases we must consider, up to rotation and reflection:
when our two points are in a corner copy of $X$
(a copy indexed by $(0,0)$, $(0,2)$, $(2,0)$, or $(2,2)$);
when they are in a copy of $X$ around the outside which is not a corner
(a copy indexed by $(0,1)$, $(1,0)$, $(2,0)$, or $(0,2)$);
and when they are in the middle copy of $X$ (indexed by $(1,1)$).

First we will consider the case when both points are, without loss of generality, in the $(0,0)$ copy of $X$.
Then we need not consider paths which contain points in the $(0,2)$, $(1,2)$, $(2,2)$, $(2,1)$, or $(2,0)$ copies of $X$, since any such path (like the one pictured at the end of  the proof of Lemma~\ref{basecaseforM}) would contribute $\frac{2}{3}$ to the score by $(\sqtwo)$, by crossing two copies of $X$ (one going out and one coming back).  
So we need only consider the case 
which is shown in Figure~\ref{figure-working}(a).
(This case did not come up in the proof of Lemma~\ref{basecaseforM} because we did not have a middle copy of $X$.)
Using the same argument as in Example~\ref{example-first-new}, we can find a path with lesser or equal score by moving the point $A$ to the shared bottom corner of the $(0,1)$ and $(1,1)$ copies of $X$
as in Figure~\ref{figure-working}(b).
 But then the path $(n,x),A,(n,y)$ has score which is less than or equal to the score of this path, and again, by the triangle inequality (since we are now entirely in the $(0,0)$ copy of $X$), $(n,x),(n,y)$ is a path with smaller or equal score, which is $\frac{1}{3}d_X(x,y)$.  

\begin{figure}[t]
\[ \secfoura \qquad \secfourb  \]
\caption{The first two figures in this proof, called (a) and (b) in this proof.
\label{figure-working}}
\end{figure}

Next, without loss of generality, we consider the case
 shown in Figure~\ref{figure-working-two}(c),
when $n=(1,0)$.  Again, we need not consider paths which contain points in the $(0,2)$, $(1,2)$ or $(2,2)$ copies of $X$, since these will contribute $\frac{2}{3}$ to the score.  So we have two subcases.  
The first subcase is the same as the case we examined when $n=(0,0)$ (but shifted to the right).  The second subcase, shown in Figure~\ref{figure-working-two}(d), 
can be shorted in a similar fashion by moving $A$ and $B$ to the respective corner points (see Figure~\ref{figure-working-two}(e)), as we saw in the proof of Lemma~\ref{nondegen}.  
Again, this gives us a path with another point in $(1,0)\otimes X$, so we see that the score is bounded below by the score of $(n,x),(n,y)$, which is $\frac{1}{3}d_X(x,y)$.

\begin{figure}[h]
\[ \secfourc \qquad \secfourd \qquad \secfoure \]
\caption{The next three figures in this proof, called (c), (d), and (e).
\label{figure-working-two}}
\end{figure}

\begin{figure}[h]
\[ \secfourf \qquad \secfourg \qquad \secfourh \]
\caption{The last three figures in this proof, called (f), (g) and (h).
\label{figure-working-three}}
\end{figure}

Finally, consider the case when $n=(1,1)$.  We can split this into two subcases.  First, when the path exits and enters the $(1,1)$ copy of $X$ from adjacent sides, say the top and the right.  
The path contains points in eight of the nine copies of $X$, as 
 shown in Figure~\ref{figure-working-three}(f). In this subcase,
by $(\sqtwo)$, the score is greater than $\frac{2}{3}$.  So we do not have a witness path in this case.
The second subcase is 
when the path contains points in only four copies of $X$, as shown in  Figure~\ref{figure-working-three}(g).
This is the same as the case we examined when $n=(0,0)$.  So in both of these subcases, the score is bounded below by $\frac{1}{3}d_X(x,y)$.

The final case is shown in Figure~\ref{figure-working-three}(h), when
 the path exits and enters the $(1,1)$ copy of $X$ on opposite sides.  

But this is essentially the same as the case we examined when $n=(1,0)$.

By examining cases, we have shown that
 every path between points in the same copy of $X$ has a score which is bounded below by $\frac{1}{3}d_X(x,y)$.  
\end{proof}

As with $M$, we have the following useful corollary for $N\otimes-$.

\begin{corollary}\label{distanceinonecopyN} For an object $X$ in $\SquaMS$, $n\in N$, and $x,y\in X$, 
\[ d_{N\otimes X}(n\otimes x,n\otimes y) = \frac{1}{3} d_X(x,y).\]
In particular, $d_{N\otimes X}(n\otimes x,n\otimes y) \leq \frac{2}{3}$, and if we apply $N\otimes-$ to $X$ $k$ many times, we get $d_{N^k\otimes X}(n_1\otimes\ldots \otimes n_k\otimes x,n_1\otimes\ldots\otimes n_k\otimes y) \leq \frac{2}{3^k} $. \end{corollary}

So by Corollary~\ref{quotientmetriccorollary}, $N\otimes X$ is a metric space.  The proof that $N\otimes X$ satisfies ($\sqone$) is the same as that for $M\otimes X$, and ($\sqtwo$) follows for $N\otimes X$ from ($\sqtwo$) in $X$, in the same way as it does for $M\otimes X$ in Lemma~\ref{nondegen}.  

Morphisms will be preserved by $N\otimes -$ just as they are by $M\otimes -$.  Hence, we have the following: 

\begin{proposition}\label{augmentedM} $N\otimes -$ is a functor on $\SquaMS$. \end{proposition}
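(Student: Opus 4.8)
The plan is to run through exactly the same checklist used to establish that $M\otimes-$ is a functor, reusing the general machinery of Section~3.1 (Theorem~\ref{quotientmetric} and Corollary~\ref{quotientmetriccorollary}) and the lemmas just proved for $N$. First I would note that $N=\{0,1,2\}^2$, that $S_{N\otimes X}$ is defined by the same formulas as $S_{M\otimes X}$ (since those formulas only reference the eight boundary copies, which lie in $N$ too), and that $\widehat{E}$ extends $E$ by the four families of identifications gluing $(1,1)$ to its horizontal and vertical neighbors. Then I would verify, in sequence: (i) $\widehat{E}$ relates boundary points isometrically in every object $X$ --- this is the first bullet of quotient suitability, and is the same routine check as for $E$, now also covering the pairs involving $(1,1)$; (ii) the graph of $\sim$ is closed, i.e.\ Lemma~\ref{limitsinMotimes} holds verbatim for $N$ (its proof only used compactness of $S_X[M_0]$ and the fact that $\widehat{E}$ identifies sides with sides, not the specific list); hence $\widehat{E}$ is quotient suitable. (iii) The "one-copy" hypothesis of Theorem~\ref{quotientmetric} --- that no path from $n\otimes x$ to $n\otimes y$ beats $\tfrac13 d_X(x,y)$ --- which is exactly Lemma~\ref{distanceinonecopy}. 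With these in hand, Corollary~\ref{quotientmetriccorollary} gives that $N\otimes X$ is a metric space and that each $\phi_n$ is a $\tfrac13$-scaled embedding.

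Next I would check that $(N\otimes X, S_{N\otimes X})$ is an object of $\SquaMS$, i.e.\ that it satisfies ($\sqone$) and ($\sqtwo$). As the excerpt already remarks, ($\sqone$) is proved exactly as for $M\otimes X$ in Lemma~\ref{nondegen}: the boundary of $N\otimes X$ consists of the same eight outer copies as for $M\otimes X$, and the shortest-path analysis along a side (adjacent and non-adjacent copies, minimizing over the corner parameter $k$ using the taxicab lower bound in $X$) is unchanged; one only needs to remark that a path dipping into the new $(1,1)$ copy can never be shorter, which again follows from the non-degeneracy ($\sqtwo$) of $X$ (any detour through the center tile costs at least as much as hugging the boundary). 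For ($\sqtwo$) I would invoke the same computation as in Lemma~\ref{nondegen}: decompose a shortest path between two boundary points of $N\otimes X$, observe each positively-scoring segment lies on the sides of some copy of $X$, apply ($\sqtwo$) in $X$ to bound each segment below by its taxicab length, and sum to get the taxicab lower bound between the images. None of that argument used the absence of the center tile.

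Then I would treat morphisms: given $f\colon X\to Y$ in $\SquaMS$, define $N\otimes f$ by $n\otimes x\mapsto n\otimes f(x)$. Preservation of $S_{N\otimes X}$ is the identical two-line computation as for $M\otimes f$ (it only references the outer copies), using that $f$ preserves $S_X$ and that $S_{N\otimes Y}$ uses the same scheme. That $N\otimes f$ is short is again the identical argument: take a shortest $\sim$-alternating path in $N\otimes X$ realizing $d_{N\otimes X}(n\otimes x, n'\otimes y)$ (it exists by Theorem~\ref{quotientmetric}), map it into $N\otimes Y$, and note each positively-scoring segment, being of the form $n_k\otimes x_k$ to $n_k\otimes x_k'$, has score $\tfrac13 d_X(x_k,x_k')\ge \tfrac13 d_Y(f(x_k),f(x_k'))$ since $f$ is short; the resulting path in $N\otimes Y$ has score bounding $d_{N\otimes Y}((N\otimes f)(n\otimes x),(N\otimes f)(n'\otimes y))$ from above. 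Finally, $N\otimes-$ visibly preserves identities and composition, so it is a functor.

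The only genuinely new content behind this proposition is Lemma~\ref{distanceinonecopy} (the center-tile case of the one-copy inequality), which has already been proved; everything else is a transcription of the $M$-case with the single recurring observation that detours through the central $(1,1)$ copy never help, which is forced by ($\sqtwo$) in $X$. So I do not expect a substantive obstacle here --- the "hard part" was isolated and dispatched in Lemma~\ref{distanceinonecopy}, and what remains is bookkeeping.
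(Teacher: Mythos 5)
Your proposal is correct and follows essentially the same route as the paper: extend $E$ to $\widehat{E}$, observe that Lemma~\ref{limitsinMotimes} and hence quotient suitability carry over, isolate the one genuinely new ingredient in Lemma~\ref{distanceinonecopy} (paths through the $(1,1)$ copy), then invoke Corollary~\ref{quotientmetriccorollary}, repeat the ($\sqone$)/($\sqtwo$) and morphism arguments from Lemma~\ref{nondegen}, and check functoriality. The paper's own proof is exactly this transcription of the $M$-case, with Lemma~\ref{distanceinonecopy} doing the only real work.
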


We also have an analogous lower bound on distances between boundary points to Proposition~\ref{distanceonedgesforM}. The proof is the same. 

\begin{lemma}\label{lemma-crux}  Let $B$ be an object in $\SquaMS$ and consider $m\otimes S_B((r,s))$ and $n\otimes S_B((t,u))$ in $N\otimes B$.  Then 
\[ d_{N\otimes B}(m\otimes S_B((r,s)),n\otimes S_B((t,u)))\geq d_{N\otimes U_0}(m\otimes S_{U_0}((r,s)),n\otimes S_{U_0}((t,u))).\]
\end{lemma}

\subsection{Distances between corner points in iterates of $N\otimes -$ on $M_0$ and $U_0$}

In much of this paper, we are going to be interested in 
iterating the functor $M\otimes -$ on the unit square
$U_0$, or on the initial square space $M_0$,
or more generally on square spaces $B$ which admit a morphism $B\to M\otimes B$.
But at this point, we need some results about the iteration of $N\otimes -$ on such spaces.
In fact, results on $M\otimes -$ often go through results on $N\otimes -$, partly because
this latter functor is easier to study.

\begin{definition}\label{def-cp}
Let $B$ be either $M_0$ or $U_0$.
\label{def-CPk}
The set $CP_k$ of  \emph{corner points} of $N^k\otimes B$ is defined as follows:
\[
\begin{array}{lcl}
CP_0 & = & \set{(0,0), (0,1), (1,0), (1,1)} \\
CP_{k+1} & = & \{n\otimes x \mid n\in N,x\in CP_k\}\\
\end{array}
\]

Let $f_k\colon CP_k\to U_0$ be (as expected): $f_0$ is the inclusion, and $f_{k+1}(n\otimes x) = \frac{1}{3}n + \frac{1}{3} f_k(x)$.

Later on in Definition~\ref{definition-alpha-N}, we will define $\alpha_{N}:N\otimes U_0\rightarrow U_0$ similarly, so that $f_{k+1}(n\otimes x) = \alpha_N(n\otimes f_k(x))$.

We regard $CP_k$ as a metric space with distances inherited from  $N^k\otimes B$.
\end{definition}

The main result in this section shows 
that it does not matter whether we take $B$ to be $M_0$ or $U_0$
in Definition~\ref{def-CPk}:
 the distances between corner points are the same.

\begin{definition}\label{stardef}
For any sequence of $p$ numbers $i_1, \ldots, i_p\in \{0,1,2\}$ and for any $r\in [0,1]$, we define
the number $\norm{i_1, \ldots, i_p;r}\in [0,1]$ in the following way.  
\[
\begin{array}{lcl}
\norm{r} & = & r \\
\norm{i_1,i_2, \ldots, i_p;r}  & = & \frac{1}{3}i_1 + \frac{1}{3}\norm{i_2, \ldots, i_p;r}
\end{array}
\]
In a more explicit presentation, 
\[\norm{i_1,i_2, \ldots, i_p;r} = \frac{r}{3^p} + \sum_{m=1}^p \frac{i_m}{3^m} 
\]
\end{definition}

\begin{lemma}
For all $i_1, \ldots, i_p\in \{0,1,2\}$ and all $r\in [0,1]$,
\[
\begin{array}{lcl}
S_{N^p\otimes B}((\norm{i_1,i_2, \ldots, i_p;r},0))
& = & (i_1,0)\otimes \ldots \otimes (i_p,0) \otimes S_{B}((r,0))\\
S_{N^p\otimes B}((\norm{i_1,i_2, \ldots, i_p;r},1)) & = & (i_1,2)\otimes \ldots \otimes (i_p,2) \otimes S_{B}((r,1))\\
\end{array}
\]
\end{lemma}

\begin{proof}
 We prove this by induction on $p$.  For $p=0$,
 the result is clear.
Assume our result for $p$, and fix $r$ and $i_1,\ldots, i_p, i_{p+1}$.
Let $r^* = \norm{i_2, \ldots, i_p;r}$.
By induction hypothesis,
\[ S_{N^p\otimes B}((r^*,0)) = (i_2,0)\otimes \ldots \otimes (i_{p+1},0) \otimes S_{B}((r,0)),\]
and similarly for $S_{N^p\otimes B}((r^*,1))$.
To save on a little notation, write $r^{**}$ for $ \norm{i_1,i_2, \ldots, i_p;r}$.
When $i_1 = 0$,  $r^{**} = \frac{1}{3}r^*$.
Using (\ref{siszero}), (\ref{sisone}), and the induction hypothesis,
\def\arraystretch{1.5}
\[
\begin{array}{lcl}
S_{N\otimes(N^p\otimes B)}((r^{**},0)) & = & (0,0)\otimes 
S_{N^p\otimes B}((r^*,0))\\ & = & 
(i_1,0)\otimes \ldots \otimes (i_{p+1},0) \otimes S_{B}((r,0))
\\
S_{N\otimes(N^p\otimes B)}((r^{**},1)) & = & (0,2)\otimes 
S_{N^p\otimes B}((r^*,1))\\ & = & 
(i_1,2)\otimes \ldots \otimes (i_{p+1},2) \otimes S_{B}((r,1))
\\
\end{array}
\]
\def\arraystretch{1}
If $i_1 = 1$, we have $r^{**} = \frac{1}{3}r^* + \frac{1}{3}$,
and if $i_1 = 2$,  $r^{**} = \frac{1}{3}r^*+ \frac{2}{3}$.   
In all of these cases, the verifications are similar.
\end{proof}

Here is another fact about this notation.

\begin{lemma}\label{hvinequality}
For all $p\geq 1$ and all $i_1, \ldots, i_p;r$,
$\frac{i_1}{3} \leq \norm{i_1, \ldots, i_p;r} \leq \frac{i_1+1}{3} $.
The only way to have $\norm{i_1, \ldots, i_p;r} = 0$ is for $i_1, \ldots, i_p;r = 0,\ldots,0;0$.
The only way to have $\norm{i_1, \ldots, i_p;r} = 1$ is for $i_1, \ldots, i_p;r = 2,\ldots,2;1$.
\end{lemma}

\begin{proof}
 By induction on $p$.  When $p=1$, this is clear from the definition of $|i_1;r|$.  
 Assume our result for $p$, and take $i_1, \ldots, i_{p+1};r$.
By induction hypothesis,
 $ 0 \leq \norm{i_2, \ldots, i_{p+1};r} \leq 1$.

So since $|i_1,\ldots, i_{p+1};r| = \frac{1}{3}i_1+\frac{1}{3}|i_2,\ldots, i_{p+1};r|$,

\[\begin{array}{rcccl}
   0 &\leq& \norm{i_2, \ldots, i_{p+1};r} &\leq& 1\\
     0 &\leq& \frac{1}{3}\norm{i_2, \ldots, i_{p+1};r} &\leq& \frac{1}{3}\\
\frac{i_1}{3} &\leq& \frac{i_1}{3}+\frac{1}{3}\norm{i_2, \ldots, i_{p+1};r} &\leq& \frac{i_1}{3}+\frac{1}{3}\\
\frac{i_1}{3} &\leq & |i_1,\ldots,i_{p+1};r|&\leq & \frac{i_1+1}{3}\\

\end{array}\]


The last assertions in our result are easy to check by induction on $p$.
\end{proof}

The last few definitions allowed $r$ and $s$ to be any numbers in $[0,1]$.  For the next main results
we need to restrict to corner points
(see Definition~\ref{def-cp}).
 The key result in this section,
Lemma~\ref{lemma-taxicab-like} below, is false without the restriction to corner points.

Next we turn our attention to the possible witness paths between elements of $N\otimes X$.  In Lemma~\ref{upandtotherightinN}, we will show that (up to rotation and reflection) we can always find a path which goes ``up and to the right'' between copies of $X$. 
In it, we use an ordering $\triangleleft$ on $\set{0,1,2}\times\set{0,1,2}$.
That ordering is the strict part of the product ordering determined  by the natural order $0<1<2$ on $\set{0,1,2}$. 
In other words, $(i,j) \triangleleft (k,\ell)$ iff $i \leq k$ and $j\leq \ell$ and at least one of these inequalities is strict.

Here is a preliminary observation on this notation.  Consider 
$\set{0,1,2}\times\set{0,1,2}$ as a graph $G$, where there is an edge from 
$(i,j)$ to $(k,\ell)$ iff either ($i = k$ and $|j - \ell| = 1$) or else
 ($j = \ell$ and $|i - k| = 1$). Suppose that $(a,b) \triangleleft (c,d)$.
 Then there is a geodesic (a path of minimal length) in $G$ from $(a,b)$ to $(c,d)$
 consisting of points which ``goes up'' in the order $\triangleleft$.
 (For example, the  distance in $G$ from $(0,0)$ to $(2,2)$ is $4$,
 and we have a path which ``goes up'' in $\triangleleft$:  
 $(0,0) \triangleleft (1,0) \triangleleft (1,1) \triangleleft (1,2) \triangleleft (2,2)$.)

Lemma~\ref{upandtotherightinN} just below is an analogous fact, but not for
the graph $G$ but instead for a square space of the
form $N\otimes B$.

\begin{lemma}\label{upandtotherightinN}
Let $B$ be in $\SquaMS$ and $x,y\in N\otimes B$. 
Suppose that $x=m_0\otimes x_0$ and $y=m_k\otimes y_0$ with $m_0\triangleleft m_k$ 
with respect to this partial order (we can rotate or reflect if necessary). 
Then there is a witness path of the form 
\[ x=m_0\otimes x_0, m_0\otimes S_B((r_0,s_0)),m_1\otimes S_B((r_1,s_1)) ,\ldots,\]
\[m_{k-1}\otimes S_B((r_{k-1},s_{k-1})), m_k\otimes y_0 =y\]
where each for each $0\leq i< k$, $m_i\otimes S_B((r_i,s_i))= m_{i+1} \otimes S_B((r_i',s_i'))$ for some $(r_i',s_i')\in M_0$, and $m_i
\triangleleft m_{i+1}$.
\end{lemma}

\begin{proof}
We have several cases.   The first is when $m_0$ and $m_k$
are neighbors in $G$
(for example, $(0,2)$ and $(1,2)$).  In this case, the work which we did
in the proof of Theorem~\ref{theorem-regular-witness-paths} adapts easily to 
give the result which we want.  That theorem dealt with the functor $M\otimes -$ 
and not $N\otimes -$, but for this case the work there shows that the witness
paths from $x$ to $y$ look like
\[
  \begin{tikzpicture}[>=stealth',shorten >=1pt,auto,node distance=2cm,semithick,scale=1.5]
  \draw [help lines] (2,3) grid (3,5);
 \draw (2.5,4.5)  node (a3)  {$\bullet$};
  \draw (2.3, 4.5) node {$y$};
  \draw (2.1,3.1)  node  (a4) {$\bullet$};
  \draw (2.3, 3.1) node {$x$};  
  \draw (2.8,4) node (a5) {$\bullet$};
 \draw[line width=2pt] (2.5,4.5) --(2.8,4);
\draw[line width=2pt] (2.1,3.1) -- (2.8,4);  
  \end{tikzpicture}
  \]
The one case which we need to add is when we have a path that uses the middle square and looks like the path from $x$ to $y$ in the picture on the left below:
\[
  \begin{tikzpicture}[>=stealth',shorten >=1pt,auto,node distance=2cm,semithick,scale=1.5]
   \draw [help lines] (0,0) grid (3,3);   
      \draw (1.2,1.4)  node  (m1)   {$\bullet$};         \draw (1.2,1.2) node{\protect{\scriptsize $x$}};
\draw (.4,2) node (m2) {$\bullet$};
   \draw (1,2.5) node (m3) {$\bullet$};       
   \draw (2,2.5) node (m4) {$\bullet$};  
\draw (2.2,2.75) node{\protect{\scriptsize $z$}}; 
      \draw (2.6,2) node (m5) {$\bullet$};      
            \draw (2.5,1.4) node (m7) {$\bullet$};   
            \draw (2.5,1.2) node{\protect{\scriptsize $y$}};
\draw[line width=2pt] (1.2,1.4) -- (.4,2);
\draw[line width=2pt] (.4,2) --   (1,2.5);
\draw[line width=2pt]  (1,2.5) -- (2,2.5);
\draw[line width=2pt] (2,2.5) -- (2.6,2);
\draw[line width=2pt] (2.5,1.4) -- (2.6,2);
\end{tikzpicture}
\qquad\qquad
\begin{tikzpicture}[>=stealth',shorten >=1pt,auto,node distance=2cm,semithick,scale=1.5]
  \draw [xshift=-0.5cm,help lines] (4,0) grid (7,3);    
     \draw (4.1,0.3)  node (n0)   {$\bullet$}; 
          \draw (3.9,0.3)  node (n0)   {$x$};   
       \draw (4.2,2.7)  node (nend)   {$\bullet$};    
    \draw (4.0, 2.7) node {$y$};     
        \draw (4.8, 2.7) node {$y'$};   
  \draw[line width=2pt] (4.5,0.32) -- (5.3,1); 
   \draw[line width=2pt] (4.1,0.3) -- (4.5,0.32);
\draw (5.3,1) node (n3) {$\bullet$};
   \draw (5.1,2) node (n4) {$\bullet$}; 
\draw[line width=2pt] (5.3,1) -- (5.1,2);
\draw (5.3, .7) node {$z_1$};
\draw (5.1, 2.3) node {$z_2$};
\draw[line width=2pt] (5.1,2) -- (4.5,2.7);

\draw[line width=2pt] (4.2,2.7) -- (4.5,2.7); 

    \draw (4.5, 2.7) node {$\bullet$}; 
      \draw (4.5, 0.32) node {$\bullet$};   
            \draw (4.7, 0.22) node {$x'$};   
 \draw (4.5,1) node  {$\bullet$};
  \draw (4.5,2) node  {$\bullet$};
 \draw (4.2,1) node  {$w_1$};
  \draw (4.2,2) node  {$w_2$};  
\end{tikzpicture}     
\]
But here the part from $x$ to $z$ may be shortened: since $x$ and $z$
lie in neighboring squares, this is 
the content of our previous observations.  And once we shorten this path 
so that it does not take the long trip by starting out going left from $x$,
it is then isomorphic 
to a path in $M\otimes B$,
we are in a position to use our previous work to produce from it a witness 
path with the appropriate feature: the first components go up in $\triangleleft$.

Next, let us consider the case when 
$m_0$ and $m_k$ have distance $2$ in $G$.
The classification of 
regular witness paths for $M\otimes -$ which we saw in Figure~\ref{figure-regular-witness-paths} applies except for small changes.
We need an addition for a situation as on the right above.  We have a path from $x$ to $y$ going through the middle square, as shown.
It is $x, x', z_1, z_2, y', y$.
This path is not what we want because at the end we have a pair with
$\neg((1,2)\triangleleft (0,2))$.  However, let us consider the path $x, w_1, w_2, y$.  This new path is increasing in $\triangleleft$.
We claim that its score is at most that of the original path
We have $d(w_1, w_2) =  \onethird$, by ($\sqone$),
and $d(z_1, z_2) \geq \onethird$, by $(\sqtwo)$.
Moreover, 
\[ d(x, w_1) \leq d(x,x') + d(x',w_1) \leq d(x,x') + d(x',z_1),\]
using the triangle inequality and $(\sqtwo)$.
The same calculations apply on the other end of the path, and we put things together
to see that indeed the score of the new path is at most the score of the old.

There are very similar arguments when $m_0$ and $m_k$ have distance $3$ or $4$ in $G$.
Indeed, the cases which we have considered make the arguments short in these cases.
We omit the details.
\end{proof}


The following is a lemma about distances between points in $N\otimes B$ (where $B$ is an arbitrary square metric space) in different copies of $B$.

\begin{lemma}\label{pathsinadjacentNcopies}
Let $B$ be a square metric space and let $x,y\in B$.  Consider the points $(0,0)\otimes x$, $(0,1)\otimes y$, $(0,2)\otimes y$, and $(1,1)\otimes y$ in $N\otimes B$. 

\begin{enumerate}
\item There is a witness path from $(0,0)\otimes x$ to $(0,1)\otimes y$ of the form
\[
 (0,0)\otimes x, (0,0) \otimes S_B((r,1)) = (0,1) \otimes S_B((r,0)), (0,1)\otimes y ,
\] 
where $r\in [0,1]$.
\item There is a witness path from $(0,0)\otimes x$ to $(0,2)\otimes y$ of the form
\[ \begin{array}{c}   (0,0)\otimes x  ,  (0,0) \otimes S_B((r_1,1)) = (0,1) \otimes S_B((r_1,0)),\\

          (0,1) \otimes S_B((r_2,1)) = (0,2)\otimes S_B((r_2,0)), (0,2)\otimes y,
          \end{array}\] 
          where $r_1,r_2\in [0,1]$.
\item  There is a witness path from $(0,0)\otimes x$ to $(1,1)\otimes y$ of the form 
\[ (0,0)\otimes x ,(0,0)\otimes S_B((1,1)) = (1,1)\otimes S_B((0,0)) , (1,1)\otimes y.\]
\item More generally, by rotating or reflecting, we get the analogous results for copies of $B$ which are in the same row or column (1. and 2.), or which share a corner (3.).
\end{enumerate}
\end{lemma}

\begin{proof}

Parts 1. and 2. follow from Lemma~\ref{upandtotherightinN}.  In part 3., we know from Lemma~\ref{upandtotherightinN}, we know that there is a path of the form 
\[\begin{array}{c}  (0,0)\otimes x, (0,0)\otimes S_B((r_1,1))  =  (0,1)\otimes S_B((r_1,0)),\\
(0,1)\otimes S_B((1,s_2))  =  (1,1)\otimes S_B((0,s_2)), (1,1)\otimes y
\end{array}
\]
or of the form 
\[\begin{array}{c}
 (0,0)\otimes x, (0,0)\otimes S_B((1,s_1))  = (1,0)\otimes S_B((0,s_1)),\\
 (1,0)\otimes S_B((r_2,1)) =   (1,1)\otimes S_B((r_2,0)),
 (1,1)\otimes y.
\end{array}
\]
Without loss of generality, suppose it is the former. 
We are going to use the triangle inequality and $(\sqtwo)$ to show that the score of such a path is minimized when 
\[ (0,1)\otimes S_B((r_1,0)) = (0,1)\otimes S_B((1,0)) = (0,1)\otimes S_B((1,s_2)).\]  That is, our path has the smallest score when it is of the form 
\[(0,0)\otimes x, (0,0)\otimes S_B((1,1)) = (1,1)\otimes S_B((0,0)), (1,1)\otimes y,\] 
as required. 

For ease of notation, let 
\[ \begin{array}{lcl} a &  = & d_{N\otimes B}((0,0)\otimes x,(0,0)\otimes d_B((1,1)))\\
b & = &d_{N\otimes B}((1,1)\otimes S_B((0,0)),(1,1)\otimes y).
\end{array}
\]
Our goal is to show that the score of the proposed path is $\geq a+b$. Let
\[ \begin{array}{lcl} c & =  &  d_{N\otimes B}((0,0)\otimes x, (0,0)\otimes S_B((r_1,1))),\\
e & =& d_{N\otimes B}((0,1)\otimes S_B((r_1,0)),(0,1)\otimes S_B((1,s_2))),\\
f & =& d_{N\otimes B}((1,1)\otimes S_B((0,s_2)),(1,1),y).\\
\end{array}
\]
So we want to show $c+e+f\geq a+b$.  
Observe that $e\geq (1-r_1)+s_2$ by $(\sqtwo)$. In addition, $c+(1-r_1)\geq a$ and $s_2+f\geq b$ by the triangle inequality.  
Thus, \[ c+e+f \geq a-(1-r_1)+(1-r_1)+s_2 +b-s_2 = a+b,\] as required.  
\end{proof}

\begin{lemma}
\label{lemma-taxicab-like}
Let $B$ be a square metric space, and 
let $p\geq 0$.

\begin{enumerate}
\item 
\label{not-necessarily-corner}
For all $x,y\in N^p\otimes B$ of the form
\begin{equation}
    \label{lemma-taxicab-like-xy}
\begin{array}{lcl}
x & = &  (i_1,j_1)\otimes (i_2,j_2)\otimes \ldots \otimes (i_p,j_p)\otimes S_B((r,s))
\\
y & = &  (k_1,\ell_1)\otimes (k_2,\ell_2)\otimes \ldots \otimes (k_p,\ell_p)\otimes S_B((t,u))
\end{array}
\end{equation}
we have the following distance formula:\footnote{The notation
$|i_1, i_2, \ldots, i_p; r|$ was
introduced in Definition~\ref{stardef}.}
\begin{equation}\label{eq-taxicab-like}
\begin{array}{lcl}
d_{N^p\otimes B}(x,y) & \geq &   | i^* - k^*| + |j^*- \ell^*|,
\end{array}
\end{equation}
where
\[
\begin{array}{lcl}
i^* & = & |i_1, i_2, \ldots, i_p; r|,\\
j^* & =  & |j_1, j_2, \ldots, j_p; s|,\\
\end{array}
\qquad
\begin{array}{lcl}
k^* & = & |k_1, k_2, \ldots, k_p; t|,\\
\ell^* & = & |\ell_1, \ell_2, \ldots, \ell_p; u|.\\
\end{array}
\]
\item \label{part-in-case-corner}
Assume that $B$ is either $M_0$ or $U_0$ and that 
$x$ and $y$ are corner points.  Then we may improve (\ref{eq-taxicab-like})
to a ``taxicab-like'' formula:
\[
\begin{array}{lcl}
d_{N^p\otimes B}(x,y) & = &    | i^* - k^*| + |j^*- \ell^*|.
\end{array}
\]
\end{enumerate}
\end{lemma}

 \begin{proof}
By induction on $p$.  

The base case is $p = 0$.  Part (\ref{not-necessarily-corner}) is just the statement 
of $(\sqtwo)$.
We turn to part (\ref{part-in-case-corner}).  This is where we use the assumption that we are dealing with
corner points and the overall space $B$ is either $M_0$
or $U_0$.  That is, the distance among points $(0,0)$, $(0,1)$, $(1,0)$, and $(1,1)$ 
may be calculated as if we were using the taxicab metric, even though the space $M_0$ uses
the path metric; the formula in this lemma is in general false and holds mainly for the corner points.

Let us check both (\ref{not-necessarily-corner}) 
and (\ref{part-in-case-corner})
for $p+1$, assuming them for $p$.
The argument breaks into cases depending on which copy of $N^p\otimes B$ our points $x$ and $y$
belong to.  

The first case is when $x$ and $y$ are in the same copy of $N^p\otimes B$.
That is, $(k_1, \ell_1) = (i_1,j_1)$.
We are going to check (\ref{part-in-case-corner}); the argument for (\ref{not-necessarily-corner})
is similar.  So $x$ and $y$ are corner points, and $B$ is $M_0$ or $U_0$.
Let 
\[
\begin{array}{lcl}
x' & = &  (i_2,j_2)\otimes \ldots \otimes (i_p,j_p)\otimes S_B((r,s))
\\
y' & = &  (k_2,\ell_2)\otimes \ldots \otimes (k_p,\ell_p)\otimes S_B((t,u))
\end{array}
\]
So $x = (i_1,j_1)\otimes x'$ and $y = (i_1,j_1)\otimes y'$.
In this case, $x'$ and $y'$ are corner points as well.
By induction hypothesis, $d_{N^{p}\otimes B}(x',y') = |i^*_2-k^*_2 | + |j^*_2 - \ell^*_2|$, where
$i^*_2  =  |i_2, i_3, \ldots, i_p; r|$, and similarly for $j^*_2$,  $k^*_2$, and $l^*_2$
(note that these start with second entry of the non-subscripted version, hence the $2$).
Now $i^* = \onethird i_1 + \onethird i^*_2$,
and similarly for the others.
 By Corollary~\ref{distanceinonecopyN},
\[
d_{N^{p+1}\otimes B}(x,y)
= \frac{1}{3} d_{N^{p}\otimes B}(x',y')
= \biggl|\frac{1}{3} i^*_2 - \frac{1}{3}k^*_2\biggr| + \biggl| \frac{1}{3}j^*_2 - \frac{1}{3} \ell^*_2 \biggr| 
= |i^*- k^*| + |j^* - \ell^*|.
\]
using $(k_1,\ell_1) = (i_1,j_1)$ in the last step. 

This concludes our work for (\ref{part-in-case-corner})
in this first case of the induction step, and as we said, 
({not-necessarily-corner}) is similar.

The other cases are when $x$ and $y$ are in different copies of $N^p\otimes B$.  
We are going to give full details for 
the case when $x$ and $y$ are in copies which share an edge.
Concretely, we shall work with the assumption 
$(i_1,j_1) = (0,0)$ and $(k_1,l_1) = (0,1)$.
Let $x$ and $y$ be as in (\ref{lemma-taxicab-like-xy}), but with $p+1$ terms
$(i,j)$ or $(k,\ell)$
instead of $p$.
Let $x'$ 
and $y'$ be as shown below, where we reiterated $x$ and $y$ for convenience:
\[
\begin{array}{lcl}
x& = & (0,0) \otimes (i_2,j_2) \otimes \ldots \otimes (i_{p+1},j_{p+1}) \otimes S_B((r,s))\\
x'& = & (0,0) \otimes (i_2,2) \otimes \ldots \otimes (i_{p+1},2) \otimes S_B((r,1))\\
 &  = & (0,1) \otimes (i_2,0) \otimes \ldots \otimes (i_{p+1},0) \otimes S_B((r,0))\\
y & = & (0,1) \otimes (k_2,\ell_2) \otimes \ldots \otimes  (k_{p+1},\ell_{p+1}) \otimes S_B((t,u))\\
y' & = & (0,1) \otimes (k_2,0) \otimes \ldots \otimes  (k_{p+1},0) \otimes S_B((t,0))\\
&=&(0,0)\otimes (k_2,2)\otimes\ldots\otimes (k_{p+1},2)\otimes S_B((t,1)).\\
\end{array}
\]

We check (\ref{not-necessarily-corner}) first.
For this, take any witness path from
$x$ to $y$. 

It follows from Lemma~\ref{pathsinadjacentNcopies} that 
we may find such a path
consisting of $x$ and $y$ connected by an element $z = (0,0)\otimes z' = (0,1)\otimes z''$,
where  $z' = S_{N^p\otimes B}((r',1))$ and $z'' = S_{N^p\otimes B}((r',0))$ for some $r'\in [0,1]$.

Before showing the full details, here is the idea.
Consider the points $x$, $x'$, and $z$.
These all lie in one and the same copy of $N^p\otimes B$, and so we may drop the outermost $(0,0)$
from their
expressions and apply part (1)  of the induction hypothesis
and also  Corollary~\ref{distanceinonecopyN}. 
We can also take $y$, $y'$, and $z$ and drop the outermost $(0,1)$ from their expressions
and use the induction hypothesis.  Further, $x'$ and $y'$ each have two expressions,
and we can use the induction hypothesis.   
So in this way, we may get  lower bounds on $d(x,z)$, $d(y,z)$, and $d(x',y')$.
Adding these gives a lower bound on the score of the path from $x$ to $y$ using $z$.
We will see that it is $\geq |i^*-k^*| + |j^* - \ell^*|$.
Part (2) in this lemma concerns the case when all the points involved are corner points.
In this case, we can make a judicious choice of $z$ (namely either $x'$ or $y'$) and
match this lower bound. This is how we verify the exact formula for $d(x,y)$ in this case.

\begin{claim}\label{claimforxy} We have $i^* = \onethird i^*_2$, $j^* = \onethird j^*_2$, $k^* = \onethird k^*_2$,
and $\ell^* = \onethird + \onethird \ell^*_2$.
Moreover, the following hold:
\[
\begin{array}{lclcl}
d_{N^{p+1}\otimes B}(x,x') & \geq & \frac{1}{3} - j^*
\\
d_{N^{p+1}\otimes B}(y,y') & \geq &  \frac{1}{3}\ell^*_2
\\
d_{N^{p+1}\otimes B}(x',y') & \geq & 
\onethird |i^*_2 - k^*_2| = |i^* - k^*|\\
\end{array}
\]
\end{claim}

\begin{proof}
The first assertions are easy from the definitions of the $*$ notation; in the last one, we use the
fact that $\ell_1 = 1$.
 All remaining assertions
 are proved similarly, and so we only go into details about the first 
 assertion. Let $w,w'\in N^p\otimes B$ be as below, so that $x = (0,0)\otimes w$, and 
 $x' = (0,0)\otimes w'$.
 \[
\begin{array}{lcl}
w& = &  (i_2,j_2) \otimes \ldots \otimes (i_{p+1},j_{p+1}) \otimes S_B((r,s))\\
w'& = &  (i_2,2) \otimes \ldots \otimes (i_{p+1},2) \otimes S_B((r,1))\\
\end{array}
\]
By our induction hypothesis on $p$,
\[\begin{array}{rcl}
d_{N^p\otimes B}(w,w')   &\geq  &
|i_2^*-i_2^*| + |j_2^* - (\frac{1}{3^p} + {\sum_{i=1}^p}\frac{2}{3^i})|\\
&=&0+  |j^*_2-1 |\\
&= &1 - j^*_2.\\
\end{array}
\]

The first inequality follows from the induction hypothesis and Definition~\ref{stardef}.  The second line is because 
\[ \frac{1}{3^p} + \displaystyle{\sum_{i=1}^p}\frac{2}{3^i}= \frac{1}{3^p} + \frac{2}{3}\biggl(\frac{1-\frac{1}{3^{p}}}{1-\frac{1}{3}}\biggr) = \frac{1}{3^p} + \frac{3^p-1}{3^p} = 1.\]  Finally, $j^*_2\leq 1$ by the same calculation, since $j_i\leq 2$ and $s\leq 1$. 

Because $x$ and $x'$ lie in the same copy of $N^p\otimes B$, 
we may use Corollary~\ref{distanceinonecopyN} to get the first inequality:
\[
d_{N^{p+1}\otimes B}(x,x') = \onethird d_{N^p\otimes B}(w,w') \geq \onethird (1 - j^*_2)
= \onethird - j^* .
\]
The proofs of the other two parts of this claim are similar applications of the induction hypothesis.
\end{proof}

\def\arraystretch{1.1}
 Using the claim,
 \begin{equation}
\label{techinicalinequality}
\begin{array}{lcl}
d_{N^{p+1}\otimes B}(x,x') + d_{N^{p+1}\otimes B}(y,y')&  \geq &  \frac{1}{3} - j^* +  \onethird \ell^*_2 \\
 & = & ( \frac{1}{3} +  \onethird \ell^*_2 ) - j^*\\
 & = & \ell^* - j^* \\
 & = & | j^* - \ell^*|
\end{array}
\end{equation}
At the end, we used
 Lemma~\ref{hvinequality}:
$\ell^* \geq \frac{1}{3} \geq \frac{1}{3}j^*_2 =  j^*$, which again uses the fact that $j_2^*\leq 1$.  
\def\arraystretch{1}

Recall that we had a point $z = (0,0)\otimes z' =  (0,1)\otimes z''$.
We need some estimates concerning $d(x,z)$ and $d(z,y)$.
Let us introduce notation for $z'$ and $z''$:  
\[
\begin{array}{lcl}
z'& = & (u_2,2) \otimes \ldots \otimes (u_{p+1},2) \otimes S_B((v,1))\\
z'' & = & (u_2,0) \otimes \ldots \otimes (u_{p+1},0) \otimes S_B((v,0))\\
\end{array}
\]
Our induction hypothesis applies to $w, z'\in N^p\otimes B$.
Since $x = (0,0) \otimes w$ and $z = (0,0)\otimes z'$,
we have 
\[
\begin{array}{lclcl}
d(x,z) &  =  & \onethird d(w,z') 
& \geq &  \onethird | u^*_2 - i^*_2 | +   (\frac{1}{3} - \frac{1}{3}j^*_2). \\
\end{array}
\]
Similarly,
\[
\begin{array}{lcl}
d(y,z) & 
 \geq & \onethird | u^*_2 - k^*_2 | 
+ \onethird \ell^*_2 .\\
\end{array}
\]
Recall that for any real numbers, $|a -b| + |b-c| \geq |a - c|$.
We get a lower estimate for the score of our witness path:
\[
\begin{array}{lclcl}
d(x,z) + d(z,y) & \geq &\onethird | i^*_2 - k^*_2 |
 + |j^* -\ell^*|  
& = & |i^* - k^*| + |j^* -\ell^*|.
\end{array}
\]
We also used the calculations which we saw in (\ref{techinicalinequality}).
Since $d(x,y)$ is the score of some witness path, by Lemma~\ref{pathsinadjacentNcopies} we see that 
 indeed  \begin{equation}\label{ineqtoref}d(x,y) \geq |i^*-k^*| + |j^* - \ell^*|.\end{equation}

We continue with
our work under the assumption 
$(i_1,j_1) = (0,0)$ and $(k_1,l_1) = (0,1)$,
turning to part (\ref{part-in-case-corner}).
In this case, $x$ and $y$ are corner points.
It follows that $x'$ and $y'$ are also corner points.
We restate Claim~\ref{claimforxy}, adding to the assumptions that $x$, $y$, $x'$, and $y'$ 
are corner points, and strengthening the conclusions by replacing $\leq$ with $=$
throughout.
The proof goes through because 
$w$ and $w'$ are again corner points, so we
are entitled to use (\ref{part-in-case-corner}) for $p$ on them.
In particular,  $d(x',y') = |i^*-k^*|$.
We then infer an additional fact: $d(x,x') + d(y,y') = |j^*-\ell^*|$.
This is shown exactly as in (\ref{techinicalinequality}), but with the $\geq$ assertion
 replaced by equality.
Then by the triangle inequality, 
\[
\begin{array}{lclclcl}
d(x,y) & \leq & d(x,x') + d(x',y') + d(y',y) \\
 & = &  d(x,x')  + d(y',y) + d(x',y')
 & =  & |i^*-k^*| + |j^*- \ell^*|.
\end{array}
\]
By (\ref{ineqtoref}),
we have equality.  This shows part (\ref{part-in-case-corner}) in the case that 
$(i_1,j_1) = (0,0)$ and $(k_1,l_1) = (0,1)$.
Similar work applies in the other cases when $x$ and $y$ are in 
 copies  of $N^p\otimes B$ which share an edge.

The other cases in this induction step are similar.
\end{proof}

We have the following proposition; it will be more important for us going forward than
the formula in Lemma~\ref{lemma-taxicab-like}.

\begin{proposition} \label{prop-CP1}
For all $k$:
\begin{enumerate}
    \item $f_k:CP_k\rightarrow U_0$ (below Definition~\ref{def-cp}) is an isometric embedding.
\item For $m_1,\ldots,m_k,n_1,\ldots,n_k\in N$ and $x,y\in M_0\subset U_0$ which are corner points, \[\begin{array}{rl}
&d_{N^k\otimes M_0}(m_1\otimes\ldots\otimes m_k\otimes x,n_1\otimes\ldots\otimes n_k\otimes y)\\
=& d_{N^k\otimes U_0}(m_1\otimes \ldots\otimes m_k\otimes x,n_1\otimes\ldots\otimes n_k\otimes y). \end{array}\]

That is, corresponding corner points have the same distance whether we are viewing them in $N^k\otimes M_0$ or $N^k\otimes U_0$. 
\end{enumerate}
\end{proposition}

\subsection{The natural transformation $\iota$}

Recall that as a set, $M$ is a subset of $N$.
We are next interested in the relation between the two
functors $M\otimes -$ and $N\otimes -$.

\begin{proposition}
\label{prop-eta-M-N}
There is a natural transformation
$\iota\colon (M\otimes -) \to (N \otimes -)$.
\end{proposition}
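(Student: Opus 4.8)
The plan is to let $\iota$ be induced, at each object, by the set inclusion $M\subseteq N$. Concretely, for an object $X$ of $\SquaMS$ I would define $\iota_X\colon M\otimes X\to N\otimes X$ on representatives by $m\otimes x\mapsto m\otimes x$, and the first task is to see this is well defined. The relation $\sim$ used to build $M\otimes X$ is the closure of the relation coming from $E$, while the one used for $N\otimes X$ is the closure of the relation coming from $\widehat{E}$; since $E\subseteq\widehat{E}$, the first $\sim$ is contained in the second, so every $\sim_E$-class is contained in a unique $\sim_{\widehat{E}}$-class and $\iota_X$ is a well-defined function. It also preserves the designated boundary: $S_{M\otimes X}$ and $S_{N\otimes X}$ are given by the identical case formulas (all of whose indices already lie in $M$), so $\iota_X(S_{M\otimes X}((r,s)))=S_{N\otimes X}((r,s))$ for every $(r,s)\in M_0$.

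The only step requiring an actual argument is that each $\iota_X$ is a short map, i.e.\ that $d_{N\otimes X}(m\otimes x,n\otimes y)\le d_{M\otimes X}(m\otimes x,n\otimes y)$. For this I would take any finite path from $(m,x)$ to $(n,y)$ in $M\times X$ and view it as a path in $N\times X$; the two auxiliary metrics $d_{M\times X}$ and $d_{N\times X}$ agree on pairs of elements of $M\times X$, and an adjacent pair that contributed $0$ to the score (being $\sim_E$-related) still contributes $0$ (being $\sim_{\widehat{E}}$-related), so the $N$-score of the path is at most its $M$-score. Taking the infimum over all such paths---and noting that $d_{N\otimes X}$ is itself an infimum over the still larger family of paths in $N\times X$---gives the desired inequality. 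Here Theorem~\ref{quotientmetric} and Corollary~\ref{quotientmetriccorollary} are used only to know that both sides are genuine metrics; the inequality itself needs nothing beyond the definition of the quotient metric.

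Finally, for naturality I would observe that for a morphism $f\colon X\to Y$ both composites $\iota_Y\circ(M\otimes f)$ and $(N\otimes f)\circ\iota_X$ act on representatives by $m\otimes x\mapsto m\otimes f(x)$, so the naturality square commutes on the nose. I do not anticipate a serious obstacle; the one place to be careful is the short-map step, specifically the bookkeeping showing that enlarging the gluing relation from $\sim_E$ to $\sim_{\widehat{E}}$ can only decrease path scores, never increase them. (Note $\iota_X$ need not be surjective---indeed it is not, since $M\otimes X$ has one fewer copy of $X$ than $N\otimes X$---but that is irrelevant, as a natural transformation asks only for a morphism at each object.)
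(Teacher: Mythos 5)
Your proposal is correct and follows essentially the same route as the paper: the component $\iota_X$ is the inclusion induced by $M\subseteq N$, and shortness follows because every path in $M\times X$ is a path in $N\times X$ of no greater score, so the infimum defining $d_{N\otimes X}$ is taken over a larger family. The paper's proof is simply a terser statement of this (phrased via geodesics rather than infima), omitting the well-definedness, $S$-preservation, and naturality checks that you spell out.
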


\proof
For a space $X$, $\iota_X$ is the inclusion
of spaces $M\otimes X \to N \otimes X$.
This is a short map because every witness path in $M\otimes X$ between
points is a path between the same points in $N\otimes X$.
Naturality is the assertion that the diagram below 
commutes:
\[
\xymatrix@C+1pc{M\otimes X \ar[r]^-{M\otimes f} \ar[d]_-{\iota_X} 
    & M\otimes Y \ar[d]^-{\iota_Y}\\ 
   N\otimes X   \ar[r]_-{N\otimes f}  &  N\otimes Y
   } 
 \]
For each $m\otimes x\in M\otimes X$,
 the upper passage gives $m\otimes f(x)$,
 and this is exactly what the lower passage gives.
\endproof

\subsection{The Cauchy completion functor}\label{Cauchycompletionfuctor}

To obtain the final $M\otimes-$ and $N\otimes-$ coalgebras, we will use the technique in~\cite{Bhat} of using the completion of the initial algebra.  Here we recall some facts about $C$, the Cauchy completion functor.

Consider a category $\mathcal{C}$ of metric spaces whose morphisms are short maps,  and for $X$ an object in $\mathcal{C}$, let $CX$ be its Cauchy completion, where we identify equivalent Cauchy sequences (that is, $(x_i)_i$ and $(y_i)_i$ such that $d_X(x_i,y_i)$ tends to $0$).
For Cauchy sequences $(x_i)_i$ and $(y_i)_i$ from an object $X$ in $\mathcal{C}$, $d_{CX} ((x_i)_i,(y_i)_i) = \displaystyle{\lim_{i\rightarrow\infty}}d_X(x_i,y_i)$, which is 
well-defined (as it will be $0$ for equivalent Cauchy sequences).  If $(x_i)_i$ and $(y_i)_i$ are not equivalent, then $d_{CX}((x_i)_i,(y_i)_i)>0$.     For $f:X\rightarrow Y$ a morphism in $\mathcal{C}$, let $Cf:CX\rightarrow CY$ be defined by $(x_i)_i\mapsto (f(x_i))_i$.  Since $(x_i)_i$ is a Cauchy sequence in $X$ and $f$ is a short map, $(f(x_i))_i$ is a Cauchy sequence in $Y$;  this, too, is well-defined.   We assume that $\mathcal{C}$ is closed under $C$ and that $Cf$ is a morphism in $\mathcal{C}$ whenever $f$ is.
This defines $C$ as a functor on $\mathcal{C}$.
Finally, each space $X$ embeds in $CX$ by taking constant sequences, and we have a natural transformation
 $i: Id \to C$.

We specialize all of this to the case when
$\mathcal{C}$ is $\SquaMS$.

\begin{lemma}
\label{lemma-Cauchy}
$\SquaMS$ is closed under $C$.
$C$ may be considered as an endofunctor on $\SquaMS$.  
As such,  $i: Id \to C$ is a natural transformation.
\end{lemma}

\proof
Let $X$ be an object in $\SquaMS$ and consider $CX$. $CX$ is a metric space bounded by $2$, since $d_{CX}((x_i)_i,(y_i)_i) = \displaystyle{\lim_{i\rightarrow\infty}} d_X(x_i,y_i)\leq 2$ since $d(x_i,y_i)\leq 2$ for all $i$.

We endow the set $CX$ with the square set structure $S_{CX} = i_X \o S_X$.
Since $i_X$ and $S_X$ are injective, so is $S_{CX}$.

Since $i_X$ is an isometric embedding, $CX$ is not only a square set, it is a square metric space.

For example, to verify the first requirement of ($\sqone$) in Definition~\ref{definitionofsquams}, let $i\in {0,1}$ and $r,s\in [0,1]$.

\[\begin{array}{rcl}
d_{CX}(S_{CX}((i,r)),S_{CX}((i,s))) &= &d_{CX}(i_X(S_X((i,r))),i_X(S_X((i,s))))\\
&=& d_{CX}((S_X((i,r)))_k,(S_X((i,s)))_k)\\
&=& \displaystyle{\lim_{k\rightarrow\infty}} d_X(S_X((i,r)),S_X((i,s)))\\
&=& d_X(S_X((i,r)),S_X((i,s)))\\
&=& |s-r|.\\
\end{array}\]

The other condition in $(\sqone)$ and the requirements of ($\sqtwo$) follow from a similar argument.

If $f: X\to Y$ is a morphism of square spaces, then $f\o S_X = S_Y$.
And so
\[ Cf\o S_{CX}  =
Cf \o i_X \o S_X = i_Y \o f  \o S_X  = i_Y \o S_Y = S_{CY}.
\]
We are using the naturality of $i$ between endofunctors on $\mathcal{C}$.
Thus, $C$ is an endofunctor on $\SquaMS$. 
The same calculation shows that $i: Id \to C$ is a natural transformation
between functors on square spaces.
\endproof

We aim to show that
up to isomorphism,
$M\otimes -$ and $N\otimes -$ commute with $C$.  We will show the result for $M\otimes-$, but the proof for $N\otimes-$ is the same.
For any object $X$ in $\mathcal{C}$, consider 
\[ M\otimes C(X)\stackrel[]{\delta^M_X}{\longrightarrow} C(M\otimes X)\stackrel[]{\rho^M_X}{\longrightarrow} M\otimes C(X).\]
given by 
\[\begin{array}{lcl}
\delta^M_X(m\otimes (x_0,x_1,\ldots))
& =  & (m\otimes x_0,m\otimes x_1,\ldots) \\
\rho^M_X((m_k\otimes x_k)_k)
& =  &  m^*\otimes (x_{k_0},x_{k_1},\ldots),
\end{array}
\]
where $m^*$ is the first index in $M$ (via some order of the finite set $M$) which occurs infinitely many times in $(m_k\otimes x_k)_k$ and $x_{k_0},x_{k_1},\ldots$ are the corresponding elements of $X$.

\begin{lemma}\label{natisom} $\delta^M$ and $\rho^M$ are natural isomorphisms.
\end{lemma}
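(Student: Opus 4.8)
The plan is to prove, for each object $X$ of $\SquaMS$, that $\delta_X$ is a bijective isometry which preserves the boundary maps — hence an isomorphism in $\SquaMS$ — that $\rho_X$ is its two-sided inverse, and that the naturality squares for $\delta$ commute; since $M\otimes C(-)$ and $C(M\otimes-)$ are composites of the functors $M\otimes-$ and $C$, this exhibits $\delta$ and $\rho$ as mutually inverse natural isomorphisms. First I would settle well-definedness of $\delta_X$. If $(x_i)_i$ is Cauchy in $X$ then $d_{M\times X}((m,x_i),(m,x_j)) = \tfrac13 d_X(x_i,x_j)\to 0$, so $(m\otimes x_i)_i$ is Cauchy in $M\otimes X$ and equivalent inputs have equivalent images. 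The one genuinely non-formal point is that $\delta_X$ respects $\sim$: if $m\otimes\xi = n\otimes\eta$ in $M\otimes C(X)$ with $m\ne n$, then $\xi,\eta$ are boundary points of $C(X)$, say $\xi = S_{C(X)}((r,s))$ and $\eta = S_{C(X)}((t,u))$, and since $E$ does not depend on the space we get $m\otimes S_X((r,s)) = n\otimes S_X((t,u))$ in $M\otimes X$; as $m\otimes x_i\to m\otimes S_X((r,s))$ and $n\otimes y_i\to n\otimes S_X((t,u))$ there, the triangle inequality forces $d_{M\otimes X}(m\otimes x_i,n\otimes y_i)\to 0$, so the images agree in $C(M\otimes X)$. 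For $\rho_X$ I would note that finiteness of $M$ guarantees an index recurring infinitely often, and that independence of the chosen representative reduces to the same kind of argument, now using compactness of $S_X[M_0]$ and closedness of $\sim$ (Lemma~\ref{limitsinMotimes}): a Cauchy sequence visiting two distinct copies cofinally must do so along an identified boundary segment.

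Next I would check that $\delta_X$ is a morphism. It preserves $S$: $S_{M\otimes C(X)}((r,s))$ equals $m'\otimes S_{C(X)}((r',s'))$ for the $m',(r',s')$ used to define $S_{M\otimes X}$, represented by the constant Cauchy sequence, and $\delta_X$ sends this to the constant sequence at $m'\otimes S_X((r',s')) = S_{M\otimes X}((r,s))$, i.e.\ to $S_{C(M\otimes X)}((r,s))$. It is short: by Theorem~\ref{quotientmetric} the distance $d_{M\otimes C(X)}(m\otimes\xi,n\otimes\eta)$ is the score of a $\sim$-alternating path whose legs have the form $\tfrac13 d_{C(X)}(\zeta,\zeta') = \lim_i \tfrac13 d_X(z_i,z_i')$, so this score is the limit over $i$ of the scores of the corresponding paths joining the $i$-th terms inside $M\otimes X$, and each such path dominates $d_{M\otimes X}(m\otimes x_i,n\otimes y_i)$; passing to the limit gives the short inequality.

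Then I would prove the reverse (isometry) inequality and bijectivity together. For each $i$ take a geodesic $\sim$-alternating path in $M\otimes X$ from $m\otimes x_i$ to $n\otimes y_i$; Theorem~\ref{quotientmetric} bounds its length by $2|M|$, so on a subsequence of $i$ the index pattern is constant, and since the boundary waypoints all lie in the compact set $S_X[M_0]$, on a further subsequence they converge; closedness of $\sim$ turns the limiting data into a genuine finite $\sim$-alternating path in $M\otimes C(X)$ from $m\otimes\xi$ to $n\otimes\eta$ whose score is exactly $\lim_i d_{M\otimes X}(m\otimes x_i,n\otimes y_i)$, giving $d_{M\otimes C(X)}(m\otimes\xi,n\otimes\eta)\le d_{C(M\otimes X)}(\delta_X(m\otimes\xi),\delta_X(n\otimes\eta))$. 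For surjectivity and the inverse: given a class $[(m_k\otimes x_k)_k]$, let $m^{*}$ be the first index recurring infinitely often and restrict to the subsequence with index $m^{*}$ — a subsequence of a Cauchy sequence, hence Cauchy and equivalent to the original — whose $X$-coordinates form a Cauchy sequence $\xi$ with $\delta_X(m^{*}\otimes\xi) = [(m_k\otimes x_k)_k]$; this is $\delta_X\circ\rho_X = \id$, while $\rho_X\circ\delta_X = \id$ is immediate since $\delta_X(m\otimes\xi)$ has a representative all of whose indices equal $m$. Thus $\delta_X$ is an isomorphism in $\SquaMS$ with inverse $\rho_X$.

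Finally, for naturality, given $f\colon X\to Y$ both $\delta_Y\circ(M\otimes C(f))$ and $C(M\otimes f)\circ\delta_X$ send $m\otimes(x_i)_i$ to $(m\otimes f(x_i))_i$, so $\delta$ is natural, and naturality of $\rho$ follows since it is the componentwise inverse. I expect the main obstacle to be the reverse isometry inequality: producing, in the limit over $i$, an honest finite $\sim$-alternating path in $M\otimes C(X)$ out of the geodesics in $M\otimes X$ — this is precisely where finiteness of $M$, the length bound from Theorem~\ref{quotientmetric}, compactness of $S_X[M_0]$, and closedness of $\sim$ must all be combined.
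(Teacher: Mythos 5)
Your proof is correct and follows essentially the same route as the paper, which declares the well-definedness, inverse, shortness, and isometry checks routine and only writes out the preservation of the square-set structure (exactly as you do, via constant sequences). The one place you work harder than necessary is the reverse isometry inequality: once $\delta_X$ and $\rho_X$ are known to be mutually inverse short maps, the chain $d(a,b)\geq d(\delta_X(a),\delta_X(b))\geq d(\rho_X(\delta_X(a)),\rho_X(\delta_X(b)))=d(a,b)$ already forces both to be isometries, so your subsequence/compactness construction of a limiting $\sim$-alternating path in $M\otimes C(X)$ can be replaced by a direct verification that $\rho_X$ is short.
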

\proof
It is routine to check that for all $X$ in $\mathcal{C}$, $\delta^M_X$ and $\rho^M_X$ are well-defined,
that they are inverse functions (modulo equivalence of Cauchy sequences), that they are short maps, and thus, isometries.

We need to check that $\delta^M_X$ and $\rho^M_X$ preserve $S_X$.  
For $\delta^M_X$, let $(r,s)\in M_0$ and consider $S_{M\otimes C(X)}((r,s))$.  Then for some $m$ and $(r',s')$ which do not depend on $C(X)$, $S_{M\otimes C(X)}((r,s)) = m\otimes S_{C(X)}((r',s'))$.  $S_{C(X)}((r',s'))$ can be viewed as the limit of the constant sequence $(S_X((r',s')))$.    So $\delta^M_X(S_{M\otimes C(X)}((r,s)))= \delta^M_X(m\otimes (S_X((r',s')))) = (m\otimes S_X((r',s')))$, which is equal to the constant sequence $(S_{M\otimes X}((r,s)))$, whose limit is $S_{C(M\otimes X)}((r,s))$, as required.

For $\rho^M_X$, if $(r,s)\in M_0$, we can view $S_{C(M\otimes X)}((r,s))$ as the limit of the constant sequence $(S_{M\otimes X}((r,s)))$, which is equal to the constant sequence $(m\otimes S_X((r',s')))$ for some $m\in M$ and $(r',s')\in M_0$ only depending on $(r,s)$.  Then $\rho^M_X(S_{C(M\otimes X)}((r,s))) = m\otimes (S_X((r',s'))) = m\otimes S_{C(X)}((r',s')) = S_{M\otimes C(X)}((r,s))$.  
\endproof

We get analogous natural isomorphisms $\delta^N$ and $\rho^N$ for $N\otimes-$ defined in the same way.  Thus, we have the following.

\begin{proposition}\label{cauchycompletion} For $X$ in $\SquaMS$, $\delta_X^M:M\otimes C(X) \rightarrow C(M\otimes X)$ and $\delta_X^N:N\otimes C(X) \rightarrow C(N\otimes X)$ are isomorphisms. \end{proposition}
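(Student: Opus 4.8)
The plan is to obtain Proposition~\ref{cauchycompletion} as an immediate consequence of Lemma~\ref{natisom} together with its analog for $N\otimes-$, so that essentially all the work is already done. For the $M$ half: Lemma~\ref{natisom} exhibits $\delta_X\colon M\otimes C(X)\to C(M\otimes X)$ and $\rho_X\colon C(M\otimes X)\to M\otimes C(X)$ as mutually inverse morphisms of $\SquaMS$ that are isometries, and the naturality squares commute on the nose since both composites act coordinatewise by $m\otimes(x_0,x_1,\ldots)\mapsto(m\otimes f(x_0),m\otimes f(x_1),\ldots)$. Hence $\delta$ is a natural isomorphism $M\otimes C(-)\Rightarrow C(M\otimes-)$, and in particular $M\otimes C(X)\cong C(M\otimes X)$ for every object $X$; note that the proposition as stated only asks for this object-level isomorphism.

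For the $N$ half I would define $\delta^N_X\colon N\otimes C(X)\to C(N\otimes X)$ and $\rho^N_X\colon C(N\otimes X)\to N\otimes C(X)$ by exactly the same formulas (using a fixed ordering of the finite set $N$ to select the representative index $m^*$ in the definition of $\rho^N$), and then check that every step of the proof of Lemma~\ref{natisom} transfers verbatim. The prerequisites are all available: $N\otimes-$ is an endofunctor of $\SquaMS$ by Proposition~\ref{augmentedM}; $\widehat{E}$ is quotient suitable; and Lemma~\ref{distanceinonecopy} furnishes precisely the extra hypothesis of Theorem~\ref{quotientmetric} for $N$, so $N\otimes X$ is a metric space whose distances are witnessed by finite $\sim$-alternating paths and whose copy-embeddings $\phi_m\colon X\to N\otimes X$ scale distances by $\tfrac13$ (Corollary~\ref{quotientmetriccorollary}). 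Given these, the verifications that $\delta^N,\rho^N$ are well defined modulo Cauchy equivalence, mutually inverse, short (hence isometric), and compatible with the side map $S$ use only the ambient structure of $\SquaMS$, functoriality, and the fact that nontrivial $\sim$-identifications occur only among points of $S_X[M_0]$ — none of which distinguishes $N$ from $M$. This gives $N\otimes C(-)\Rightarrow C(N\otimes-)$ and hence $N\otimes C(X)\cong C(N\otimes X)$.

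I expect the only genuinely delicate point — and it is already contained in Lemma~\ref{natisom} — to be the well-definedness of $\rho_X$ (and $\rho^N_X$): for a Cauchy sequence $(m_k\otimes x_k)_k$ one must note that the chosen index $m^*$ occurs infinitely often (immediate, since $M$ is finite), that the associated subsequence of $X$-coordinates is Cauchy in $X$, and that replacing the sequence by an equivalent one changes the output by a Cauchy-equivalent sequence; and one must remember that $\delta_X\circ\rho_X$ and $\rho_X\circ\delta_X$ equal the identity only \emph{modulo} Cauchy equivalence, because $\rho_X$ may drop the finitely many ``wrong-index'' terms. All of this is routine bookkeeping, so the proof of Proposition~\ref{cauchycompletion} reduces to a one-line appeal to Lemma~\ref{natisom} for $M$ and to its verbatim $N$-analog.
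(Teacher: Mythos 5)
Your proposal is correct and matches the paper's own treatment: the paper derives Proposition~\ref{cauchycompletion} directly from Lemma~\ref{natisom} together with the remark that the analogous natural isomorphisms for $N\otimes-$ are defined in the same way. Your additional bookkeeping about well-definedness of $\rho_X$ modulo Cauchy equivalence and the transfer of the hypotheses to the $N$ case is consistent with (indeed slightly more explicit than) what the paper records.
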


\section{The initial algebra of $M\otimes -$ obtained as the colimit of
its initial algebra $\omega$-chain}
\label{section-initial-algebra}

The overall message of this paper is that the Sierpinski carpet as a metric space
is bilipschitz equivalent to 
a final coalgebra of the endofunctor $M\otimes -$ on the category of square metric spaces.
However, to show this, we need a lot of material on a dual concept, \emph{initial algebras}.
It turns out that in our setting the final coalgebra is the Cauchy completion of the initial algebra.

\begin{definition}
Let $\A$ be a category and $F\colon\A\to \A$
an endofunctor.
An \emph{algebra for $F$} is a pair $(A,f)$, where
$A$ is an object, and $f: FA\to A$ is a morphism.
We call $A$ the \emph{carrier} and $f$ the 
\emph{structure (morphism)}.
A \emph{pre-fixed point} of $F$ is an
algebra whose structure is a monomorphism.

Let $(A,f)$ and $(B,g)$ be algebras for $F$.  An \emph{algebra morphism} 
from $(A,f)$ to $(B,g)$
is a morphism $\phi:A\to B$ in $\AA$ such that $\phi\o f = g \o F\phi$:
\[
\xymatrix@C+1pc{FA \ar[r]^-{f} \ar[d]_-{F\phi} 
    & A \ar[d]^-{\phi}\\ 
   FB   \ar[r]_-{g}  &  B
   } 
 \]
This gives a category $\AlgF$ of $F$-algebras, and an \emph{initial algebra} is an initial object in  $\AlgF$.
As expected, if such an algebra exists at all, it is unique up to isomorphism
in  $\AlgF$. 
 \end{definition}
 
 We recall a standard result in category theory, \emph{Lambek's Lemma}:
 if $(A,f)$ is an initial algebra, then $f$ is an isomorphism in the base category $\A$.

 \subsection{A pre-fixed point of $M\otimes -$.}

The main result of this section is the existence of an 
 initial algebra $M\otimes G \rightarrow G$ in $\SquaMS$.
 Before we start in on that,
 we exhibit a pre-fixed point related to the topic of this paper. 
Let $U_0=[0,1]^2$, equipped with the taxicab metric $d_{U_0}$, where
\[d_{U_0}((x,y),(x_1,y_1)) = |x-x_1|+|y-y_1|.\]
Define $\alpha_M: M\otimes U_0\rightarrow U_0$ by \begin{equation}\label{alphaMdef} (i,j)\otimes (r,s)\mapsto (\onethird(i+r),\onethird(j+s)).\end{equation} 
The following result is not immediate because the metrics are different in 
$U_0$ and $M\otimes U_0$.

\begin{lemma}\label{U0injective}
The map
 $\alpha_M:M\otimes U_0\rightarrow U_0$ is a monomorphism
of $\SquaMS$.   Thus, 
\[ (U_0,\alpha_M:M\otimes U_0\rightarrow U_0)\] is a pre-fixed point of $M\otimes-$.
\end{lemma}

\proof
First, it is easy to verify using the equivalences in $E$ that $\alpha_M$ preserves $S_{M\otimes U_0}$.  

We next show 
that $\alpha_M$ is injective.
To begin, if $\alpha_M((i,j)\otimes (r,s)) = \alpha_M((k,l)\otimes (t,u))$, when $(i,j)=(k,l)$, we must have $(r,s) = (t,u)$.  Otherwise, by examining cases we check that for any possible combination of $(i,j)$ and $(k,l)$, this equality forces $(r,s)$ and $(t,u)$ to be such that $(i,j)\otimes (r,s)$ and $(k,l)\otimes (t,u)$ are equal under the equivalence relation $E$. 

We next check that $\alpha_M$ is a short map.  Let $x=(i,j)\otimes (r,s)$ and $y=(k,l)\otimes (t,u)$ in $M\otimes U_0$.
Then
$x$ and $y$ fall into one of the following cases (up to possible rotation and reflection). 
\[
  \begin{tikzpicture}[>=stealth',shorten >=1pt,auto,node distance=2cm,semithick,scale=1.2]
  \draw [help lines] (0,4) grid (1,5);
 \draw (.3,4.6)  node   {$\cdot\, x$};
  \draw (.7,4.2)  node   {$\cdot\, y$};
  \draw [help lines] (2,3) grid (3,5);
 \draw (2.5,4.5)  node   {$\cdot\, x$};
  \draw (2.5,3.5)  node   {$\cdot\, y$};  
  \draw [help lines] (4,2) grid (5,5);  
   \draw (4.5,4.5)  node    {$\cdot\, x$};
  \draw (4.5,2.5)  node   {$\cdot\, y$};  
\draw [help lines] (6,2) grid (7,5);  
\draw [help lines] (7,2) grid (8,3);  
   \draw (6.5,4.5)  node    {$\cdot\, x$};
  \draw (7.5,2.5)  node   {$\cdot\, y$};  
  \end{tikzpicture}
\]

\[
  \begin{tikzpicture}[>=stealth',shorten >=1pt,auto,node distance=2cm,semithick,scale=1.2]
  \draw [help lines] (0,0) grid (1,3);
    \draw [help lines] (1,0) grid (3,1);
   \draw (0.5,2.5)  node    {$\cdot\, x$};   
      \draw (2.5,0.5)  node    {$\cdot\, y$};  
  \draw [help lines] (4,1) grid (5,3);  
    \draw [help lines] (5,2) grid (6,3);
   \draw (4.5,1.5)  node    {$\cdot\, y$};   
      \draw (5.5,2.5)  node    {$\cdot\, x$};  
  \draw [help lines] (7,0) grid (10,3);    
    \draw (8.5,2.5)  node    {$\cdot\, x$};   
      \draw (8.5,0.5)  node    {$\cdot\, y$}; 
 \end{tikzpicture}
\]

In each case it is reasonably routine to verify that $d_{M\otimes U_0}(x,y)\geq d_{U_0}(\alpha_M(x),\alpha_M(y))$, but we will examine one of these cases carefully, the one indicated in 
the lower-left corner.
Suppose that  $(i,j) = (0,2)$ and $(k,l) = (2,0)$, as shown.  
Note that
\[ d_{U_0}(\alpha_M(x),\alpha_M(y)) = \onethird |(i+r)-(k+t)|+\onethird |(j+s)-(l+u)|.
\]
Then, without loss of generality, the shortest path
in $M\otimes U_0$ 
between $x$ and $y$ is of the following form: 
\[
\begin{array}{l}
x=(0,2)\otimes (r,s), (0,2)\otimes (r_1,0)=(0,1)\otimes (r_2,1), \\
(0,1)\otimes (r_3,0)= (0,0)\otimes (r_4,1), (0,0)\otimes (1, s_1) = (1,0)\otimes (0,s_2),\\
(1,0)\otimes (1,s_3)=(2,0)\otimes (0,s_4), (2,0)\otimes (t,u)=y\\
\end{array}
\]

\[
  \begin{tikzpicture}[>=stealth',shorten >=1pt,auto,node distance=2cm,semithick,scale=1.2]
  \draw [help lines] (0,0) grid (1,3);
    \draw [help lines] (1,0) grid (3,1);
     \draw (0.5,2.5)  node     {$\bullet$};
       \draw (0.2,2.5)  node     {$x$}; 
      \draw (2.5,0.5)  node   {$\bullet$};  
            \draw (2.8,0.5)  node   {$y$};  
     \draw (0.6,2)  node  (a)  {$\bullet$};    
          \draw (0.6,1)  node   (b)   {$\bullet$};          
          \draw (1,.8)  node    {$\bullet$};   
              \draw (2,.8)  node    {$\bullet$};   
\draw (0.5, 2.5) -- (0.6,2);               
 \draw (0.6,2)  -- (0.6,1);    
  \draw (0.6,1) -- (1,.8);
    \draw (1,.8) --(2,.8);   
    \draw(2,.8) --(2.5,0.5) ;                    
   \end{tikzpicture}
\]
We estimate the score of this path.  First, we consider the horizontal components from each scaled copy of $U_0$. 
Their contribution to the score is
\def\arraystretch{1.1}
\[
\begin{array}{cl}
\geq & \onethird|r_1-r| + \onethird |r_2-r_1| +\onethird |1-r_2| +\onethird |1-0| + \onethird |t-0|\\
\geq & \onethird |2 + (t-r)|\\
= & \onethird |(i+r)-(k+t)|
\end{array}
\]
\def\arraystretch{1}
(The last equality holds because  $i=0$ and $k=2$.)
Similarly for the vertical components.  Thus, $d_{M\otimes U_0}(x,y)\geq d_{U_0}(\alpha_M(x),\alpha_M(y))$.

The other cases are similar. 

To conclude the proof, we recall that by Proposition~\ref{prop-mono}, injective functions give rise to monomorphisms in $\SquaMS$.
\endproof

 \subsection{Colimits of $\omega$-chains}
 
We apply Theorem~\ref{thm-adamek},
a well-known result in category theory,
to construct an initial algebra by taking 
\emph{the colimit of a certain $\omega$-chain} and verifying that the functor 
preserves this colimit.
We thus begin with a review of the definitions.  Even though we are mainly interested
in square metric spaces, we find it convenient to work somewhat more generally and 
also to study the situation in several related categories.

Let $\A$ be a category.
An 
 \emph{$\omega$-chain} in $\A$
is a functor from $(\omega, \leq)$ as a category into $\A$.
It is determined by an infinite sequence of objects and morphisms of $\A$ indexed by $\omega$:

 \begin{equation}\label{chain}
 \xymatrix{
A_0 \ar[r]^{a_{0}} & A_1 \ar[r]^{a_{1}} & \cdots & A_k \ar[r]^{a_k} & A_{k+1} & \cdots
 }
 \end{equation}
To turn this into a functor from $(\omega, \leq)$, we must specify
 \emph{connecting morphisms} $a_{k,\ell}$ for $k \leq \ell$.  
 We obviously take
 $a_{k,k} = \id_{A_k}$, and 
 then for $k < \ell$ we take $a_{k,\ell}$ to be the composition 
 $a_{\ell-1} \o a_{\ell-2} \o \cdots \o a_{k}$.

A \emph{cocone} of (\ref{chain}) is 
a pair $(B, (b_k)_k)$ consisting of
an object $B$ together with morphisms $b_k\colon A_k \to B$ so that that $b_k = b_\ell \o a_{k,\ell}$
when $k\leq \ell$. 
Sometimes we abuse notation slightly and write a cocone as $b_k\colon A_k\to B$, but technically
a cocone is an object together with a family of morphisms.
A \emph{colimit} of the chain  (\ref{chain}) 
is a cocone $(C,(c_k)_k)$ with the property that 
for every cocone  $(B, (b_k)_k)$ there is a unique morphism $f\colon C \to B$ so that $b_k = f \o c_k$ for all $k\in\omega$.

\begin{definition}
Consider an $\omega$-chain as in (\ref{chain}) with connecting morphisms $a_{k,\ell}$.  Let $(C,(c_k)_k)$ be a colimit. 
We say that $F$ \emph{preserves this colimit}
if 
the chain $FA_k$ with connecting morphisms $Fa_{k,\ell}$ has $(FC,(Fc_k)_k)$ as a colimit.
\end{definition}

Here is the reason that this is of interest in this paper.
\begin{theorem}[Ad\'amek~\cite{A74}]
\label{thm-adamek}
Let $\mathcal{A}$ be a category with initial object $0$.  Let $F:\mathcal{A}\rightarrow\mathcal{A}$ be an endofunctor.  
Consider the initial-algebra chain 
\begin{equation} 
\label{initialchain}
0\stackrel[]{!}{\longrightarrow} F0\stackrel[]{F!}{\longrightarrow} 
F^20\stackrel[]{F^2!}{\longrightarrow} \cdots F^k0\stackrel[]{F^k !}{\longrightarrow}  F^{k+1}0\cdots
\end{equation}
Suppose the colimit $G=colim_{k<\omega}F^k0$ exists, and write $g_k:F^k0\rightarrow  G$ for the cocone morphism.  
Suppose that $F$ preserves this colimit. 
Let $a:FG\rightarrow G$ be the unique morphism so that $a\circ Fg_k=g_{k+1}$ for all $k$.  Then $(G,a)$ is an initial algebra.  
\end{theorem}

We are especially concerned with the case
$\mathcal{A}=\SquaMS$, 
 $0= M_0$, and  $! =S_{M\otimes M_0}: M_0 \to M\otimes M_0$.
We shall show that with those choices, the colimit of the initial algebra $\omega$-chain exists, 
calling on much more general results.
Then we shall prove that the functor $M\otimes -$ preserves this colimit.

At various points in this paper
we are going to need  colimits of other $\omega$-chains in $\SquaMS$.
For every $M\otimes -$ coalgebra $(B,\beta)$, we need the chain below and its colimit.
\begin{equation}
B\stackrel[]{\beta}{\longrightarrow} M\otimes B\stackrel[]{M\otimes \beta}{\longrightarrow} M^2\otimes B\stackrel[]{M^2\otimes \beta}{\longrightarrow} 
\cdots M^k\otimes B\stackrel[]{M^k\otimes \beta}{\longrightarrow}  M^{k+1}\otimes B\cdots,
\label{eq-B-chain}
\end{equation}
We need the same colimit with $N$ replacing $M$, too.
We shall prove that the colimit of (\ref{eq-B-chain}) exists
and that it is preserved by the functor.
For this, we combine general facts about 
colimits in  \emph{pseudo-metric} spaces
with facts about the functors $M\otimes -$ and $N\otimes -$
which we have already seen.

We thus make a digression to study colimits of $\omega$-chains in greater generality.
We want to explore the colimits in sets, 
pseudo-metric spaces, metric spaces,
square sets, and square metric spaces.
In each case, we  characterize 
colimits of $\omega$-chains.

\subsection{Colimits of $\omega$-chains in Sets}

Suppose that we have an $\omega$-chain  in $\Set$
\begin{equation}\label{Achain}
A_0 \to A_1 \to \cdots \end{equation}
with connecting maps $a_{k,\ell}\colon A_k \to A_\ell$.
Suppose that we have a set $C$ and a cocone 
$(c_k)_{k\in\omega}$, where
$c_k\colon
    A_k \to C$.  Assume the following two properties:

    \begin{enumerate}
    \item[($\Set$1)]\label{P:setone}\  $C = \bigcup_{k} c_k[A_k]$, and
    \item[($\Set$2)]\label{P:settwo}\  Given $k\in\omega$ and elements
      $x,y\in A_k$ with $c_k(x) = c_k(y)$, there exists $\ell \geq k$ in
      $\omega$ such that $a_{k,\ell}(x) = a_{k,\ell}(y)$.
    \end{enumerate}
Note that ($\Set$2) implies a stronger form of the same statement:
if $x\in A_k$ and $y\in A_\ell$ with $k\leq \ell$ and $c_{k}(x)=c_\ell(y)$,
then there is $p\geq \max(\ell,k)$ such that $a_{k,p}(x) = a_{\ell,p}(y)$.
Here is how we see this.  Notice that $a_{k,\ell}(x)\in A_\ell$.   Apply ($\Set$2) to $a_{k,\ell}(x)$ and $y$ as elements
of $A_\ell$ to get some $p\geq \ell$ so that $a_{\ell,p}(a_{k,\ell}(x)) = a_{\ell,p}(y)$.   But $a_{\ell,p}(a_{k,\ell}(x)) = a_{k,p}(x)$.

We claim that $C$ with the morphisms $c_k:A_k\rightarrow C$ is a colimit.
Indeed, suppose that we are given a cocone $b_k\colon A_k\to B$.
We need to define a cocone morphism $f\colon C\to B$ and to prove that it is unique.
We define $f(c_k(x)) = b_k(x)$ for all $k\in \omega$ and $x\in A_k$.
This is a well-defined function due to  our observation in the previous paragraph.
It is defined on all of $C$, by ($\Set$1).
 It is a cocone morphism by definition. 
 And it is the unique such, since
 the condition   $f\o c_k = b_k$ gives the definition of $f$.
 
 \emph{Construction}
To prove the existence of a colimit of  (\ref{Achain}), we only need to find a set
$C$ and a cocone $(c_k)_k$ with
($\Set$1) and ($\Set$2).
Take the disjoint union $\sum_k A_k$, then take the  relation $\equiv$
given by
\[
(x,k) \equiv (y,\ell)\quad\mbox{iff}\quad \mbox{ $a_{k,p}(x) = a_{\ell,p}(y)$ for some $p\geq k,\ell$}
\]
In fact, this relation is an equivalence relation.
The quotient $C = (\sum_k A_k)/\!\!\equiv$ is then the colimit, with maps $c_k = A_k \to \sum_k A_k \to C$.  Conditions ($\Set$1) and ($\Set$2) are immediate.

\subsection{Colimits of $\omega$-chains in Pseudo-Metric Spaces} 
A pseudo-metric on a set $X$ is a \emph{distance function}
$d: X\times X \to [0,\infty]$
with the following properties:
$d(x,x) = 0$, $d(x,y) = d(y,x)$, and $d(x,z) \leq d(x,y) + d(y,z)$.
However, $d(x,y) = 0$ need not imply $x = y$.
A \emph{$2$-bounded} space has all distances bounded by $2$.
Let us consider the category $\PS$ of $2$-bounded
\emph{pseudo-metric spaces}.
As with the metric space categories in this paper, 
we take the morphisms in $\PS$ to be the short maps (also called non-expanding functions).
Let $\UU\colon \PS \to \Set$ be the forgetful functor.
As mentioned in~\cite{AdamekEA15} for the case of $1$-bounded spaces, 
$\PS$ is also 
cocomplete. That is, it has all colimits, not just colimits of $\omega$-chains.
We only need a special case of this, the result for colimits of $\omega$-chains.

\emph{Characterization of colimits of $\omega$-chains in $\PS$.}
Consider a chain 
\begin{equation}\label{AchainPS}
(A_0,d_0) \to (A_1,d_1) \to \cdots \end{equation}
with connecting short maps $a_{k,\ell}\colon A_k \to A_\ell$.
Suppose that we have a $\PS$-object $(C,d_C)$ with
short maps $c_k\colon
    A_k \to C$.  Assume the following three
    properties: 
    \begin{enumerate}
    \item [($\PS$1)] \label{P:setonePS} \ As sets, $C = \bigcup_{k} c_k[A_k]$, and
  \item[($\PS$2)]\label{P:settwoPS}\ Given $n\in\omega$ and elements
      $x,y\in A_k$ with $c_k(x) = c_k(y)$, there exists $\ell \geq k$ in
      $\omega$ such that $a_{k,\ell}(x) = a_{k,\ell}(y)$.

  \item[($\PS$3)]\label{P:fin-fb:3PS}\  For all $k$ and all $x,y\in A_k$,
     $d_C(c_k(x), c_k(y)) = \displaystyle{\inf_{p\geq k}}\  d_p(a_{k,p}(x), a_{k,p}(y))$.  
    \end{enumerate}
 We claim that $(C,d_C)$ is the   colimit of (\ref{AchainPS}) in
    $\PS$. 
    Due to ($\PS$1) and ($\PS$2), the underlying set $C$
    is a colimit of the $\omega$-chain in $\Set$ obtained by forgetting the pseudo-metric.
So, given a cocone $b_k\colon A_k\to B$ 
  in $\PS$,
we have a $\Set$ map $f\colon C\to B$
  (from above)
   given by $f(c_k(x)) = b_k(x)$.  We need only check that this map is 
  short.  First take a fixed $k$ and elements $x,y\in A_k$.   We want to show that 
  \[ d_{B}(f(c_k(x)), f(c_k(y))) \leq d_C(c_k(x), c_k(y)).\]
  This means that we want 
   \[ d_{B}(b_k(x), b_k(y))) \leq 
 \inf_{p\geq k} d_p(a_{k,p}(x), a_{k,p}(y)). \]  
For this, we can show that for all $p\geq k$, 
     \[ d_{B}(b_k(x), b_k(y))) \leq 
 d_p(a_{k,p}(x), a_{k,p}(y)) .\]  
 Now $b_p\colon A_p\to B$ is short, and $b_p\o a_{k,p} = b_k$ due to the cocone property.
 So 
 \[ d_p(a_{k,p}(x), a_{k,p}(y)) \geq 
 d_{B}(b_p (a_{k,p}(x)), b_p (a_{k,p}(y))   ) = d_{B}(b_k(x), b_k(y)). \]
 More generally, we need to consider $\ell\leq k$ and elements $x\in A_k$ and $y\in A_\ell$.
 In this case, $a_{\ell,k}(y) \in A_k$.
 So by what we just did,
  \[ d_{B}(f(c_k(x)), f(c_k(a_{\ell,k}(y)))) \leq d_C(c_k(x), c_k(a_{\ell,k}(y))).\] 
 But $c_k\o a_{\ell,k} = c_\ell$.
 So we have
   \[ d_{B}(f(c_k(x)), f(c_\ell(y))) \leq d_C(c_k(x), c_\ell(y)).\]

     \emph{Construction}
To prove the existence of a colimit of  (\ref{AchainPS}), we 
need only find a space with properties ($\PS$1) -- ($\PS$3) above.
Take the colimit $C$ in $\Set$.  This ensures ($\PS$1) and ($\PS$2).
Endow this set with the pseudo-metric 
\[
d^*(x,y) = \inf \set{d_k(x',y') : k< \omega,  x',y'\in A_k,
c_k(x') = x,  \mbox{ and } c_k(y') = y }.
\]
This ensures ($\PS$3).

\subsection{Colimits of $\omega$-chains in Metric Spaces}
Let $\MS$ denote the category of $2$-bounded metric spaces with short maps as morphisms, and suppose we have a chain in $\MS$:
\begin{equation}\label{AchainMS}
(A_0,d_0) \to (A_1,d_1) \to \cdots \end{equation}
with connecting short maps $a_{k,\ell}\colon A_k \to A_\ell$.
Suppose that we have a \emph{metric space} $(C,d_C)$ with
short maps $c_k\colon
    A_k \to C$.  Assume the following 
    properties:
    \begin{enumerate}
    \item[($\MS$1)] \label{P:setoneMS} \ As sets, $C = \bigcup_{k} c_k[A_k]$, and

   \item[($\MS$2)]\label{P:fin-fb:3MS}  \ For all $k$ and all $x,y\in A_k$,
     $d_C(c_k(x), c_k(y)) = \displaystyle{\inf_{p\geq k}}\  d_p(a_{k,p}(x), a_{k,p}(y))$.  
    \end{enumerate}
 We claim that $(C,d_C)$ is the   colimit of (\ref{AchainMS}) in
    $\MS$. 
Suppose that we have a     cocone $b_k\colon A_k\to B$ 
  in $\MS$.
  We want to define $f\colon C\to B$
as before, 
  by $f(c_k(x)) = b_k(x)$.
 To prove that $f$ is well-defined in $\Set$ or $\PS$, we had used a condition that we do not assume here,
 so the argument is different.

 Suppose that we have $k$ and $x,y\in A_k$ with $c_k(x) = c_k(y)$.
We want to show that $b_k(x) = b_k(y)$.   (The more general case of having $\ell,k$ and 
 $x\in A_k$, $y\in A_\ell$ with $c_k(x) = c_\ell(y)$ is treated similarly.)
By condition ($\MS$2), 
 \[ \inf_{p\geq k} d_p(a_{k,p}(x), a_{k,p}(y)) = 0 = d_C(c_k(x), c_k(y)).\]
Fix $\eps > 0$. 
There is some $p\geq k$ so that $ d_p(a_{k,p}(x), a_{k,p}(y)) \leq \eps$.
 Since $b_p \colon A_p \to B$ is short, 
 \[
 d_B(b_k(x), b_k(y)) = 
d_B(b_p(a_{k,p}(x)),  b_p(a_{k,p}(y)) ) \leq  d_p(a_{k,p}(x), a_{k,p}(y)) \leq \eps.
 \]   
This holds for all $\eps > 0$.  So $ d_B(b_k(x), b_k(y)) = 0$.
  Since $B$ is a metric space, $b_k(x) =  b_k(y)$.
  
This proves that $f$ is well-defined.  
 The same argument which we gave for $\PS$ shows that it is the colimit map in $\MS$.

     \emph{Construction}
To prove the existence of a colimit of  (\ref{AchainMS}), we 
need only find a metric space with properties ($\MS$1) and ($\MS$2) above.
Take the colimit $C$ in $\Set$,   
and endow it with the same pseudo-metric from before
\[
d^*(x,y) = \inf \set{d_k(x',y') : k<\omega, x',y'\in A_k,
i_k(x') = x,  \mbox{ and } i_k(y') = y }.
\]
Then let $x \sim y$ iff $d^*(x,y) = 0$.   This is an equivalence relation,
and so we can take the quotient $C/\!\!\sim$.  This quotient is (importantly) a metric space.
The natural map $C\to C/\!\!\sim$ does not change any non-zero distances.
From this, ($\MS$1) and ($\MS$2) follow easily.

\begin{example}

 Let $X_n=\set{u_n,v_n}$ be the space with two points and $d_n(u_n,v_n) = 2^{-n}$.
 Let $a_n: X_n \to X_{n+1}$ be the short map given by $a_n(u_n) = u_{n+1}$,
 and $a_n(v_n) = v_{n+1}$.
 We thus have an $\omega$-chain of metric spaces, and we take the colimit 
 in $\PS$ and in $\MS$.
In $\PS$, the colimit is a pseudo-metric space  consisting of two points of distance $0$.
This is not a metric space.
In $\MS$, the colimit is a single point.  These examples motivate the difference between
the three conditions ($\PS$1) --($\PS$3) and the two conditions ($\MS$1)--($\MS$2).

\end{example}

\subsection{Colimits of $\omega$-sequences in $\SquaSet$}

Suppose that we have a chain 
\[
A_0 \to A_1 \to \cdots 
\]
in $\SquaSet$.
Suppose that we have a cocone $a_k\colon A_k \to C$ in $\SquaSet$, and assume ($\Set$1) and ($\Set$2).
Then we claim that our cocone is the colimit in $\SquaSet$.   To see this,  we need only endow 
$C$  with a square set structure
$S_{C}\colon M_0 \to C$ and also show that given a cocone 
$b_k\colon A_k\to B$ in $\SquaSet$, 
the colimit map $f\colon C\to B$ preserves this structure.

We define $S_C$ by $c_0 \o S_{A_0} : M_0 \to A_0 \to C$.
To see that this works, note that since $b_0 \colon A_0 \to B$ is a square space map, $b_0\o S_{A_0} = S_B$.
Thus
\[ f \o S_C = f\o  c_0 \o S_{A_0} =  b_0 \o S_{A_0} = S_B.
\]
Since $S_B$ is injective and $f\o S_C = S_B$,
$S_C$ is also injective.

\subsection{Colimits of $\omega$-sequences in $\SquaMS$}

Consider next a chain 
\[
(A_0, d_0) \to (A_1, d_1) \to \cdots 
\]
in $\SquaMS$.
Suppose that we have a cocone $a_k\colon A_k \to C$ in $\SquaMS$, and assume ($\MS$1) and ($\MS$2).
Then we claim that our cocone is the colimit in $\SquaMS$.  

We know how to take the colimit $C$ in $\SquaSet$, endowing $C$ with a $\SquaSet$ structure.
We also know how to take the colimit in $\MS$.
    So the only remaining point is to check the non-degeneracy requirements 
($\sqone$) and ($\sqtwo$).   
To check $(\sqone)$, let $r,s\in [0,1]$ and consider $S_{C}((r,0))$ and $S_{C}((s,0))$ (the other cases are similar).  Then 
\[d_{C}(S_{C}((r,0)),S_{C}((s,0))) = 
\displaystyle{\inf_{k<\omega}} d_k(S_{A_k}((r,0)),S_{A_k}((s,0))) = |r-s|,\] 
since each $A_k$ is an object in $\SquaMS$.  
Similarly, to check $(\sqtwo)$, let $(r,s),(t,u)\in M_0$.  Then 
\[ d_{C}(S_{C}((r,s)),S_{C}((t,u))) = 
\displaystyle{\inf_{k<\omega}}\ d_k(S_{A_k}((r,s)),S_{A_k}((t,u)))\geq |r-t| +  |s-u|,\]

\subsection{$M\otimes -$ preserves colimits of $\omega$-chains}

We next show that the functor $M\otimes -$ preserves colimits of $\omega$-chains.
This result is used in 
Section~\ref{applyAdamek}, where we apply
 Ad\'amek's Theorem~\ref{thm-adamek} to construct the initial algebra of this functor.

\begin{lemma}
\label{M-preserves}
The endofunctor $M\otimes -$ preserves colimits of $\omega$-chains.
\end{lemma}

\proof
    Consider a chain
\[
(A_0, d_0) \to (A_1, d_1) \to \cdots 
\]
in $\SquaMS$, and let its colimit be
the space $(C,d_C)$ with colimit cocone 
$(c_k)_k$, where $c_k:A_k\to C$.
We are going to show that the colimit of
\begin{equation}\label{secondchain}
(M\otimes A_0,  d_{M\otimes A_0}) \to (M\otimes  A_1, d_{M\otimes A_1}) \to \cdots 
\end{equation}
is $(M\otimes C, (M\otimes c_k)_k)$.
To begin, 
we already know that the cocone 
$(C, (c_k)_k)$
has properties ($\MS$1) and ($\MS$2) for the original chain.
We need only check that 
$(M\otimes C, (M\otimes c_k)_k)$ has these same properties ($\MS$1) and ($\MS$2)   for the chain in (\ref{secondchain}).

For ($\MS$1), it is clear that as sets,
\[ M\otimes C = M \otimes \bigcup_k c_k[A_k] = 
\bigcup_k \biggl( M \otimes  c_k[A_k]\biggr)  = \bigcup_k (M\otimes c_k)[A_k].
\]
For ($\MS$2), we want to show that
for all $k\in\omega$ and all $x,y\in A_k$, and all $m, n\in M$,
\begin{equation}
\label{want}
 d_{M\otimes C}(m\otimes c_k(x), n\otimes c_k(y)) = \inf_{p\geq k} d_{M\otimes A_p}(m \otimes a_{k,p}(x), n\otimes a_{k,p}(y)).
 \end{equation} 
 We first consider the case when $n = m$.
In this case, 
\[
\begin{array}{lcll}
d_{M\otimes C}(m\otimes c_k(x), m\otimes c_k(y)) 
   & = & \frac{1}{3} d_{C}(c_k(x), c_k(y)) \\ \\
   & = & \frac{1}{3}\ \displaystyle{\inf_{p\geq k}}\  d_p(a_{k,p}(x), a_{k,p}(y)) \\ \\
  & = &  \displaystyle{\inf_{p\geq k}} \ \frac{1}{3} d_p(a_{k,p}(x), a_{k,p}(y))\\ \\
  & = &  \displaystyle{\inf_{p\geq k}}\ d_{M\otimes A_p} (m\otimes a_{k,p}(x),m \otimes a_{k,p}(y)).
\end{array}
\]
With this special case done, we consider the general case.    We use the fact from Theorem~\ref{quotientmetric}
that in $M\otimes C$, there is a fixed path that attains the distance between our points $m\otimes c_k(x)$ and $n\otimes c_k(y)$.
This path has finitely many sub-paths (at most $5$ in fact), and each subpath is in one and the same copy of $C$.
It follows from our first observation that  (\ref{want}) holds.
 
 This concludes the proof.\endproof

We also have a result 
exactly like Lemma~\ref{M-preserves}
but for the functor $N\otimes -$.
The details are basically the same.

\subsection{Using colimits to obtain the initial algebras of $M\otimes-$ and $N\otimes -$}
\label{applyAdamek}

At this point, we recall Ad\'amek's Theorem (Theorem~\ref{thm-adamek}), 
 and apply this to $\SquaMS$, with $F$ either $M\otimes -$ or $N\otimes -$.
As we know, colimits of all $\omega$-chains exist in our category.
We are of course interested in the colimit of 
the initial-algebra chain (\ref{initialchain}).
The functors preserve this colimit, since they preserves all colimits of $\omega$-chains.
Thus, there is are initial algebras.
We write these as 
\begin{equation}\label{GW}
\begin{array}{l}
(G,\eta\colon M\otimes G\to G) \\
(W,\lambda\colon N\otimes W \to W) \\
\end{array}
\end{equation}
In both cases, the algebra structures  are isometries, by Lambek's Lemma.

Further, the colimit morphisms are given by the natural equivalence relations.
For example, consider $M$ (the functor where we need this remark).
We have 
\begin{equation}\label{ikmaps}
    g_k\colon M^k\otimes M_0\rightarrow G
\end{equation}
given by $g_k(x) = [x]$, where the equivalence relation involved here 
relates, for $l\leq m$,  $y\in M^\ell\otimes M_0$ with $z\in M^m\otimes M_0$ 
iff $a_{\ell,m}(y) = z$, 
where $a_{\ell,m}\colon  M^\ell\otimes M_0 \to  M^m\otimes M_0$ is the 
evident map.

\section{Final coalgebras for $N\otimes-$ and $M\otimes -$}
\label{section-final-coalgebras}

This section discusses final coalgebras for the two main functors in this paper,
$N\otimes -\colon \SquaMS\to \SquaMS$, and $M\otimes -\colon \SquaMS\to \SquaMS$.
The main results are that the unit square $U_0$ with the taxicab metric is a final coalgebra for
$N\otimes -\colon \SquaMS\to \SquaMS$, and that this coalgebra is the Cauchy completion
of the initial algebra.   Turning to $M\otimes -$, we show that again the 
 Cauchy completion
of the initial algebra is the final coalgebra.  
It would have been pleasing if this final coalgebra had been the Sierpinski carpet $\SC$. 
But this is not to be: the bijective map $\SC\to M\otimes\SC$ is not a short map.
Nevertheless, we shall prove later than $\SC$ is bilipschitz equivalent to the carrier of
 final coalgebra
of $M\otimes -$.  
In a different direction, forgetting the metric, $\SC$ is the final coalgebra of our functor on $\SquaSet$.

\begin{definition}
Let $H\colon \A\to \A$ be an endofunctor on any category.
A \emph{coalgebra for $H$} is a pair $(A,a)$, where  $a \colon A \to HA$.
Given two coalgebras $(A,a)$ and $(B,b)$ for this functor, 
a \emph{coalgebra morphism} is a 
morphism $h\colon A \to B$ in $\A$ such that $b\o h  = Hh\o a$:
\[
\xymatrix@C+1pc{A \ar[r]^-{a} \ar[d]_-{h} 
    & HA \ar[d]^-{Hh}\\ 
   B   \ar[r]_-{b}  &  HB
   } 
 \]

$(A,a)$ is a \emph{final coalgebra} if for every coalgebra $(X,e)$ there is a unique coalgebra morphism $e^\dag:X\rightarrow A$.
Equivalently, it is a final object in the category of coalgebras. 
\end{definition}

Final coalgebras need not exist, but when they do, they are unique up to isomorphism.  Moreover, 
if $(C,\gamma)$ is a final coalgebra, then by Lambek's Lemma (the dual of the form that we stated earlier),
$\gamma$ is an isomorphism in the base category $\A$.

\subsection{Corecursive algebras}
\label{section-corecursive-algebras}

Our work on final coalgebras involves a secondary notion: \emph{corecursive algebras}.
We bring corecursive algebras into the paper because they generalize final coalgebras
and because the Sierpinski carpet turns out to be a corecursive algebra in $\SquaMS$.

\begin{definition}
Let $H\colon \A\to \A$ be an endofunctor on any category.
An algebra $a\colon HA \to A$ is \emph{corecursive} if for every
coalgebra $e\colon X\to HX$ there is a unique 
\emph{coalgebra-to-algebra
morphism $e^\dag\colon X \to A$}. This means that 
$e^\dag = a\o He^\dag \o e$:
\[
\xymatrix@C+1pc{X \ar[r]^-{e} \ar[d]_-{e^\dag} 
    & HX \ar[d]^-{He^\dag}\\ 
   A  & \ar[l]^-{a}   HA
   } 
 \]
The map $e^\dag$ is also called \emph{the solution to $e$
 in the algebra $(A,a)$}.
\end{definition}

The following is the dual form for Proposition 7 in~\cite{CAPRETTA2006437}.

\begin{proposition}\label{prop-invertible}
If a corecursive $H$-algebra $(A,a)$ has an invertible structure map $a$,
then $(A,a^{-1})$ is a final coalgebra for the same functor. 
If $(A, a)$ is a final coalgebra, then  $(A,a^{-1})$ is a corecursive algebra.
\end{proposition}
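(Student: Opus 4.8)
The plan is to prove both statements by unwinding the relevant definitions and using the uniqueness clauses that come built into each notion. For the first statement, suppose $(A,a)$ is corecursive with $a$ invertible. I want to show $(A,a^{-1})$ is a final coalgebra for $H$. So let $(X,e)$ be an arbitrary $H$-coalgebra. Since $(A,a)$ is corecursive, there is a unique coalgebra-to-algebra morphism $e^\dag\colon X\to A$, meaning $e^\dag = a\circ He^\dag\circ e$. Composing with $a^{-1}$ on the left gives $a^{-1}\circ e^\dag = He^\dag\circ e$, which is exactly the statement that $e^\dag$ is a coalgebra morphism from $(X,e)$ to $(A,a^{-1})$. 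This gives existence of a coalgebra morphism. For uniqueness: if $h\colon X\to A$ is any coalgebra morphism $(X,e)\to(A,a^{-1})$, then $a^{-1}\circ h = Hh\circ e$, so $h = a\circ Hh\circ e$, i.e. $h$ is a coalgebra-to-algebra morphism into $(A,a)$; by the uniqueness clause for corecursive algebras, $h = e^\dag$. Hence $e^\dag$ is the unique coalgebra morphism and $(A,a^{-1})$ is final.

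For the second statement, suppose $(A,a)$ is a final coalgebra for $H$. Recall that by Lambek's Lemma, $a$ is an isomorphism, so $a^{-1}$ is a legitimate algebra structure $HA\to A$. I claim $(A,a^{-1})$ is corecursive. Let $(X,e)$ be any coalgebra. Since $(A,a)$ is final, there is a unique coalgebra morphism $h\colon(X,e)\to(A,a)$, i.e. $a\circ h = Hh\circ e$. Applying $a^{-1}$ on the left gives $h = a^{-1}\circ Hh\circ e$, which says $h$ is a coalgebra-to-algebra morphism from $(X,e)$ into the algebra $(A,a^{-1})$. For uniqueness, run the same algebra computation backwards: any $h'\colon X\to A$ with $h' = a^{-1}\circ Hh'\circ e$ satisfies $a\circ h' = Hh'\circ e$, hence is a coalgebra morphism into the final coalgebra, hence equals $h$.

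There is essentially no obstacle here; the whole argument is a symmetric bookkeeping exercise pre- and post-composing with $a$ and $a^{-1}$, together with the observation that the commuting square for a coalgebra morphism into $(A,a^{-1})$ and the commuting square for a coalgebra-to-algebra morphism into $(A,a^{-1})$ are literally the same equation rearranged. The only external input is Lambek's Lemma in the second half to guarantee $a^{-1}$ exists, and that is already recalled in the excerpt. One should just be careful to state explicitly that the two uniqueness clauses (for final coalgebras and for corecursive algebras) are what force the maps to coincide, rather than leaving that implicit.
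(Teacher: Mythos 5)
Your proof is correct and follows the same route as the paper: the paper's (terser) argument likewise observes that a coalgebra-to-algebra morphism into $(A,a)$ is literally the same arrow as a coalgebra morphism into $(A,a^{-1})$, and invokes Lambek's Lemma for the second assertion. You have simply written out the pre/post-composition bookkeeping explicitly.
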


\begin{lemma} \label{lemma-morphisms} 
Let $e\colon X \to HX$ and $f\colon Y \to HY$
be coalgebras, and let $h\colon X\to Y$ be 
a coalgebra morphism.
Let $a\colon HA \to A$ be a corecursive algebra.
Then $e^\dag = f^\dag\o h$.
\end{lemma}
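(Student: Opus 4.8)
The plan is to recognize that $f^\dag\o h$ is itself a coalgebra-to-algebra morphism from $(X,e)$ into the corecursive algebra $(A,a)$, and then to invoke the \emph{uniqueness} clause in the definition of corecursiveness: since $a\colon HA\to A$ is corecursive, there is exactly one coalgebra-to-algebra morphism $(X,e)\to(A,a)$, and that morphism is by definition $e^\dag$. Hence $f^\dag\o h$ must coincide with $e^\dag$.

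To execute this, I would lay out the three identities we may use. First, $H$ is a functor, so $H(f^\dag\o h)=Hf^\dag\o Hh$. Second, the hypothesis that $h\colon X\to Y$ is a coalgebra morphism says $f\o h=Hh\o e$. Third, $f^\dag\colon Y\to A$ exists (because $(A,a)$ is corecursive and $(Y,f)$ is a coalgebra) and by its defining property satisfies $f^\dag=a\o Hf^\dag\o f$. Combining these,
\[
a\o H(f^\dag\o h)\o e \;=\; a\o Hf^\dag\o Hh\o e \;=\; a\o Hf^\dag\o f\o h \;=\; (a\o Hf^\dag\o f)\o h \;=\; f^\dag\o h,
\]
where the first equality is functoriality of $H$, the second is the coalgebra-morphism equation for $h$, and the last is the defining equation for $f^\dag$. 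This shows $f^\dag\o h$ is a solution to $e$ in $(A,a)$.

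By the uniqueness of solutions in a corecursive algebra, any coalgebra-to-algebra morphism from $(X,e)$ to $(A,a)$ equals $e^\dag$; applying this to $f^\dag\o h$ gives $e^\dag=f^\dag\o h$, as desired. There is no real obstacle here: the argument is a short diagram chase, and the only point requiring care is that the uniqueness being used is the corecursiveness of the target $(A,a)$, not any property of the intermediate coalgebra $(Y,f)$.
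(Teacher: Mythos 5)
Your proposal is correct and matches the paper's own argument: both show that $f^\dag\o h$ satisfies the defining equation $g = a\o Hg\o e$ via the same diagram chase (functoriality of $H$, the coalgebra-morphism equation for $h$, and the defining equation for $f^\dag$), and then conclude by the uniqueness of solutions in the corecursive algebra $(A,a)$. No gaps.
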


The proof of this may be found in Example 3.2 in \cite{lmcs:707}.

Recall that $N = \{0,1,2\}^2$, and that $U_0 = [0,1]^2$.
We are going to consider the functor $H_0 X = N\times X$ on $\Set$.  

Recall from the previous section
 our definition of $\alpha_M:M\otimes U_0\rightarrow U_0$, which we proved was an injective morphism in Lemma~\ref{U0injective}.  Here we will introduce some notation towards defining an analogous morphism $\alpha_N:N\otimes U_0\rightarrow U_0$.

Let $\shrink\colon N \to U_0$ be given by
\[\shrink(i,j) =
(\onethird i, \onethird j).
\]
We have an $H_0$-algebra structure $\alpha_0\colon N\times U_0 \to U_0$ given by 
\[\alpha_0((i,j),(x,y)) =  \shrink(i,j)+ \onethird (x,y).\]

\begin{lemma}
\label{prop-U0-N}
$(U_0,\alpha_0\colon N\times U_0 \to U_0)$ is a corecursive algebra for $H_0$ on $\Set$.
\end{lemma}

\begin{proof} Although it is possible to give a self-contained elementary proof, this result also
follows from Corollary 2.11 in~\cite{amvElgot} (see also~\cite[Example 7.3.10]{AMM}). 
We must check a few hypotheses to apply that result.   We discuss these one-by-one.

Let $\CMS$ be the category of complete metric spaces with distances bounded by $2$.  We have a forgetful functor $U:\CMS\to \Set$.\footnote{A forgetful functor is standardly denoted by $U$.   For us, this has an unfortunate clash with our notation $U_0$ for the unit square.  Bringing this to the reader's attention should help avoid any confusion.}  We verify three hypotheses.

First,  the functor $H_0 = N\times X: \Set\to \Set$ lifts to $\CMS$.  This has nothing to do with our specific set $N$, it holds for all sets $N$.
Here is what this means.   Consider $N$ as a discrete space with distance $2$ between all points.  Then we have a functor $H_1\colon\CMS\to\CMS$ given by $H_1 X = N\times X$, with the 
metric defined as follows:
\[
d(((i,j),(x,y)), ((i',j'),(x',y')))
= \left\{ \begin{array}{ll} 2 & \mbox{if $(i,j) \neq (i',j'))$} \\
                  \onethird d_{U_0}((x,y),(x',y')) & \mbox{if $(i,j) = (i',j'))$}
\end{array}
\right.
\] 
$H_1$ works as expected on morphisms.  The lifting property is that $U\o H_1 = H_0\o U$, and this is easy to check.

Second, this lifted functor $H_1$ is \emph{locally contracting}.  Indeed, 
for all ``parallel pairs'' of  $\CMS$-morphisms $f,g: X\to Y$, $d(H_1 f, H_1 g) = \frac{1}{3} d(f,g)$.
This is a routine verification using the supremum metric on function spaces and the distance formula above.

Finally, the $\Set$-morphism $\alpha_0\colon N\times U_0 \to U_0$ also is a $\CMS$-morphism $\alpha_0\colon H_1 U_0 \to U_0$.
This means that $\alpha_0$ is short.   To check this, take two elements of $H_1 U_0$, say
$p = ((i,j), (x,y))$ and $p' = ((i',j'), (x',y'))$.   If $(i,j) \neq (i',j')$, then their distance in $N$ is $2$, and hence
the distance between $p$ and $p'$ is also $2$.  But the distance between $\alpha(p)$ and $\alpha(p')$ is at most $2$.
In the other case, $(i,j)= (i',j')$.  In this case,
\[ d_{H_1 U_0}(\alpha(p),\alpha(p')) =
\onethird d_{U_0}((x,y), (x',y')) = d_{U_0}(\onethird(x,y), \onethird(x',y'))  .\]
These hypotheses then imply that $(U_0, \alpha_0)$ is  a corecursive algebra for $H_0$ on $\Set$.
\end{proof}

Lemma~\ref{prop-U0-N} was a preliminary
result; the main point is
Lemma~\ref{lemma-U0-N-otimes},
its  
 adaptation  for
 the category $\SquaSet$ of square sets.

\begin{definition}
\label{definition-alpha-N}
Let  $\alpha_N\colon N \otimes U_0 \to U_0$ be given by
\[
\begin{array}{lcl}
\alpha_N(n\otimes z) & = & \shrink(n) + \frac{1}{3}(z).
\end{array}
\]
\end{definition}

Notice that $n\in N$ here is a pair; earlier we
had written it as $(i,j)$.  Similarly, $z\in U_0$;
earlier we wrote it as $(r,s)$.  It takes quite a few
routine elementary calculations to check that $\alpha_N$
is well-defined.  That is, we must check that 
if $(n,z) \approx (n',z')$, then $ \shrink(n) + \frac{1}{3}(z) = 
 \shrink(n') + \frac{1}{3}(z')$.
 For example, we  have 
 $((0,0), (r,1))\approx((0,1),(r,0))$.
 And
 \[
 \shrink(0,0) + \onethird(r,1) = 
 (\rthirds, \onethird) = 
  \shrink(0,1) + \onethird(r,0).
 \]

 Furthermore, it is easy to verify that $\alpha_N$ preserves $S_{N\otimes U_0}$, so $\alpha_N$ is a $\SquaSet$ morphism. 
 
\begin{lemma}
\label{alphaNisomorphism}
In $\SquaMS$, 
$\alpha_N\colon N\otimes U_0 \to U_0$
is an isomorphism: it maps $N\otimes U_0$ one-to-one onto $U_0$,
and it is an isometry.
\end{lemma}

\begin{proof}
Clearly $\alpha_N$ is surjective: given $(r,s)\in U_0$, let $(i,j)\in N$ be the greatest in the lexicographic order such that $\frac{1}{3}i\leq r$ and $\frac{1}{3}j\leq s$.  Then $\alpha_N((i,j)\otimes (3r-i,3s-j)) = (r,s)$.  

To see that $\alpha_N$ is injective, we will show that it is an isometry.

First, let us check that $\alpha_N$ is a short map.
\def\arraystretch{1.5}
Taking $p=1$ and $B = U_0$ in (\ref{eq-taxicab-like}), we see that 
\[\begin{array}{cl}
 &  d_{N\otimes U_0}((i,j)\otimes S_{U_0}((r,s)), (k,\ell)\otimes S_{U_0}((t,u)))
\\
\geq &
\bigl| |i;r| - |k;t|\bigr| - \bigl| |j;s| - |\ell;u|\bigr| \\
= &\bigl| \onethird(i+r) -\onethird(k+t)\bigr| - \bigl| \onethird(j+s) -\onethird(\ell+u)\bigr| \\  
= &  d_{\Taxicab}((\onethird(i+r), \onethird(j+s)), (\onethird(k+t), \onethird(\ell+u)))\\ 
= & d_{\Taxicab}(\alpha_N((i,j)\otimes (r,s)), \alpha_N((k,\ell)\otimes (t,u))).\\
\end{array}
\]
\def\arraystretch{1}

Now to see that this is an isometry, consider $\alpha_N(x)$ and $\alpha_N(y)$ in $U_0$.  The idea is that we can introduce a grid to $U_0$ which corresponds to the boundaries of copies of $U_0$ in $N\otimes U_0$, and look at the intersections of a segment between $\alpha_N(x)$ and $\alpha_N(y)$ with that grid.  We then use this to construct a path in $N\otimes U_0$ between $x$ and $y$ whose distance is equal to that between $\alpha_N(x)$ and $\alpha_N(y)$ in $U_0$, and this will be an upper bound of the distance between $x$ and $y$ in $N\otimes U_0$. 

Rather than work through the thick notation of a general case, we will present the following illustrative example.

 We consider 
 \[
   \begin{tikzpicture}[>=stealth',shorten >=1pt,auto,node distance=2cm,semithick,scale=1.5]
  \draw [help lines]  grid (3,3);
 \draw (.8, 2.7)  node (start)  {$\bullet$};
  \draw (.5, 2.7)  node (startletter)  {$A$};
 \draw (1,2.38) node (a)   {$\bullet$};
\draw (.7,2.38) node (a)   {$B$};
\draw (1.2375,2) node (b) {$\bullet$};
\draw (.9375,2) node (b) {$C$};

\draw (1.8625,1) node (d)  {$\bullet$};
\draw (2.1625,1.06) node (d)  {$D$}; 
\draw (2,.78) node (c)   {$\bullet$};
\draw (2.3,.75) node (c)   {$E $}; 
\draw (.8,2.7) -- (2.3,0.3);
  \draw (2.3,.3)  node (end) {$\bullet$};
    \draw (2.6,.3)  node (endletter) {$F$};
  \end{tikzpicture}
\]
These are points in $N\otimes U_0$, shown explicitly on the left below.
The column on the right gives their images under $\alpha_N$, as elements of $U_0$.
 
\[
\begin{array}{lclcl}
A & = & (0,2) \otimes (.8,.7) &  & 
\\
B & = & (0,2) \otimes (1,.38) & = & (1,2) \otimes  (0,.38)    \\
C & = &(1,2) \otimes   (.2375,0)  & = &  (1,1) \otimes (.2375,1) \\
D & = & (1,1) \otimes (.8625,0) & = &  (1,0) \otimes (.8625,1) \\
E & = & (1,0) \otimes (1,.78) & =& (2,0) \otimes (0,.78)\\
F & = & (2,0) \otimes  (.3,.3) &               \\
\end{array}
\qquad
\begin{array}{lcl}
\alpha_N(A) & = & (.267, .9) \\
\alpha_N(B) & = & (.333,.793) \\
\alpha_N(C) & = & (.4125,.667) \\
\alpha_N(D) & = & (.621,.333) \\
\alpha_N(E) & = & (.667,.26) \\
\alpha_N(F) & = & (.767, .1) \\
\end{array}
\]
The way we got these was to find the line between
$\alpha_N(A)$ and $\alpha_N(F)$, then to find the intersection points
of this line with the relevant grid lines, and finally to find 
the preimages under $\alpha_N$.  For $B$, $C$, $D$, and $E$, we have two preimages.

We are going to verify that \[d_{U_0}(\alpha_N(A), \alpha_N(F)) \geq
d_{N\otimes U_0}(A,F).\]
Recall, we are using the taxicab metric in $U_0$ (\ref{eq:taxicab}).  So 
\[ d_{U_0}(\alpha_N(A), \alpha_N(F)) = |.767-.267| + |.1-.9| = .5 + .8 = 1.3.\]
To show that $d_{N\otimes U_0}(A,F) \geq 1.3$, we find an alternating path
(a sequence of points in $N\times U_0$ as described in Definition~\ref{simAlternatingPath})
from $A$ to $F$ and check that the score of this path is again $1.3$.
The alternating path we want is suggested by $A$, $\ldots$, $F$. 
It is

\[
\begin{array}{lclclclc}
((0,2) , (.8,.7))   ,  
((0,2) , (1,.38))  \sim   ((1,2),  (0,.38))  ,\\
((1,2) , (.2375,0))  \sim   ((1,1) , (.2375,1)), \\ 
((1,1), (.8625, 0))  \sim  ((1,0), (.8625,1)),\\
((1,0) , (1,.78)) \sim  ((2,0) , (0,.78)),
((2,0) , (.3,.3)).
\end{array}
\]

The score of this alternating path is 
\[
\onethird (.2 + .32 + .2375 + .38 +.625 + 1 + .1375 + .22 + .3 + .48).
\]
The relationship between this and our calculation of $ d_{U_0}(\alpha_N(A), \alpha_N(F))$
is clarified if we separate the horizontal and vertical contributions.
Our score above is
\[\begin{array}{cl}
 & \onethird((.2 +  .2375 + .625 +   .1375   + .3  )
+ (.32 + .38 + 1 + .22 + .48))\\
= & \onethird (1.5 + 2.4) \\
= & 1.3
\end{array}
\]
This is as desired.  This all is merely an example, but 
the general case is similar.  We conclude that $d_{N\otimes U_0}(x,y) \leq d_{U_0}(\alpha_N(x), \alpha_N(y))$.

Thus, $\alpha_N$ is an isometry, so it is injective, and hence an isomorphism in $\SquaMS$.  
\end{proof}

 For every $\SquaSet$ $B$ there is a canonical quotient map in $\Set$,  $\nu_B: N\times B \to N\otimes B$.
 It is given by $\alpha_X((n,x)) = n\otimes x$.

\begin{proposition}\label{prop-unnatural}
For every $\SquaSet$ morphism $f: B\to C$ the evident ``naturality square''  commutes:
 $(N\otimes f) \o \nu_B = \nu_C \o (N\times f)$.
\end{proposition}
 
 \begin{proof}
  The for $(n,x)\in N\times X$,
\[
(\eta_Y\o (N\times f))((n,x)) = n\otimes f(x)
((N\otimes f)\o \eta_X)(n,x).
\]
\end{proof}

\begin{lemma}
\label{lemma-U0-N-otimes}
$(U_0,\alpha_N\colon N\otimes U_0 \to U_0)$ is a corecursive algebra for $N\otimes X$ on $\SquaSet$.
\end{lemma}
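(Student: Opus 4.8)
The plan is to lift the $\Set$-level result, Proposition~\ref{prop-U0-N}, to the category $\SquaSet$, using the same bookkeeping device that was illustrated in the worked example after that proposition. The key structural fact we already have in hand is that $\alpha_N\colon N\otimes U_0 \to U_0$ is an isomorphism in $\SquaSet$ (noted just above the statement); combined with Proposition~\ref{prop-invertible}, it suffices to show that $(U_0,\alpha_N^{-1}\colon U_0 \to N\otimes U_0)$ is a final coalgebra for $N\otimes -$ on $\SquaSet$ — or, equivalently and more directly, to show that $(U_0,\alpha_N)$ is corecursive. I would argue corecursiveness directly, since that is what is asked.

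First I would take an arbitrary $N\otimes -$ coalgebra $(X,S_X,e\colon X \to N\otimes X)$ in $\SquaSet$ and produce the coalgebra-to-algebra morphism $e^\dag\colon X \to U_0$. The existence half: unwinding $e$, each $x\in X$ is assigned a pair $(n_x, y_x)$ with $n_x \in N$ and $y_x\in X$ (subject to the coherence forced by $S_X$ and the relation $\approx$). Splitting coordinates as in the worked example, define an auxiliary set $X_0$ whose elements are ``the $x$-coordinate of $x$'' and ``the $y$-coordinate of $x$'' for each $x\in X$, and an $H$-coalgebra structure $f\colon X_0 \to K\times X_0$ with $K=[0,\tfrac13]$, where the ``head'' is read off from $\shrink(n_x)$ coordinatewise. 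Proposition~\ref{prop-zero-one-first} gives a unique $f^\dag\colon X_0 \to [0,1]$; recombining coordinates gives $e^\dag\colon X \to U_0$, and the defining equation $f^\dag = \iota\circ Hf^\dag\circ f$ translates exactly into $e^\dag = \alpha_0\circ (H_0 e^\dag)\circ e$, i.e. $e^\dag = \alpha_N\circ (N\otimes e^\dag)\circ e$ once one checks that $N\otimes e^\dag$ and $H_0 e^\dag = N\times e^\dag$ agree after passing through the quotient $N\times X \to N\otimes X$ — which holds because $\alpha_N$ is well-defined on the quotient (the routine $\approx$-compatibility calculations already sketched). For uniqueness, run the construction in reverse: any $H_0$-coalgebra-to-algebra morphism for $e$ restricts to an $H$-coalgebra-to-algebra morphism for $f$, which is unique by Proposition~\ref{prop-zero-one-first}, forcing $e^\dag$ to be unique.

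The one genuinely new obligation beyond the $\Set$ story is that $e^\dag$ must be a \emph{square set} morphism, i.e. $e^\dag\circ S_X = S_{U_0}$. This I would verify by noting that $S_X\colon M_0 \to X$ is itself compatible with the coalgebra structures: the initial square set $M_0$ (forgetting the path metric) carries a coalgebra structure making $S_X$ a coalgebra morphism into $(X,e)$, because $N\otimes -$ sends the boundary data $S_{M_0}$ to the boundary data $S_{N\otimes M_0}$ by the very scheme defining $S_{N\otimes X}$. Then Lemma~\ref{lemma-morphisms} gives $(S_X)^\dag$-style coherence: the solution of $M_0$ composed with $S_X$ equals $e^\dag$ on the image of $S_X$. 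Since the solution of the canonical coalgebra on $M_0$ into $U_0$ must be the inclusion $M_0 \hookrightarrow U_0$ — this is forced because $\alpha_N$ restricted to boundary segments is exactly the affine re-indexing, and the inclusion visibly satisfies the fixed-point equation — we conclude $e^\dag\circ S_X$ is the inclusion $M_0\hookrightarrow U_0 = S_{U_0}$, as required.

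I expect the main obstacle to be purely organizational rather than deep: keeping the coordinate-splitting $X\leadsto X_0$ honest, in particular making sure the $\approx$-identifications on $M_0$ (which glue the sides of the nine subsquares) are respected when one passes between the $N\otimes X$ picture and the coordinatewise $K\times X_0$ picture, so that $f$ is genuinely well-defined and the translation of equations is valid. None of this requires metric information — we are in $\SquaSet$ — so all the quotient-metric machinery of Theorem~\ref{quotientmetric} is irrelevant here; the content is entirely the $\Set$ fixed-point argument plus the boundary-coherence check via Lemma~\ref{lemma-morphisms}.
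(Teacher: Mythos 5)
Your proposal follows essentially the same route as the paper: lift $e$ through the quotient to an $N\times{-}$ coalgebra, invoke the $\Set$-level corecursive algebra $(U_0,\alpha_0)$ of Proposition~\ref{prop-U0-N}, obtain $S$-preservation of $e^\dag$ by observing that $S_X$ is a coalgebra morphism out of a canonical coalgebra on $M_0$ whose solution is $S_{U_0}$, and get uniqueness by converting any $\SquaSet$ solution back into an $N\times{-}$ solution. The one step to tighten is your appeal to Lemma~\ref{lemma-morphisms} for the boundary check: as phrased (with $S_X$ a coalgebra morphism into the $N\otimes{-}$ coalgebra $(X,e)$ and the algebra $(U_0,\alpha_N)$) it presupposes the corecursiveness you are proving; the paper avoids this circularity by applying that lemma at the $N\times{-}$ level, to the lifted coalgebras $\widehat{S_{N\otimes M_0}}$ and $\widehat{e}$ and the already-established corecursive algebra $(U_0,\alpha_0)$ --- which is exactly the machinery you have set up, so the repair is immediate. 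You should also make the lift $\widehat{e}$ concrete (the paper chooses the lexicographically least representative in $N\times B$) so that the square relating $\widehat{S_{N\otimes M_0}}$, $S_X$, and $\widehat{e}$ genuinely commutes rather than only up to the quotient.
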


\proof
We are given a coalgebra $e\colon B \to N\otimes B$,
 and it is our task to show that there
is a unique $e^\dag: B\to U_0$ in  $\SquaSet$
such that $e^\dag = \alpha_N\o (N\otimes e^\dag) \o e$.
We first make a diagram in $\Set$:
 \[
    \begin{tikzcd}
  M_0   \arrow[swap]{d}{S_B} \arrow{r}{\widehat{S}_{N\otimes M_0}} 
  & N\times M_0  \arrow[swap]{d}{N\times S_B} & \\
 B \arrow{r}{\widehat e}
      \arrow{d}[swap]{e^\dag}
      &
N\times B  
        \arrow[d,swap, "N\times e^\dag"]
      \arrow{r}{i}
&
N\otimes B
         \arrow{d}{N\otimes e^\dag}
      \\
    U_0
      &
      N\times U_0       \arrow{l}{\alpha_0}
      \arrow{r}{\nu}
      &
       N\otimes U_0
             \arrow[bend left=30]{ll}{\alpha_N}
    \end{tikzcd}
  \] 
$S_B$ and $\alpha_N$ are $\SquaSet$ morphisms, and so what we mean above
is the same maps in $\Set$. 
 The maps $\nu_B$
 and $\nu_{U_0}$ are the  ones we saw in Proposition~\ref{prop-unnatural}.
We will discuss the map $N \otimes e^\dag$ later, after we define $e^\dag$ and verify that
it is a square set morphism.

Please note that we have changed the notation on the one of the ``hat'' map, writing
$\widehat{S}_{N\otimes M_0}$ instead of $\widehat{S_{N\otimes M_0}}$.

The morphisms $\widehat{e}$ and $\widehat{S}_{N\otimes M_0}$
are defined in a canonical way,
as follows. 
Fix an ordering $<$ on $N$, say the lexicographic order.
 First, consider $\widehat{e}$. 
 Let $\widehat{e}(b)$ be any pair $(n,b') \in N\times B$ such that $e(b) = [\widehat{e}(b)]$
 and  $n$ is $<$-least in $N$ such that some $b'$ exists with this property.   This defines $n$ uniquely,
 and it is easy to see that $b'$ is also unique.  This is because if $(n,b) \approx (n,b')$,
 then $b = b'$.  We see easily that $e = i\o \widehat{e}$. 
The morphism $\widehat{S}_{N\otimes M_0}$ is similar.   
As an example, $\widehat{S}_{N\otimes M_0}(1/3,0) = ((0,0),(1,0))$.
By the way, despite the notation,
$\widehat{S}_{N\otimes M_0}$ is a morphism in $\Set$ here.

 By Proposition~\ref{prop-U0-N}
 applied to $\widehat{e}$,  we get $(\widehat{e})^\dag$, making the square in the corner commute.  We will shorten this to $e^\dag$, as this will turn out to be the $\SquaSet$ morphism we want.  
  The definitions of 
 $\widehat{e}$ and $\widehat{S}_{N\otimes M_0}$ and the fact that $e$ is a $\SquaSet$ morphism 
 imply that the square in the upper-left commutes.  For example, consider $(\frac{1}{3},0)\in M_0$. 
 Now $(N\times S_B)\circ \widehat{S}_{N\otimes M_0}((\frac{1}{3},0)) = N\times S_B(((0,0),(1,0))) = ((0,0),S_B((1,0)))$  because $((0,0), S_B((1,0)))$ is the first representative of the class $S_{N\otimes B}((\frac{1}{3},0))$ according to the lexicographic order on $N$.  Similarly, $\widehat{e}\circ S_B((\frac{1}{3},0)) = \widehat{e}(S_B((\frac{1}{3},0))) = ((0,0), S_B((1,0)))$ since $(0,0)\otimes S_B((1,0))$ is the first representation of $S_{N\otimes B}((\frac{1}{3},0))$ according to the lexicographic ordering on $N$ (as opposed to $(1,0)\otimes S_B((0,0))$).  
So we get a coalgebra
 morphism for the functor $N\times-$.
 By Lemma~\ref{lemma-morphisms},
  $\widehat{S}_{N\otimes M_0}^\dag = e^\dag\o S_B$.
  
We claim that  $\widehat{S}_{N\otimes M_0}^\dag = S_{U_0}$.
 That is, 
 we claim that
 $S_{U_0}$ satisfies the corecursive algebra condition which uniquely defines
 $\widehat{S}_{N\otimes M_0}^\dag$:
 \[S_{U_0} = \alpha_0 \circ (N\times S_{U_0})
    \circ    \widehat{S}_{N\otimes M_0} 
 \]
  We verify an example.   For example, for 
  $1/3 < r \leq 2/3$,
 \[\begin{array}{lcl} 
(\alpha_0 \circ (N\times S_{U_0})
    \circ \widehat{S}_{N\otimes M_0})(r,0)
   &  =  & \alpha_0 \circ (N\times S_{U_0})((1,0),(3r-1 ,0))\\
    & = & \alpha_0((1,0), S_{U_0}((3r-1,0))) \\
     & = & \alpha_0((1,0),  (3r-1,0))\\
      & = &  (r,0) \\
      & = & S_{U_0}((r,0))\\
 \end{array}
 \]
 All of the other cases are similar.
 By uniqueness of
 solutions, $S_{U_0}$ \emph{is} the solution.
 The upshot is that at this point we know that
  $e^\dag\o S_B =
  \widehat{S}_{N\otimes M_0}^\dag = 
  S_{U_0}$, and hence that
 $e^\dag$ is a $\SquaSet$ morphism.

Now that we know that $e^\dag$ is a $\SquaSet$ morphism, we use the functor $N\otimes -$ on $\SquaSet$ to get
 $N\otimes e^\dag\colon N\otimes B \to N\otimes U_0$; recall that this is defined by
$(N\otimes e^\dag)(n\otimes b) = n\otimes e^\dag(b)$. 
The square in the bottom commutes by Proposition~\ref{prop-unnatural}.
The region on the bottom commutes: $ \alpha_N \o \nu = \alpha_0$.
Recalling that $e = \nu_B\o \widehat{e}$, a diagram chase shows that we have the 
desired equality  $e^\dag = \alpha_N\o (N\otimes e^\dag) \o e$.

We also check that  $e^\dag$ is the  unique solution of $e$ in $\SquaSet$.
Suppose that we have a $\SquaSet$ morphism $e^*$ so that 
$e^* = \alpha_N\o (N\otimes e^*) \o e$.  We show that $e^* = e^\dag$.  
Consider the diagram below:
 \[
    \begin{tikzcd}
 B \arrow{r}{\widehat e}
      \arrow[bend left=30]{rr}{e}
      \arrow{d}[swap]{e^*}
      &
N\times B  
        \arrow[d,swap, "N\times e^*"]
      \arrow{r}{\nu_B}
&
N\otimes B
         \arrow{d}{N\otimes e^*}
      \\
    U_0
      &
      N\times U_0       \arrow{l}{\alpha_0}
      \arrow{r}{\nu_{U_0}}
      &
       N\otimes U_0
             \arrow[bend left=30]{ll}{\alpha_N}
    \end{tikzcd}
  \]
  Since $e^*$ is a morphism in $\SquaSet$, we are entitled to write
$N\otimes e^*$, as shown.  But the diagram
  above is in $\Set$.  
The top and bottom commute, as we have seen.
The square on the right commutes, easily.
The verification here is similar to what we saw in the first part of the proof.
And now a diagram chase shows that the square on the left commutes as well.
But this means that $e^*$ is a solution to the $N\times-$ coalgebra $(B,\widehat{e})$.
And so by uniqueness of solutions in $(U_0,\alpha_0)$, $e^* = e^\dag$.
 \endproof

The next main result is that $(U_0,\alpha_N^{-1})$ is a final 
$N\otimes-$coalgebra in square \emph{metric spaces}.
Here the metric on $U_0$ is the taxicab metric.
We need a few preliminary lemmas.  
In these, we fix an $(N\otimes-)$-coalgebra in $\SquaMS$,
 $(B,\beta:B\rightarrow N\otimes B)$.
   We already know that there is a unique $\SquaSet$ morphism $\beta^\dag\colon B\to U_0$ such that $\beta^\dag = \alpha_N\circ (N\otimes\beta^\dag)\circ \beta$.
   Also, $\alpha_N$ is an isometry (see Lemma~\ref{alphaNisomorphism})
  hence $\alpha_N^{-1}$ is short.
  Our main work in this section shows that $\beta^\dag$ is short (on all of $B$), 
of course using that $\beta$ is a short map.
 The surprising feature of our proof is that we must consider other coalgebras in order to prove the shortness of $\beta^\dag$.
Notice that $(N\otimes B, N\otimes\beta)$
  is also an $(N\otimes-)$-coalgebra.  Furthermore, $\beta\colon B\to N\otimes B$ is a coalgebra morphism.

\begin{lemma}\label{lemma-2N}
$(N\otimes \beta)^\dag  =  \alpha_N \o (N\otimes \beta^\dag)$.
\end{lemma}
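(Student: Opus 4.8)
The plan is to verify directly that the morphism $\alpha_N \o (N\otimes \beta^\dag)$ satisfies the equation that uniquely characterizes the coalgebra-to-algebra morphism $(N\otimes\beta)^\dag$ for the $N\otimes-$coalgebra $(N\otimes B,\, N\otimes\beta)$ into the corecursive algebra $(U_0,\alpha_N)$ on $\SquaSet$, and then invoke uniqueness of solutions. Everything here takes place in $\SquaSet$: at this stage $\beta^\dag$ is only known to be a $\SquaSet$ morphism, so I would be careful not to invoke anything metric.

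First I would record two elementary observations. (1) $(N\otimes B,\, N\otimes\beta)$ is indeed an $N\otimes-$coalgebra, and $\beta\colon (B,\beta)\to (N\otimes B,\, N\otimes\beta)$ is a coalgebra morphism, since the relevant square commutes trivially (both legs equal $(N\otimes\beta)\o\beta$). (2) By the defining property of $\beta^\dag$ as the solution of $(B,\beta)$ in $(U_0,\alpha_N)$, we have $\beta^\dag = \alpha_N \o (N\otimes\beta^\dag)\o\beta$ in $\SquaSet$.

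Then comes the core computation. Put $\phi = \alpha_N \o (N\otimes\beta^\dag)\colon N\otimes B \to U_0$ and expand $\alpha_N \o (N\otimes\phi)\o(N\otimes\beta)$. Using functoriality of $N\otimes-$ to pull $N\otimes\phi$ apart and then to recombine the inner composite, this rewrites as $\alpha_N \o \bigl(N\otimes(\alpha_N \o (N\otimes\beta^\dag)\o\beta)\bigr)$; the bracketed morphism equals $\beta^\dag$ by observation (2), so the whole expression collapses to $\alpha_N \o (N\otimes\beta^\dag) = \phi$. Hence $\phi$ is a solution of the coalgebra $(N\otimes B,\, N\otimes\beta)$ in $(U_0,\alpha_N)$, and since solutions are unique by Lemma~\ref{lemma-U0-N-otimes}, we conclude $(N\otimes\beta)^\dag = \phi = \alpha_N \o (N\otimes\beta^\dag)$.

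I do not expect a genuine obstacle: the argument is a pure diagram chase whose only ingredients are functoriality of $N\otimes-$ on $\SquaSet$ and the uniqueness clause of Lemma~\ref{lemma-U0-N-otimes}. One could instead route through Lemma~\ref{lemma-morphisms} applied to the coalgebra morphism $\beta$ to obtain $\beta^\dag = (N\otimes\beta)^\dag \o \beta$, but that relation alone does not pin down $(N\otimes\beta)^\dag$ (one would need $\beta$ to be epi), so the direct verification above is the cleaner route.
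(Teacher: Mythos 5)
Your proof is correct, but it is organized differently from the paper's. You verify directly that $\phi = \alpha_N \o (N\otimes \beta^\dag)$ satisfies the fixed-point equation $\phi = \alpha_N \o (N\otimes \phi)\o (N\otimes\beta)$ characterizing $(N\otimes\beta)^\dag$ — the computation $(N\otimes\phi)\o(N\otimes\beta)=N\otimes(\phi\o\beta)=N\otimes\beta^\dag$ is exactly right — and then invoke the uniqueness clause of the corecursivity of $(U_0,\alpha_N)$ on $\SquaSet$ (Lemma~\ref{lemma-U0-N-otimes}). The paper instead routes through Lemma~\ref{lemma-morphisms}: it shows that $N\otimes\beta^\dag$ is a coalgebra morphism from $(N\otimes B,\,N\otimes\beta)$ to $(N\otimes U_0,\, N\otimes\alpha_N^{-1})$, checks separately that $(N\otimes\alpha_N^{-1})^\dag=\alpha_N$, and concludes $(N\otimes\beta)^\dag=(N\otimes\alpha_N^{-1})^\dag\o(N\otimes\beta^\dag)=\alpha_N\o(N\otimes\beta^\dag)$. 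The two arguments have the same mathematical content (the defining equation of $\beta^\dag$ plus functoriality of $N\otimes-$ plus uniqueness of solutions), but yours is more self-contained: it does not need $\alpha_N$ to be invertible, nor the auxiliary coalgebra $(N\otimes U_0,\,N\otimes\alpha_N^{-1})$, so it would generalize to a corecursive algebra whose structure map is not an isomorphism. Your closing remark is also accurate: applying Lemma~\ref{lemma-morphisms} to the coalgebra morphism $\beta$ itself only yields $\beta^\dag=(N\otimes\beta)^\dag\o\beta$, which does not determine $(N\otimes\beta)^\dag$; the paper avoids this by applying that lemma to $N\otimes\beta^\dag$ rather than to $\beta$.
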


\proof
Consider the diagram below in $\SquaSet$. 
   It makes sense because $\alpha_N$ is invertible
   (see Lemma~\ref{alphaNisomorphism}).
\[
\xymatrix@C+1pc{
N\otimes B \ar[d]_-{N\otimes\beta} \ar[r]^-{N\otimes \beta^\dag}  &
N\otimes U_0 \ar[d]_-{N\otimes \alpha_N^{-1}} \ar[r]^-{\alpha_N} \ar@{=}[dr] &
 U_0 \\
N \otimes N\otimes B \ar[r]_-{N\otimes N\otimes\beta^\dag} &
N\otimes (N\otimes U_0) \ar[r]_-{N\otimes\alpha_N} &
 N\otimes U_0 \ar[u]_-{\alpha_N} 
 }
\]
The triangles commute.  The square on the right
(rotated $90^{o}$ and reflected) then
  shows that  $(N\otimes \alpha_N^{-1})^\dag = \alpha_N$.
The square on the left  commutes because when we remove $N$ and turn the arrow
on the right around (from $\alpha_N^{-1}$ to $\alpha_N$), we have the definition of $\beta^\dag$.
That square thus 
shows that $N\otimes \beta^\dag$ is a coalgebra morphism.
Applying  Lemma~\ref{lemma-morphisms} to it, we see that
\[ (N\otimes \beta)^\dag = (N\otimes \alpha_N^{-1})^\dag  \o (N\otimes \beta^\dag) = \alpha_N \o (N\otimes \beta^\dag).
\]
\endproof

\begin{definition}
Let $Z\subseteq B$.  We say that $\beta^\dag$ is \emph{short on $Z$} if for all $b,c\in Z$,
$d_{U_0}(\beta^\dag (b), \beta^\dag (c)) \leq d_B(b,c)$.

Also, we write $N\otimes Z$ for  $\set{n\otimes b : n\in N \mbox{ and } b \in Z}$.
\end{definition}

 \begin{lemma}\label{lemma-manyLines}
 Let $Z\subseteq B$ be any set that includes the image $S_{B}[M_0]$.
If $\beta^\dag$ is short on $Z$, then $(N\otimes \beta)^\dag$ is short on $N\otimes Z$.
\end{lemma}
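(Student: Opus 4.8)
The plan is to reduce the claim about $(N\otimes\beta)^\dag$ on $N\otimes Z$ to the hypothesis that $\beta^\dag$ is short on $Z$, using the identity $(N\otimes\beta)^\dag = \alpha_N\o(N\otimes\beta^\dag)$ from Lemma~\ref{lemma-2N}. So I want to estimate $d_{U_0}(\alpha_N(n\otimes\beta^\dag(b)), \alpha_N(n'\otimes\beta^\dag(b')))$ from above by $d_{N\otimes B}(n\otimes b, n'\otimes b')$, for $b,b'\in Z$ and $n,n'\in N$. Since $\alpha_N$ is an isometry (as noted just before Lemma~\ref{lemma-2N}, after Proposition~\ref{cauchycompletion} and the surrounding discussion), it suffices to show that $N\otimes\beta^\dag\colon N\otimes B\to N\otimes U_0$ does not increase distances between points of $N\otimes Z$, i.e.\ $d_{N\otimes U_0}(n\otimes\beta^\dag(b), n'\otimes\beta^\dag(b'))\le d_{N\otimes B}(n\otimes b, n'\otimes b')$.

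First I would handle the easy case $n=n'$: then by Corollary~\ref{quotientmetriccorollary}(2), $d_{N\otimes B}(n\otimes b,n\otimes b') = \frac{1}{3}d_B(b,b')$ and $d_{N\otimes U_0}(n\otimes\beta^\dag(b),n\otimes\beta^\dag(b')) = \frac{1}{3}d_{U_0}(\beta^\dag(b),\beta^\dag(b'))$, so the inequality is exactly the shortness of $\beta^\dag$ on $Z$. For the general case, I would invoke Theorem~\ref{quotientmetric}: the distance $d_{N\otimes B}(n\otimes b, n'\otimes b')$ is realized by a finite $\sim$-alternating path in $N\times B$. Crucially, because $b,b'\in Z$ and $Z\supseteq S_B[M_0]$, every ``turning point'' of the path — the pairs $(m_k,x_k')\sim(m_{k+1},x_{k+1})$ linking consecutive copies — lies in the image of $S_B$, hence in $Z$; and the two endpoints also lie in $Z$. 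So every pair of adjacent entries contributing positively to the score consists of two points of $\{m_k\}\times Z$ lying in a single copy of $B$. Applying $N\otimes\beta^\dag$ entrywise gives a path in $N\times U_0$ of the same combinatorial shape (the $\sim$-relations are preserved since $\beta^\dag$ is a $\SquaSet$ morphism, by what was established in Lemma~\ref{lemma-U0-N-otimes}, so it commutes with $S$), and each positive-contribution segment has its length $\frac{1}{3}d_B(x_k,x_{k+1})$ replaced by $\frac{1}{3}d_{U_0}(\beta^\dag(x_k),\beta^\dag(x_{k+1}))$, which is no larger by shortness of $\beta^\dag$ on $Z$. Hence the image path in $N\times U_0$ has score $\le$ that of the original, and this score is an upper bound for $d_{N\otimes U_0}(n\otimes\beta^\dag(b), n'\otimes\beta^\dag(b'))$. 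Composing with the isometry $\alpha_N$ and using Lemma~\ref{lemma-2N} finishes the proof.

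The main obstacle I anticipate is making rigorous the claim that the geodesic path realizing $d_{N\otimes B}$ between two points of $N\otimes Z$ has all of its intermediate vertices in $Z$. The $\sim$-gluing vertices are automatically in $S_B[M_0]\subseteq Z$ by definition of the equivalence relation $\widehat E$ and Definition~\ref{simAlternatingPath}; what needs a word is that the endpoints $b,b'$ being in $Z$ is exactly the hypothesis $Z\supseteq S_B[M_0]$ together with $b,b'$ being the given points — so there is nothing to prove there, the statement already restricts to $b,b'\in N\otimes Z$. The one genuinely delicate point is that in a $\sim$-alternating path an endpoint entry $(m_0,x_0)$ or $(m_p,x_p')$ may fail to be a gluing point, but it is one of our given endpoints, hence in $Z$; and the ``non-endpoint, non-gluing'' entries like $(m_0,x_0')$ paired with a gluing entry are in $S_B[M_0]$. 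So the entrywise application of $\beta^\dag$ never needs $\beta^\dag$ to be short outside $Z$, which is precisely what the lemma is set up to exploit. I expect the write-up to be short once this bookkeeping is spelled out.
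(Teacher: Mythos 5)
Your proof is correct and follows essentially the same route as the paper's: both reduce via Lemma~\ref{lemma-2N} to estimating $d_{N\otimes U_0}(n\otimes\beta^\dag(b),\, n'\otimes\beta^\dag(c))$, decompose a geodesic of $N\otimes B$ at junction points lying in $S_B[M_0]\subseteq Z$, and apply the shortness of $\beta^\dag$ on $Z$ to each single-copy segment. The only organizational difference is that the paper splits the geodesic into three pieces and handles the middle piece with Proposition~\ref{lemma-crux} together with the triangle inequality in $U_0$, whereas you push the entire $\sim$-alternating path through $N\otimes\beta^\dag$ segment by segment; the underlying estimates are identical.
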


\proof
Let $b,c\in Z$ and $n_1,n_2\in N$. 
We may assume that $n_1 \neq n_2$, since if $n_1 = n_2$ 
this follows easily from the fact that $\alpha_N$ is a short map and $\beta^\dag$ is short on $Z$.
There are $(r_1,s_1), (r_2,s_2)\in M_0$ such that 
a witness path in $N\otimes B$ from $n_1\otimes b$  to $n_2\otimes c$ contains
$n_1\otimes S_{B}((r_1,s_1))$ and $n_2 \otimes S_{B}((r_2,s_2))$.
We are going to write $S$ for $S_B$ to save on some notation.
We have

\begin{longtable}{clll}
 & $d_{N\otimes B}(n_1\otimes b, n_2\otimes c)$  \\ \\
 = & 
$d(n_1\otimes b, n_1\otimes S((r_1,s_1))) + d(n_1 \otimes S((r_1,s_1)), n_2 \otimes S((r_2,s_2)))$  \\
 & $ +$ $d(n_2 \otimes S((r_2,s_2)), n_2\otimes c)$
&(1) \\ \\
 = & $\onethird d_B(b, S((r_1,s_1))) + d_{N\otimes B}(n_1 \otimes S((r_1,s_1))), n_2 \otimes S((r_2,s_2))$\\
 &  $ +$ $\onethird d_B(S((r_2,s_2)), c)$
 &(2)  \\ \\
  $\geq$ & 
 $\onethird d_{U_0}(\beta^\dag(b), (r_1,s_1))  + d_{N\otimes U_0}(n_1 \otimes (r_1,s_1),  n_2 \otimes (r_2,s_2))$\\
 & $+$ 
 $\onethird d_{U_0}((r_2,s_2), \beta^\dag(c)) $
&(3) 
  \\ \\
 $\geq$ &  $d_{U_0}( \shrink(n_1) + \onethird \beta^\dag(b),  \shrink(n_1) +\onethird (r_1,s_1))$  \\
&  $+$ $d_{U_0}(\shrink(n_1) + \onethird(r_1,s_1),  \shrink(n_2)  +  \onethird(r_2,s_2))$  \\
  & $+$
$d_{U_0}( \shrink(n_2) +\onethird (r_2,s_2), \shrink(n_2) + \onethird \beta^\dag(c))$ 
&(4)
 \\ \\
$\geq$ &  $d_{U_0}(\shrink(n_1) +\onethird \beta^\dag (b), \shrink(n_2) + \onethird\beta^\dag (c))$
&(5)  \\ \\ 
$=$ & $d_{U_0}(\alpha_N((N\otimes\beta^\dag)(n_1,b)), \alpha_N((N\otimes\beta^\dag)(n_2,c))) $
 & (6) \\ \\
$=$  & $d_{U_0}((N\otimes \beta)^\dag(n_1\otimes b), (N\otimes \beta)^\dag(n_2\otimes c)) $  
&(7)\\ 
\end{longtable}

In (1), the distances are in $N\otimes B$.  (1) holds by the choice of $(r_1,s_1)$ and $(r_2,s_2)$ (such that $n_1\otimes S_B((r_1,s_1))$ and $n_2\otimes S_B((r_2,s_2))$ are on a witness path from $n_1\otimes b$ to $n_2\otimes c$).
In (2), we are using Corollary~\ref{quotientmetriccorollary}, the result on distances in a single copy of  $X$ inside of $N\otimes X$.
(3) uses
the assumption that $\beta^\dag$ is short on $Z$,  and the fact that $\beta^\dag \o S_B((r_i,s_i)) = (r_i,s_i)$.
It also uses Lemma~\ref{lemma-crux} in the middle.

(4) uses two facts about distances in $U_0$.  Let $x,y,z\in U_0$.
First, $c \cdot d(x,y) = d(c\cdot x,c\cdot y)$ when $0\leq c \leq 1$.
Second, $d(x,y) = d(x+z,y+z)$, provided $x +z$ and $y+z$ belong to $U_0$.
And in the middle summand of (4), we used the fact that $\alpha_N\colon N\otimes U_0 \to U_0$ is
a short map, and the definition of $\alpha_N$.

(5) uses the triangle inequality in $U_0$.
(6) uses the definition of $\alpha_N$ and $N\otimes \beta^\dag$.
(7) is by Lemma~\ref{lemma-2N}.

This completes the proof.
\endproof

\begin{lemma}\label{lemma-technical}
Let $(B,\beta\colon B\to N\otimes B)$, and let $k\in \omega$.
There is a coalgebra 
\[ (C,\gamma\colon C\to N\otimes C),\]
a coalgebra morphism $g\colon B\to C$, and 
a set $Z\subseteq C$ so that 
\begin{enumerate}
\item $S_C[M_0] \subseteq Z$.
\item
$\gamma^\dag$ is short on $Z$.
\item For every $c_1\in C$ there is some $c_2\in Z$ such that
$d_C(c_1,c_2) \leq \frac{2}{3^k}$, and also
 
 $d_{U_0}(\gamma^{\dag}(c_1),\gamma^\dag(c_2)) \leq \frac{2}{3^k}$.
\end{enumerate}
\end{lemma}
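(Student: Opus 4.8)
The plan is to prove this by induction on $k$, taking $C = N^k\otimes B$ (built up one application of $N\otimes-$ at a time), $h$ the $k$-fold composite of the coalgebra morphism $\beta$ with itself, and $Z$ the set of boundary points of all the scaled copies of $B$ inside $N^k\otimes B$. The key external inputs will be Lemma~\ref{lemma-manyLines} (propagating shortness on a boundary set through one application of the functor) and Lemma~\ref{lemma-2N} (the formula for the solution of $N\otimes\gamma$ in terms of that of $\gamma$).

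For the base case $k=0$ I would take $C=B$, $\gamma=\beta$, $h=\id_B$, and $Z=S_B[M_0]$. Conditions (1) and (3) are immediate: for (3), given any $c_1\in B$, use $c_2=S_B((0,0))\in Z$, and both $d_B(c_1,c_2)$ and $d_{U_0}(\beta^\dag(c_1),\beta^\dag(c_2))$ are $\le 2 = 2/3^0$ since $B$ is $2$-bounded and $U_0$ with the taxicab metric is $2$-bounded. The only point with real content is (2): $\beta^\dag$ is short on $S_B[M_0]$. This holds because, by Lemma~\ref{lemma-U0-N-otimes}, $\beta^\dag\circ S_B$ is the inclusion $M_0\hookrightarrow U_0$, so for $p=(r,s),q=(t,u)\in M_0$ we get $d_{U_0}(\beta^\dag(S_B(p)),\beta^\dag(S_B(q))) = |r-t|+|s-u|$, whereas $(\sqtwo)$ gives $d_B(S_B(p),S_B(q))\ge |r-t|+|s-u|$.

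For the inductive step, suppose $(C,\gamma,h,Z)$ works for $(B,\beta)$ at level $k$. Since $h\colon(B,\beta)\to(C,\gamma)$ is a coalgebra morphism and $\gamma\colon(C,\gamma)\to(N\otimes C,N\otimes\gamma)$ is one as well, the composite $\gamma\circ h\colon B\to N\otimes C$ is a coalgebra morphism into $(N\otimes C,N\otimes\gamma)$. I would set $C'=N\otimes C$, $\gamma'=N\otimes\gamma$, $h'=\gamma\circ h$, and $Z'=N\otimes Z$. Condition (1) follows from the level-$k$ statement together with $S_{N\otimes C}[M_0]\subseteq N\otimes S_C[M_0]$ (a consequence of the definition of $S_{N\otimes C}$). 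Condition (2): since $Z\supseteq S_C[M_0]$ and $\gamma^\dag$ is short on $Z$, Lemma~\ref{lemma-manyLines} gives that $(N\otimes\gamma)^\dag=(\gamma')^\dag$ is short on $N\otimes Z=Z'$. Condition (3): given $c_1=n\otimes c\in N\otimes C$, choose $c'\in Z$ with $d_C(c,c')\le 2/3^k$ and $d_{U_0}(\gamma^\dag(c),\gamma^\dag(c'))\le 2/3^k$ from the level-$k$ hypothesis, and put $c_2=n\otimes c'\in Z'$. Then $d_{N\otimes C}(c_1,c_2)=\frac{1}{3}d_C(c,c')\le 2/3^{k+1}$ by Corollary~\ref{quotientmetriccorollary}, and by Lemma~\ref{lemma-2N} one has $(\gamma')^\dag(n\otimes x)=\alpha_N(n\otimes\gamma^\dag(x))=\shrink(n)+\frac{1}{3}\gamma^\dag(x)$; combined with the fact that $u\mapsto \shrink(n)+\frac{1}{3}u$ shrinks taxicab distances in $U_0$ by $\frac{1}{3}$, this yields $d_{U_0}((\gamma')^\dag(c_1),(\gamma')^\dag(c_2))=\frac{1}{3}d_{U_0}(\gamma^\dag(c),\gamma^\dag(c'))\le 2/3^{k+1}$.

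I expect the main obstacle to be organizational rather than conceptual: one must keep straight the iterated functor $N\otimes-$ and check at each stage that the hypotheses of Lemmas~\ref{lemma-manyLines} and~\ref{lemma-2N} are genuinely met — in particular that the set $Z$ produced at level $k$ always contains $S_C[M_0]$, so that ``$N\otimes Z$'' is a legitimate input to Lemma~\ref{lemma-manyLines} at the next step, and that the chosen representative $c_1=n\otimes c$ does not cause any well-definedness issue (it does not, since $(\gamma')^\dag$ is a function). The only step carrying actual metric content is the base-case shortness of $\beta^\dag$ on $S_B[M_0]$, and that reduces at once to $(\sqtwo)$ together with the identification of the taxicab distance between boundary points of $U_0$ with $|r-t|+|s-u|$.
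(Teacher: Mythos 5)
Your proposal is correct and follows essentially the same route as the paper's own proof: the same induction on $k$ with $(C,\gamma)=(B,\beta)$, $Z=S_B[M_0]$ at the base (using ($\sqtwo$) for shortness there), and the same inductive step passing to $(N\otimes C, N\otimes\gamma)$, $\gamma\circ h$, and $N\otimes Z$, invoking Lemma~\ref{lemma-manyLines} for condition (2) and Lemma~\ref{lemma-2N} together with the $\frac{1}{3}$-scaling of $\alpha_N$ for condition (3).
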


\proof
By induction on $k$.
For $k = 0$, we take $(C,\gamma) = (B,\beta)$,   $g = \id_B$, and $Z = S_B[M_0]$.
Every point in $B$ is at a distance $\leq 2$ from $S_B((0,0))$, and every point in $U_0$ is 
distance at most $2$ from every other point. $\gamma^\dag = \beta^\dag$ is 
short on $Z=M_0$ because of ($\sqtwo$), which requires that distances on the boundary are bounded below by the distances determined by the taxicab metric. 
 
Assume our result for $k$, and fix $(C,\gamma)$, $g$, and $Z$ with the required properties.
The map $\gamma$ is a coalgebra morphism $\gamma\colon C\to N\otimes C$.
Consider $(N\otimes C,N\otimes \gamma)$, $\gamma\o g$ and $N\otimes Z$.

We check that $S_{N\otimes C}[M_0] \subseteq N\otimes Z$.
Let $(r,s)\in M_0$. 
Recall the $\SquaSet$ structure
$S_{N\otimes M_0}\colon M_0\to N\otimes M_0$.
It is a general feature of how
$N\otimes-$ works as a functor that the diagram below commutes:
\[
\xymatrix@C+1.5pc{M_0 \ar[r]^-{S_{N\otimes M_0}}\ar[d]_-{S_C} 
\ar[dr]^-{S_{N\otimes C}}
  & N \otimes M_0 \ar[d]^-{N\otimes S_C}\\
  C \ar[r]_-{\gamma} & N\otimes C
  }
\]
 Write $S_{N\otimes M_0}((r,s))$ as $n\otimes  (r',s')$, where $(r',s')\in M_0$ and $n\in N$.
Then 
\[S_{N\otimes C}((r,s)) = (N\otimes S_C)(n\otimes  (r',s'))  = n\otimes S_C((r',s')) \in
N\otimes S_C[M_0] 
\subseteq N\otimes Z.\]
By Lemma~\ref{lemma-manyLines}, $(N\otimes\gamma)^\dag$ is short on $N\otimes Z$.

Finally, we verify the last point.
Fix a point $n\otimes c_1\in N\otimes C$.  
Let $c_2\in Z$ be such that 
$d(c_1,c_2) \leq \frac{2}{3^k}$, and
$d_{U_0}(\gamma^\dag(c_1),\gamma^\dag(c_2)) \leq \frac{2}{3^k}$. 
   Then $n\otimes c_2\in N\otimes Z$, and 
   \[ d_{N\otimes C}(n\otimes c_1, n\otimes c_2) = 
   \onethird d_C(c_1,c_2) \leq  \frac{2}{3^{k+1}}.\]
   (We are using the same $n$ as chosen at the start of this paragraph.)
Recall that $(N\otimes \gamma)^\dag = \alpha_N \o (N\otimes \gamma^\dag)$ by Lemma~\ref{lemma-2N}.
And 
\[
\begin{array}{cl}
& d_{U_0}((N\otimes \gamma)^\dag(n\otimes c_1),(N\otimes \gamma)^\dag(n\otimes c_2))\\
= & d_{U_0}( \alpha_N \o (N\otimes \gamma^\dag) (n\otimes c_1),  \alpha_N \o (N\otimes \gamma^\dag)(n\otimes c_2)) \\
= & d_{U_0}(\alpha_N(n\otimes \gamma^\dag(c_1)), \alpha_N(n\otimes \gamma^\dag(c_2)))\\
= & d_{U_0}(\shrink(n) + \onethird\gamma^\dag(c_1), \shrink(n) + \onethird\gamma^\dag(c_2))\\
= &  \frac{1}{3}d_{U_0}(\gamma^\dag(c_1),\gamma^\dag(c_2)) \\
\leq & \frac{2}{3^{k+1}}\\
\end{array}
\]
This completes the proof.
\endproof

\begin{lemma} \label{lemma-short}
$\beta^\dag\colon B\to U_0$ is short.
\end{lemma}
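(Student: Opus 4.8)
The plan is to bootstrap from Lemma~\ref{lemma-technical} and Lemma~\ref{lemma-morphisms} and then pass to a limit. Fix $b,c\in B$; the goal is $d_{U_0}(\beta^\dag(b),\beta^\dag(c))\le d_B(b,c)$. For each $k\in\omega$, apply Lemma~\ref{lemma-technical} to obtain an $N\otimes-$ coalgebra $(C,\gamma)$, a coalgebra morphism $h\colon B\to C$, and a set $Z\subseteq C$ with $S_C[M_0]\subseteq Z$, with $\gamma^\dag$ short on $Z$, and with the approximation property (3). Since $h$ is a coalgebra morphism and $(U_0,\alpha_N)$ is corecursive (Lemma~\ref{lemma-U0-N-otimes}), Lemma~\ref{lemma-morphisms} gives $\beta^\dag=\gamma^\dag\circ h$, so $\beta^\dag(b)=\gamma^\dag(h(b))$ and $\beta^\dag(c)=\gamma^\dag(h(c))$.

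Next I would run a straightforward triangle-inequality chase. Write $c_1=h(b)$ and $c_1'=h(c)$. By property (3) choose $c_2,c_2'\in Z$ with $d_C(c_1,c_2)\le \tfrac{2}{3^k}$, $d_C(c_1',c_2')\le \tfrac{2}{3^k}$, $d_{U_0}(\gamma^\dag(c_1),\gamma^\dag(c_2))\le \tfrac{2}{3^k}$, and $d_{U_0}(\gamma^\dag(c_1'),\gamma^\dag(c_2'))\le \tfrac{2}{3^k}$. Because $h$ is short, $d_C(c_1,c_1')\le d_B(b,c)$, so $d_C(c_2,c_2')\le d_C(c_1,c_1')+\tfrac{4}{3^k}\le d_B(b,c)+\tfrac{4}{3^k}$ by the triangle inequality in $C$. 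Since $c_2,c_2'\in Z$ and $\gamma^\dag$ is short on $Z$, $d_{U_0}(\gamma^\dag(c_2),\gamma^\dag(c_2'))\le d_B(b,c)+\tfrac{4}{3^k}$. A final triangle inequality in $U_0$ then yields
\[
d_{U_0}(\beta^\dag(b),\beta^\dag(c)) = d_{U_0}(\gamma^\dag(c_1),\gamma^\dag(c_1')) \le d_B(b,c) + \tfrac{8}{3^k}.
\]

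Since this inequality holds for every $k$ and its left-hand side does not depend on $k$, letting $k\to\infty$ gives $d_{U_0}(\beta^\dag(b),\beta^\dag(c))\le d_B(b,c)$, which is exactly shortness of $\beta^\dag$. The one point requiring a word of care is that the auxiliary data $(C,\gamma)$, $h$, and $Z$ depend on $k$; but this causes no trouble, because for each $k$ we derive the bound $d_B(b,c)+8\cdot 3^{-k}$ independently, and then take the infimum over $k$. In truth there is no real obstacle at this stage: all of the substantive geometry — the coalgebra construction, the role of Proposition~\ref{lemma-crux}, and the ``spreading short-ness outward one copy at a time'' in Lemmas~\ref{lemma-manyLines} and~\ref{lemma-technical} — has already been done, and what remains is only to cash it in via an $\varepsilon$-style argument.
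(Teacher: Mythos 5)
Your proof is correct and follows essentially the same route as the paper's: both use Lemma~\ref{lemma-morphisms} to write $\beta^\dag=\gamma^\dag\circ h$, invoke Lemma~\ref{lemma-technical} to approximate $h(b)$ and $h(c)$ by points of $Z$ within $2/3^k$, and then combine the shortness of $\gamma^\dag$ on $Z$ with the shortness of $h$ via the triangle inequality before letting $k\to\infty$. The only difference is bookkeeping (your explicit error term $8\cdot 3^{-k}$ versus the paper's $\varepsilon/4$ budgeting), which is immaterial.
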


\proof Fix $\eps > 0$. 
Let $b_1,b_2\in B$.  
Let $k$ be large enough so that $2/3^k < \eps/4$.
Let $C$, $g$,  $Z$, $c_1$ and $c_2$  be as in Lemma~\ref{lemma-technical}
so that $c_1,c_2\in Z$,  $d_C(g(b_i), c_i)\leq \eps/4$,
and also $d_{U_0}(\gamma^\dag(g(b_i)), \gamma^\dag(c_i)) \leq \eps/4$ for $i = 1, 2$.
Then $d_C(c_1,c_2)\leq d_C(g(b_1),g(b_2)) + \eps/2$.
And
\[
\begin{array}{cll}
 & d_{U_0}(\beta^\dag (b_1), \beta^\dag (b_2)) \\
= & d_{U_0}(\gamma^\dag (g (b_1)), \gamma^\dag (g(b_2))) & (1)\\
\leq & d_{U_0}(\gamma^\dag(g (b_1)), \gamma^\dag (c_1)) + 
  d_{U_0}(\gamma^\dag (c_1), \gamma^\dag (c_2)) +  d_{U_0}(\gamma^\dag (c_2), \gamma^\dag (g (b_2)))\\
 \leq & \eps/4 + d_C(c_1, c_2) + \eps/4 & (2) \\
 \leq & \eps/2 + (d_C(g(b_1), g(b_2)) + \eps/2)  & \\
 \leq & \eps + d_B(b_1, b_2) & (3) \\
  \\ 
 \end{array}
\]
Point (1) uses Lemma~\ref{lemma-morphisms}.
Point (2) uses the shortness of $\gamma^{\dag}$ on $Z$.  Point (3) uses the shortness of $g$.
This for all $\eps >0$ proves our result.
\endproof

\begin{theorem} \label{theorem-Uzero-corec}
$(U_0,\alpha_N)$ is a corecursive algebra for $N\otimes-$ on $\SquaMS$,
and  $(U_0,\alpha_N^{-1})$ is a final coalgebra for this same functor.
\end{theorem}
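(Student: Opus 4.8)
The plan is to assemble pieces that are already in hand: the hard analytic work has been done in Lemma~\ref{lemma-short}, so what remains is a short bookkeeping argument transferring the corecursive-algebra property from $\SquaSet$ to $\SquaMS$, followed by an application of Proposition~\ref{prop-invertible}.

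First I would let $(B,\beta\colon B\to N\otimes B)$ be an arbitrary $N\otimes-$coalgebra in $\SquaMS$. Forgetting the metric, $(B,\beta)$ is a coalgebra in $\SquaSet$, so Lemma~\ref{lemma-U0-N-otimes} produces a unique $\SquaSet$ morphism $\beta^\dag\colon B\to U_0$ with $\beta^\dag = \alpha_N\o(N\otimes\beta^\dag)\o\beta$. By Lemma~\ref{lemma-short}, this $\beta^\dag$ is a short map, hence a morphism in $\SquaMS$, and therefore a coalgebra-to-algebra morphism there. For uniqueness: any coalgebra-to-algebra morphism $e^*\colon B\to U_0$ in $\SquaMS$ is in particular such a morphism in $\SquaSet$ (morphisms of $\SquaMS$ are exactly the morphisms of $\SquaSet$ that are in addition short), so $e^* = \beta^\dag$ by the uniqueness clause of Lemma~\ref{lemma-U0-N-otimes}. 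This establishes that $(U_0,\alpha_N)$ is a corecursive algebra for $N\otimes-$ on $\SquaMS$.

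For the second assertion, I would observe that $\alpha_N$ is an isomorphism in $\SquaMS$: it is a bijection (indeed a $\SquaSet$ isomorphism, as recorded just before Lemma~\ref{lemma-U0-N-otimes}), it is an isometry, and so its inverse $\alpha_N^{-1}$ is also short, hence a $\SquaMS$ morphism. Proposition~\ref{prop-invertible} then yields at once that $(U_0,\alpha_N^{-1})$ is a final coalgebra for the same functor.

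I do not expect a genuine obstacle at this stage, precisely because the substantive estimate---that the canonically defined solution $\beta^\dag$ is non-expanding---was already extracted through Lemmas~\ref{lemma-2N}, \ref{lemma-manyLines}, \ref{lemma-technical} and \ref{lemma-short}, using the device of passing to the iterated coalgebras $(N\otimes C, N\otimes\gamma)$ and approximating arbitrary points by boundary points at scale $2/3^k$. The only points requiring care are using the forgetful passage between $\SquaMS$ and $\SquaSet$ consistently in both the existence and uniqueness halves, and explicitly checking that $\alpha_N^{-1}$ is a legitimate $\SquaMS$ morphism before invoking Proposition~\ref{prop-invertible}.
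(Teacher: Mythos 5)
Your proposal is correct and follows essentially the same route as the paper: forget the metric to invoke the $\SquaSet$ corecursive-algebra result (Lemma~\ref{lemma-U0-N-otimes}), use Lemma~\ref{lemma-short} to see that the canonical solution is short, deduce uniqueness in $\SquaMS$ from uniqueness in $\SquaSet$ via the faithful forgetful functor, and then apply Proposition~\ref{prop-invertible} using the invertibility of $\alpha_N$. Your explicit remark that $\alpha_N^{-1}$ is short because $\alpha_N$ is an isometry is a detail the paper leaves implicit, but the argument is the same.
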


\proof
We already know that if we forget the metric,  $(U_0,\alpha_N)$ is a corecursive algebra for $N\otimes-$ on $\SquaSet$.
In the case that we have a short coalgebra structure, $(B,\beta)$,  the unique  $\SquaSet$ map $\beta^\dag$ is short,
by Lemma~\ref{lemma-short}.   The forgetful functor $\SquaMS\to\SquaSet$ is faithful, and so 
$\beta^\dag$ is the unique coalgebra-to-algebra map in $\SquaMS$.   This shows the first assertion in our result.
The second follows since $\alpha_N$ is invertible 
(see Lemma~\ref{alphaNisomorphism}).
\endproof

\subsection{$U_0$ is isomorphic to the completion of
the initial algebra for $N\otimes -$}

Recall from (\ref{GW}) that the initial algebra of $N\otimes -$ on $\SquaMS$ is denoted
$(W,\lambda\colon N\otimes W \to W)$.  Recall also that in Definition~\ref{definition-alpha-N}  
we saw
an algebra $\alpha_N\colon  N\otimes U_0\to U_0$.
By initiality
there is a unique $(N\otimes-)$-algebra morphism 
\
\[
\psi\colon W \to U_0
\]
In addition, for the same functor $N\otimes -$,  $\lambda^{-1}$ is a coalgebra and $U_0$ is corecursive,
and therefore $(\lambda^{-1})^\dag = \psi$.
This discussion is in $\SquaMS$, and so $\psi$ is a short map.
Recall also that $W$ is the colimit of the initial sequence
\begin{equation}
M_0 \stackrel[]{! = S_{N\otimes M_0}}{\longrightarrow} 
N\otimes M_0\stackrel[]{N\otimes !}{\longrightarrow}  N^2\otimes M_0\stackrel[]{N^2\otimes !}{\longrightarrow}  N^3\otimes M_0\stackrel[]{N^3\otimes !}{\longrightarrow}  \cdots
 N^k\otimes M_0\stackrel[]{N^k\otimes !}{\longrightarrow}   N^{k+1}\otimes M_0\cdots
\label{eq-chain-N}
\end{equation}
We write $w_k\colon N^k\otimes M_0\to W$ for the colimit injection.   

For all $k$, let $\ell_k\colon N^k\otimes M_0 \to U_0$ be given by $\ell_k = \psi \o w_k$.

Recall the sets $CP_k$ from Definition~\ref{def-CPk},
and also the maps $f_k\colon CP_k\to U_0$, which satisfy the equations
$f_0((r,s)) = (r,s)$, and 
$f_{k+1}(n\otimes x) = \alpha_N(n\otimes f_k(x))$.

\begin{proposition}
\label{proposition-ell-cocone}
\begin{enumerate}
\item The family $(\ell_k)_k$ is a cocone of the inital sequence:
for all $k$, $\ell_k =  \ell_{k+1}\o (N^k\otimes !)$.    

\item 
For all $k$, the diagram below commutes:
\[
\begin{tikzcd}[column sep=huge] 
N^k \otimes M_0 \ar{r}{N^k \otimes !} \ar{d}[swap]{w_k} 
  & N^{k+1} \otimes M_0 \ar{d}{N\otimes w_k} 
\ar{dl}[swap]{w_{k+1}}   \\
W  \ar{r}[swap]{\lambda^{-1} }  &  N\otimes W   \\
\end{tikzcd}
\]

\item $f_k$ is the restriction of the map 
$\ell_k\colon N^k\otimes M_0 \to U_0$
 to $CP_k$.

\end{enumerate}
\end{proposition}

\begin{proof}
\begin{enumerate}
\item This is a consequence of the general fact that if we post-compose all maps in a 
given cocone by the same morphism,
we again have a cocone.

\item
The triangles commute because $W$ is the colimit of the initial-algebra chain and $N\otimes-$ preserves the colimit. 
So the square commutes.


\item We show by induction on $k$ that for $r, s\in \set{0,1}$, and $n_1, \ldots, n_k\in N$, 
\[
\ell_{k}(n_1\otimes\ldots\otimes n_{k}\otimes (r,s)) = 
f_k(n_1\otimes\ldots\otimes n_{k}\otimes (r,s)).
\]

For $k = 0$, $\ell_0((r,s)) = (r,s)=f_0((r,s))$, since $\ell_0= \psi\o w_0 = \psi\o S_{U_0}$ is a morphism in $\SquaMS$
and thus preserves $M_0$.

Assume our result for $k$, and fix $r$, $s$, and $n_1, \ldots, n_k, n_{k+1}\in N$.
To save on notation, write $x$ for $n_2\otimes\ldots\otimes n_{k+1}\otimes (r,s)$.
(In case $k = 1$, $x$ is $(r,s)$.)  This point $x$ belongs to $CP_k$.
Then 
\[\begin{array}{rlr}
&\ell_{k+1}(n_1\otimes x)\\
 = & \psi (w_{k+1}(n_1\otimes x))  &\mbox{by definition of $\ell_{k+1}$}\\  
 = & (\psi\o \lambda)((N\otimes w_k)(n_1 \otimes x)) &\mbox{by part (2),
 $\lambda \o (N\otimes w_k) = w_{k+1}$} \\
  = & (\alpha_N\o  (N\otimes \psi))((N\otimes w_k)(n_1 \otimes x)) &\mbox{$\psi$ is an $(N\otimes -)$-algebra morphism}\\
  = & \alpha_N(n_1 \otimes \psi(w_k(x))) & \mbox{by definition of $N\otimes \psi$ and $N\otimes w_k$}\\
  = & \alpha_N (n_1\otimes\ell_k(x)) &\mbox{by definition of $\ell_{k}$}\\    
    = & \alpha_N (n_1\otimes f_k(x)) & \mbox{by induction hypothesis}\\
  = & f_{k+1}(n\otimes x)  &\mbox{by definition of $f_{k+1}$}\\  
 \end{array}
\]
 \end{enumerate}
 This completes the proof.
 \end{proof}

In the result below and in the sequel, we use the notation  $w_{j,k}$ when $j\leq k$
for
the connecting morphism of the initial-algebra chain (\ref{eq-chain-N}):
\[w_{j,k}\colon N^{j}\otimes M_0 \to N^{k}\otimes M_0.\]
In a more general setting (using different notation) we discussed these
below (\ref{chain}).

\begin{proposition} \label{prop-CP2}
Concerning the maps $w_{j,k}$ when $j\leq k$ and the sets of corner points:
\begin{enumerate}
\item $w_{j,k}[CP_{j}] \subseteq CP_{k}$.
\item The restriction of $w_{j,k}$ to $CP_{j}$ is an isometric embedding.
\end{enumerate}
\end{proposition}

\proof
The first part is an easy induction.

For the second part, let $z$ and $z'$ belong to $CP_{j}$.


\[
\begin{array}{clr}
 & d(z,z') & \\
  = & d_{U_0}(f_j(z),f_j(z'))  & (1)\\
=& d_{U_0}(\ell_j(z),\ell_j(z')) & (2)\\
 = & d_{U_0}(\ell_k\circ w_{j,k}(z),\ell_k\circ w_{j,k}(z')) & (3)\\
 = & d_{U_0}(f_k\circ w_{j,k}(z),f_k\circ w_{j,k}(z')) & (4)\\
  = & d(w_{j,k}(z),w_{j,k}(z')) & (5)\\
 \end{array}
\]

Lines (1) and (5) hold by Proposition~\ref{prop-CP1} applied to both $f_j$ and $f_k$.  (2) and (4) 
hold because $f_j$ is the restriction of $\ell_j$, and (3) is 
proved by an easy induction
on $k\geq j$, using 
 Proposition~\ref{proposition-ell-cocone}. 
\endproof

With these preliminaries done, we now return to the topic of this section.

Recall from (\ref{GW})
that $(W,\lambda \colon N\otimes W\rightarrow W)$ is an initial $N\otimes-$ algebra and that $\lambda$ is an isomorphism.  
With $C$ the Cauchy completion functor on the category,
we have another algebra which we will call
$(V,\theta\colon N\otimes V \to V)$, where $V= CW$ is
a square metric space whose underlying metric is complete, and $\theta$ is an isomorphism.
(The map $\theta$ is $ C\lambda\o \delta^N_W$, where 
$\delta^N_W\colon N\otimes CW \to C(N\otimes W)$ is the isomorphism
which we have seen in Proposition~\ref{cauchycompletion}.)

\begin{lemma}
\label{lemma-HCP}
Let $CP = \bigcup_k w_k[CP_k]$.
\begin{enumerate}
\item For $x,y\in CP$, let $j$ be such that there are $x',y'\in N^j\otimes M_0$ with $w_j(x') =x$ and $w_j(y')=y$.  Then  $d_{N^j\otimes M_0}(x',y') = d_W(x,y)$.  
\item $CP$ is a dense subset of $W$.
\item The restriction of $\psi$ to $CP$ is an isometry.
\item $\psi$ is an isometry.
\item $\psi$ extends to an isomorphism $\overline{\psi}\colon V \to U_0$.
\end{enumerate}
\end{lemma}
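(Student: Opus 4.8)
The plan is to prove the five assertions of Lemma~\ref{lemma-HCP} in the order stated, using the earlier results on corner points and the general facts about colimits of $\omega$-chains in $\MS$.

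\textbf{Part 1.} For $x,y\in CP$, pick $k$ large enough that $x = w_k(x')$ and $y = w_k(y')$ for some $x',y'\in CP_k$ (possible since $CP_k\subseteq CP_{k+1}$ via $w_{k,k+1}$ and the definition $CP_{k+1}=N\otimes CP_k$). By the characterization ($\MS$2) of the colimit $W$, $d_W(x,y) = \inf_{p\geq k} d_{N^p\otimes M_0}(w_{k,p}(x'), w_{k,p}(y'))$. But by Proposition~\ref{prop-CP2}(2), each connecting map $w_{k,p}$ restricts to an isometric embedding on $CP_k$, so every term in the infimum equals $d_{N^k\otimes M_0}(x',y')$; hence the infimum is $d_{N^k\otimes M_0}(x',y')$ itself. (Note the choice of $k$ does not matter, again by Proposition~\ref{prop-CP2}(2).)

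\textbf{Part 2.} We must show $CP$ is dense in $W$. By ($\MS$1), $W = \bigcup_k w_k[N^k\otimes M_0]$, so it suffices to show every point of the form $w_k(z)$, $z\in N^k\otimes M_0$, is a limit of corner points. Since $N^k\otimes M_0 = N\otimes(N^{k-1}\otimes M_0)$ and boundary points $S_{N^{k-1}\otimes M_0}[M_0]$ are limits of corner points of $N^{k-1}\otimes M_0$ (the boundary sides are copies of the unit interval, in which dyadic-type corner fractions $\{0,\frac13,\frac23,1\}$-subdivisions are dense), an induction on $k$ shows corner points are dense in each $N^k\otimes M_0$; the maps $w_k$ are short, hence continuous, so $w_k[CP_k]$ is dense in $w_k[N^k\otimes M_0]$. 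A short $\eps/2$-argument combining these then gives density of $CP$ in $W$. This density step is where the bulk of the routine-but-fiddly work lies, and it is the main obstacle: one must be careful that ``corner points'' at level $k$ really do approximate an arbitrary point of $N^k\otimes M_0$, using that each such point lies on the boundary of some scaled copy (or is itself a corner), and that the sides of the square are isometric to $[0,1]$ by ($\sqone$).

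\textbf{Parts 3--5.} For Part 3: the restriction of $\psi$ to $CP$ sends $w_k(z)$ to $\ell_k(z) = f_k(z)$ for $z\in CP_k$; by Proposition~\ref{prop-CP1}, $f_k$ preserves distances, and by Part 1 so does the identification $w_k|_{CP_k}$, so $\psi|_{CP}$ is distance-preserving. For Part 4: $\psi$ is short (by construction as the colimit map), and it is an isometry on the dense subset $CP$ by Part 3; since $U_0$ is a metric space and distances are continuous, a standard limiting argument upgrades ``$\psi$ short $+$ isometric on a dense set'' to ``$\psi$ isometric'' on all of $W$ — indeed for $x,y\in W$ take $x_n,y_n\in CP$ with $x_n\to x$, $y_n\to y$; then $d_W(x,y) = \lim d_W(x_n,y_n) = \lim d_{U_0}(\psi x_n,\psi y_n) = d_{U_0}(\psi x,\psi y)$, using continuity of $d$ and of $\psi$. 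For Part 5: $\psi$ is an isometry $W\to U_0$, hence uniformly continuous, so it extends uniquely to an isometry $\overline\psi\colon V = CW \to C U_0 = U_0$ (the latter equality because $U_0$ with the taxicab metric is already complete); the image $\overline\psi[V]$ is a complete, hence closed, subspace of $U_0$ containing the dense set $\psi[CP]$ (which contains $S_{U_0}[M_0]$ and all $f_k[CP_k]$, dense in $U_0$ by Proposition~\ref{ternary}-style ternary density in each coordinate), so $\overline\psi$ is onto; being an isometric bijection, it is an isomorphism in $\SquaMS$ once one checks it commutes with the square-set structure, which follows since $\psi\o w_0 = \ell_0 = S_{U_0}$ and $w_0 = S_W$.
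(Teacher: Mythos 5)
Your Parts 1, 3, 4, and 5 follow essentially the same route as the paper: Part 1 via ($\MS$2) together with Proposition~\ref{prop-CP2}(2), Part 3 via $\psi\circ w_k=\ell_k$ and Proposition~\ref{prop-CP1}, Part 4 by the standard dense-subset limiting argument, and Part 5 via completeness of $U_0$ and ternary density of the image of $CP$. Those are fine.

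Part 2, however, contains a genuine error. You claim that ``an induction on $k$ shows corner points are dense in each $N^k\otimes M_0$,'' and in particular that the boundary points $S_{N^{k-1}\otimes M_0}[M_0]$ are limits of corner points \emph{of $N^{k-1}\otimes M_0$}. This is false: $CP_k$ is a \emph{finite} set (it has $4\cdot 9^k$ elements before identifications), while $N^k\otimes M_0$ contains a continuum of points at positive mutual distances (by ($\sqone$), each side of each scaled copy of $M_0$ is isometric to a scaled unit interval). A finite set cannot be dense there; e.g.\ the point at parameter $1/2$ on a side is at distance at least $\tfrac{1}{2}\cdot 3^{-k}$ from every corner point of the same level. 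Consequently the step ``$w_k[CP_k]$ is dense in $w_k[N^k\otimes M_0]$'' also fails, and the concluding $\eps/2$-argument has nothing to stand on. The correct argument — the one the paper uses — is that $CP_k$ is only a $\tfrac{2}{3^k}$-\emph{net} in $N^k\otimes M_0$ (every point is $\overline{n}\otimes(r,s)$ for some $\overline{n}\in N^k$, and lies within $\tfrac{2}{3^k}$ of the corner $\overline{n}\otimes(0,0)$ of its own scaled copy), and that a point $w_j(z)$ of $W$ can be rewritten as $w_p(w_{j,p}(z))$ for arbitrarily large $p$; choosing $p$ with $\tfrac{2}{3^p}<\eps$ and approximating $w_{j,p}(z)$ by a level-$p$ corner point gives an element of $CP$ within $\eps$ of $w_j(z)$, since $w_p$ is short. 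The approximating corner points must come from higher and higher levels of the chain; density holds in the colimit $W$, not level by level.
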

\proof
\begin{enumerate}
    \item First note that such a $j$ exists, since if $x\in w_l[CP_l]$ and $y\in w_j[CP_j]$ for some $l\leq j$, then let $\widehat{x}\in CP_l$ be such that $w_l(\widehat{x}) = x$ and let $x' = w_{l,j}(\widehat{x})$.  Then $w_j(x') = x$, as required. 
    
    By Proposition~\ref{prop-CP2}(2), for any $k\geq j$, $d_{N^k\otimes M_0}(w_{j,k}(x'),w_{j,k}(y')) = d_{N^j\otimes M_0}(x',y')$.  
    
    So $d_W(x,y) = \displaystyle{\inf_{k\geq j}}\  d_{N^k\otimes M_0}(w_{j,k}(x'),w_{j,k}(y')) = d_{N^j\otimes M_0}(x',y')$.  
    
    \item 
    Let $\epsilon>0$ be given and choose $K$ such that $\frac{2}{3^K}<\epsilon$.  Let $x\in W$ and let $k\geq K$ be such that there is $x'\in N^k\otimes M_0$ with $w_k(x') =x$.  Then there are $n_1,\ldots,n_k\in N$ and $(r,s)\in M_0$ such that $x' = n_1\otimes\ldots\otimes n_k\otimes (r,s)$.  Let $c = n_1\otimes\ldots\otimes n_k\otimes (0,0)\in CP_k$, and note that $w_k(c)\in CP$.    Then $d_{N^k\otimes M_0}(x',c) \leq \frac{2}{3^k}<\epsilon$ by Corollary~\ref{distanceinonecopyN}, so since $d_W$ is the infimum of the distances in $N^k\otimes M_0$, $d_W(x,w_k(c)) \leq d_{N^k\otimes M_0}(x', c) <\epsilon$.    Hence, $CP$ is dense in $W$. 
    
    \item Let $x,y\in CP$ and let $k$ be such that there are $x',y'\in CP_k$ with $w_k(x')=x$ and $w_k(y')=y$.  Note that $\psi(x) = \psi\circ w_k(x') = \ell_k(x') = f_k(x')$ and similarly, $\psi(y) = f_k(y')$. 
    Then
    \[\begin{array}{rcll}
    d_{U_0}(\psi(x),\psi(y)) & = & d_{U_0}(f_k(x'),f_k(y')) & \\
    & = & d_{N^k\otimes M_0}(x',y') & \mbox{by Proposition~\ref{prop-CP1}}\\
    & = & d_{W}(x,y) & \mbox{by 1.} \\
    \end{array}
    \]
    
    \item This follows from parts 2 and 3.
    \item For this it will be enough to show that the image of $CP$ is dense in $U_0$.  Let $(x,y)\in U_0$ be given.  It is a standard fact that every real number has a ternary representation; see also~\cite[Example 7.3.10(2)]{AMM} for a corecursive algebra proof of the related fact that real numbers have binary representations. We can choose $(i_k,j_k)$ in $N$ such that 
    $(x,y) = \biggl(\displaystyle{\sum_{k=0}^\infty} \frac{i_k}{3^{k+1}},\displaystyle{\sum_{k=0}^\infty} \frac{j_k}{3^{k+1}}\biggr)$.    
    For $\epsilon>0$, choose $K$ such that $\frac{2}{3^K}<\epsilon$.  Let 
    \[ c=  (i_0,j_0)\otimes\ldots\otimes (i_{K-1},j_{K-1})\otimes (0,0)\in CP_k,\] and note that $f_k(c) = \ell_k(c) = \psi(w_k(c))=\overline{\psi}(w_k(c))$.  So since $w_k(c) \in CP$, this is in the image of $CP$.  Then 
    \[ d_{U_0}(f_K(c),(x,y)) = \biggl|\displaystyle{\sum_{k=K+1}^\infty} \frac{i_k}{3^{k+1}}\biggr| + \biggl|\displaystyle{\sum_{k=K+1}^\infty}\frac{j_k}{3^{k+1}}\biggr|\leq \frac{2}{3^K}<\epsilon.\]  
    
    Thus, the image of $CP$ under $\psi$ is dense in $U_0$, as required. 
\end{enumerate}
\endproof

\begin{theorem} 
\label{theorem-final-N}
$(V, \theta\colon N\otimes V\to V)$ is a corecursive $(N\otimes -)$-algebra,
and therefore
$(V,\theta^{-1}\colon V\to N\otimes V)$ is a final $(N\otimes-)$-coalgebra.
\end{theorem}

\proof
Let $(B,\beta\colon B \to N\otimes B)$ be a coalgebra.
Consider the metric space
$V^B$, and note that since $V$ is complete, $V^B$ is also complete. 
The subspace of $V^B$ of short maps which preserve the square space structure is a closed subset since 
limits of structure-preserving short maps will be short and will preserve the structure.
Crucially, 
the set of such maps is non-empty.  This is because we have $\SquaMS$ morphism $\beta^\dag\colon B\to U_0$ by Lemma~\ref{lemma-short}
(this map has nothing to do with $\beta$ in this proof)
 and an isomorphism
 $\overline{\psi}^{-1}:U_0 \to V$ by  Lemma~\ref{lemma-HCP}.
We also have a $\onethird$-contracting map $\Phi\colon V^B \to V^B$ given by 
$\Phi(f) = \theta\o (N\otimes f)\otimes 
\beta$.

Thus, $\Phi$ has a unique fixed point. 
The fixed points of $\Phi$ are exactly the 
coalgebra to algebra morphisms $B \to V$.
Thus, there is a unique 
such morphism from $B \to V$. 
This proves that $(V,\theta)$ is a corecursive 
algebra.   Since $\theta$ is invertible, $(V,\theta^{-1})$ is a
 final coalgebra; see~Proposition~\ref{prop-invertible}.
\endproof

\begin{corollary} $V$, the Cauchy completion of the initial $(N\otimes-)$-algebra, is isomorphic to $U_0$ with the taxicab metric.

\end{corollary}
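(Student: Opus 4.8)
The plan is to read this corollary off the uniqueness of final coalgebras. By Theorem~\ref{theorem-final-N}, the pair $(V,\theta^{-1}\colon V\to N\otimes V)$ is a final $N\otimes-$ coalgebra in $\SquaMS$, where by construction (the paragraph preceding Lemma~\ref{lemma-HCP}) $V = CW$ is the Cauchy completion of the carrier $W$ of the initial $N\otimes-$ algebra $(W,\lambda)$ from~(\ref{GW}). On the other hand, Theorem~\ref{theorem-Uzero-corec} says that $(U_0,\alpha_N^{-1}\colon U_0\to N\otimes U_0)$, with $U_0$ carrying the taxicab metric, is also a final $N\otimes-$ coalgebra in $\SquaMS$. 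Since a final object in any category is unique up to a unique isomorphism, the carriers $V$ and $U_0$ are isomorphic in $\SquaMS$; an isomorphism in $\SquaMS$ is a short map with short inverse preserving the designated boundary map $S$, hence an $S$-preserving isometry. This is exactly the assertion of the corollary.

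Equivalently — and this spells out the same content concretely — one may simply invoke Lemma~\ref{lemma-HCP}(5), where the comparison map $\psi\colon W\to U_0$ (obtained from the colimit property of $W$ together with the cocone $(\ell_k)_k$) was already shown to extend to an isomorphism $\overline{\psi}\colon V\to U_0$ in $\SquaMS$. Combining this with the identification $V = CW$ yields the statement directly, with $\overline{\psi}$ as the witnessing isomorphism; the two approaches agree because the isomorphism produced by uniqueness of final coalgebras is, on general grounds, a coalgebra isomorphism and in particular an isomorphism of the underlying category $\SquaMS$.

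I do not expect any genuine obstacle here: all the substantive work was done in Theorems~\ref{theorem-Uzero-corec} and~\ref{theorem-final-N} and in Lemma~\ref{lemma-HCP} (which in turn rests on Proposition~\ref{cauchycompletion}, Lemma~\ref{lemma-short}, and the corner-point analysis of Lemma~\ref{Nqm}). The only point worth a sentence of care is to be explicit that ``isomorphic'' means \emph{isomorphic in $\SquaMS$} — a distance-preserving, $S$-preserving bijection — rather than merely homeomorphic or bilipschitz equivalent, and to note that both routes deliver this stronger conclusion.
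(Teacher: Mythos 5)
Your first paragraph is exactly the paper's own proof: both $(V,\theta^{-1})$ and $(U_0,\alpha_N^{-1})$ are final $N\otimes-$ coalgebras by Theorems~\ref{theorem-final-N} and~\ref{theorem-Uzero-corec}, hence their carriers are isomorphic in $\SquaMS$. The alternative route via Lemma~\ref{lemma-HCP}(5) is a correct bonus but not needed; the proposal is correct and matches the paper's argument.
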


\proof
Since $V$ and $U_0$ are both final $(N\otimes -)$-coalgebras (Theorem~\ref{theorem-Uzero-corec} and Theorem~\ref{theorem-final-N}), they are isomorphic. 
\endproof

\subsection{The Sierpinski carpet is a corecursive $M\otimes-$ algebra}

For our next result on this topic, recall that we have an isometry 
$\alpha_N\colon N\otimes U_0 \to U_0$
(see 
Lemma~\ref{alphaNisomorphism}).

Let $\tau$ be the restriction of $\alpha_N$ to $M\otimes \SC$.
Recall the maps $\sigma_m$ from Definition~\ref{def-sigmas}, and also $\sigma$.
Then note that for $m\otimes s\in M\otimes \SC$,
\begin{equation}\label{eq-tau}
\tau(m\otimes s) = \shrink(m)+\frac{1}{3} s = \sigma_m(s) \in \SC. 
\end{equation}
And for $s\in \SC = \sigma(\SC) = \displaystyle{\bigcup_{m\in M}}\sigma_m(\SC)$, there are $s'\in \SC$ and $m\in M$ such that $s=\sigma_m(s') = \tau(m\otimes s')$.  So $\tau:M\otimes \SC\rightarrow \SC$ is a bijection.  

This map $\tau$ is not an isometry,
so it has no inverse in $\SquaMS$, but it still is an isomorphism in $\SquaSet$.

\begin{proposition} \label{prop-tau}
The diagram below commutes in $\SquaMS$:
\[
\begin{tikzcd}[column sep=huge] 
M \otimes \SC \ar{r}{\tau} \ar{d}[swap]{M\otimes i}& \SC \ar{d}{i} \\
M\otimes U_0   \ar{r}[swap]{\alpha_M =\alpha_N\o \iota_{U_0} } &  U_0 
\end{tikzcd}
\]
Here
$i$ is the inclusion, and the natural transformation
 $\iota$ is from Proposition~\ref{prop-eta-M-N}.
\end{proposition}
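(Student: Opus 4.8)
The plan is to verify that the square commutes by chasing a generic element $m\otimes s$ with $s\in\SC$ around both paths, using only the explicit formulas for the four maps involved. Recall that $\tau$ is defined as the restriction of $\alpha_N$ to $M\otimes\SC$, so $\tau(m\otimes s)=\shrink(m)+\tfrac13 s$, which lands in $\SC$ precisely because $\SC$ is the fixed point of $\sigma$ and $\sigma_m(s)=\shrink(m)+\tfrac13 s$. The map $i\colon\SC\to U_0$ is the inclusion, $M\otimes i$ is the functorial action sending $m\otimes s\mapsto m\otimes i(s)=m\otimes s$ (now viewed inside $M\otimes U_0$), and $\alpha_M$ is given by $\alpha_M((k,l)\otimes(r,t))=(\tfrac13(k+r),\tfrac13(l+t))$, which by Proposition~\ref{prop-eta-M-N} equals $\alpha_N\circ\iota_{U_0}$ since $\iota_{U_0}$ is just the inclusion $M\otimes U_0\hookrightarrow N\otimes U_0$ and $\alpha_N$ restricted to the $M$-indexed copies agrees with $\alpha_M$.

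First I would compute the clockwise composite: starting from $m\otimes s$, applying $\tau$ gives $\shrink(m)+\tfrac13 s\in\SC$, and then applying $i$ leaves this point unchanged as an element of $U_0$. Next I would compute the counterclockwise composite: applying $M\otimes i$ to $m\otimes s$ gives $m\otimes s\in M\otimes U_0$ (the same formal expression, now with $s$ regarded as a point of $U_0$), and then applying $\alpha_M$ yields, writing $m=(k,l)$ and $s=(r,t)$, the point $(\tfrac13(k+r),\tfrac13(l+t))=\shrink(m)+\tfrac13 s$. The two results coincide, so the diagram commutes. I would also remark that well-definedness on equivalence classes is already guaranteed: $\tau$ is well-defined because $\alpha_N$ is (as checked in the discussion preceding Lemma~\ref{lemma-U0-N-otimes}, since the glued points in $E$ map to the same point of $U_0$), and $M\otimes i$ is well-defined because $M\otimes-$ is a functor and $i$ is a morphism of $\SquaSet$ preserving $S$.

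There is essentially no obstacle here — this is a one-line diagram chase once the formulas are in hand — so the only thing worth being careful about is bookkeeping: making sure that the identification $\alpha_M=\alpha_N\circ\iota_{U_0}$ is invoked correctly (it is literally built into the statement of the proposition, so I would just cite Proposition~\ref{prop-eta-M-N}), and making sure the functoriality claim $(M\otimes i)(m\otimes s)=m\otimes i(s)$ is stated, since that is the definition of how $M\otimes-$ acts on morphisms. I would phrase the whole argument in $\SquaSet$, noting that $\tau$ and $\alpha_M$ need not be isometries but are genuine $\SquaSet$-morphisms, so the commuting square lives in $\SquaSet$; this is exactly what the proposition asserts and what later sections on the bilipschitz equivalence will use.
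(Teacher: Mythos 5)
Your proof is correct: the paper states Proposition~\ref{prop-tau} without proof, treating it as immediate from the definitions, and your element chase (both composites send $m\otimes s$ to $\shrink(m)+\tfrac13 s$ in $U_0$) together with the well-definedness remarks is exactly the routine verification being omitted. Nothing is missing.
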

\begin{proof}
 Let $m\otimes x\in M\otimes \SC$ be given.  Then $i\circ\tau(m\otimes x) = i(\alpha_N(m\otimes x)) = \alpha_N(m\otimes x)$
 and $\alpha_M\circ M\otimes i(m\otimes x) = \alpha_M(m\otimes x) = \alpha_N\circ\iota_{U_0}(m\otimes x) = \alpha_N(m\otimes x)$.
\end{proof}

Let $(B,\beta\colon B \to M\otimes B)$ be a coalgebra.
By postcomposing with the inclusion $\iota_B:M\otimes B \to N\otimes B$, we get an $N\otimes-$ coalgebra $\iota_B\circ \beta:B\rightarrow N\otimes B$.   So we have 
$(\iota_B\circ\beta)^\dag: B\to U_0$. 
We aim to show that for all $b\in B$, $(\iota_B\circ\beta)^\dag(b) \in \SC$.
Before presenting the proof, we will walk the reader through the ideas.
We will assume that $B$ is enumerated without repeats as $b_1, b_2, \ldots, b_k, \ldots$,
and also that our coalgebra $\beta$ 
is given by 
\[
\beta(b_i) = m_i\otimes b_{i+1}
\]
(Please note that we are not saying that all
coalgebras look like this; we are only making an example.  In fact,
a general coalgebra for this functor would be a family of an arbitrary set
of 
disjoint versions of this example, together with an arbitrary set of 
finite coalgebras; these would be eventually periodic.  None of this
really matters in this paper.)
The $m_i$ can be chosen in $M$, not just in $N$.
Then the solution $(\iota_B\circ \beta)^\dag\colon B \to U_0$
corresponds to elements $r_1, r_2, \ldots$ in $U_0$ such that
\[
\begin{array}{lcl}
r_1 & = & \alpha_N(m_1 \otimes r_2) \\
r_2 & = & \alpha_N(m_2 \otimes r_3) \\
r_3 & = & \alpha_N(m_3 \otimes r_4) \\
 & \vdots & \\
 \end{array}
\]
Again, we would like to show that each $r_i$ belongs to $\SC$.  It is clear that 
\[ r_1 \in \alpha_N( m_1\otimes U_0 ) = \sigma_{m_1}(U_0) \]
The notation $m_1\otimes U_0$ and similar notation below is from
Remark~\ref{remark-tensoring}.
A little more thought shows that
\[ r_1 \in \alpha_N(m_1\otimes  \alpha_N(m_2 \otimes U_0) )= \sigma_{m_1} (\sigma_{m_2}(U_0) ) \]
and then
\[ r_1 \in
\alpha_N(m_1\otimes 
 \alpha_N(m_2\otimes  \alpha_N(m_3 \otimes U_0) ))
 = \sigma_{m_1} (\sigma_{m_2}(\sigma_{m_3}(U_0)) ) \]
In the notation of Hutchinson's Theorem 
(Proposition~\ref{prop-Hutchinson}),
 $r_1 \in (U_0)_{m_1 m_2 m_3 \ldots m_p}$ for all $p$.  
 Since all of the $m$'s belong to $M$,  Proposition~\ref{prop-Hutchinson} parts (2) and (3)
 tell us that
 $r_1\in \SC_{m_1m_2\ldots}\subset \SC$.   Similarly, we can argue for each $i$, $r_i = (\iota_B\circ\beta)^\dag(b_i) \in \SC_{m_im_{i+1}\ldots}\subset \SC$. 
 
 Most of the  work in 
 the proof of our next result
 is in managing the notation (and changing it a little)
  and then filling  in the details in the sketch above.

\begin{proposition}\label{prop-factors}
For all $b\in B$, $(\iota_B\circ \beta)^\dag(b)\in \SC$.
\end{proposition}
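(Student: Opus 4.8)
The strategy is to make precise the heuristic sketch that precedes the statement. The key tool is Hutchinson's Theorem (Proposition~\ref{prop-Hutchinson}), particularly parts (2) and (3), applied with the contractions $\sigma_m$, $m\in M$, on $U_0$. The plan is to show that for a fixed $b\in B$, the point $\beta^\dag(b)$ lies in $(U_0)_{\vec m}$ for \emph{arbitrarily long} finite address prefixes $\vec m = m_1 m_2\cdots m_p$ with all $m_i\in M$, and then invoke Proposition~\ref{prop-Hutchinson}(1)--(3) to conclude $\beta^\dag(b) = k_{\overline m}\in\SC$.

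\textbf{Step 1: set up the address sequence.} Given $b\in B$, apply the coalgebra structure $\beta\colon B\to M\otimes B$ repeatedly. Since $\beta(b)\in M\otimes B$, Theorem~\ref{quotientmetric} (or rather the underlying $\SquaSet$ description) lets us write $\beta(b)$ in the form $m_1\otimes b^{(1)}$ for some $m_1\in M$ and $b^{(1)}\in B$ — more carefully, the element $\beta(b)$ of $M\otimes B$ has at least one representative $(m_1, b^{(1)})\in M\times B$, and we fix one such (using a chosen ordering on $M$, exactly as in the proof of Lemma~\ref{lemma-U0-N-otimes}). Iterating, we obtain sequences $m_1, m_2, \ldots$ in $M$ and $b = b^{(0)}, b^{(1)}, b^{(2)}, \ldots$ in $B$ with $\beta(b^{(i)}) = m_{i+1}\otimes b^{(i+1)}$ (as elements of $M\otimes B$, via the chosen representatives). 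This is the content of "managing the notation" mentioned in the text: a general coalgebra need not literally look like the running example, but each individual forward orbit does.

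\textbf{Step 2: unwind the fixed-point equation.} Since $\beta^\dag$ solves $\beta^\dag = \alpha_N\o(N\otimes\beta^\dag)\o\beta$ (viewing $\beta$ as an $N\otimes-$ coalgebra via $\iota_B$), and since $\beta(b^{(i)})$ is represented by $(m_{i+1}, b^{(i+1)})$ with $m_{i+1}\in M\subseteq N$, we get
\[
\beta^\dag(b^{(i)}) = \alpha_N\bigl(m_{i+1}\otimes \beta^\dag(b^{(i+1)})\bigr) = \shrink(m_{i+1}) + \tfrac13\,\beta^\dag(b^{(i+1)}) = \sigma_{m_{i+1}}\bigl(\beta^\dag(b^{(i+1)})\bigr).
\]
Here I am using the identity $\tau(m\otimes s)=\sigma_m(s)$ (equivalently $\alpha_N(m\otimes z)=\sigma_m(z)$ for $m\in M$) recorded just before Proposition~\ref{prop-tau}. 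Composing $p$ of these and using that $\beta^\dag(b^{(p)})\in U_0$, we obtain
\[
\beta^\dag(b) = \sigma_{m_1}\circ\sigma_{m_2}\circ\cdots\circ\sigma_{m_p}\bigl(\beta^\dag(b^{(p)})\bigr) \in \sigma_{m_1}\circ\cdots\circ\sigma_{m_p}(U_0) = (U_0)_{m_1 m_2\cdots m_p},
\]
the last equality being the definition of $(U_0)_{\vec m}$ (Definition preceding Proposition~\ref{prop-Hutchinson}) — so $\beta^\dag(b)$ lies in $(U_0)_{m_1\cdots m_p}$ for every $p$.

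\textbf{Step 3: conclude via Hutchinson.} Now apply Proposition~\ref{prop-Hutchinson} with $A = U_0$, which is non-empty and bounded. Part~(1) gives that $\bigcap_{p=1}^\infty \SC_{m_1\cdots m_p}$ is the singleton $\{k_{\overline m}\}$ with $k_{\overline m}\in\SC$. Part~(2) gives $\mathrm{diam}\bigl((U_0)_{m_1\cdots m_p}\bigr)\to 0$, and part~(3) gives $d\bigl((U_0)_{m_1\cdots m_p}, k_{\overline m}\bigr)\to 0$ as $p\to\infty$. Since $\beta^\dag(b)\in (U_0)_{m_1\cdots m_p}$ for all $p$, it follows that $d\bigl(\beta^\dag(b), k_{\overline m}\bigr)\to 0$, forcing $\beta^\dag(b) = k_{\overline m}\in\SC$. (All metrics here are the taxicab metric on $U_0$, which is what $\SC$ carries; the distances in Proposition~\ref{prop-Hutchinson} were stated for a general complete metric space, so this is fine.)

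\textbf{Main obstacle.} The only genuinely delicate point is Step~1: verifying that for an \emph{arbitrary} coalgebra $(B,\beta)$ — not the eventually-periodic example in the text — the forward orbit of a point $b$ genuinely produces indices $m_i$ that can be taken in $M$ rather than merely in $N$. This is where one must use that $\beta$ lands in $M\otimes B$, so that the $\SquaSet$-representatives $(m_i, b^{(i)})$ of $\beta(b^{(i-1)})$ have $m_i\in M$; one should also remark (as the text does) that when a point of $M\otimes B$ has several representatives they are all related by $\approx$, and the choice among them is immaterial because all the relevant $\sigma_m$-images agree on $S_B[M_0]$ by the isometry condition on $E$. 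Once that bookkeeping is pinned down, the rest is a direct application of the machinery already assembled.
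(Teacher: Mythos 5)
Your proof is correct and follows essentially the same route as the paper's: both fix canonical $M\times B$-representatives of $\beta$ (your $(m_i,b^{(i)})$ are the paper's $(v_i(b),u_i(b))$ built from the associate $\widehat\beta$), both prove by induction that $\beta^\dag(b)\in (U_0)_{m_1\cdots m_p}$ for every $p$ by unwinding the corecursive-algebra equation through $\alpha_N$, and both conclude via Proposition~\ref{prop-Hutchinson}. Your Step~2 is in fact written slightly more cleanly than the paper's corresponding line, which elides the application of $\alpha_N$.
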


\proof

As in the proof of Lemma~\ref{lemma-U0-N-otimes}, fix an associate $\widehat{\beta}\colon B \to M\times B$.
Define maps $u_k\colon B \to B$ for $k\geq 0$
and $v_k\colon B \to M$ for $k \geq 1$:
\[
\begin{array}{lcl}
u_0(b) & = & b\\
\widehat{\beta}(u_k(b)) & = & (v_{k+1}(b), u_{k+1}(b))\\
\end{array}
\]
We claim that for all $k\geq 0$:  $u_k(u_1(b)) = u_{k+1}(b)$.
The proof is by induction on $k$.  For $k = 0$, our result is clear.
Assume that $u_k(u_1(b)) = u_{k+1}(b)$.
Then 
\begin{equation}\label{eqclaim}
 (v_{k+1}(u_1(b)), u_{k+1}(u_1(b)))
= \widehat{\beta}(u_k(u_1(b))) =
\widehat{\beta}(u_{k+1}(b)) =(v_{k+2}(b), u_{k+2}(b)).
\end{equation}
So  $u_{k+1}(u_1(b)) = u_{k+2}(b)$.
This establishes our claim.
And from this claim we  
repeat (\ref{eqclaim}) to see that
for all $k\geq 1$,
 $v_k(u_1(b)) = v_{k+1}(b)$.

For each $b\in B$, we have an infinite sequence of elements of $M$
\begin{equation}
    \label{Mseq}
 v_1(b), v_2(b), \ldots, v_k(b), \ldots
\end{equation}
Moreover, we will show by induction on $k$  that 
\begin{equation}
 \label{vsbs}   
 (\iota_B\circ \beta)^\dag(b) \in (U_0)_{v_1(b),v_2(b), \ldots, v_k(b)}
 \end{equation}
for all $b\in B$.
For $k = 0$, $(\iota_B\circ\beta)^\dag(b)\in U_0 = (U_0)_{\eps}$.
Fix $k \geq 0$, and assume that for all $b\in B$, $(\iota_B\circ\beta)^\dag(b) \in  (U_0)_{v_1(b), \ldots, v_k(b)}$.
Now fix $b$.
So $\widehat{\beta}(b) = (v_1(b),u_1(b))$.
To save on notation,
we will write $b'$ for $u_1(b)$.
By our assumption, 
\[
(\iota_B\circ \beta)^\dag(b') \in  (U_0)_{v_1(b'), \ldots, v_k(b')} = 
(U_0)_{v_2(b), \ldots, v_{k+1}(b)}. 
\]
(Notice that we used a fact from above to write $v_i(b') = v_i(u_1(b)) = v_{i+1}(b)$.)
And then 
\[
(\iota_B\circ \beta)^\dag(b)= 
\alpha_N(v_1(b) \otimes (\iota_B\circ\beta)^\dag(b'))\in
\sigma_{v_1(b)}((U_0)_{v_2(b), \ldots, v_{k+1}(b)})
= (U_0)_{v_1(b),v_2(b), \ldots, v_{k+1}(b)}.
\]

This completes the induction.
Since the 
sequence in (\ref{Mseq})
comes from $M$, by 
(\ref{vsbs}) and Proposition~\ref{prop-Hutchinson}, we get that $(\iota_B\circ \beta)^\dag(b)\in \SC$.
\endproof

As a result of Proposition~\ref{prop-factors}, we regard $(\iota_B\o \beta)^\dag$ as a morphism with codomain $\SC$.
That is, $(\iota_B\circ \beta)^\dag\colon B \to U_0$ factors through the inclusion  $i\colon \SC \to U_0$.
So we have a map
 $\beta^*\colon B\to \SC$
 such that 
\begin{equation}\label{betastar}
(\iota_B\o \beta)^\dag = i \o \beta^*.
\end{equation}

\begin{theorem}
\label{theorem-SC}
$(\SC,\tau)$ is a corecursive algebra for $M\otimes-\colon\SquaMS\to\SquaMS$.
\end{theorem}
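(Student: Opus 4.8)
The plan is to show that for every $M\otimes-$ coalgebra $(B,\beta\colon B\to M\otimes B)$ there is a unique coalgebra-to-algebra morphism $\beta^*\colon B\to\SC$ with $\beta^* = \tau\o(M\otimes\beta^*)\o\beta$. The key observation is that all the hard analytic work has already been done: by Proposition~\ref{prop-eta-M-N} we may regard $\beta$ as an $N\otimes-$ coalgebra via $\iota_B\o\beta\colon B\to N\otimes B$, and by Theorem~\ref{theorem-Uzero-corec} the algebra $(U_0,\alpha_N)$ is corecursive for $N\otimes-$ on $\SquaMS$, so we obtain a unique short, $\SquaSet$-structure-preserving map $(\iota_B\o\beta)^\dag\colon B\to U_0$ satisfying $(\iota_B\o\beta)^\dag = \alpha_N\o(N\otimes(\iota_B\o\beta)^\dag)\o\iota_B\o\beta$. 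By Proposition~\ref{prop-factors}, this map lands in $\SC$, so it factors as $i\o\beta^*$ with $\beta^*\colon B\to\SC$ as in~(\ref{betastar}).

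First I would check that $\beta^*$ is a morphism of $\SquaMS$: it is short and preserves $S$ because $(\iota_B\o\beta)^\dag$ is and $\SC$ carries the subspace metric and $S_{\SC}$ is the inclusion, so restricting the codomain changes nothing. Next I would verify the coalgebra-to-algebra equation $\beta^* = \tau\o(M\otimes\beta^*)\o\beta$. Starting from the equation satisfied by $(\iota_B\o\beta)^\dag$ and using Proposition~\ref{prop-tau} — which says $\alpha_M = \alpha_N\o\iota_{U_0}$ and that $i\o\tau = \alpha_M\o(M\otimes i)$ — I would chase the diagram: $i\o\beta^* = (\iota_B\o\beta)^\dag = \alpha_N\o\iota_{U_0}\o(M\otimes(i\o\beta^*))\o\beta = \alpha_M\o(M\otimes i)\o(M\otimes\beta^*)\o\beta = i\o\tau\o(M\otimes\beta^*)\o\beta$, and since $i$ is injective (monic in $\SquaMS$, by Proposition~\ref{prop-mono}), the desired equation follows. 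Here I must be a little careful that $M\otimes(\iota_B\o\beta)^\dag$, as a map $M\otimes B\to N\otimes U_0$, really does decompose through $\iota_{U_0}$ and $M\otimes i$ as written; this is where Proposition~\ref{prop-tau} and functoriality of $M\otimes-$, $N\otimes-$ are used.

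For uniqueness, suppose $\beta'\colon B\to\SC$ is another $\SquaMS$ morphism with $\beta' = \tau\o(M\otimes\beta')\o\beta$. Composing with the inclusion $i\colon\SC\to U_0$ and again invoking Proposition~\ref{prop-tau}, I would show $i\o\beta'$ satisfies the defining equation of the solution of $\iota_B\o\beta$ in $(U_0,\alpha_N)$: namely $i\o\beta' = \alpha_N\o(N\otimes(i\o\beta'))\o\iota_B\o\beta$. The computation is the same diagram chase run in reverse. Then uniqueness of the $N\otimes-$ coalgebra-to-algebra map (Theorem~\ref{theorem-Uzero-corec}) gives $i\o\beta' = (\iota_B\o\beta)^\dag = i\o\beta^*$, and injectivity of $i$ yields $\beta' = \beta^*$.

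The main obstacle I anticipate is the bookkeeping in the diagram chase connecting the $M\otimes-$ world to the $N\otimes-$ world — specifically, confirming that $(M\otimes \beta^*)$ post-composed with $\iota_{U_0}$ agrees with $(N\otimes(\iota_B\o\beta^*))$ restricted appropriately, i.e.\ that the naturality square for $\iota$ interacts correctly with the factorization $\beta^\dag = i\o\beta^*$. Since everything in sight is short and $\SquaSet$-structure-preserving, and $i$ is monic, no genuinely new estimate is needed; the content is entirely in Proposition~\ref{prop-factors} (already proved) and in Proposition~\ref{prop-tau} together with the commuting diagrams. So the proof should be short: assemble the two propositions and Theorem~\ref{theorem-Uzero-corec}, do the chase, and conclude.
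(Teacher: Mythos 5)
Your proposal is correct and follows essentially the same route as the paper's proof: regard $\beta$ as an $N\otimes-$ coalgebra via $\iota_B$, invoke Theorem~\ref{theorem-Uzero-corec} and Proposition~\ref{prop-factors} to get $\beta^*$ with $i\o\beta^*=(\iota_B\o\beta)^\dag$, chase the diagram built from Proposition~\ref{prop-tau} and the naturality of $\iota$, and cancel the monomorphism $i$; uniqueness is the same chase in reverse plus uniqueness of solutions in $(U_0,\alpha_N)$. No gaps.
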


\proof
Let $(B,\beta)$ be a coalgebra.   Consider the following diagram in $\SquaMS$:
\[  
   \begin{tikzcd}
      B \arrow{r}{\beta} 
      \ar[bend right=40,swap]{ddd}{(\iota_B \o \beta)^\dag}
      \arrow{d}{\beta^*}
      &
M\otimes B
  \ar[bend left=50]{dd}{M\otimes(\iota_B \o \beta)^\dag}
        \arrow[d,swap, "M\otimes \beta^*"]
      \arrow{r}{\iota_B}
      &
N\otimes B \ar{ddd}{N\otimes  (\iota_B \o \beta)^\dag}
      \\
  \SC
      \arrow[>->]{dd}[swap]{i}
      &
      M\otimes \SC       \arrow{l}{\tau}
      \ar{d}[swap]{M\otimes i}
      & &
  \\
   & M\otimes U_0  \ar{dr}{\iota_{U_0}}  \ar{dl}{\alpha_N\o \iota_{U_0}}& \\
U_0 &   & \arrow{ll}{\alpha_N} N\otimes U_0  
    \end{tikzcd}
\]

We need to show that the top left corner commutes. 
We are using the natural transformation $\iota: (M\otimes -) \to (N\otimes -)$  from Proposition~\ref{prop-eta-M-N}.
We get $ (\iota_B \o \beta)^\dag$ by Theorem~\ref{theorem-Uzero-corec}, and the outside of the diagram commutes.
We have seen in (\ref{betastar}) that the small region in the center commutes.

The region on the far right commutes
by the naturality of $\iota$.

The region in the lower-left commutes by Proposition~\ref{prop-tau}.
The bottom commutes trivially.  
Thus, all of the inside parts commute.  
A diagram chase shows that $i \o \beta^* = i \o \tau \o (M\o \beta^*) \o \beta$.
Since $i$  is monic, $\beta^* =   \tau \o (M\o \beta^*) \o \beta$.    This shows that $\beta^*$ is a coalgebra-to-algebra map.

For the uniqueness of $\beta^*$, suppose that $\beta^{**}\colon B\to \SC$ satisfies
$\beta^{**} =   \tau \o (M\o \beta^{**}) \o \beta$.
Consider the diagram below:
\[  
   \begin{tikzcd}
      B \arrow{r}{\beta} 
  \ar[bend right=40,swap]{ddd}{i\o \beta^{**}}
      \arrow{d}{\beta^{**}}
      &
M\otimes B
  \ar[bend left=50]{dd}{M\otimes(i\o \beta^{**})}
        \arrow[d,swap, "M\otimes \beta^{**}"]
      \arrow{r}{\iota_B}
      &
N\otimes B \ar{ddd}{N\otimes (i\o \beta^{**})}
      \\
  \SC
      \arrow[>->]{dd}[swap]{i}
      &
      M\otimes \SC       \arrow{l}{\tau}
      \ar{d}[swap]{M\otimes i}
      & &
  \\
   & M\otimes U_0  \ar{dr}{\iota_{U_0}}  \ar{dl}{\alpha_N\o \iota_{U_0}}& \\
U_0 &   & \arrow{ll}{\alpha_N} N\otimes U_0  
    \end{tikzcd}
\]
At first glance, the maps are different from those in the previous diagram.
All of the inside parts of this diagram commute: the part on the left by definition,
the part on the right by naturality, and the remaining parts for the same reasons
as in the previous diagram.   Thus, the ouside commutes.  This implies that 
$i\o \beta^{**}$ is a coalgebra-to-algebra morphism
for $\iota_B\o \beta$.  By the uniqueness part of Theorem~\ref{theorem-Uzero-corec}, $i\o \beta^{**} =  (\iota_B \o \beta)^\dag = i \o \beta^*$.
Since $i$ is monic, $\beta^{**} = \beta^*$.
\endproof

Unfortunately $\tau^{-1}$ is not a short map, so it is not a morphism in $\SquaMS$.  However, it is an isomorphism in $\SquaSet$, so we do get the following. 

\begin{corollary}
\label{corollary-SC}
$(\SC,\tau^{-1})$ is a final coalgebra for $M\otimes-\colon\SquaSet\to\SquaSet$.
\end{corollary}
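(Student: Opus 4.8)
The plan is to obtain this from Proposition~\ref{prop-invertible}, which turns a corecursive algebra with invertible structure map into a final coalgebra. Theorem~\ref{theorem-SC} already gives that $(\SC,\tau)$ is corecursive for $M\otimes-$, but only on $\SquaMS$, and there $\tau$ is \emph{not} invertible (since $\tau^{-1}$ is not short). So the two things I would establish first are: (a) $(\SC,\tau)$ is a corecursive algebra for $M\otimes-$ on $\SquaSet$; and (b) $\tau$ is an isomorphism in $\SquaSet$. Given these, Proposition~\ref{prop-invertible} applied with $H=M\otimes-\colon\SquaSet\to\SquaSet$, $A=\SC$, and $a=\tau$ immediately yields that $(\SC,\tau^{-1})$ is a final coalgebra for $M\otimes-$ on $\SquaSet$.

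For (a) I would reuse the proof of Theorem~\ref{theorem-SC} almost verbatim, reading each of its diagrams as a diagram in $\SquaSet$ rather than in $\SquaMS$, with two changes. First, wherever that proof produces the solution $(\iota_B\o\beta)^\dag\colon B\to U_0$ of the $N\otimes-$ coalgebra $\iota_B\o\beta$ by invoking Theorem~\ref{theorem-Uzero-corec}, I would instead invoke Lemma~\ref{lemma-U0-N-otimes}, which says precisely that $(U_0,\alpha_N)$ is a corecursive $N\otimes-$ algebra on $\SquaSet$ and hence supplies this solution for an arbitrary coalgebra $B$ in $\SquaSet$. Second, I would note that the proof of Proposition~\ref{prop-factors} uses only the associate $\widehat\beta\colon B\to M\times B$ (built as in Lemma~\ref{lemma-U0-N-otimes}), the set-maps $u_k,v_k$, the sets $(U_0)_{\vec v}$, and Hutchinson's Proposition~\ref{prop-Hutchinson}; none of this involves any metric, so it applies unchanged and yields $\beta^*\colon B\to\SC$ with $i\o\beta^*=(\iota_B\o\beta)^\dag$. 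The two diagram chases in Theorem~\ref{theorem-SC} --- first for the existence of the coalgebra-to-algebra morphism $\beta^*$, then for its uniqueness --- then go through word for word; the only property of the inclusion $i\colon\SC\to U_0$ they use is that it is monic, and monos in $\SquaSet$ are exactly the injections, by the same argument as Proposition~\ref{prop-mono}. Part (b) is already recorded in the discussion just before Proposition~\ref{prop-tau}: $\tau$, being the restriction of $\alpha_N$ to $M\otimes\SC$, is a bijection onto $\SC$ preserving the square set structure, hence an isomorphism in $\SquaSet$, even though $\tau^{-1}$ fails to be short and so is not a morphism of $\SquaMS$.

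The only real obstacle is making sure, in part (a), that nothing in the proofs of Theorem~\ref{theorem-SC} and Proposition~\ref{prop-factors} quietly relied on shortness or on the metric of $B$. Everything there is either a diagram chase valid in an arbitrary category or a purely combinatorial argument about addresses combined with Hutchinson's theorem, so I expect this check to be routine rather than substantial; once it is done, the corollary follows formally from Proposition~\ref{prop-invertible}.
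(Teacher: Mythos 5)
Your proposal is correct, but it takes a genuinely different route from the paper. The paper's own proof is a short reduction to the $\SquaMS$ case: given an arbitrary $\SquaSet$ coalgebra $(B,\beta)$, it equips $B$ with an essentially discrete metric (taxicab distances on $S_B[M_0]$, distance $2$ everywhere else), checks that this makes $B$ an object of $\SquaMS$ for which every square-set map into or out of $B$ is automatically short, and then reads off both existence and uniqueness of the solution directly from Theorem~\ref{theorem-SC}; it finishes, as you do, by noting $\tau$ is a bijection and invoking Proposition~\ref{prop-invertible}. You instead re-prove corecursiveness natively in $\SquaSet$, replacing the one genuinely metric ingredient (Theorem~\ref{theorem-Uzero-corec}) by its $\SquaSet$ counterpart (Lemma~\ref{lemma-U0-N-otimes}) and observing that Proposition~\ref{prop-factors} and the two diagram chases are metric-free; this is a legitimate argument, and your audit of where shortness is (not) used is accurate --- the only points worth being explicit about are that $M\otimes-$ and the naturality of $\iota$ make sense at the $\SquaSet$ level (they do, since both are defined by the set-level quotient construction) and that the inclusion $i\colon\SC\to U_0$ is monic in $\SquaSet$ because it is injective. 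The trade-off: the paper's discrete-metric trick is shorter and avoids re-examining the earlier proofs, while your version makes transparent exactly which parts of the machinery are purely combinatorial, at the cost of the re-checking you flag at the end.
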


\proof
First, let us show that 
$(\SC,\tau)$ is a corecursive algebra for $M\otimes-\colon\SquaSet\to\SquaSet$.
Let $(B,\beta)$ be a coalgebra.  Endow $B$ with the following metric:  For $(r,s),(t,u)\in M_0$, let 
\[ d_B(S_B((r,s)),S_B((t,u))) = d_{U_0}((r,s),(t,u)),\]
and for  $x,y\notin S_B[M_0]$, let $d_B(x,y) = 2$ and $d_B(x,S_B((r,s))) =2$.  It is easy to verify that this is an object in $\SquaMS$. 

Then $\beta$ is automatically short.   By Theorem~\ref{theorem-SC}, there is a unique solution $\beta^\dag$.
This same morphism is a solution in $\SquaSet$, of course.   For the uniqueness, note that every morphism from 
the discrete space $B$ to $\SC$ is automatically short.

The morphism $\tau$ is a bijection, and so it is invertible in $\SquaSet$.  
So we are done by Proposition~\ref{prop-invertible}.
\endproof

\subsection{The final $(M\otimes-)$-coalgebra $(Q,\gamma:Q\rightarrow M\otimes Q)$}


Recall $(G,\eta:M\otimes G\rightarrow G)$, the initial algebra.  
By Lambek's Lemma, 
 $\eta$ is an isomorphism.  Let $Q= CG$, the Cauchy completion. 
 Consider the map below:
 \begin{equation}\label{eq-gamma}
\gamma:Q\rightarrow M\otimes Q
 \qquad\qquad  \xymatrix{Q =CG \ar[r]^-{C\eta^{-1}} & C(M\otimes G) \ar[r]^-{\rho_{G}}
 & M\otimes CG
 = M\otimes Q}.
 \end{equation}
 The morphism $\rho^M_G$ is the isomorphism from 
 Proposition~\ref{natisom}.
In this section we will show that $(Q,\gamma)$ is the final $(M\otimes-)$-coalgebra.

For the remainder of the paper, let \begin{equation}\label{GinQ} i:G\hookrightarrow Q\end{equation}
denote the inclusion map from $G$ into $Q$, and note that $G$ is dense in $Q$.

Let $(B,\beta\colon B\to M\otimes B)$ be a coalgebra.
The main task at this point is to exhibit a short map   $h\colon B\rightarrow Q$.  We will use the short map 
$(\iota_B\circ \beta)^\dag\colon B \to U_0$ in our definition, but our use will not be what
one might at first expect.
Instead, to get $h$ we will need to \emph{go via $M^n\otimes U_0$}
(in some appropriate sense that we shall discuss). 
Even if we wanted to use $(\iota_B\circ\beta)^\dag$ directly, there is an issue which arises in considering a map from $\SC$ (as a subset of $U_0$) to $Q$:
the most natural and direct map
will not be short.  For example, consider points $(\frac{1}{2},\frac{1}{3})$ and $(\frac{1}{2},\frac{2}{3})$ in $U_0$.  These have distance $\frac{1}{3}$ in the taxicab metric.   However, these correspond to the top and bottom of the
``hole'' at $(1,1)$ in $Q$, that is, if we view $Q$ as $M\otimes Q$, the points 
are $(1,0)\otimes S_Q((\frac{1}{2},1))$ and $(1,2)\otimes S_Q((\frac{1}{2},0))$, so their distance under the quotient metric will be $\frac{2}{3}$ (to navigate around the hole).  So the obvious bijective correspondence between a subset of $U_0$ and $Q$ will not be a short map, and indeed, not an isometry.  
 However, we navigate around this difficulty, going a different way. 
We will consider corner points as we did for $N\otimes-$, but note that the density of corner points in the relevant subset of $U_0$ is not going to help us: again, 
the map from the appropriate subset of $U_0$ to $Q$ is not a short map.

\paragraph{Corner Points for $(M\otimes-)$}

We will start by adapting the definition of corner points for the $(N\otimes-)$ functor.

\begin{definition}
The set $CP^M_k$ of  \emph{corner points} of $M^k\otimes M_0$ is defined as follows:
\[
\begin{array}{lcl}
CP^M_0 & = & \set{(0,0), (0,1), (1,0), (1,1)} \\
CP^M_{k+1} & = & \{m\otimes x \mid m\in M, x\in CP^M_k\}\\
\end{array}
\]

\end{definition}

We can also refer to corner points in $M^k\otimes U_0$ via the inclusion $M^k\otimes S_{U_0}(CP^M_k)$, and this is a bijective correspondence.  Right away we see that the distance between corner points in $CP^M_k$ (as a subset of $M^k\otimes M_0$) is bounded below by the distance between their images in $M^k\otimes U_0$,
because $M^k\otimes S_{U_0}$ is a short map.

In the next lemma and corollary, we will prove that $M^k\otimes S_{U_0}$ restricted to $CP^M_k$ is in fact an isometry. 

\begin{lemma}\label{CPgeodesic}
Let $x$ and $y$ be corner points in $M^k\otimes U_0$. 
Then there exists a witness path from $x$ to $y$ consisting entirely of 
 corner points in $M^k\otimes U_0$.  
\label{lemma-geodesic}
\end{lemma}

\proof
The idea is to take any path  $p$ from $x$ to $y$ and to modify $p$, obtaining a path $p'$ from $x$ to $y$
with a score
 at most that of $p$ and with at least one fewer node which is not a corner point.
 (The score of a path was defined near the beginning of Section~\ref{section-L}.)
So in effect we are arguing by induction on the number of non-corner-points that the score can drop
by replacing such a point by a corner point, and perhaps making further modifications.

Our path may be written as a path in the $8^k$ copies of $U_0$.
That is, a 
witness path (see Definition~\ref{definition-witness-path})
in $M^k\otimes U_0$ is most naturally presented as a path of ``segments'',
each from $M^{k-1}\otimes U_0$.   But this is not the way we want to view it here. 
We want to say that our path is a path of length $\leq 8^k$ in copies of $U_0$ with the taxicab metric.  We know that $8^k$ is an upper bound on the number of segments in our path, since if a copy of $U_0$ is visited twice, then by Corollary~\ref{distanceinonecopyM} we could find a smaller score by removing the cycle.  

The first thing to do is to modify $p$ on behalf of all edges which connect two non-corner points.
In the picture on the left below is a suggestive example.
We are going to work with this rather than the general case.
The edges that connect two non-corner points are  the ones shown, except for the first and last.
\[
\begin{tikzpicture}[scale=1]
\fill[white!10!white,  draw=black] (0,0) rectangle (6,1);
\draw (1,0) -- (1,1);
\draw (2,0) -- (2,1);
\draw (3,0) -- (3,1);
\draw (4,0) -- (4,1);
\draw (5,0) -- (5,1);
\draw [line width=2pt]  (0,0) --  (1,.7);
\draw [line width=2pt]  (1,.7) -- (2,.3);
\draw [line width=2pt]  (2,.3) --  (3, .9);
\draw [line width=2pt]  (3,.9) --  (4,.1);
\draw [line width=2pt]  (4,.1) --  (5,.3);
\draw [line width=2pt]  (5,.3) --  (6,1);
\draw (0,1.4) node {$a$};
\draw (1,1.4) node {$b$};
\draw (2,1.4) node {$c$};
\draw (3,1.4) node {$d$};
\draw (4,1.4) node {$e$};
\draw (5,1.4) node {$f$};
\draw (6,1.4) node {$g$};
\end{tikzpicture}
\qquad
\begin{tikzpicture}[scale=1]
\fill[white!10!white,  draw=black] (0,0) rectangle (6,1);
\draw (1,0) -- (1,1);
\draw (2,0) -- (2,1);
\draw (3,0) -- (3,1);
\draw (4,0) -- (4,1);
\draw (5,0) -- (5,1);
\draw  (6,0) --  (6,1);
\draw [line width=2pt]  (0,0) --  (6,0);
\draw [line width=2pt]  (6,0) --  (6,1);
\draw [line width=.5pt]  (0,0) --  (1,.7);
\draw [line width=.5pt]  (1,.7) -- (2,.3);
\draw [line width=.5pt]  (2,.3) --  (3, .9);
\draw [line width=.5pt]  (3,.9) --  (4,.1);
\draw [line width=.5pt]  (4,.1) --  (5,.3);
\draw [line width=.5pt]  (5,.3) --  (6,1);
\end{tikzpicture}
\]
Every edge which connects two non-corner points is part of a maximal sub-path $q$ of such edges.
This is because the first and last points on $p$ are corner points, and $p$ itself is finite.
Then we replace the sub-path $q$ as on the right above.
It is important to note that making this replacement still gives us a path in $M^k\otimes U_0$. 
(That is, we do not step out of $M^k\otimes U_0$ into $N^k\otimes U_0$ by making it.  This is because we remain within the copies of $U_0$ used in the original path, so none of our new segments will fall in one of the ``holes'' determined by $M$.) And different maximal sub-paths may be replaced simultaneously.
We check that the bold path on the left represents a longer subpath than the one on the right.
Let the coordinates of $a$ be $(x_a, y_a)$, and similarly for $b$, $c$, $\ldots$, $g$.
Then the length of the path on the left is 
\[
\begin{array}{clr}
& |x_b - x_a| + |y_b - y_a| + \cdots + |x_g - x_f| + |y_g - y_f| \\
\geq & 6 + |y_b - y_a| + |y_d - y_c| +  |y_f - y_e| + |y_g - y_f|\\
\geq & 6 + 1 = 7 
\end{array}
\]

The idea is that each $|x_b-x_a|$ is at least $1$ since they are on opposite sides of a copy of $U_0$, so these will cumulatively contribute at least $6$ to the score.  Similarly, in order to transit from $y_a$ to $y_f$, we must contribute at least $1$ to the score, since they are on opposite sides (of a row of adjacent copies) of $U_0$.

The length of the bold path on the right is $7$.
The same argument would work for a sub-path which was like this but rotated $90^{\circ}$.
There is a second kind of replacement which is similar to what we just saw but where the sub-path's two endpoints have the same $y$-coordinate.   This second kind is easier to handle,
since a sequence of horizontal segments works.

After these two kinds of replacements our path $p$ might contain non-corner points, but edges which contain non-corner points also
contain a corner point.   These edges come in pairs of three possible forms:
  \[
  \begin{tikzpicture}[scale=1]
\fill[white!10!white,  draw=black] (0,0) rectangle (1,1);
\draw [line width=2pt]  (0,1) --  (1,.6);
\draw [line width=2pt]  (1,.6) --  (0,0);
\end{tikzpicture}
\qquad
\begin{tikzpicture}[scale=1]
\fill[white!10!white,  draw=black] (0,0) rectangle (2,1);
\draw (1,0) -- (1,1);
\draw [line width=2pt]  (0,1) --  (1,.3);
\draw [line width=2pt]  (1,.3) --  (2,0);
\end{tikzpicture}
\qquad
\begin{tikzpicture}[scale=1]
\fill[white!10!white,  draw=black] (0,0) rectangle (2,1);
\draw (1,0) -- (1,1);
\draw [line width=2pt]  (0,1) --  (1,.3);
\draw [line width=2pt]  (1,.3) --  (2,1);
\end{tikzpicture}
\qquad
\]
Then each of these sub-paths may be replaced by one using only corner points, with the overall score
not increasing, as shown below:
  \[
  \begin{tikzpicture}[scale=1]
\fill[white!10!white,  draw=black] (0,0) rectangle (1,1);
\draw [line width=2pt] (0,1) -- (0,0);
\draw [line width=.5pt]  (0,1) --  (1,.6);
\draw [line width=.5pt]  (1,.6) --  (0,0);
\end{tikzpicture}
\qquad
\begin{tikzpicture}[scale=1]
\fill[white!10!white,  draw=black] (0,0) rectangle (2,1);
\draw (1,0) -- (1,1);
\draw [line width=2pt] (0,1) -- (1,1);
\draw [line width=2pt]  (1,1) -- (1,0);
\draw [line width=2pt]  (1,0) -- (2,0);
\draw [line width=.5pt]  (0,1) --  (1,.3);
\draw [line width=.5pt]  (1,.3) --  (2,0);
\end{tikzpicture}
\qquad
\begin{tikzpicture}[scale=1]
\fill[white!10!white,  draw=black] (0,0) rectangle (2,1);
\draw (1,0) -- (1,1);
\draw [line width=2pt] (0,1) -- (1,1);
\draw [line width=2pt]  (1,1) -- (2,1);
\draw [line width=.5pt]  (0,1) --  (1,.3);
\draw [line width=.5pt]  (1,.3) --  (2,1);
\end{tikzpicture}
\qquad
\]
In each case, it is clear that the new sub-path has a length at most that of the old;
this is most interesting in the middle case, where we use the fact that the metric in $U_0$
is the taxicab metric.   

In this way, we have taken a path $p$ in  $M^k\otimes U_0$ between corner points
and modified it to a path between the same points in  $M^k\otimes M_0$ without increasing the length. 
\endproof

Throughout the remainder of this section, we will adopt the following notation: for $\overline{m}\in M^k$ and $x\in X$, $\overline{m}\otimes x$ is $m_1\otimes\ldots\otimes m_k\otimes x\in M^k\otimes X$, where $\overline{m} = (m_1,\ldots,m_k)$.  

\begin{corollary}\label{cornersM0toU0}Let
$r,s,t,u\in \{0,1\}$.
Then for $\overline{m},\overline{n}\in M^k$, 
\[ d_{M^k\otimes U_0}(\overline{m}\otimes S_{U_0}((r,s)), \overline{n}\otimes S_{U_0}((t,u))) = d_{M^k\otimes M_0}(\overline{m}\otimes S_{M_0}((r,s)),\overline{n}\otimes S_{M_0}((t,u))).\]  
That is, the distance between corners in $M^k\otimes U_0$ coincides with the distance in $M^k\otimes M_0$.  
\end{corollary}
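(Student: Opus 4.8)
The plan is to prove the two inequalities separately. One direction is immediate and was already noted just before the corollary: since $S_{U_0}\colon M_0\to U_0$ is a short map (the path metric on $M_0$ dominates the taxicab metric on every boundary, in particular on $U_0$), applying the functor $M^n\otimes-$ gives a short morphism $M^n\otimes S_{U_0}\colon M^n\otimes M_0\to M^n\otimes U_0$ of $\SquaMS$, whence
\[ d_{M^n\otimes U_0}(\overline{i}\otimes S_{U_0}((r,s)),\overline{j}\otimes S_{U_0}((t,u))) \leq d_{M^n\otimes M_0}(\overline{i}\otimes S_{M_0}((r,s)),\overline{j}\otimes S_{M_0}((t,u))). \]

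For the reverse inequality I would invoke Lemma~\ref{lemma-geodesic}: there is a geodesic in $M^n\otimes U_0$ from $\overline{i}\otimes S_{U_0}((r,s))$ to $\overline{j}\otimes S_{U_0}((t,u))$ which, once unfolded to the level of the $8^n$ scaled copies of $U_0$ as in the proof of that lemma, is a $\sim$-alternating path all of whose entries are corner points of $M^n\otimes U_0$. For such a path each consecutive pair is either identified under $\sim$ (contributing $0$ to the score) or lies in a common scaled copy of $U_0$, in which case both entries are corners of that $\frac{1}{3^n}$-scaled unit square, so the edge contributes $\frac{1}{3^n}$ times the taxicab distance between two corners of $[0,1]^2$, i.e.\ $\frac{1}{3^n}\cdot\{0,1,2\}$.

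Now transport this path to $M^n\otimes M_0$ through the bijection between $M^n\otimes S_{U_0}[CP^M_n]$ and $CP^M_n$, replacing each corner point $\overline{m}\otimes S_{U_0}((a,b))$ (with $a,b\in\{0,1\}$) by $\overline{m}\otimes S_{M_0}((a,b))$. Because $M^n\otimes S_{U_0}$ respects the gluing data defining the two quotients, a $\sim$-related pair goes to a $\sim$-related pair and a same-copy pair goes to a same-copy pair in $M^n\otimes M_0$; in the latter case the edge now contributes $\frac{1}{3^n}$ times the path-metric distance between the corresponding corners of $M_0$, which is the same value $\frac{1}{3^n}\cdot\{0,1,2\}$, since the taxicab metric on the corners of $[0,1]^2$ and the path metric on the corners of $M_0$ agree (adjacent corners at distance $1$, opposite corners at distance $2$). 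Hence the resulting path in $M^n\otimes M_0$ joins $\overline{i}\otimes S_{M_0}((r,s))$ to $\overline{j}\otimes S_{M_0}((t,u))$ with exactly the same score as the original geodesic, giving
\[ d_{M^n\otimes M_0}(\overline{i}\otimes S_{M_0}((r,s)),\overline{j}\otimes S_{M_0}((t,u))) \leq d_{M^n\otimes U_0}(\overline{i}\otimes S_{U_0}((r,s)),\overline{j}\otimes S_{U_0}((t,u))), \]
and combining with the first inequality yields equality.

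The main obstacle is the bookkeeping in the middle paragraph: making it precise that the geodesic produced by Lemma~\ref{lemma-geodesic}, after full unfolding into the $8^n$ copies of $U_0$, is a genuine $\sim$-alternating path whose non-$\sim$ edges each join two corners of a single copy, and that the corner-point bijection carries such a path edgewise to a legitimate path in $M^n\otimes M_0$ of equal score. Once that correspondence is set up, the numerical coincidence — taxicab distance on the four corners of $[0,1]^2$ equals path-metric distance on the four corners of $M_0$ — is immediate and the conclusion falls out; I do not expect to need Theorem~\ref{quotientmetric} beyond what Lemma~\ref{lemma-geodesic} already packages.
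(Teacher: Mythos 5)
Your proposal is correct and follows essentially the same route as the paper: one inequality from shortness of the canonical morphism $M^n\otimes S_{U_0}$ (the paper's $M^n\otimes\iota$), and the other by taking the all-corner geodesic from Lemma~\ref{lemma-geodesic} and transporting it to $M^n\otimes M_0$ with unchanged score, using that adjacent/opposite corners are at distance $1$/$2$ in both the taxicab metric on $[0,1]^2$ and the path metric on $M_0$. The bookkeeping you flag as the main obstacle is exactly the step the paper itself leaves implicit.
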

\proof
By the previous lemma, there is a witness path in $M^k\otimes U_0$ such that every entry is a corner. So for each pair contributing positively to the score,
 if they are adjacent corners, they contributes $(\frac{1}{3})^k$, and if they are opposite corners, they contribute $(\frac{2}{3})^k$ to the score.

So consider the corresponding path in $M^k\otimes M_0$.  This score will be the same.  Thus, the distance in $M^k\otimes U_0$ is bounded above the distance of the corresponding points in $M^k\otimes M_0$.  However, we know that the distance in $M^k\otimes M_0$ is bounded above by its image in $M^k\otimes U_0$ under $M^k\otimes S_{U_0}$, since this is a short map.
Thus, these distances are equal.  
\endproof

\paragraph{The map \text{\boldmath{$h$}}}

Let $(B,\beta\colon B\to M\otimes B)$ be a coalgebra.
Our final task is to find a morphism $h:B\rightarrow Q$ in $\SquaMS$:
once we know that the set of morphisms from $B$ to $Q$ is non-empty, we can use a fixed-point argument like the one we saw in Theorem~\ref{theorem-final-N} to show that $(Q,\gamma:Q\rightarrow M\otimes Q)$ is the final $M\otimes-$coalgebra in $\SquaMS$.  

We will start by defining functions $h_k:B\rightarrow M^k\otimes M_0$ which are \emph{not short maps}, but are approximately short in some technical sense described below.  We will need our work on corner points and the short map $M^k\otimes (\iota_B\circ \beta)^\dag:M^k\otimes B\rightarrow M^k\otimes U_0$ to show that the $h_k$ maps satisfy our approximate shortness property.  Then for a fixed $x\in B$, this gives a sequence $[h_k(x)]_k$ in $G$, which we will show is a Cauchy sequence, and thus, has a limit in $Q$.  This limit is what $h$ will map $x$ to.  Furthermore, we will show that $h$ preserves $S_B$, and thus, is a $\SquaMS$ morphism.

For $x\in B$, define infinite sequences $m_1(x),m_2(x),\ldots\in M$ and $b_0(x),b_1(x),\ldots\in B$ as follows:  let $b_0(x) =x$, and for $k\geq 1$, given $b_0(x),\ldots,b_{k-1}(x)$ and $m_1(x),\ldots, m_{k-1}(x)$, \emph{choose} $m_k(x)\in M$ and $b_k(x)\in B$ such that 
\begin{equation}
    \label{eq-bk}
\beta(b_{k-1}(x)) = m_{k}(x) \otimes b_k(x).
\end{equation}
Note that there may be more than one choice for $m_k(x)$ and $b_k(x)$.
The point is that we are fixing a particular selection.  

Here is how our notation works:
 \[  
   \begin{tikzcd}
B \arrow{r}{\beta} & M\otimes B \arrow{r}{M\otimes\beta} 
& M^2\otimes B \arrow{r}{M^{k-1}\otimes\beta}\cdots & M^k\otimes B \arrow{r}{M^k\otimes \beta}& \cdots\\
x & m_1(x) \otimes b_1(x) &  m_1(x) \otimes m_2(x) \otimes b_2(x) 
 & \mbar(x) \otimes b_k(x)
 \end{tikzcd}
 \]

For a given $x\in B$, we have indicated notation for the images of $x$ under the maps shown.
When the context is clear, we abbreviate $m_1(x)\otimes\ldots\otimes m_k(x)$ by $\mbar(x)$.  (However, we should be careful to note that $\mbar$ is not 
the name of any function.)

Let $h_k:B\rightarrow M^k\otimes M_0$ be given by 
\[ h_k(x) = m_1(x)\otimes\ldots\otimes m_k(x)\otimes (0,0).\]
Note that $h_k$ is not a short map.  The idea is that as $k$ increases, the distances between elements of $M^k\otimes B$ (and indeed, $M^k\otimes M_0$) depend less and less on the element of $B$ (or $M_0$) and more on $m_1\otimes\ldots\otimes m_k$, so we will use these $h_k$'s to approximate $h$,
the main map in this section.
Even though each $h_k$ is not short, 
we do have an approximate notion of shortness which it satisfies.

\begin{definition} A map $f:X\rightarrow Y$ is \emph{$\epsilon$-short} if for $x,y\in X$, $$d_Y(f(x),f(y))\leq d_X(x,y) + \epsilon.$$\end{definition}

\begin{lemma}\label{hkepsilonshort} $h_k:B\rightarrow M^k\otimes M_0$ 
is $\frac{4}{3^k}$-short. \end{lemma}

\begin{proof}
    Let $x,y\in B$ be given, and for ease of notation, let $\overline{m}(x) = m_1(x)\otimes\ldots\otimes m_k(x)$, $\overline{m}(y) = m_1(y)\otimes\ldots\otimes m_k(y)$, $x' = b_k(x)$, and $y'=b_k(y)$.  
We have:
\[
    \begin{array}{clr}
 &  d_{M^k\otimes M_0}(h_k(x),h_k(y)) 
   \\ = & 
     d_{M^k\otimes M_0}(\overline{m}(x)\otimes (0,0),\overline{m}(y)\otimes(0,0)) & (1) \\
= & d_{M^k\otimes U_0}(\overline{m}(x)\otimes (0,0),\overline{m}(y)\otimes (0,0)) & (2)\\    
  \leq & d_{M^k\otimes U_0}(\overline{m}(x)\otimes (0,0),\overline{m}(x)\otimes (\iota_B\circ\beta)^\dag(x')) & (3) \\
 & +d_{M^k\otimes U_0}(\overline{m}(x)\otimes (\iota_B\circ\beta)^\dag(x'),\overline{m}(y)\otimes (\iota_B\circ\beta)^\dag(y')) & \\
  & +d_{M^k\otimes U_0}(\overline{m}(y)\otimes (\iota_B\circ\beta)^\dag(y'), \overline{m}(y)\otimes (0,0)) & \\
  \leq & d_{M^k\otimes U_0}(\overline{m}(x)\otimes (\iota_B\circ\beta)^\dag(x'),\overline{m}(y)\otimes (\iota_B\circ\beta)^\dag(y')) +\frac{4}{3^k} & (4)\\
   \leq& d_{M^k\otimes B}(\overline{m}(x)\otimes x', \overline{m}(y)\otimes y') +\frac{4}{3^k} & (5) \\
     \leq & d_B(x,y) +\frac{4}{3^k}. & (6)\\
 \end{array}
 \]
Equality (1) is by the definition of the maps $h_k$ and the values $\mbar(x)$ and $\mbar(y)$.
(2) is by Corollary~\ref{cornersM0toU0}. 
 (3) is by the triangle inequality.
 (4) is by Corollary~\ref{distanceinonecopyM}.  That is, for a fixed $m^*\in M^k$, 
$d_{M^k\otimes U_0}(m^*\otimes u, m^*\otimes v) <\frac{2}{3^k}$ for all $u, v\in U_0$.
 In particular,
 \[ d_{M^k\otimes U_0}(\overline{m}(x)\otimes (0,0),
 \overline{m}(x)\otimes (\iota_B\circ\beta)^\dag(x'))\leq \frac{2}{3^k},\] and similarly for $y$.  
(5) follows from the fact that  $(\iota_B\circ\beta)^\dag: B\rightarrow U_0$ is a short map, which 
 implies that $M^k\otimes (\iota_B\circ\beta)^\dag:M^k\otimes B\rightarrow U_0$ is also a short map.
Finally, 
 (6) is because $(M^{k-1}\otimes!)\circ\ldots\circ!$ is a short map,
 and because (as indicated in our diagram below (\ref{eq-bk})),
 $(M^{k-1}\otimes !)\circ\ldots\circ !(x) = \overline{m}(x)\otimes x'$ 
 (and similarly for $y$).
 \end{proof}

\begin{lemma} Let $x\in B$ be given.  $[h_k(x)]_k$ is a Cauchy sequence in $G$, the initial $(M\otimes -)$-algebra.  
\end{lemma}

\begin{proof}
Let $\epsilon>0$ be given and choose $K$ sufficiently large so that $\frac{2}{3^K}<\epsilon$.  Let $k,j>K$ be given, and suppose $k>j$. 
We  use $\mbar\otimes (0,0)$ as an abbreviation for  $ m_1(x)\otimes\ldots\otimes m_k(x)\otimes (0,0)$,
and $\overline{\mbar}\otimes (0,0)$ as an abbreviation for  $ m_1(x)\otimes\ldots\otimes m_j(x)\otimes (0,0)$.

With this notation,
\[
\begin{array}{lcl} h_k(x)  & = &  \mbar\otimes (0,0)\in M^k\otimes M_0,\\
h_j(x) & = &  \overline{\mbar} \otimes (0,0)\in M^j\otimes M_0.
\end{array}
\]
Since $\beta(0,0)=(0,0)\otimes (0,0)$, we see that  
\[ (M^{k-1}\otimes \beta)\circ\ldots\circ \beta(h_j(b)) = \overline{\mbar}\otimes \overbrace{(0,0)\otimes\ldots\otimes (0,0)}^{k-j+1},\]
This belongs to the equivalence class $[h_j(b)]$ in $G$.  
So since $d_G$ is the infimum of distances between representatives coming from the sets $M^k\otimes M_0$,

\[\begin{array}{cl}
& d_G([h_k(x)],[h_j(x)])\\
\leq &d_{M^k\otimes M_0}(\mbar\otimes (0,0),\overline{\mbar}\otimes (0,0)\otimes\ldots\otimes (0,0))\\
\leq &\frac{2}{3^j} \\
\leq & \frac{2}{3^K}.\\
\end{array}
\] 
We are using  Corollary~\ref{distanceinonecopyM}. 
\end{proof}

Since $Q$ is the completion of $G$, we can define
  $h:B\rightarrow Q$ by letting $h(x)$ be the limit of the Cauchy sequence $[i(h_k(x))]_k$.

\begin{proposition} $h:B\rightarrow Q$ is a short map. 
\end{proposition}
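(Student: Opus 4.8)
The plan is to unwind the metric on the Cauchy completion $Q=CG$ and reduce the statement to a single estimate at each finite stage $M^k\otimes M_0$ of the initial chain, together with a limit. Recall that $G$ is the colimit of the initial chain of $M\otimes-$ with (injective, short) cocone maps $g_k\colon M^k\otimes M_0\to G$, and that by definition $h(b)=([h_k(b)])_k=(g_k(h_k(b)))_k$, which is a Cauchy sequence in $G$ by Lemma~\ref{hkcauchy}. Hence for $b_1,b_2\in B$,
\[
d_Q(h(b_1),h(b_2))=\lim_{k\to\infty} d_G\bigl(g_k(h_k(b_1)),\,g_k(h_k(b_2))\bigr)\le \lim_{k\to\infty} d_{M^k\otimes M_0}(h_k(b_1),h_k(b_2)),
\]
the last inequality because each $g_k$ is short. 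So it suffices to prove, for every $k$,
\[
d_{M^k\otimes M_0}(h_k(b_1),h_k(b_2))\le d_B(b_1,b_2)+\tfrac{4}{3^k}.
\]

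For this, write $x_k^{(i)}\in M^k\otimes B$ for the image of $b_i$ under the $k$-fold iteration of $\beta$, so that $h_k(b_i)=c_k(x_k^{(i)})$; writing $x_k^{(i)}=\overline{m_i}\otimes b_i'$ with $\overline{m_i}\in M^k$ the lexicographically least address, we have $h_k(b_i)=\overline{m_i}\otimes(0,0)$, a corner point of $M^k\otimes M_0$. Since each $M^j\otimes\beta$ is a morphism of $\SquaMS$, the $k$-fold iterate of $\beta$ is short, so $d_{M^k\otimes B}(x_k^{(1)},x_k^{(2)})\le d_B(b_1,b_2)$. Next I push everything into the common space $M^k\otimes U_0$. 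Using the short map $\beta^\dag\colon B\to U_0$ (Lemma~\ref{lemma-short}, $\beta$ being regarded as an $N\otimes-$coalgebra via $\iota_B$ from Proposition~\ref{prop-eta-M-N}), functoriality gives the short map $M^k\otimes\beta^\dag\colon M^k\otimes B\to M^k\otimes U_0$ sending $x_k^{(i)}$ to $\overline{m_i}\otimes\beta^\dag(b_i')$. By Corollary~\ref{cornersM0toU0}, $d_{M^k\otimes M_0}(h_k(b_1),h_k(b_2))=d_{M^k\otimes U_0}(\overline{m_1}\otimes(0,0),\overline{m_2}\otimes(0,0))$, and by Corollary~\ref{quotientmetriccorollary} iterated, distances inside one copy are scaled by $\tfrac{1}{3^k}$, so $d_{M^k\otimes U_0}(\overline{m_i}\otimes(0,0),\overline{m_i}\otimes\beta^\dag(b_i'))=\tfrac{1}{3^k}\,d_{U_0}((0,0),\beta^\dag(b_i'))\le \tfrac{2}{3^k}$. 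Chaining these through the triangle inequality in $M^k\otimes U_0$ gives
\[
d_{M^k\otimes M_0}(h_k(b_1),h_k(b_2))=d_{M^k\otimes U_0}(\overline{m_1}\otimes(0,0),\overline{m_2}\otimes(0,0))\le \tfrac{2}{3^k}+d_{M^k\otimes B}(x_k^{(1)},x_k^{(2)})+\tfrac{2}{3^k}\le d_B(b_1,b_2)+\tfrac{4}{3^k},
\]
and letting $k\to\infty$ in the first display finishes the estimate. (One also checks that $h$ preserves $S$: each $h_k$ agrees on $S_B[M_0]$ with the canonical boundary inclusion of $M^k\otimes M_0$, so the limit does as well; this is routine.)

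The main obstacle is precisely the source of the fact, pointed out just before the proposition, that the individual $h_k$ are \emph{not} short: $h_k$ records the corner point $\overline{m_i}\otimes(0,0)$ rather than the true point $\overline{m_i}\otimes b_i'$ inside its small copy. The key observation that resolves this is that the discrepancy is confined to a single copy of diameter $\le \tfrac{2}{3^k}$, hence is absorbed into a vanishing error upon passing to the completion; the roles of $\beta^\dag$ and of Corollary~\ref{cornersM0toU0} are to let us carry out the comparison between $M^k\otimes M_0$ and $M^k\otimes B$ inside $M^k\otimes U_0$, where corner-to-corner distances are controlled. Note that we only ever need an \emph{upper} bound on $d_G(g_k(z),g_k(z'))$ by the stage-$k$ distance (immediate from shortness of $g_k$), so we never need the $M$-analogue of Proposition~\ref{prop-CP2}(2) asserting that $g_k$ is isometric on corner points; this keeps the argument short.
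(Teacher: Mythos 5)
Your proof is correct and follows essentially the same route as the paper's: both reduce to the stage-$k$ estimate via shortness of the iterated $\beta$ and of $M^k\otimes\beta^\dag$, absorb the corner-versus-actual-point discrepancy into a $\tfrac{2}{3^k}$ error inside a single copy, and invoke Corollary~\ref{cornersM0toU0} to compare $M^k\otimes U_0$ with $M^k\otimes M_0$. The only difference is bookkeeping: the paper passes through its equation (\ref{hshortmapeq4}), the asserted-but-unproved equality $d_{M^k\otimes M_0}(h_k(x),h_k(y)) = d_G([h_k(x)],[h_k(y)])$, whereas your arrangement (bounding $d_Q$ above by $\lim_k d_{M^k\otimes M_0}(h_k(b_1),h_k(b_2))$) needs only the trivial direction given by shortness of $g_k$, which is a small but genuine tidying of the argument.
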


\begin{proof}
Let $x,y\in B$ be given.  For ease of notation, let $m_i=m_i(x)$, $x_i = b_i(x)$, $n_i=m_i(y)$ and $y_i=b_i(y)$.  That is, for all $k$, 
\[
\begin{array}{lcl}
h_k(x)& = & m_1\otimes\ldots\otimes m_k\otimes (0,0)
\\ 
(M^{k-1}\otimes \beta)\circ\ldots\circ\beta(x)  & = & m_1\otimes\ldots\otimes m_k\otimes x_k\\
\end{array}
\]
We have similar equations for $y$, but using 
the elements $n_i\in M$ instead of $m_i$.

Let $\epsilon>0$ be given.  Our aim is to show that $d_B(x,y)+\epsilon\geq d_Q(h(x),h(y))$.  This, for all $\epsilon>0$
will yield our result.
Choose $k$ sufficiently large so that 
\begin{equation}\label{k1}\frac{4}{3^k}<\frac{\epsilon}{2}\end{equation}
and 
\begin{equation}\label{k2}|d_G(h_k(x),h_k(y))-d_Q(h(x),h(y))|<\frac{\epsilon}{2}.\end{equation}
This is possible, since $h(x)$ and $h(y)$ are limits of the sequences $[h_k(x)]_k$ and $[h_k(y)]_k$ respectively. 
Then
\[
\begin{array}{rcll}
    d_Q(h(x),h(y)) & \leq & d_G(h_k(x),h_k(y)) + \frac{\epsilon}{2}& \mathrm{by\ } (\ref{k2}) \\
    & \leq & d_B(x,y) + \frac{4}{3^k} + \frac{\epsilon}{2} & \mathrm{by\ Lemma\ }\ref{hkepsilonshort} \\ 
    & \leq & d_B(x,y) + \epsilon & \mathrm{by\ } (\ref{k1}) \\ 
\end{array}
\]
as required.
\end{proof}

\begin{lemma}\label{lemma-morphism-to-Q} $h:B\rightarrow Q$ is a morphism in $\SquaMS$. \end{lemma}

\begin{proof}
    Since we know that $h$ is a short map, it only remains to show that it preserves $S_B$ to see that it is a $\SquaMS$ morphism.  

    Let $(r,s)\in M_0$ be given, and first note that
    
    \begin{equation}\label{SQSG} S_Q((r,s)) = i(S_G((r,s))) = i([S_{M^k\otimes M_0}((r,s))])\end{equation}  for all $k$, where $i:G\hookrightarrow Q$ is the inclusion in (\ref{GinQ}), since the morphisms $M^k\otimes !$ preserve $M_0$.
    
    Let $m_i = m_i(S_B((r,s)))\in M$ and $x_i = b_i(S_B((r,s)))\in B$. 
    (Here $b_i$ is from (\ref{eq-bk}), with $i$ for $k$.)
For all $k$, 
\[ ((M^{k-1}\otimes \beta)\circ\ldots\circ\beta)(S_B((r,s))) = m_1\otimes\ldots\otimes m_k\otimes x_k.\]  
In particular, since $(M^{k-1}\otimes \beta)\circ\ldots\circ\beta$ is a $\SquaMS$ morphism, we have that $x_k = S_B((r_k,s_k))$ for some $(r_k,s_k)\in M_0$.  

We also have $$ h_k(S_B((r,s)) = m_1\otimes\ldots\otimes m_k\otimes (0,0).$$

Next we need to show that $S_{M^k\otimes M_0}((r,s)) = m_1\otimes\ldots\otimes m_k\otimes (r_k,s_k)$.   Note that the following diagram commutes: 
    \[  
   \begin{tikzcd}
     M_0 \arrow{r}{!} \arrow{d}{S_B} &   M \otimes M_0 \arrow{d}{M\otimes S_B} \arrow{r}{M\otimes !} & M^2\otimes M_0 \arrow{r}{M^2\otimes !} \arrow{d}{M^2\otimes S_B} & \cdots \arrow{r}{M^{k-1}\otimes !}& M^k\otimes M_0 \arrow{d}{M^k\otimes S_B} \arrow{r}{M^k\otimes !} & \cdots 
\\
B \arrow{r}{\beta} & M\otimes B \arrow{r}{M\otimes\beta} & M^2\otimes B \arrow{r}{M^2\otimes \beta} &\cdots \arrow{r}{M^{k-1}\otimes\beta}& M^k\otimes B \arrow{r}{M^k\otimes \beta}& \cdots
 \end{tikzcd}
 \]
 Let
 $n_0$, $\ldots$, $n_k\in M$ and
 $ (t_k,u_k)\in M_0$ be such that    
    \begin{equation}\label{eq:newlylabeled} n_1\otimes\ldots\otimes n_k\otimes (t_k,u_k) = S_{M^k\otimes M_0}((r,s)),\end{equation}
    and note that this is equal to $(M^{k-1}\otimes !)
    \circ\ldots\circ !((r,s))$.
We would 
get the same result by starting with $(r,s)$ in $M_0$ and going across the top of the diagram and then down to
$M^k\otimes  B$ via $M^k\otimes S_B$, or by going down to $B$ via $S_B$ first and then across the bottom of the diagram.
Thus, we have 
    
   \begin{equation}\label{samemonborder}
    \begin{array}{rcl}
    M^k\otimes S_B(n_1\otimes\ldots\otimes n_k\otimes (t_k,u_k)) & = &n_1\otimes \ldots\otimes n_k \otimes S_B((t_k,u_k))\\
    &= &m_1\otimes \ldots\otimes m_k\otimes S_B((r_k,s_k)).\\
    \end{array}
    \end{equation}

    So these must be equivalent under $E$.  Since $E$ does not depend on $B$, we must also have $$n_1\otimes\ldots\otimes n_k\otimes S_{M_0}((t_k,u_k)) = m_1\otimes \ldots\otimes m_k\otimes S_{M_0}((r_k,s_k)).$$
    
   Thus, $S_{M^k\otimes M_0}((r,s)) = m_1\otimes\ldots\otimes m_k\otimes (r_k,s_k)$.

 Now we will show that for all $\epsilon>0$, 
 \[ d_Q(h(S_B((r,s))),S_Q((r,s))) < \epsilon,\] 
 and this gives our result.
    Let $\epsilon>0$ be given and choose $k$ sufficiently large so that $\frac{2}{3^k}<
    \frac{\epsilon}{2}$ and \begin{equation}\label{closetothelimit} d_Q(h(S_B((r,s))),i([h_k(S_B((r,s)))]))<\frac{\epsilon}{2}.\end{equation}
For this $k$,
    \[
    \begin{array}{cll}
 &   d_Q(h(S_B((r,s))),S_Q((r,s)))\\
   \leq & d_Q(h(S_B((r,s))),i([h_k(S_B((r,s)))]))  
     +d_Q(i([h_k(S_B((r,s)))]),S_Q((r,s))) & (1) \\
      \leq &   d_G([h_k(S_B((r,s)))],S_G((r,s)))+\frac{\epsilon}{2} & (2)\\
     \leq &   d_G([h_k(S_B((r,s)))],[S_{M^k\otimes M_0}((r,s))])+\frac{\epsilon}{2} & (3)\\
     \leq & d_{M^k\otimes M_0}(h_k(S_B((r,s))),S_{M^k\otimes M_0}((r,s))) + \frac{\epsilon}{2} & \\
      = & d_{M^k\otimes M_0}(m_1\otimes\ldots\otimes m_k\otimes (0,0),m_1\otimes\ldots\otimes m_k\otimes (r_k,s_k)) +\frac{\epsilon}{2}& (4)\\
      \leq & \frac{2}{3^k} + \frac{\epsilon}{2} & (5)\\ 
      < & \epsilon & \\     
    \end{array}
    \]
(1) is by the triangle inequality.  (2) and (3) are by (\ref{closetothelimit}),  (\ref{SQSG}), and the fact that $i:G\hookrightarrow Q$ is an isometric embedding.  (4) is by (\ref{eq:newlylabeled}) and (\ref{samemonborder}). (5) is by Corollary~\ref{distanceinonecopyM}.

Thus, $h(S_B((r,s))) = S_Q((r,s))$ for all $(r,s)\in M_0$.  So $h$ is a $\SquaMS$ morphism.  
\end{proof}

\begin{theorem} 
\label{theorem-final-M}
$(Q,\gamma\colon Q\to M\otimes  Q)$ is the final $M\otimes-:\SquaMS\rightarrow \SquaMS$ coalgebra.
\end{theorem}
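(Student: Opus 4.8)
The plan is to run exactly the Banach-fixed-point argument that proved Theorem~\ref{theorem-final-N}, now using the morphism $h\colon B\to Q$ built in Lemma~\ref{lemma-morphism-to-Q} in place of the composite $B\to U_0\to V$. First I would fix a coalgebra $(B,\beta\colon B\to M\otimes B)$ in $\SquaMS$ and form the function space $Q^B$ with the sup-metric $d(f,g)=\sup_{b\in B} d_Q(f(b),g(b))$; since every object of $\SquaMS$ is bounded by $2$ this is a genuine $2$-bounded metric space, and it is complete because $Q=CG$ is complete. Inside $Q^B$ let $K$ be the set of short maps $B\to Q$ that preserve the square-set structure $S$. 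This $K$ is a closed subset of $Q^B$ (a uniform limit of short maps is short, and the equations $f\o S_B = S_Q$ are preserved under pointwise limits), hence itself a complete metric space, and --- this is the step that required all the preceding work --- it is non-empty, since $h\in K$ by Lemma~\ref{lemma-morphism-to-Q}.

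Next I would use that $\gamma\colon Q\to M\otimes Q$ is an isomorphism (it is $C\eta^{-1}$ precomposed with the natural isomorphism $C(M\otimes G)\cong M\otimes CG$ of Proposition~\ref{cauchycompletion}, and $\eta$ is invertible by Lambek's Lemma), so that $\gamma^{-1}\colon M\otimes Q\to Q$ is short, in fact an isometry. Define $\Phi\colon K\to K$ by $\Phi(f) = \gamma^{-1}\o (M\otimes f)\o \beta$. This lands in $K$: each of $\beta$, $M\otimes f$, $\gamma^{-1}$ is a $\SquaMS$-morphism, so $\Phi(f)$ is short and preserves $S_B$. It is $\frac{1}{3}$-contracting: for $b\in B$ write $\beta(b)$ as $m\otimes b'$; then $(M\otimes f)(\beta(b)) = m\otimes f(b')$ and $(M\otimes g)(\beta(b)) = m\otimes g(b')$ lie in the same copy of $Q$ inside $M\otimes Q$, so by Corollary~\ref{quotientmetriccorollary} their distance is $\frac{1}{3}d_Q(f(b'),g(b')) \le \frac{1}{3}d(f,g)$, and $\gamma^{-1}$ does not increase this. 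Unwinding the definitions, $f$ is a fixed point of $\Phi$ iff $\gamma\o f = (M\otimes f)\o \beta$, that is, iff $f$ is a coalgebra morphism $(B,\beta)\to(Q,\gamma)$.

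By the Contracting Mapping Theorem, $\Phi$ has a unique fixed point in $K$; equivalently, there is a unique coalgebra morphism $B\to Q$ in $\SquaMS$. Since $(B,\beta)$ was arbitrary, $(Q,\gamma)$ is the final $M\otimes-$ coalgebra. The only genuinely hard input is the non-emptiness of $K$ --- producing a single short square-set morphism $B\to Q$ --- but that is precisely the content of the chain of lemmas culminating in Lemma~\ref{lemma-morphism-to-Q}, so at this point the theorem is a short assembly of facts already in hand. The one small thing to check carefully is that choosing a representative $(m,b')$ of $\beta(b)$ is harmless in the contraction estimate: either $b'$ lies off the boundary and the representative used by $M\otimes f$ and $M\otimes g$ is forced, or $b'$ lies on the boundary, in which case $f(b')=g(b')=S_Q((r,s))$ and that summand contributes $0$.
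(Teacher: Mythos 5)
Your proposal is correct and follows essentially the same route as the paper: the paper proves Theorem~\ref{theorem-final-M} by citing the Banach fixed-point argument of Theorem~\ref{theorem-final-N} verbatim, with Lemma~\ref{lemma-morphism-to-Q} supplying the crucial non-emptiness of the space of short structure-preserving maps $B\to Q$. You have merely written out the details (completeness and closedness of $K$, the $\frac{1}{3}$-contraction via Corollary~\ref{quotientmetriccorollary}, and the identification of fixed points with coalgebra morphisms) that the paper leaves implicit.
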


\proof
The proof is the same as that of Theorem~\ref{theorem-final-N},
except that Lemma~\ref{lemma-morphism-to-Q} is used to show that every coalgebra 
has a morphism into $Q$, instead of Lemmas~\ref{lemma-short}
and~\ref{lemma-HCP}.
\endproof

By the same proof as in Corollary~\ref{corollary-SC}, we get the following.

\begin{corollary}\label{corollary-Q}
$(Q,\gamma:Q\rightarrow M\otimes Q)$ is the final $M\otimes-:\SquaSet\rightarrow\SquaSet$ coalgebra. 
\end{corollary}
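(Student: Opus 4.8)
The plan is to run the proof of Corollary~\ref{corollary-SC} almost word for word, with the algebra $(Q,\gamma^{-1})$ playing the role that $(\SC,\tau)$ played there. First I would record the inputs: by Theorem~\ref{theorem-final-M}, $(Q,\gamma)$ is the final $M\otimes-$ coalgebra on $\SquaMS$, so by Lambek's Lemma $\gamma\colon Q\to M\otimes Q$ is an isomorphism in $\SquaMS$; in particular $\gamma^{-1}\colon M\otimes Q\to Q$ is a $\SquaMS$-morphism, and by Proposition~\ref{prop-invertible} the algebra $(Q,\gamma^{-1})$ is corecursive for $M\otimes-$ on $\SquaMS$.

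Next I would upgrade this to: $(Q,\gamma^{-1})$ is corecursive for $M\otimes-$ on $\SquaSet$. Given a $\SquaSet$-coalgebra $(B,\beta\colon B\to M\otimes B)$, equip $B$ with the metric used in Corollary~\ref{corollary-SC}: set $d_B(S_B((r,s)),S_B((t,u))) = d_{U_0}((r,s),(t,u))$ for $(r,s),(t,u)\in M_0$, and $d_B(x,y)=2$ whenever $x$ or $y$ lies outside $S_B[M_0]$. One checks routinely that this makes $B$ an object of $\SquaMS$ — $(\sqone)$ and $(\sqtwo)$ hold because the boundary carries exactly the taxicab-bounded metric — and that with this metric $\beta$ is automatically short, since all distances not along the boundary are already the maximum value $2$. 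Applying the $\SquaMS$-corecursiveness of $(Q,\gamma^{-1})$ then yields a unique coalgebra-to-algebra morphism $\beta^\dag\colon B\to Q$ in $\SquaMS$, which is in particular a $\SquaSet$-solution. For uniqueness in $\SquaSet$, observe that any $\SquaSet$-morphism $B\to Q$ is automatically short for this metric, hence is already a $\SquaMS$-morphism, and so must coincide with $\beta^\dag$; here I am using that the forgetful functor $\SquaMS\to\SquaSet$ is faithful.

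Finally, since $\gamma$ — and therefore $\gamma^{-1}$ — is a bijection, it is invertible in $\SquaSet$, so Proposition~\ref{prop-invertible}, now applied inside $\SquaSet$, gives that $(Q,(\gamma^{-1})^{-1})=(Q,\gamma)$ is a final coalgebra for $M\otimes-\colon\SquaSet\to\SquaSet$, as desired. I do not anticipate a genuine obstacle: the one place needing attention is the verification that the ad hoc metric on $B$ produces a bona fide square metric space with $\beta$ short and that every $\SquaSet$-morphism out of it is short — but this is exactly the bookkeeping already carried out in Corollary~\ref{corollary-SC}, so it transfers with no new ideas.
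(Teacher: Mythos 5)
Your overall strategy is exactly the paper's: the printed proof of Corollary~\ref{corollary-Q} is literally ``by the same proof as in Corollary~\ref{corollary-SC},'' and your substitution of Theorem~\ref{theorem-final-M}, Lambek's Lemma, and Proposition~\ref{prop-invertible} for Theorem~\ref{theorem-SC} is precisely the right way to manufacture the corecursive $\SquaMS$-algebra $(Q,\gamma^{-1})$ that plays the role of $(\SC,\tau)$.

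There is, however, one step that does not transfer verbatim from $\SC$ to $Q$, and it is the one you dismiss as bookkeeping: the claim that, with the ad hoc metric (taxicab distances on $S_B[M_0]$, distance $2$ elsewhere), every $\SquaSet$-morphism $B\to Q$ is automatically short. For $\SC$ this is true because the boundary of $\SC$ carries exactly the taxicab metric inherited from $U_0$. For $Q$ it is false: boundary points of $Q$ separated by the central hole are strictly farther apart than their taxicab distance. Concretely, $d_B(S_B((0,\tfrac12)),S_B((1,\tfrac12)))=1$ under your metric, while iterating Proposition~\ref{distanceonedgesforM} along the initial chain shows $d_Q(S_Q((0,\tfrac12)),S_Q((1,\tfrac12)))\geq \tfrac43$, since every path in $M^k\otimes U_0$ must detour around the middle square. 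So a $\SquaSet$-morphism $B\to Q$ need not be short, and your uniqueness argument breaks; the same computation shows that $\beta$ itself is not short on this $B$ (already an issue in the $\SC$ case), so the existence half is affected as well. The repair is to put the \emph{path} metric of the initial object $(M_0,\id)$ on $S_B[M_0]$ instead of the taxicab metric: $B$ is still an object of $\SquaMS$, $\beta$ is short because $d_{M\otimes B}(S_{M\otimes B}(z),S_{M\otimes B}(z'))\leq d_{M_0}(z,z')$ by initiality of $(M_0,\id)$, and every $\SquaSet$-morphism out of $B$ into any square metric space is short for the same reason. With that one change your argument goes through.
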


\section{Bilipschitz equivalence}
\label{section-bilipschitz}

Our concluding task 
in this paper
is to show that even though the Sierpinski carpet $\mathbbm{S}$ is not isomorphic to
$(Q,\gamma)$, the
final $(M\otimes-)$-coalgebra, the two are  bilipschitz equivalent.
We begin by recalling the definitions.
A function  $f: A\to B$  between metric spaces is  \emph{bilipschitz continuous}
if there is a number
$K\geq 1$ so that 
\[
\oneoverK d_{A}(x,y)\leq
 d_B(f(x),f(y)) \leq K d_A(x,y) \]
for all $x,y\in A$.  
In addition $A$ and $B$ are 
 are \emph{bilipschitz equivalent} if there is a bilipschitz continuous 
 bijection $f: A\to B$.

We remind the reader that the metric on $\mathbbm{S}$ is the metric induced from the 
taxicab metric on $U_0$ (see just above Definition \ref{def-sigmas}).  Recall that, by Proposition \ref{canconsidertaxi}, $\SC$ with the taxicab metric is bilipschitz equivalent to $\SC$ with the Euclidean metric, so we obtain the result by considering $\SC$ with the taxicab metric.

As we have seen in Theorem~\ref{theorem-SC}, $(\SC, \tau:M\otimes \SC\rightarrow \SC)$ is a corecursive algebra for $M\otimes -$ in $\SquaMS$.
By Corollary~\ref{corollary-SC}, $(\SC,\tau^{-1}:\SC\rightarrow M\otimes \SC)$ is 
a final coalgebra in $\SquaSet$, and in particular, it is a coalgebra. 
In addition, since $(Q,\gamma:Q\rightarrow M\otimes Q)$ is a coalgebra, we have a unique 
coalgebra-to-algebra morphism
$\gamma^\dag:Q\rightarrow \SC$.  And since $(Q,\gamma)$ is a final 
$M\otimes -$ coalgebra
(see Corollary~\ref{corollary-Q}),
there is a unique $\SquaSet$ morphism $(\tau^{-1})^\dag: \mathbbm{S}\rightarrow Q$.  
By finality,
\[
\begin{array}{lcl}
(\tau^{-1})^\dag\circ \gamma^\dag & = & \id_Q, \\
\gamma^\dag\circ (\tau^{-1})^\dag & = & \id_{\mathbbm{S}}.\\
\end{array}
\]
 Hence, $\gamma^{\dag}$ is a bijection.  
However, the inverse of $\gamma^{\dag}$ is not a short map, so 
$\gamma^{\dag}$
is not a $\SquaMS$ isomorphism.
We are going to prove that $\gamma^{\dag}$ is a bilipschitz bijection.

Since $\gamma^\dag$ is a short map, we need only 
find $K\geq 1$ such that $\frac{1}{K} d_Q(x,y)\leq d_{\SC}(\gamma^\dag(x),\gamma^\dag(y))$.  
We shall show that $K =2$ works.
To accomplish this, we will first consider maps from $M^k\otimes M_0$ to $U_0$.  
The inclusion $\SC\hookrightarrow U_0$
is an isometric embedding, by our definition
of the metric on $\SC$.   We prefer
to use $U_0$ in most of this section
because it is easier to visualize $M\otimes U_0$
than $M\otimes \SC$.

Recall from (\ref{GinQ}) that 
we also have an isometric embedding
$i:G\hookrightarrow Q$.
So for each $k<\omega$, we have a morphism $\mu_k=\gamma^\dag\circ i\circ g_k:M^k\otimes M_0\rightarrow \SC$, as in the diagram below:
 \[
      \begin{tikzcd}[column sep=.35in]
& &  & M^{k+1}\otimes M_0 
  \arrow{dr}{M\otimes g_k}
 \arrow[bend right =30,swap]{dddlll}{\mu_{k+1}  =  \gamma^\dag\circ i\circ g_{k+1}}
     \arrow{dl}{g_{k+1} }
          \arrow[bend left =30]{dddrrr}{M\otimes \mu_k  = M\otimes(\gamma^\dag\circ i\circ g_k)}
          &
   \\
 && G    \arrow{dl}[swap]{i} \arrow{rr}{\eta^{-1}} 
 & & M\otimes G \arrow{dr}{M\otimes i} \ar{dl}{i_{M\otimes G}}   \\
&  Q    \arrow{dl}{\gamma^\dag}  \arrow[bend right =15]{rrrr}{\gamma}
  \arrow{rr}{C \eta^{-1}} 
&& C(M\otimes G)  \ar{rr}{\rho_G} & & M\otimes Q \arrow{dr}[swap]{M\otimes \gamma^\dag}
  \\
 \SC      
& &  & &  & &  \arrow{llllll}{\tau}   M\otimes \SC   \\
    \end{tikzcd}    
  \] 
The top triangle commutes by the definition of the maps $\eta$ and $g_k$ (see (\ref{GW}) and (\ref{ikmaps})).  
The square below it commutes since $i$ is the component of the natural transformation 
$Id\to C$ which we saw in Lemma~\ref{lemma-Cauchy}.
We set aside for a moment the commutativity of the triangle next to this square.
 The map $\gamma$ was defined in (\ref{eq-gamma})
 to be $\rho_G\o C\eta^{-1}$. 
  The bottom commutes
by definition of $\gamma^{\dag}$.

It remains to consider the triangle in the middle of the figure.
Consider $m\otimes x \in M\otimes G$.  Using our definitions, we have the desired equation
\[
\rho_G(i_{M\otimes G}(m\otimes x)) =\rho_G(m\otimes x, m\otimes x, \ldots) = 
m \otimes (x, x, \ldots) =  (M\otimes i)(m\otimes x).
\]
Thus the triangle commutes.
  The overall figure shows that for every $k$, 
 \begin{equation}
\label{goaround}
\mu_{k+1} = \tau \o (M\otimes \mu_k).
\end{equation}

We will examine the relationship between distances in $M^k\otimes M_0$ and 
between corresponding points 
in $\SC\subset U_0$, and then use this to obtain the result. We start with the following fact about points
in $ M^k\otimes M_0$
whose images under $\mu_k$ are on a horizontal or vertical segment. 

Throughout we will be using the fact that $d_{\SC}$ is the taxicab metric on $\SC$ as a subset of $U_0$ (see (\ref{eq:taxicab})).

\begin{lemma}\label{segmentlemma}
Let $k\geq 0$ and $x,y\in M^k\otimes M_0$ be such that $\mu_k(x)$ and $\mu_k(y)$
share either an $x$-coordinate or a $y$-coordinate.
Then 
\[ d_{M^k\otimes M_0}(x,y) \leq 2d_{\SC}(\mu_k(x),\mu_k(y)).\]  
\end{lemma}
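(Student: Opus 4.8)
The plan is to prove this by induction on $k$. For $k=0$ the points $x,y$ lie in $M_0$ and their images under the inclusion $S_{U_0}$ share a coordinate, so $x$ and $y$ lie on a common horizontal or vertical line of $U_0$; if they lie on the same side of $M_0$ then the path metric $d_{M_0}(x,y)$ already equals the coordinate difference, and if they lie on opposite sides — the only other way to share a coordinate — then $d_{M_0}(x,y)=2$ while the coordinate difference is $1$, so the bound holds with equality. That settles the base case.

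For the inductive step I would write $x=m\otimes x'$ and $y=n\otimes y'$ with $m,n\in M$ and $x',y'\in M^k\otimes M_0$, and use that $\mu_{k+1}(m\otimes z)=\tau(m\otimes\mu_k(z))=\shrink(m)+\tfrac13\mu_k(z)$. When $m=n$ the map $z\mapsto\shrink(m)+\tfrac13 z$ is a single affine isometry-up-to-scale, so $\mu_{k+1}(x),\mu_{k+1}(y)$ share a coordinate iff $\mu_k(x'),\mu_k(y')$ do, $d_{\SC}(\mu_{k+1}(x),\mu_{k+1}(y))=\tfrac13 d_{\SC}(\mu_k(x'),\mu_k(y'))$, and $d_{M^{k+1}\otimes M_0}(x,y)=\tfrac13 d_{M^k\otimes M_0}(x',y')$ by Corollary~\ref{quotientmetriccorollary}; the inductive hypothesis for $x',y'$ then gives the claim. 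The content is the case $m\neq n$, where (after a rotation/reflection) I may assume the images share an $x$-coordinate. Comparing first coordinates in $\shrink(m)+\tfrac13\mu_k(x')=\shrink(n)+\tfrac13\mu_k(y')$ forces one of two situations: (a) $m,n$ lie in the same column of the $3\times 3$ grid, with $\mu_k(x'),\mu_k(y')$ sharing an $x$-coordinate $p\in[0,1]$; or (b) $m,n$ lie in horizontally adjacent columns and the shared coordinate is the common grid line $x=i/3$, with $\mu_k(x')$ on the right edge and $\mu_k(y')$ on the left edge of their $M^k$-squares.

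In case (b), injectivity of $\mu_k$ (which holds since $\gamma^{\dag}$, hence $\mu_k$, is a bijection onto its image) pins $x'$ and $y'$ to the boundary points $S_{M^k\otimes M_0}(1,q)$ and $S_{M^k\otimes M_0}(0,q')$, so $x$ and $y$ already lie in adjacent glued copies; a short explicit path traveling up a column of three copies — its inter-copy steps costing $0$ by the identifications of~(\ref{eq-E}) and its within-copy steps being side-of-the-square segments — yields $d_{M^{k+1}\otimes M_0}(x,y)=d_{U_0}(\mu_{k+1}(x),\mu_{k+1}(y))$, even better than required. In case (a), the rows of $m$ and $n$ differ by $1$ (they share an edge, one crossing suffices) or by $2$; in the latter the column either contains its middle copy (columns $0$ and $2$, go straight up through it) or does not (column $1$). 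In each of these configurations the scheme is the same: from $x$ move within $\{m\}\times(M^k\otimes M_0)$ to a point on an edge of that copy whose $\mu_k$-image shares a coordinate with $\mu_k(x')$, so that leg is bounded by the inductive hypothesis (times the $\tfrac13$ of Corollary~\ref{quotientmetriccorollary}); then pass through the at most three intervening copies along the edges identified by~(\ref{eq-E}); then finish within $\{n\}\times(M^k\otimes M_0)$. Throughout one uses $d_{\SC}=d_{U_0}$ and that on a common line the taxicab distance is just the coordinate difference. A routine bookkeeping check, of the kind done in Lemmas~\ref{nondegen} and~\ref{U0injective}, shows that in the ``straight-up'' sub-cases the factor $2$ is consumed exactly.

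The hard part will be the central-hole sub-case of (a), namely $\{m,n\}=\{(1,0),(1,2)\}$ (or its reflection $\{(0,1),(2,1)\}$): the segment in $U_0$ joining $\mu_{k+1}(x)$ and $\mu_{k+1}(y)$ runs through the removed center, so the path must detour around the hole through a side column, and the detour's within-copy portions in $\{(1,0)\}\times(M^k\otimes M_0)$ and $\{(1,2)\}\times(M^k\otimes M_0)$ can each cost up to $\tfrac13\cdot 2\min(p,1-p)$ by the inductive hypothesis — which for a central band of positions $p$ pushes the total past $2\,d_{U_0}$ for the naive left- or right-detour. I would resolve this by choosing the detour side nearer to the common $x$-coordinate and, for the remaining band, either recursing one level deeper into the copies $\{(1,0)\}\times(M^k\otimes M_0)$ and $\{(1,2)\}\times(M^k\otimes M_0)$ (exploiting that an extreme value of the relevant relative coordinate forces $x'$ or $y'$ toward a boundary, where legs become cheap and the earlier estimates improve) or checking the finitely many sub-configurations directly. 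Making this detour estimate land within the budget $2$ is the delicate heart of the argument; once it is in hand, the remaining cases are comparatively mechanical.
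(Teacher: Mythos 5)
Your strategy coincides with the paper's: induction on $k$, a case split on the relative positions of the outer indices $m,n$, explicit detour paths whose inter-copy steps cost $0$, and the inductive hypothesis applied to the legs inside the two end copies. The base case, the $m=n$ case, the adjacent-copy cases, and the ``straight through the middle copy'' case all balance as you say (one small slip: for opposite-side points of $M_0$ sharing a coordinate $s$, the path distance is $1+2\min(s,1-s)$, not always $2$, though the bound $\leq 2$ still holds). The proof is nevertheless incomplete at exactly the point you flag. In the central-hole case your detour's overhead is $\frac{4}{3}\min(p,1-p)$ against a remaining budget of $u-s$, which fails on the band $\min(p,1-p)>\frac{3}{4}(u-s)$, and neither proposed repair closes it: ``recursing one level deeper'' runs into self-similarity, since the inductive bound $2p$ on the horizontal leg from $x'$ to the side of its copy is essentially sharp ($x'$ can be separated from that side by holes at every scale), so descending a level does not make that leg cheaper; and there are not finitely many sub-configurations, since $p$, $s$, $u$ range over continua.

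The missing idea is to re-route the two within-end-copy legs so that they run \emph{parallel} to the segment joining $\mu_{k+1}(x)$ and $\mu_{k+1}(y)$ rather than perpendicular to it. In your orientation ($m=(1,0)$, $n=(1,2)$, images $(c,s)$ and $(c,u)$ with $c=\frac{1+p}{3}$ and, after reflecting, $p\leq\frac{1}{2}$), send $x$ vertically to $(1,0)\otimes S_{M^k\otimes M_0}((p,1))$, whose image $(c,\frac{1}{3})$ lies on the bottom edge of the hole. Since $x'$ and $S_{M^k\otimes M_0}((p,1))$ share the $x$-coordinate $p$, the inductive hypothesis bounds this leg by $\frac{1}{3}\cdot 2(1-3s)=2(\frac{1}{3}-s)$, i.e.\ it is charged entirely to the vertical part of the budget, with no overhead. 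The only overhead left is the detour hugging the hole, $(c,\frac{1}{3})\to(\frac{1}{3},\frac{1}{3})\to(\frac{1}{3},\frac{2}{3})\to(c,\frac{2}{3})$, of length $\frac{2p}{3}+\frac{1}{3}\leq\frac{2}{3}=2\cdot\frac{1}{3}$, exactly the budget of the middle row; the symmetric leg in the $(1,2)$ copy then gives a total of at most $2(u-s)$, with equality when $p=\frac{1}{2}$. This is precisely how the paper closes the case (stated there for the reflected configuration $m=(0,1)$, $n=(2,1)$, going over the top of the hole when the shared coordinate is $\geq\frac{1}{2}$); without this re-routing the budget genuinely does not balance.
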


\proof
We will show this for $x,y\in M^k\otimes M_0$ 
such that $\mu_k(x)$ and $\mu_k(y)$ share a $y$-coordinate;
the other case is proved similarly.
So we will show that for all $k\geq 0$, if $x,y\in M^k\otimes M_0$ and $\mu_k(x) = (r,s)$, $\mu_k(y) = (t,s)$ for some $(r,s),(t,s)\in [0,1]^2$, then $d_{M^k\otimes M_0}(x,y) \leq 2d_{\SC}(\mu_k(x),\mu_k(y))$.  
We prove this by induction on $k$.

If $k = 0$, since $\mu_0$ is a $\SquaMS$ morphism, $\mu_0(x) = \mu_0(S_{M_0}(x)) = S_{\SC}(x)=x$ (since $S_{M_0}$ is the identity on $M_0$ and $S_{\SC}$ is the inclusion of $M_0\hookrightarrow\SC$), and similarly, $\mu_0(y) =y$. 
We are going to consider the case $r = 0$ and $t =1$; the 
other cases are either similar or easier. 
So $x = (0,s)$ and $y = (1,s)$. Recall, as in Example \ref{pathmetric}, the distance in $M_0$ is the path metric.  So the distance in $M_0$ from $x$ to $y$ is  
$1 + 2s$ when $s \leq \frac{1}{2}$, and it is $1 + 2(1-s) = 3 - 2s$ when $s \geq \frac{1}{2}$.
In either case, this is $\leq 2$.  By $(\sqtwo)$ $d_{\SC}(x,y)\geq |t-r|+|s-s|=1$, so we have $d_{M_0}(x,y)\leq 2 d_{\SC}(x,y) = 2d_{\SC}(\mu_0(x),\mu_0(y))$.

Now assume the result for $k$ and suppose $x,y\in M^{k+1}\otimes M_0$.  
Let us write $x = m \otimes x'$ and $y= n\otimes y'$,
where $m$ and $n$ belong to $M$, and $x',y'\in M^k\otimes M_0$.
(We emphasize that $n$ denotes an element of $M$, not a number.)
We argue by cases on $m$ and $n$.

Our first case is when $m = n$.  
We thus assume that $\mu_{k+1}(m\otimes x') = (r,s)$ and $\mu_{k+1}(m\otimes y') = (t,s)$.
By (\ref{goaround}), $\tau(m\otimes \mu_k(x')) = (r,s)$ and $\tau(m\otimes \mu_k(y')) = (t,s)$.

Now $\tau$ works the same way as $\alpha_M$ (it is a domain-codomain restriction of $\alpha_M$, see (\ref{eq-tau}) and (\ref{alphaMdef})).
 And so we see easily that $\mu_k(x')$ and $\mu_k(y')$ have the same $y$-coordinate.
So 
\def\arraystretch{1.5}
\[
\begin{array}{lcll}
d_{M^{k+1} \otimes M_0}(x,y) & = & \onethird d_{M^k\otimes M_0}(x',y')\\
& \leq & \onethird \cdot 2d_{\SC}(d(\mu_k(x'),\mu_k(y'))) & \mbox{by induction hypothesis}\\
& = & 2 d_{M\otimes \SC}(m\otimes \mu_k(x'),m\otimes \mu_k(y')) & 
\mbox{by (\ref{eq-tau})}
\\
& = & 2 d_{\SC}(\tau(m\otimes \mu_k(x')),\tau(m\otimes \mu_k(y'))) & \mbox{see below} \\
& = & 2 d_{\SC}(\mu_{k+1}(x), \mu_{k+1}(y))\\
\end{array}
\]
\def\arraystretch{1}
For the ``see below'' line, we use the fact that within a particular copy $m\otimes \SC$,
the restriction of $\tau$ is an isometric embedding.

Indeed, for $z_1,z_2\in \SC$ and $m\in M$,

\[\begin{array}{rcl}
d_{\SC}(\tau(m\otimes z_1),\tau(m\otimes z_2)) & = & d_{\SC}(\frac{1}{3}m + \frac{1}{3}z_1 ,\frac{1}{3}m+\frac{1}{3}z_2)\\ 
&=& \frac{1}{3} d_{\SC}(z_1,z_2)\\
&=& d_{M\otimes \SC}(m\otimes z_1,m\otimes z_2).\\
\end{array}\]

Our second case is when $m$ and $n$ are adjacent squares in $M$.
(For example, we could have $m = (0,0)$, and $n = (1,0)$ or $n = (0,1)$.)
The argument in this case is a small elaboration of what we saw in the 
first case.  Our work below on a more complicated case subsumes this one,
and so we shall pass over this particular case.    The same holds for our
third case, when we have $m = (0,0)$, and $n = (0,2)$, or another pair which
is a rotation or reflection of this one.   The main case which is \emph{not} 
handled is when $m = (0,1)$ and $n=(2,1)$, or some rotation or reflection of this.
In such cases, the shortest path in $M^{k+1}\otimes M_0$ from $x$ to $y$ 
must ``navigate around the central hole.''

Without loss of generality, suppose that $s\geq \frac{1}{2}$ (see below, $s<\frac{1}{2}$ is similar).  Then there is a path in $M^{k+1}\otimes M_0$ from $x$ to $y$ 
of the following form: 
\[
\begin{tikzpicture}[scale=1]
\draw (0,0) node {\sctwoBprime{\sconeBprime}};
\draw (-1.44,.3) node (t) {$\bullet$};
\draw (-.66,.3) node (tt) {$\bullet$};
\draw (.66,.3) node (uu) {$\bullet$};
\draw (1.37,.3) node (u) {$\bullet$};
\draw (-4,.3) node (p) {$x = (0,1)\otimes x'$};
\draw (4,.3) node  (q) {$y = (2,1)\otimes y'$};
\draw (-3,-3) node  (r) {$(0,1)\otimes v_1$};
\draw (2,-3) node  (s) {$(2,1)\otimes v_2$};
\path[->](p) edge [bend left]  (t);
\path[->](q) edge [bend left]  (u);
\path[->](r) edge [bend left]  (tt);
\path[->](s) edge [bend right]  (uu);
\draw [line width=2pt]  (-1.44,.3) -- (-.66,.3);
\draw [line width=2pt]  (-.66,.3) --  (-.66,.66);
\draw [line width=2pt]  (-.66,.66) --  (.66,.66);
\draw [line width=2pt]  (.66,.66) --  (.66,.3);
\path [line width=2pt] (1.37,.3) edge (.66,.3);
\end{tikzpicture}
\]
where $v_1=S_{M^k\otimes M_0}((1,3s-1))$ and $v_2=S_{M^k\otimes M_0}((0,3s-1))$.  Then
$\mu_{k+1}((0,1)\otimes v_1) = (\frac{1}{3},s)$ and $\mu_{k+1}((2,1)\otimes v_2) = (\frac{2}{3},s)$. 

The picture suggests going around the top of the middle square:
This is because we assume $s \geq \frac{1}{2}$.
(If $s<\frac{1}{2}$, then we get an analogous shorter path going around the bottom of the middle square.)  Then since the distance in $M^{k+1}\otimes M_0$ is the score of the shortest path, we have 
\def\arraystretch{1.5}
\[
\begin{array}{rcl}
  d_{M^{k+1}\otimes M_0}(x,y) & \leq & 
 \onethird d_{M^k\otimes M_0}(x',v_1) + (\frac{2}{3} - s) + \frac{1}{3}  + (\frac{2}{3}-s) + \onethird d_{M^k\otimes M_0}(v_2,y')
\\ 
& \leq & \onethird ( d_{M^k\otimes M_0}(x',v_1) + 2 + d_{M^k\otimes M_0}(v_2,y'))\\
\end{array}
\]
\def\arraystretch{1}
since $s\geq \frac{1}{2}$.  

Let $(r',s') = \mu_k(x')$ and note that $\mu_k(v_1) = (1,s')$.  By the induction hypothesis, $d_{M^k\otimes M_0}(x',v_1)\leq 2(1-r')$.  Further note that 
\[(r,s) = (\onethird(r'),\onethird(1+s')).\]  Thus, $d_{M^k\otimes M_0}(x',v_1)\leq 2(1-3r)$.
Similarly, $d_{M^k\otimes M_0}(v_2,y')\leq 2(3t-2)$. 
So, using our calculation above, 
\[
\begin{array}{rcl}
  d_{M^{k+1}\otimes M_0}(x,y) &\leq& \onethird (2(1-3r) + 2 + 2(3t-2))\\
& =& 2(t-r)\\
& =& 2d_{\SC}((r,s),(t,s))\\
& =& 2d_{\SC}(\mu_{k+1}(x),\mu_{k+1}(y))\\
\end{array}
\]
as required. 

This covers all of the possible cases in which
 $\mu_k(x)$ and $\mu_k(y)$
share  a $y$-coordinate. 
\endproof

Next, we need to show that the distance between the images $\gamma^\dag(x)$ and
$\gamma^\dag(y)$ can be calculated as the sum of horizontal and vertical segments between endpoints \emph{in the image of} $\mu_k$.  This is what will allow us to compare the distance in $M^k\otimes M_0$ to the distance in   $\SC$.

\begin{lemma}
\label{Aug6}
For $k\geq 0$, given $x,y\in M^k\otimes M_0$, the distance between $\mu_k(x)$ and $\mu_k(y)$ in $\SC$ is the sum of the lengths of at most four horizontal or vertical segments whose endpoints are in the image of $\mu_k$. 
\end{lemma}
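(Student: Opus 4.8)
The plan is to argue by induction on $k$, tracking the statement ``for all $x,y\in M^k\otimes M_0$, the distance $d_{\SC}(\mu_k(x),\mu_k(y))$ is realized by an $L$- or $Z$-shaped path consisting of at most four axis-parallel segments, each with both endpoints in $\mu_k[M^k\otimes M_0]$.'' For the base case $k=0$, $\mu_0$ is the inclusion of $M_0$ into $\SC$, whose image is the topological boundary of $U_0$; any two boundary points are joined by at most two axis-parallel segments along the boundary (one, if they share a coordinate or lie on the same side), and these are exactly the geodesics in the taxicab metric restricted to the boundary since $\SC$ contains the full boundary. So the base case amounts to a short case analysis on which sides $x$ and $y$ lie.

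For the inductive step, write $x=m\otimes x'$ and $y=n\otimes y'$ with $m,n\in M$ and $x',y'\in M^k\otimes M_0$, and use (\ref{goaround}): $\mu_{k+1}=\tau\circ(M\otimes\mu_k)$, so $\mu_{k+1}(x)=\sigma_m(\mu_k(x'))$ and $\mu_{k+1}(y)=\sigma_n(\mu_k(y'))$, living in the sub-squares $\sigma_m(U_0)$ and $\sigma_n(U_0)$ of the $3\times 3$ grid. When $m=n$, the whole path stays in one scaled copy, and applying the induction hypothesis inside that copy and scaling by $\frac13$ gives at most four segments with endpoints in $\mu_{k+1}[M^{k+1}\otimes M_0]$ (using that $\tau$ restricted to $m\otimes\SC$ is an isometric embedding, as already noted). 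When $m\neq n$, I would use the structure of the region contact graph of $M$: a taxicab geodesic in $\SC$ from $\mu_{k+1}(x)$ to $\mu_{k+1}(y)$ travels through at most a couple of the grid sub-squares, entering and leaving each along shared edges of the $3\times 3$ grid (these edges are themselves in the image of $\mu_{k+1}$, by the $\SquaSet$ structure $S_{M^{k+1}\otimes M_0}$ and the definition of $E$). Inside each grid square it traverses a boundary point to boundary point, which by the induction hypothesis (applied to $M^k\otimes M_0$ and scaled) is at most four axis-parallel segments — but in fact, since the entry/exit points are corner points or lie on the boundary of the sub-square, a count shows the whole concatenation still simplifies to at most four segments overall (two to cross the ``hole'' region as in the hard case of Lemma~\ref{segmentlemma}, plus one entering and one leaving). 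This is where I would lean heavily on the case analysis already carried out in Lemma~\ref{segmentlemma}: the genuinely new phenomenon relative to $N\otimes-$ is navigating around the central hole, and that is exactly the case treated there.

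The main obstacle I anticipate is the bookkeeping of the segment count in the $m\neq n$ case: naively concatenating ``at most four segments per sub-square'' over two or three sub-squares blows the bound past four, so the argument must exploit that each sub-square is entered and exited on its boundary (not its interior), which collapses the intermediate pieces — the path within a sub-square between two boundary points is really just a single boundary-following arc of one or two segments, not a general four-segment path, because one can route along the grid lines rather than through the sub-square's interior carpet structure. I would make this precise by strengthening the inductive hypothesis slightly: whenever $\mu_k(x)$ and $\mu_k(y)$ both lie on the boundary of $U_0$ (i.e.\ $x,y\in S_{M^k\otimes M_0}[M_0]$ in the appropriate sense), the geodesic uses at most \emph{two} segments and follows the boundary; the general four-segment bound is then a consequence applied only at the top level. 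With that refinement the concatenation across grid squares stays within four segments, and the lemma follows. I expect the remaining verifications — that the chosen segment endpoints genuinely lie in $\mu_k[M^k\otimes M_0]$, and that the $L$-shaped path is actually geodesic in the taxicab metric on $\SC$ (using that $\SC$ contains all the relevant grid lines of $M^k\otimes M_0$, hence these segments lie in $\SC$) — to be routine given Lemma~\ref{Nqm}, Corollary~\ref{cornersM0toU0}, and the taxicab calculations already in Lemma~\ref{segmentlemma}.
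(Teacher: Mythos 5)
There is a genuine gap here, and it stems from treating $d_{\SC}$ as a path metric. The metric on $\SC$ is the \emph{restriction} of the taxicab metric on $U_0$, so $d_{\SC}(\mu_k(x),\mu_k(y)) = |t-r|+|u-s|$ where $\mu_k(x)=(r,s)$ and $\mu_k(y)=(t,u)$; there are no ``geodesics in $\SC$'' that must stay inside the carpet, and the segments in the desired decomposition are allowed to pass through holes --- only their \emph{endpoints} must lie in $\mu_k[M^k\otimes M_0]$. Your base case already fails on this point: for $x=(0,0.3)$ and $y=(1,0.7)$ the boundary-following route has length $2$ while $d_{\SC}(x,y)=1.4$, so a decomposition ``along the boundary'' does not sum to the distance (the correct one is the $L$ through the interior with corner $(1,0.3)\in M_0$). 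The same problem infects the inductive step and your proposed strengthening (``two segments following the boundary'' for boundary points): any decomposition whose segments are not jointly monotone in both coordinates has total length strictly greater than the taxicab distance, so the lemma's \emph{equality} is lost. Routing along grid lines of the $3\times 3$ subdivision, as you propose, produces exactly such non-monotone detours.

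The paper's induction is organized around a different object than yours: not the sub-squares containing $x$ and $y$, but the sub-square containing the \emph{corner} $(t,s)$ of the natural $L$-shaped decomposition. If $(t,s)\in\mu_k[M^k\otimes M_0]$, two segments suffice. If not, $(t,s)$ lies in some hole, and one replaces the $L$ by a monotone four-segment staircase $(r,s)\to(v_1,s)\to(v_1,v_2)\to(t,v_2)\to(t,u)$ skirting that hole; the monotonicity $r\le v_1\le t$, $s\le v_2\le u$ is what keeps the total length equal to $|t-r|+|u-s|$, and the induction on $k$ simply zooms in on whichever cell of the grid contains $(t,s)$ (projecting $x$ and $y$ to that cell's boundary when necessary), with the central hole handled explicitly. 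Your region-contact-graph analysis and the appeal to Lemma~\ref{segmentlemma} do not supply these monotone intermediate points --- that lemma compares distances in $M^k\otimes M_0$ and $\SC$ for points already known to share a coordinate, and its ``navigation around the hole'' happens in the domain, not in $\SC$. The concatenation bookkeeping you flag as the main obstacle is a symptom of attacking the wrong decomposition; to repair your argument you would need to prove precisely the corner-relocation claim, which is the content of the paper's proof.
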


\proof
Let $x,y\in M^k\otimes M_0$ be given, and let $\mu_k(x) = (r,s)$, $\mu_k(y) = (t,u)$. Without loss of generality, suppose 
that $r\leq t$ and $s\leq u$ (the other cases are similar).  Consider the point $(t,s)$.

Case 1: $(t,s)$ is in the image
of $M^k\otimes M_0$ under $\mu_k: M^k\otimes M_0\rightarrow \SC$, let $z\in M^k\otimes M_0$ be such that $\mu_k(z) = (t,s)$.  Then
\[ d_{\SC}(\mu_k(x),\mu_k(y)) = d_{\SC}(\mu_k(x),\mu_k(z))+d_{\SC}(\mu_k(z),\mu_k(y)).
\]
In this case, we are done.

Case 2: $(t,s)$ is not in the image of $M^k\otimes M_0$ under $\mu_k$.  That is, $(t,s)$ occurs in a ``hole'' which we will need to navigate around. 
Again, we are restricting our attention to the case when $r\leq t$ and $s\leq u$ (the other cases are analogous).







\begin{claim}\label{case2claim} For every $k\geq 0$, if $x,y\in M^k\otimes M_0$ and $\mu_k(x)=(r,s)$ and $\mu_k(y) = (t,u)$ and $(t,s)$ is not in the image of $\mu_k$, then there exist $z,z_1,z_2\in M^k\otimes M_0$ such that 
\begin{equation}\label{eqAug6}
\begin{array}{lcll}
d_{\SC}(\mu_k(x),\mu_k(y)) & = & 
& d_{M^k\otimes M_0}(\mu_k(x),\mu_k(z_1)) + d_{M^k\otimes M_0}(\mu_k(z_1),\mu_k(z)) \\
& & + &d_{M^k\otimes M_0}(\mu_k(z),\mu_k(z_2)) + d_{M^k\otimes M_0}(\mu_k(z_2),\mu_k(y)) \\
\end{array}
\end{equation}
\end{claim}

The idea is indicated in the picture below (which may not be to scale, the ``hole'' may be much smaller and off to one side).
The points  $z,z_1,z_2\in M^k\otimes M_0$ are such that $\mu_k(z_1) = (v_1,s)$, $\mu_k(z) = (v_1,v_2)$, and $\mu_k(z_2) = (t,v_2)$.
\[
\begin{tikzpicture}[scale=1]
\draw (-1,-1) -- (1,-1);
\draw (1,-1) -- (1,1);
\draw (1,1) -- (-1,1);
\draw (-1,1) -- (-1,-1);
\draw (-3,-3) -- (3,-3);
\draw (3,-3) -- (3,3);
\draw (3,3) -- (-3,3);
\draw (-3,3) -- (-3,-3);
\draw (-2,0) node (x1y1) {$\bullet$};
\draw (-2,-0.4) node (x1y1tag) {$(r,s)$};
\draw (-1,0) node (rs) {$\bullet$};
\draw (-2.5,.7) node (rstag) {$(v_1,s)$};
\draw (-1,1) node (ru) {$\bullet$};
\draw (-1.4,1.4) node (rutag) {$(v_1,v_2)$};
\draw (0,1) node (tu) {$\bullet$};
\draw (2,1.3) node (tutag) {$(t,v_2)$};
\draw (0,2) node (x2y2) {$\bullet$};
\draw (0,2.4) node (x2y2tag) {$(t,u)$};
\draw (0,0) node (x2y1) {$\bullet$};
\draw (.5,-2.4) node (x2y1tag) {$(t,s)$};
\draw [line width=2pt] (-2,-0) --  (-1,0);
\draw [line width=2pt] (-1,0) --  (-1,1);
\draw [line width=2pt] (-1,1) --  (0,1);
\draw [line width=2pt] (0,1) --  (0,2);
\path[->](rstag) edge [bend left]  (rs);
\path[->](tutag) edge [bend right]  (tu);
\path[->](x2y1tag) edge [bend right]  (x2y1);
\end{tikzpicture}
\]

Here is the relation of the picture to (\ref{eqAug6}).
On the right of (\ref{eqAug6}), 
each term comes from a
 horizontal or vertical segment in $U_0$.  In particular, $\mu_k(x)$ and $\mu_k(z_1)$ share $y$-coordinates, $\mu_k(z_1)$ and $\mu_k(z)$ share $x$-coordinates, $\mu_k(z)$ and $\mu_k(z_2)$ share $y$-coordinates, and $\mu_k(z_2)$ and $\mu_k(y)$ share $x$-coordinates.

Now we prove the claim by induction on $k$.
When $k=0$, we must have $(r,s),(t,u) \in M_0$, so since we have $r\leq t$ and $s\leq u$, the only case in which $(t,s)\notin M_0$ is if $r=0$ and $u=1$.  But in this case, we can let $z=z_1=z_2 = (0,1)$.

Assume the claim for some fixed $k\geq 0$ and let $x,y\in M^{k+1}\otimes M_0$.  
We will consider two cases for $(t,s)$:  when it appears in the center ``hole'', that is, in $(\frac{1}{3},\frac{2}{3})\times (\frac{1}{3},\frac{2}{3})$, and when it does not. 

First suppose it does not.  We will consider the particular case when $(t,s)\in [\frac{2}{3},1]\times [0,\frac{1}{3}]$, the bottom right corner.  The rest of the cases are similar.  

If $\mu_{k+1}(x)$ is also in this corner, let $x'$ be such that $x = (2,0)\otimes x'$.  Otherwise,
let $x' = S_{M^k\otimes M_0}((0,3s))$.  Similarly, if $\mu_{k+1}(y)$ is in this bottom right corner, let $y' $ be such that $y=(2,0) \otimes y'$.  Otherwise let $y' = S_{M^k\otimes M_0}((3t-2,1))$.  
Note that $(3t-2,3s)$ is not in the image of $\mu_k$, or else we could have $z$ such that $\mu_k(z) = (3t-2,3s)$, and thus, we would have 
\[ \mu_{k+1}((2,0)\otimes z) = \tau\circ M\otimes \mu_k((2,0)\otimes z) = \tau((2,0)\otimes (3t-2,3s)) = (t,s),\]
 a contradiction to our assumption.  So by the induction hypothesis, there are $z_1,z,z_2\in M^k\otimes M_0$ such that $\mu_k(z_1)=(v_1,3s)$, $\mu_k(z) = (v_1,v_2)$, and $\mu_k(z_2) = (3t-2,v_2)$.  
Then 
\[\begin{array}{lcl}
 \mu_{k+1}((2,0)\otimes z_1) &  = & (\onethird (2+v_1),s)\\
 \mu_{k+1}((2,0)\otimes z) & =& (\onethird (2+v_1),\onethird (v_2))\\
\mu_{k+1}((2,0)\otimes z_2) & = & (t,\onethird (v_2))
\end{array}
\]  
These are as required since the successive segments they determine are horizontal or vertical; see the picture above.

Finally, suppose $(t,s)$ is in $(\frac{1}{3},\frac{2}{3})\times (\frac{1}{3},\frac{2}{3})$. Then we must have $\mu_{k+1}(x)\in [0,\frac{1}{3}]\times [\frac{1}{3},\frac{2}{3}]$ and $\mu_{k+1}(y)\in [\frac{1}{3},\frac{2}{3}]\times [\frac{2}{3},1]$.  
Let 
\[\begin{array}{lcl}
z_1   & =  & (0,1)\otimes S_{M^k\otimes M_0}((1,3t-1))\\
z & = & (0,1)\otimes S_{M^k\otimes M_0}((1,1))\\
z_2 & = &  (1,2)\otimes S_{M^k\otimes M_0}((3t-1,0))\\
\end{array}
\]
 Then $\mu_{k+1}(z_1) = (\frac{1}{3},s)$, $\mu_{k+1}(z) = (\frac{1}{3},\frac{2}{3})$, and $\mu_{k+1}(z_2) = (t,\frac{2}{3})$.
 These again are as required in our claim.

This concludes our induction proof of the claim.  
Applying it,  we can express the distance
 $d_{\SC}(\mu_k(x),\mu_k(y))$
 as  the sum of the lengths of at most $4$ horizontal  and  vertical segments with endpoints in the image of $\mu_k$. 
 \endproof

Putting the last lemmas in this section together, we get the following:

\begin{proposition} 
\label{prop-3-bilipschitz}
For $k\geq 0$ and $x,y\in M^k\otimes M_0$, 
\[
 d_{M^k\otimes M_0}(x,y)\leq 2 d_{\SC}(\mu_k(x),\mu_k(y)).
\]
\end{proposition}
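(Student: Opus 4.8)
The plan is to combine the two preceding lemmas essentially mechanically, using the triangle inequality in $M^k\otimes M_0$ to glue the pieces together. First I would invoke Lemma~\ref{Aug6} to write the distance $d_{\SC}(\mu_k(x),\mu_k(y))$ as a sum
\[
d_{\SC}(\mu_k(x),\mu_k(z_1)) + d_{\SC}(\mu_k(z_1),\mu_k(z)) + d_{\SC}(\mu_k(z),\mu_k(z_2)) + d_{\SC}(\mu_k(z_2),\mu_k(y)),
\]
where $z_1, z, z_2 \in M^k\otimes M_0$ and each consecutive pair of points $\mu_k(x),\mu_k(z_1)$; $\mu_k(z_1),\mu_k(z)$; $\mu_k(z),\mu_k(z_2)$; $\mu_k(z_2),\mu_k(y)$ shares either an $x$-coordinate or a $y$-coordinate (this is exactly the ``horizontal or vertical segment with endpoints in the image of $\mu_k$'' conclusion of Lemma~\ref{Aug6}). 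If fewer than four segments are needed — for instance when $(t,s)$ already lies in the image of $\mu_k$, or when the two points share a coordinate outright — one simply takes some of $z_1, z, z_2$ to coincide with each other or with $x$ or $y$; these degenerate cases only make the argument shorter.

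Next I would apply Lemma~\ref{segmentlemma} to each of the (at most four) segments. Since for each segment the two $M^k\otimes M_0$-preimages have images under $\mu_k$ sharing a coordinate, the lemma gives
\[
d_{M^k\otimes M_0}(x,z_1) \leq 2 d_{\SC}(\mu_k(x),\mu_k(z_1)),
\]
and similarly for the pairs $(z_1,z)$, $(z,z_2)$, $(z_2,y)$. Then the triangle inequality in $M^k\otimes M_0$ yields
\[
d_{M^k\otimes M_0}(x,y) \leq d_{M^k\otimes M_0}(x,z_1) + d_{M^k\otimes M_0}(z_1,z) + d_{M^k\otimes M_0}(z,z_2) + d_{M^k\otimes M_0}(z_2,y),
\]
and bounding each summand via Lemma~\ref{segmentlemma} and then re-summing the $\SC$-distances back up using Lemma~\ref{Aug6} gives $d_{M^k\otimes M_0}(x,y) \leq 2 d_{\SC}(\mu_k(x),\mu_k(y))$, as desired.

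The only genuine care-point — and it is minor — is bookkeeping: I must make sure that the points $z_1, z, z_2$ supplied by Lemma~\ref{Aug6} are genuinely elements of $M^k\otimes M_0$ (so that the $M^k\otimes M_0$-distances $d_{M^k\otimes M_0}(x,z_1)$ etc.\ make sense and the triangle inequality can be applied there), and that the coordinate-sharing in the statement of Lemma~\ref{Aug6} is set up precisely so that Lemma~\ref{segmentlemma} applies to each consecutive pair. Both are already built into the statements of those lemmas, so no new geometric work is required here; the proposition is a clean corollary of the two. I do not expect any serious obstacle.
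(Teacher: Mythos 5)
Your proposal is correct and matches the paper's proof essentially verbatim: the paper likewise applies the triangle inequality in $M^k\otimes M_0$ across the points $z_1,z,z_2$ supplied by Lemma~\ref{Aug6} (allowing degenerate cases where they coincide), bounds each summand by Lemma~\ref{segmentlemma}, and re-sums the $\SC$-distances using the additivity guaranteed by Lemma~\ref{Aug6}. No gaps.
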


\proof
Let $x,y\in M^k\otimes M_0$, and let $\mu_k(x) = (r,s)$ and $\mu_k(y) = (t,u)$.  As in Lemma~\ref{Aug6}, assume without loss of generality that $r\leq t$ and $s\leq u$ (the other cases are similar).  

If $(t,s)$ is in the image of $M^k\otimes M_0$ under $\mu_k$ (as in Case 1 in the proof of Lemma~\ref{Aug6}), let $z$ be such that $\mu_k(z) = (t,s)$, and let $z_1=z_2=z$. 

Otherwise, if $(t,s)$ is not in the image of $M^k\otimes M_0$ under $\mu_k$, let $z_1,z,z_2\in M^k\otimes M_0$
be as in Claim~\ref{case2claim} of  Case 2 in the proof of Lemma~\ref{Aug6}.
Then in either case, \[ d_{M^k\otimes M_0}(x,y) \leq d_{M^k\otimes M_0}(x,z_1) + d_{M^k\otimes M_0}(z_1,z) + d_{M^k\otimes M_0}(z,z_2) + d_{M^k\otimes M_0}(z_2,y).
\]
We use
the fact that these are each horizontal or vertical segments in $\SC$, and also Lemmas~\ref{segmentlemma}
and~\ref{Aug6} to see that
\[\begin{array}{rl}
& d_{M^k\otimes M_0}(x,z_1) + d_{M^k\otimes M_0}(z_1,z) + d_{M^k\otimes M_0}(z,z_2) + d_{M^k\otimes M_0}(z_2,y)\\
\leq & 2(d_{\SC}(\mu_k(x),\mu_k(z_1)) + d_{\SC}(\mu_k(z_1),\mu_k(z)) + d_{\SC}(\mu_k(z),\mu_k(z_2)) + d_{\SC}(\mu_k(z_2),\mu_k(y)))\\
= &2d_{\SC}(\mu_k(x),\mu_k(y))\\
\end{array}
\]
\endproof

Next, we need 
a version of 
Proposition~\ref{prop-3-bilipschitz}
for $G$.  This comes almost immediately from the fact that for any $x\in G$, there exists $k$ and $x'\in M^k\otimes M_0$ such that $g_k(x') = [x'] = x$.  

\begin{proposition}\label{GSC}
For $x,y\in G$, $$d_G(x,y)\leq 2d_{\SC}(\gamma^\dag\circ i(x),\gamma^\dag \circ i(y)).$$
\end{proposition}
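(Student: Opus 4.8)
The plan is to reduce the statement about $G$ to Proposition~\ref{prop-3-bilipschitz}, which is the corresponding statement about the finite stages $M^k\otimes M_0$. The key observation is that $G$ is the colimit of the initial $\omega$-chain, so every point of $G$ lies in the image of some $g_k\colon M^k\otimes M_0\to G$, and moreover (by the corollary following the proposition that $G$ is the colimit in $\Set$) each $g_k$ is injective; in fact, since $g_k = g_{k+1}\circ a_{k,k+1}$ and the connecting maps are injective, the distances are compatible along the chain. So given $x,y\in G$, first I would pick $k$ large enough that there exist $x',y'\in M^k\otimes M_0$ with $g_k(x') = x$ and $g_k(y') = y$; this is possible because the colimit cocone is jointly surjective and we can always push forward along the chain to a common stage.

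Next I would relate $d_G(x,y)$ to $d_{M^k\otimes M_0}(x',y')$. Since $G$ is the colimit in $\MS$ (indeed in $\PS$), property ($\MS$2) gives $d_G(x,y) = \inf_{p\geq k} d_{M^p\otimes M_0}(a_{k,p}(x'),a_{k,p}(y'))$, so in particular $d_G(x,y) \leq d_{M^k\otimes M_0}(x',y')$. That is all we need in this direction. Then applying Proposition~\ref{prop-3-bilipschitz} at stage $k$ gives
\[
d_G(x,y) \leq d_{M^k\otimes M_0}(x',y') \leq 2\,d_{\SC}(\mu_k(x'),\mu_k(y')).
\]
Finally I would identify $\mu_k(x')$ with $\gamma^\dag\circ i(x)$: by definition $\mu_k = \gamma^\dag\circ i\circ g_k$, so $\mu_k(x') = \gamma^\dag(i(g_k(x'))) = \gamma^\dag(i(x))$, and likewise for $y'$. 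Substituting yields exactly $d_G(x,y)\leq 2\,d_{\SC}(\gamma^\dag\circ i(x),\gamma^\dag\circ i(y))$.

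I do not expect a serious obstacle here; the only point requiring a little care is making sure that the choice of stage $k$ and the representatives $x',y'$ is legitimate and that the inequality $d_G(x,y)\leq d_{M^k\otimes M_0}(x',y')$ is exactly what the colimit characterization ($\MS$2) provides (it is the ``$\leq$'' half, obtained by taking $p=k$ in the infimum). One should also note that if $x',y'$ are chosen at possibly different initial stages, one simply pushes both forward to a common stage using the connecting maps, which are injective, so no information is lost; and the value of $\mu$ is independent of which stage one uses to represent a point, since the $\mu_k$ form a cocone by equation~(\ref{goaround}). With these remarks, the argument is a short diagram-and-definitions chase built directly on Proposition~\ref{prop-3-bilipschitz}.
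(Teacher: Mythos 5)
Your proposal is correct and follows essentially the same route as the paper: lift $x,y$ to representatives $x',y'$ at a common finite stage $k$, apply Proposition~\ref{prop-3-bilipschitz} there, and use the cocone identity $\mu_k=\gamma^\dag\circ i\circ g_k$ to translate back. The only (harmless) difference is that the paper relates $d_G(x,y)$ to $d_{M^k\otimes M_0}(x',y')$ via an $\eps/2$-approximation, whereas you observe that the one-sided inequality $d_G(x,y)\leq d_{M^k\otimes M_0}(x',y')$ already follows from ($\MS$2) by taking $p=k$ in the infimum, which is a slight simplification and is perfectly valid.
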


\proof
Let $x,y\in G$ and 
$\epsilon>0$ be given.  Choose $k\geq 0$ sufficiently large 
and $x',y'\in M^k\otimes M_0$ with $[x']=x$ and $[y']=y$, and $|d_G(x,y)-d_{M^k\otimes M_0}(x',y')|<\frac{\epsilon}{2}$.  Then $\gamma^\dag\circ i(x) = \gamma^\dag\circ i\circ g_k(x') = \mu_k(x')$ and similarly for $y$. 
Hence
\[\begin{array}{rcll}
    d_G(x,y) & \leq & d_{M^k\otimes M_0}(x',y') + \frac{\epsilon}{2}  \\
     & \leq &  2(d_{\SC}(\mu_k(x'),\mu_k(y')) + \frac{\epsilon}{2}) 
     & \mbox{by Proposition~\ref{prop-3-bilipschitz}} \\
     & = & 2 d_{\SC}(\gamma^\dag\circ i(x),\gamma^\dag\circ i(y)) + \epsilon\\
\end{array}\]
Since $\epsilon>0$ was arbitrary, we get the required inequality. 
\endproof

Finally, we will use the following very general fact along with 
the fact that $G$ is dense in $Q$ to get our result. 

\begin{proposition} \label{prop-density}
Let $f:A\rightarrow B$ be a
 short map between metric spaces and let $D$ be dense in $A$.  If $K\geq 1$ is such that 
 \[ d_A(x,y)\leq Kd_B(f(x),f(y))\]
 for all $x,y\in D$, then
 this same inequality holds for all $x,y\in A$.
\end{proposition}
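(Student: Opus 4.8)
The plan is to use a standard density-plus-continuity argument. Fix arbitrary $x,y\in A$. Since $D$ is dense in $A$, choose sequences $(x_n)_n$ and $(y_n)_n$ in $D$ with $x_n\to x$ and $y_n\to y$ in $A$. The metric $d_A$ is continuous on $A\times A$ (this follows from the triangle inequality, e.g.\ $|d_A(x_n,y_n)-d_A(x,y)|\le d_A(x_n,x)+d_A(y_n,y)$), so $d_A(x_n,y_n)\to d_A(x,y)$.

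Next I would use that $f$, being short, is in particular $1$-Lipschitz and hence continuous: $d_B(f(x_n),f(x))\le d_A(x_n,x)\to 0$, so $f(x_n)\to f(x)$, and likewise $f(y_n)\to f(y)$. By the same continuity-of-the-metric estimate applied in $B$, $d_B(f(x_n),f(y_n))\to d_B(f(x),f(y))$.

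Finally, for each $n$ we have $x_n,y_n\in D$, so the hypothesis gives $d_A(x_n,y_n)\le K\,d_B(f(x_n),f(y_n))$. Passing to the limit as $n\to\infty$ on both sides, using the two convergences just established, yields $d_A(x,y)\le K\,d_B(f(x),f(y))$. Since $x,y\in A$ were arbitrary, this completes the proof. There is no real obstacle here; the only thing to be careful about is invoking continuity of $f$ (from shortness) and continuity of the metrics before taking limits, both of which are immediate.
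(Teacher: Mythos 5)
Your proof is correct and is essentially the same argument as the paper's: both approximate $x,y$ by points of $D$, use shortness of $f$ to transfer the approximation to $B$, and conclude by a limiting/epsilon estimate. The paper writes it with explicit $\epsilon/4K$ bounds while you phrase it with sequences and continuity of the metric, but the content is identical.
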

\proof
Let $\epsilon>0$ and $x,y\in A$ be given.  Choose $x',y'\in D$ such that $d_A(x,x')<\frac{\epsilon}{4K}$ and $d_A(y,y')<\frac{\epsilon}{4K}$.  
Since  $f$ is a short map,
\[\begin{array}{rcll}
d_A(x,y) & \leq & d_A(x',y') + 2(\frac{\epsilon}{4K}) \\
& \leq & Kd_B(f(x'),f(y')) + \frac{\epsilon}{2K}\\
& \leq & K ( d_B(f(x),f(y)) + 2(\frac{\epsilon}{4K})) + \frac{\epsilon}{2K} & \mbox{see below}\\
& =& Kd_B(f(x),f(y)) + \frac{\epsilon}{2} + \frac{\epsilon}{2K}\\
& \leq & Kd_B(f(x),f(y)) + \epsilon\\
\end{array}\]
In the line marked ``see below'', we use the fact that $f$ is short to see that 
  $d_B(f(x),f(x')) <\frac{\epsilon}{4K}$ and similarly for $y$. 
Since $\epsilon>0$ was arbitrary, $d_A(x,y)\leq Kd_B(f(x),f(y))$ for all $x,y\in A$, as required. 
\endproof

\begin{theorem}\label{theorem-bilipschitz}
The metric space 
$Q$ is bilipschitz equivalent to the Sierpinski carpet $\mathbbm{S}$ as a subset of the plane with the taxicab metric, and thus, the Euclidean metric.  
\end{theorem}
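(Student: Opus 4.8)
The plan is to assemble the pieces already in hand. We have a bijection $\gamma^\dag\colon Q\to\SC$ which is a short map (it is a coalgebra-to-algebra morphism into the corecursive algebra $(\SC,\tau)$ by Theorem~\ref{theorem-SC}, and its inverse exists by the finality arguments preceding this statement). So the right-hand bilipschitz inequality $d_\SC(\gamma^\dag(x),\gamma^\dag(y))\le K\,d_Q(x,y)$ holds with $K=1$. It remains only to produce a constant $K$ with $d_Q(x,y)\le K\,d_\SC(\gamma^\dag(x),\gamma^\dag(y))$, and we will show $K=2$ works.

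First I would invoke the fact that $Q=CG$ is the Cauchy completion of $G$, so the isometric image $i[G]$ is a dense subset of $Q$. On this dense subset the desired inequality is exactly Proposition~\ref{GSC}: for $x,y\in G$, $d_G(x,y)\le 2\,d_\SC(\gamma^\dag\circ i(x),\gamma^\dag\circ i(y))$, and since $i$ is an isometric embedding this says precisely that $d_Q(u,v)\le 2\,d_\SC(\gamma^\dag(u),\gamma^\dag(v))$ for all $u,v\in i[G]$. Then I would apply Proposition~\ref{prop-density} with $A=Q$, $B=\SC$, $f=\gamma^\dag$ (which is short), $D=i[G]$, and $K=2$, to conclude that $d_Q(u,v)\le 2\,d_\SC(\gamma^\dag(u),\gamma^\dag(v))$ for all $u,v\in Q$. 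Combining with shortness of $\gamma^\dag$ gives
\[
\tfrac12\, d_Q(u,v)\;\le\; d_\SC(\gamma^\dag(u),\gamma^\dag(v))\;\le\; d_Q(u,v),
\]
so $\gamma^\dag$ is a bilipschitz bijection and $Q$ is bilipschitz equivalent to $\SC$ with the taxicab metric.

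Finally, to get the statement about the Euclidean metric I would compose with the identity map $(\SC,d_\Taxicab)\to(\SC,d_\Euc)$, which is bilipschitz with constant $2$ by the restriction of Proposition~\ref{taxi-euc-bilip} to $\SC$; composition of bilipschitz maps is bilipschitz (with constant $\le 4$ here), so $Q$ is also bilipschitz equivalent to $\SC$ viewed in the Euclidean plane.

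Honestly there is no serious obstacle left at this point: every hard estimate has already been carried out in Lemma~\ref{segmentlemma}, Lemma~\ref{Aug6}, and Propositions~\ref{prop-3-bilipschitz}--\ref{GSC}. The only point that needs a sentence of care is the density of $i[G]$ in $Q$ and the fact that $i$ preserves distances exactly, so that Proposition~\ref{GSC} transfers verbatim to points of $Q$ before the density-extension step is applied.
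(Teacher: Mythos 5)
Your proposal is correct and follows essentially the same route as the paper: exhibit $\gamma^\dag\colon Q\to\SC$ as a short bijection, obtain the reverse inequality with constant $2$ on the dense subset $i[G]$ via Proposition~\ref{GSC}, and extend to all of $Q$ by Proposition~\ref{prop-density}. The only addition is your explicit final step passing from the taxicab to the Euclidean metric via Proposition~\ref{taxi-euc-bilip}, which the paper leaves implicit but is a correct and harmless elaboration.
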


\proof
We use $\gamma^\dag\colon Q \to \SC$.   This is a short bijection, it has the additional property
that $d_Q(x,y) \leq 2 d_{\SC}(\gamma^\dag(x), \gamma^\dag(y))$.  In this last estimate, we use 
Proposition~\ref{prop-density}, taking $A$ to be $Q$ and the dense set $D$ to be the image of $G$ under the 
isometric embedding $i$.  We also use Proposition~\ref{GSC}.
\endproof

\section{Conclusion}
Stepping back,
the main point of this paper has been to further the interaction between the subject of 
coalgebra broadly considered (including corecursive algebras)
and continuous mathematics.  The questions that we asked in this paper
concerned the relationship between very natural 
and very concrete fractal sets on the one hand,
and more abstract ideas like initial algebras  and final coalgebras
on the other. We came to this work in order to explore these general issues.
What we found in the exploration was a set of ideas connecting category-theoretic
and analytic concepts such as colimits in metric spaces, short maps 
approximated by non-short maps, and corecursive algebras as an alternative
to infinite sums.    We hope that the results in this
paper further these connections.

Here are two general next steps in this line of research.
First, it would be desirable to merge the ideas here with the general
categorical framework for self-similarity developed in Leinster~\cite{lein}.
This would mean taking assumptions on our category $\SquaMS$
(such as ($\sqone$) and ($\sqtwo$)) and also assumptions on the functor
(see Theorem~\ref{quotientmetric}) and incorporating them as additional
assumptions in Leinster's framework, in addition to the
requirements needed there, such as the
non-degeneracy requirements.  In a different direction, one would want to know which aspects
of the classical theory of fractals may be derived from the universal properties
which we have established.

\bibliographystyle{plain}

\bibliography{references}	

\end{document}